\newcommand{\ulindex}[1]{\ul{#1}\index{#1}}
\newcommand {\fsum} {\displaystyle \sum} 
\newcommand {\fprod}{\displaystyle \prod} 
\newcommand{\ra} {\rightarrow}
\newcommand{\bea}{\begin{eqnarray*}}
\newcommand{\eea}{\end{eqnarray*}}
\newcommand{\inv}{^{-1}}
\newcommand{\ds}{\displaystyle}
\newcommand{\N}{\mathbb{N}}
\newcommand{\Z}{\mathbb{Z}}
\newcommand{\C}{\mathbb{C}}
\newcommand{\R}{\mathbb{R}}
\newcommand{\T}{\mathbb{T}}
\newcommand{\closedspan}{\overline{\spanset}}
\newcommand{\norm}[1]{\left\|{#1}\right\|}
\newcommand{\vertiii}[1]{{\left\vert\kern-0.25ex\left\vert\kern-0.25ex\left\vert #1 
    \right\vert\kern-0.25ex\right\vert\kern-0.25ex\right\vert}}
\def\showlineno{line \the\inputlineno}
\renewcommand\LineNumber{\the\inputlineno}
\definecolor{linenogrey}{RGB}{150,150,150}
\DeclareMathOperator{\id}{id}
\DeclareMathOperator{\spanset}{span}
\DeclareMathOperator{\supp}{supp}
\newtheorem{theorem}{Theorem}[section]
\newtheorem{lemma}[theorem]{Lemma}
\newtheorem{definition}[theorem]{Definition}
\newtheorem{question}[theorem]{Question}
\newtheorem{corollary}[theorem]{Corollary}
\newtheorem{remark}[theorem]{Remark}
\newtheorem{notation}[theorem]{Notation}
\newtheorem{example}[theorem]{Example}
\newtheorem{proposition}[theorem]{Proposition}
\newtheorem{conjecture}[theorem]{Conjecture}
\author {Robert Huben}
\date{March 2021}
\title{Gauge-Invariant Uniqueness and \\ Reductions of Ordered Groups}
\begin{document}

\maketitle

\begin{abstract}
A reduction $\varphi$ of an ordered group $(G,P)$ to another ordered group is an order homomorphism which maps each interval $[1,p]$ bijectively onto $[1, \varphi(p)]$. We show that if $(G,P)$ is weakly quasi-lattice ordered and reduces to an amenable ordered group, then there is a gauge-invariant uniqueness theorem for $P$-graph algebras. We also consider the class of ordered groups which reduce to an amenable ordered group, and show this class contains all amenable ordered groups and is closed under direct products, free products, and hereditary subgroups.

\end{abstract}

\tableofcontents

\section{Introduction}

Cuntz-Krieger algebras and their generalizations (Exel-Laca algebras, graph algebras, higher-rank graph algebras, etc) are all, broadly speaking, the universal algebras generated by partial isometries whose range and source projections satisfy certain combinatorial relations. But defining these algebras by their universality comes at a cost, since it becomes difficult to check if any particular collection of partial isometries (a ``representation'') is universal. Mathematicians responded with uniqueness theorems, conditions on the representation which guarantee that it is universal. A classic example is the \emph{gauge-invariant uniqueness theorem} for graph algebras, which states that so long as the canonical generators of the algebra are nonzero and there is a gauge action (meaning an action $\alpha$ of the circle satisfying $\alpha_z(T_\gamma)= z^{\text{length}(\gamma)} T_\gamma$ for any $z \in \T$ and path $\gamma$ in the graph), then any other representation of the graph is a quotient of this representation. This construction and uniqueness theorem has been generalized to higher rank graphs by Kumjian and Pask \cite{kumjian2000higher}, where paths are given lengths in $\N^k$ instead of $\N$, and an action of $\T^k = \widehat{\Z^k}$ replaces the gauge action of $\T = \widehat{\Z}$.

One might then ask that paths be allowed to have ``lengths'' in any positive cone $P$ of a group $G$. The first attempt at such a generalization was by Brownlowe, Sims, and Vittadello \cite{brownlowe2013co}, who studied $P$-graphs when $(G,P)$ is quasi-lattice ordered, but surprisingly the authors construct a $P$-graph algebra which is not universal but \emph{co-}universal, meaning it is a quotient of any sufficiently large representation.

In Section 4 of this paper, we build on their work to show that if the grading group $(G,P)$ is weakly quasi-lattice ordered and has a certain kind of quotient map called a reduction such that the quotient is amenable, this algebra is both universal and co-universal. This allows us to generalize the gauge-invariant uniqueness theorems for graphs and $k$-graphs (such as \cite[Theorem 2.2]{raeburn2005graph} or \cite[Theorem 3.4]{kumjian2000higher}) to make a gauge-invariant uniqueness theorem for $P$-graphs:

\begin{restatable*}{theorem}{uniquenesstheorem}
\label{uniquenesstheorem}
Let $(G,P)$ be a weakly quasi-lattice ordered group which reduces to an amenable ordered group, and let $\Lambda$ be a finitely-aligned $P$-graph. Then there is exactly one representation (up to canonical isomorphism) of $\Lambda$ which is $\Lambda$-faithful, tight, and has a gauge coaction of $G$. This representation is universal for tight representations and co-universal for representations which are $\Lambda$-faithful and have a gauge coaction by $G$. 
\end{restatable*}

This diagram provides a visual summary of the result:

\begin{center}
\begin{tikzpicture}[yscale=.8, xscale=.5]
\draw[fill=red!50!white, fill opacity=.5] plot[color=red,smooth,samples=100,domain=-4:4] (\x,{3/8*\x*\x-1}) -- plot[color=red,smooth,samples=100,domain=-4:4] (\x,{5});
\draw[fill=blue!50!white, fill opacity=.5] plot[color=blue,smooth,samples=100,domain=-4:4] (\x,{-3/8*\x*\x+1}) -- plot[color=blue,smooth,samples=100,domain=-4:4] (\x,{-5});

\node [label={[align=center]$\Lambda$-faithful \\ gauge coacting\\ representations}] at (0,1.8) {};

\node [label={[align=center]tight \\ representations}] at (0,-5) {};

    \draw node at (-4,-0.8) {$C^*_{min}(\Lambda)$};
    
    \draw [fill] (0,0.8) circle(1 mm);
    \draw [->] (-2.3, 0.8) -- (-0.3, 0.8);
    \draw node at (-4,0.8) {$C^*_{tight}(\Lambda)$};

    \draw [fill] (0,-0.8) circle(1 mm);
    \draw [->] (-2.3, -0.8) -- (-0.3, -0.8);
    \draw node at (-4,-0.8) {$C^*_{min}(\Lambda)$};

    \draw [<->, ] (3, -0.8) -- (3, 0.8);
    \node [label={[align=center]no gap if \\ $(G,P)$ reduces to \\ an amenable group }] at (7,-1.5) {};

    \draw [->, dashed] (-7, 2) -- (-7, 4);
    \node [label={[align=center]bigger \\ representations}] at (-10,2) {};

    \draw [->, dashed] (-7, -2) -- (-7, -4);
    \node [label={[align=center]smaller \\ representations}] at (-10,-4.5) {};

\end{tikzpicture}

\end{center}

We believe that the algebra generated by this simultaneously universal and co-universal representation deserves the title of \emph{the} Cuntz-Krieger algebra of the $P$-graph. 

The notion of a reduction of ordered groups is introduced in Section 3, culminating in this theorem showing that several natural operations on ordered groups preserve the existence of a reduction onto an amenable group:

\begin{restatable*}{theorem}{AmenableReductionTheorem}
\label{AmenableReductionTheorem}
The class of ordered groups which have (strong) reductions onto amenable groups contains all amenable ordered groups and is closed under hereditary subgroups, finite direct products, and finite free products.
\end{restatable*}

In Theorem \ref{uniquenesstheorem}, being reducible to an amenable group plays the role of amenability in guaranteeing a unique representation, but by Theorem \ref{AmenableReductionTheorem}, this condition is, in at least one sense, more robust than amenability. In particular, \emph{being reducible to} an amenable group is preserved under free products, whereas \emph{being} amenable is almost always destroyed by a free product (for instance if $G$ and $H$ contain copies of $\Z$, then $G*H$ is not amenable). 

The notion of being reducible to an amenable group is sufficiently robust that, as corollary to Theorem \ref{AmenableReductionTheorem}, the important example $(\Z^2 *\Z, \N^2 *\N)$ from the literature (\cite{spielberg2007graph, brownlowe2013co}) reduces to an amenable group. In Proposition \ref{Kirchberg UCT Algebras Result}, we use this fact in combination with the results of \cite{brownlowe2013co} to show that every Kirchberg algebra in the UCT class is stably isomorphic to the simultaneously universal and co-universal algebra of a $(\N^2*\N)$-graph. 

Besides the results listed in Theorem \ref{AmenableReductionTheorem}, a great deal is not yet known about how reductions interact with other group constructions. The limits of our understanding are discussed in Section \ref{sectionSummary}.

We hope that this new approach will provide a useful tool for the analysis of $P$-graphs and their algebras, and possibly have implications for product systems and Fell bundles.

\section{Background}

\subsection{Ordered Groups}

Ordered groups have been studied in both a context of group theory and operator theory, and we largely follow the conventions of \cite{brownlowe2013co} and \cite[Chapter 32]{exel2017partial}. The reader should note that other authors, such as \cite{clay2016ordered}, use the term ``ordered group'' to refer to a group with a total order, but we follow the convention that the group has a partial order.

\begin{definition}
A \ulindex{positive cone} in a group $G$ is a submonoid $P$ such that $P \cap P \inv = \{1\}$. 

A \ulindex{left-invariant partial order} on a group $G$ is a partial order $\leq$ such that for all $a,b,c \in G$, $a \leq b$ if and only if $ca \leq cb$. 

Every left-invariant partial order on a group arises naturally from a positive cone $P$ by saying that $a \leq b$ if and only if $a \inv b \in P$. We will say that an \ulindex{ordered group} $(G,P)$ is a group along with a positive cone, with the implication that the group takes on a left-invariant partial order in this way.

A group is \ulindex{totally ordered} if its partial order is a total order. This is equivalent to the condition that $P \cup P \inv=G$. 
\end{definition}

Although our theory of reductions does not require any additional structure on our ordered groups, the relators for a graph algebra will ask for an additional property on our orderings from \cite[Chapter 32]{exel2017partial}:

\begin{definition}
We say $(G,P)$ is \ulindex{weakly quasi-lattice ordered} (WQLO)\index{WQLO} if whenever $x,y \in P$ have a common upper bound in $P$, they have a supremum (least common upper bound), denoted $x \vee y$. Note that any upper bound of a positive element is itself positive by transitivity.
\end{definition}

While many mathematicians (including\cite{brownlowe2013co, sehnem2019c, spielberg2014groupoids}) worked in the context of quasi-lattice ordered groups, we have found that the slightly more general condition of weak quasi-lattice order suffices. The condition of weak quasi-lattice order is also friendlier than quasi-lattice order since it is entirely a property of the submonoid $P$, and therefore one can ``forget'' the ambient group. The same is not true for quasi-lattice order. For a more thorough introduction to quasi-lattice order, weak quasi-lattice order, and their connections, the reader is directed to \cite[Chapter 32]{exel2017partial}.

\begin{notation}
In this work, we will make use of the following notation:

\begin{itemize}
\item  $\N =\{0,1,2,3...\}$ includes $0$.
\item Given a group $G$ and subset $S \subseteq G$, $\langle S\rangle$ will denote the group generated by the elements of $S$, and $S^*$ will denote the monoid generated by the elements of $S$.
\item The identity element in a multiplicative group $G$ will be denoted by $1_G$, or $1$ if the group is clear from context. Less commonly we will denote the identity element by $e$.
\end{itemize}\end{notation}

\begin{example}
Let $G=\Z^k$, and let $P=\N^k$. Then $(G,P)$ is WQLO. Every pair of elements $p=(p_1,...,p_k)$ and $q=(q_1,...,q_k)$ have a supremum $p \vee q = (\max(p_1,q_1), ..., \max(p_k,q_k))$. (In fact, this ordering is a ``lattice order'', meaning any two elements have both a supremum and an infimum.)

Let $G=F_k=\langle a_1,...,a_k\rangle$ be the free group on $k$ generators, and let $P=\{a_1,...,a_k\}^*$ be the free monoid generated by the generators of $G$. Then $(G,P)$ is WQLO, since any $p,q \in P$ have a common upper bound if and only if $p \leq q$ or $q \leq p$, in which case $p \vee q =\max\{p,q\}$.
 
\end{example}

\begin{notation}
Let $(G,P)$ be an ordered group. \index{$[a,b]$} \index{$[1,p]$}We use the following standard notation for intervals: for $a,b \in G$, write 
\[ [a,b] := \{g \in G | a \leq g \leq b \}.\]
\end{notation}

It is immediate that such an interval is nonempty if and only if $a \leq b$. We will most often be interested in intervals of the form $[1,p]$ for some $p \in P$.

\begin{definition}
Let $(G,P)$ and $(H,Q)$ be ordered groups. We say that $\varphi:G \ra H$ is an \ulindex{order homomorphism} if it is a group homomorphism with $\varphi(P) \subseteq Q$. We will write $\varphi:(G,P) \ra (H,Q)$ to denote an order homomorphism.
\end{definition}

Order homomorphisms preserve both the group structure and the order structure, as the following lemma shows:

\begin{lemma} \label{Order Homomorphism}
Let $(G,P)$ and $(H,Q)$ be ordered groups, and $\varphi:G \ra H$ a group homomorphism. Then $\varphi$ is an order homomorphism if and only if $x \leq y$ implies $\varphi(x) \leq \varphi(y)$ for all $x,y \in G$.

In particular, if $\varphi$ is an order homomorphism and $p \in P$, then $\varphi([1,p]) \subseteq [1, \varphi(p)]$.
\end{lemma}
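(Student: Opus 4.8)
The plan is to convert every order relation into a membership statement in the positive cone, using the defining equivalence $a \leq b \iff a\inv b \in P$, and then to exploit that a group homomorphism commutes with inversion and multiplication, so $\varphi(a\inv b) = \varphi(a)\inv \varphi(b)$. Once both inequalities are phrased as ``this product lies in the cone,'' the biconditional splits into two very short verifications, one in each direction.

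For the forward direction, I would assume $\varphi(P) \subseteq Q$ and take any $x,y \in G$ with $x \leq y$. By definition $x\inv y \in P$, hence $\varphi(x\inv y) \in Q$; rewriting the argument as $\varphi(x)\inv \varphi(y)$ shows $\varphi(x)\inv\varphi(y) \in Q$, which is precisely $\varphi(x) \leq \varphi(y)$. For the reverse direction, I would assume the order-preserving implication and take an arbitrary $p \in P$. Since $p = 1\inv p \in P$ we have $1 \leq p$, so the hypothesis yields $\varphi(1) \leq \varphi(p)$, i.e. $1_H \leq \varphi(p)$, which unwinds to $\varphi(p) \in Q$. As $p$ was arbitrary, $\varphi(P) \subseteq Q$, so $\varphi$ is an order homomorphism.

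The ``in particular'' clause then follows by transitivity of $\leq$: given $g \in [1,p]$, i.e. $1 \leq g \leq p$, I apply the order-preserving property (now available from the forward direction) to each inequality to get $1_H \leq \varphi(g)$ and $\varphi(g) \leq \varphi(p)$, whence $\varphi(g) \in [1,\varphi(p)]$. This gives $\varphi([1,p]) \subseteq [1,\varphi(p)]$.

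I do not expect a genuine obstacle here; this is essentially an unwinding of definitions. The only point worth stating carefully is the asymmetry between the two directions: the forward implication must be checked against \emph{every} comparable pair $x \leq y$, whereas the converse needs only the single comparison $1 \leq p$ for each $p \in P$. This works because, under left-invariance, the relation $1 \leq p$ already encodes all of $P$, so the single inequality $1 \leq p$ serves as the universal witness that makes the two conditions equivalent.
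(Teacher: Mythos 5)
Your proof is correct and follows essentially the same route as the paper: the forward direction unwinds $x \leq y$ to $x\inv y \in P$ and pushes it through $\varphi$, the converse uses the single comparison $1 \leq p$ to recover $\varphi(P) \subseteq Q$ (the paper merely phrases this contrapositively), and the ``in particular'' applies order-preservation to $1 \leq s \leq p$. No gaps.
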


\begin{proof}
If $\varphi$ is an order homomorphism and $x \leq y$, then $x\inv y \in P$, so $\varphi(x\inv y)=\varphi(x) \inv \varphi(y) \in Q$. Thus $\varphi(x) \leq \varphi(y)$.

Conversely, if $\varphi$ is not an order homomorphism, then there is $p \in P$ such that $\varphi(p) \not \in Q$. Then $1_G \leq p$, and $1_H=\varphi(1_G) \not \leq \varphi(p)$, as desired.

For the ``in particular'', if $s \in [1,p]$, then $1 \leq s \leq p$, so $1= \varphi(1) \leq \varphi(s) \leq \varphi(p)$, so $\varphi(s) \in [1, \varphi(p)]$ as desired.
\end{proof}

\subsection{$C^*$-algebras}

\subsubsection{The Factors Through Theorem}
The following result is an elementary notion from algebra, which we state here for completeness since we will make frequent use of it:

\begin{lemma}[Factors Through Theorem]\label{Factors Through Theorem}
Let $A,B$, and $C$ be sets, let $f:A \ra B$ and $g:A \ra C$ be functions, and suppose that $f$ is surjective.

\begin{center}
\begin{tikzcd}
A \arrow[r, two heads, "f"] \arrow [d, "g"]& 
B \arrow[ld, dashed, "\exists h"]\\
C
\end{tikzcd}
\end{center}

\begin{enumerate}

\item Suppose that whenever $f(a)=f(a')$, then $g(a)=g(a')$. Then there is a ``bonding map'' $h:B \ra C$ such that $h \circ f = g$. In particular, $h$ is (well-)defined by $h(f(a))=g(a)$ for all $f(a) \in B$.

\item If $A, B,$ and $C$ are groups, $f$ and $g$ are group homomorphisms, and $\ker f \subseteq \ker g$, then hypothesis of (1) that $f(a)=f(a')$ implies $g(a)=g(a')$ is satisfied. Therefore, the conclusion of (1) is true.

\item If $A,B,C$ have a binary operation $\cdot$ (respectively a unary operation $^*$)  which is preserved by $f$ and $g$, then $\cdot$ (respectively $^*$) is also preserved by $h$.
\end{enumerate}

\end{lemma}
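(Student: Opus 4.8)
The plan is to handle the three parts in order, with part (1) doing the real work of constructing $h$ and the later parts being short verifications that feed back into it.

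For part (1), I would define $h$ directly by the recipe that is essentially forced on us: given $b \in B$, use surjectivity of $f$ to choose some $a \in A$ with $f(a) = b$, and set $h(b) := g(a)$. The only thing to check — and the single place where the hypothesis of (1) is used — is that this assignment is well-defined, i.e. independent of the chosen preimage. This is exactly the content of the assumption: if $a$ and $a'$ both map to $b$, then $f(a) = f(a')$, so $g(a) = g(a')$, and the two candidate values for $h(b)$ coincide. Once $h$ is a genuine function, the identity $h(f(a)) = g(a)$ holds by construction, which is precisely the desired $h \circ f = g$.

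For part (2), I would deduce the hypothesis of part (1) from the kernel containment. Suppose $f(a) = f(a')$. Since $f$ is a homomorphism, $f(a\inv a') = f(a)\inv f(a') = 1$, so $a\inv a' \in \ker f \subseteq \ker g$. Applying $g$, again a homomorphism, gives $g(a)\inv g(a') = g(a\inv a') = 1$, hence $g(a) = g(a')$. Thus part (1) applies verbatim and produces $h$. For part (3), I would verify that $h$ transports the operations, again using surjectivity of $f$ to write arbitrary elements of $B$ as images. For the binary operation, given $b_1, b_2 \in B$ choose preimages $a_1, a_2$; since $f$ preserves $\cdot$ we have $b_1 \cdot b_2 = f(a_1)\cdot f(a_2) = f(a_1 \cdot a_2)$, so $h(b_1 \cdot b_2) = g(a_1 \cdot a_2) = g(a_1)\cdot g(a_2) = h(b_1)\cdot h(b_2)$. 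The unary case is identical in spirit: writing $b = f(a)$ gives $h(b^*) = h(f(a)^*) = h(f(a^*)) = g(a^*) = g(a)^* = h(b)^*$.

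There is no genuine obstacle here, as the whole lemma is elementary; the single point that deserves explicit care is the well-definedness argument in part (1), since every subsequent computation silently relies on $h$ being an honest function and not merely a relation. I would therefore state that verification as its own step rather than absorb it into the definition of $h$.
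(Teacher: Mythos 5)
Your proposal is correct and follows essentially the same route as the paper's own proof: define $h$ via preimages under the surjection $f$, verify well-definedness from the hypothesis of (1), deduce (2) from the kernel containment $\ker f \subseteq \ker g$, and check in (3) that $h$ preserves the operations by pulling elements of $B$ back through $f$. Your explicit emphasis on well-definedness as a separate step is exactly the point the paper's proof also highlights, so there is nothing to add.
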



\begin{proof}
For (1), for all $b\in B$, since $f$ is surjective, there is some $a \in A$ such that $f(a)=b$. Now, define $h(b)=g(a)$, which is well-defined since if there were some other $a' \in A$ with $f(a')=b$, then $g(a)=g(a')$ by hypothesis. That is, we have defined $h$ by $h(f(a))=g(a)$, so $h \circ f =g$.

For (2), if $f(a)=f(a')$, then $a\inv a' \in \ker f \subseteq \ker g$, so $g(a)=g(a')$, as desired.

For (3), let $b_1, b_2 \in B$. Since $f$ is surjective, there is some $a_1, a_2 \in A$ such that $f(a_1)=b_1, f(a_2)=b_2$. Then 

\bea
 h(b_1 \cdot b_2) &=& h(f(a_1) \cdot f(a_2))= h(f(a_1 \cdot a_2))= g(a_1 \cdot a_2)\\
&=& g(a_1) \cdot g(a_2)=h(f(a_1)) \cdot h(f(a_2))=h(b_1) \cdot h(b_2)
\eea

\noindent and

\[ h(b_1^*)= h(f(a_1)^*)= h(f(a_1^*))= g(a_1^*)= g(a_1)^* = h(f(a_1))^*=h(b_1)^*\]

\noindent  as desired.

\end{proof}

\subsubsection{Universal Algebras}

The following construction of a ``universal algebra'' is a slight simplification of the one outlined by Blackadar in \cite{blackadar1985shape}. We state it here for completeness.

\begin{lemma} [Construction of a Universal Algebra, \cite{blackadar1985shape}] 
\label{Universal Algebra}
Let $\mathcal G=\{x_\alpha\}_{\alpha \in I}$ denote a set of formal symbols, $\mathcal G^*=\{x_\alpha^*\}_{\alpha \in I}$ another set of formal symbols over the same indexing set, and $\mathcal R$ a set of relators of the form $p(x_{\alpha_1},...,x_{\alpha_n}, x_{\alpha_1}^*,...,x_{\alpha_n}^*)=0$ where $p$ is some polynomial in $2n$ non-commuting variables and complex coefficients. 

\sloppy We define a \ul{representation}\index{representation!of arbitrary generators and relators} of $(\mathcal G, \mathcal R)$ to be a collection of elements $\{y_\alpha\}_{\alpha \in I}$ in a $C^*$-algebra that satisfy $p(y_{\alpha_1},...,y_{\alpha_n}, y_{\alpha_1}^*,...,y_{\alpha_n}^*)=0$ for each relator in $\mathcal R$.

Suppose that the relators on $(\mathcal G, \mathcal R)$ imply that in any representation, the images of the generators are partial isometries. Then there is a representation $\{z_\alpha\}_{\alpha \in I}$ such that for any other representation $\{y_\alpha\}_{\alpha \in I}$ there is a surjective $*$-homomorphism $\pi: C^*(\{z_\alpha\}_{\alpha \in I}) \ra C^*(\{y_\alpha\}_{\alpha \in I})$ satisfying $z_\alpha \mapsto y_\alpha$. 

The existence of this surjective $*$-homomorphism is called the \ulindex{universal property} of $C^*(\{z_\alpha\}_{\alpha \in I})$. We call this representation the \ulindex{universal representation} and say that $C^*(\{z_\alpha\}_{\alpha \in I})$ is the \ulindex{universal algebra} for $(\mathcal G, \mathcal R)$.

\end{lemma}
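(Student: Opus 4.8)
The plan is to carry out the standard construction of a universal $C^*$-algebra as the completion of a quotient of a free $*$-algebra with respect to the largest available $C^*$-seminorm, using the partial-isometry hypothesis precisely to guarantee that this seminorm is finite. First I would form the free complex $*$-algebra $\mathcal F$ on the symbols $\{x_\alpha\}_{\alpha \in I}$ (noncommutative polynomials in the $x_\alpha$ and $x_\alpha^*$, with involution interchanging them), let $J \subseteq \mathcal F$ be the two-sided $*$-ideal generated by the relator polynomials in $\mathcal R$, and set $\mathcal A := \mathcal F / J$, writing $\bar x_\alpha$ for the image of $x_\alpha$. By freeness, any representation $\{y_\alpha\}$ extends to a unique $*$-homomorphism $\mathcal F \to C^*(\{y_\alpha\})$ with $x_\alpha \mapsto y_\alpha$; since the $y_\alpha$ satisfy every relator, this map kills $J$ and, by the Factors Through Theorem (Lemma \ref{Factors Through Theorem}), descends to a $*$-homomorphism $\pi_y : \mathcal A \to C^*(\{y_\alpha\})$.

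Next I would define the universal seminorm on $\mathcal A$ by
\[ \|a\|_u := \sup_{\{y_\alpha\}} \|\pi_y(a)\|, \]
the supremum ranging over all representations $\{y_\alpha\}$ of $(\mathcal G, \mathcal R)$. The crux of the whole argument, and the step I expect to be the main obstacle, is showing this supremum is finite. This is exactly where the hypothesis is used: since each $y_\alpha$ is a partial isometry we have $\|y_\alpha\| \leq 1$ in \emph{every} representation, so writing $a$ as a finite $\C$-linear combination of words in the $\bar x_\alpha, \bar x_\alpha^*$ and applying the triangle inequality and submultiplicativity bounds $\|\pi_y(a)\|$ by the sum of the absolute values of the coefficients of $a$, a quantity independent of $\{y_\alpha\}$. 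Hence $\|a\|_u < \infty$. Without a uniform norm bound on the generators this supremum could be infinite, so the partial-isometry assumption is essential. I would also address the set-theoretic point that the representations form a priori a proper class; this is handled by restricting to representations on Hilbert spaces of cardinality bounded in terms of $|I| + \aleph_0$, which loses no generality since each $C^*(\{y_\alpha\})$ is generated by a set of at most $|I|$ elements and so is faithfully represented on such a space.

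It is then routine that $\|\cdot\|_u$ is a $C^*$-seminorm, being a supremum of the $C^*$-seminorms $a \mapsto \|\pi_y(a)\|$. Setting $N := \{a \in \mathcal A : \|a\|_u = 0\}$, the quotient $\mathcal A / N$ inherits a genuine $C^*$-norm, and I would define $C^*(\{z_\alpha\})$ to be its completion, with $z_\alpha$ the image of $\bar x_\alpha$. The collection $\{z_\alpha\}$ is a representation, because every relator already vanishes in $\mathcal A = \mathcal F / J$ and hence in the completion; in particular each $z_\alpha$ is a partial isometry.

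Finally, for the universal property I would fix an arbitrary representation $\{y_\alpha\}$ and observe that $\|\pi_y(a)\| \leq \|a\|_u$ by the very definition of $\|\cdot\|_u$; thus $\pi_y$ is norm-decreasing, annihilates $N$, and extends continuously to a $*$-homomorphism $\pi : C^*(\{z_\alpha\}) \to C^*(\{y_\alpha\})$ with $z_\alpha \mapsto y_\alpha$. Surjectivity is automatic: the image of a $*$-homomorphism between $C^*$-algebras is closed, and it contains every $y_\alpha$, hence all of $C^*(\{y_\alpha\})$. This yields the desired universal representation.
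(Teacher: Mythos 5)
Your proposal is correct and follows essentially the same route as the paper's proof: both pass to the free $*$-algebra, define the supremum $C^*$-seminorm over all representations, use the partial-isometry hypothesis to bound that seminorm by the sum of the absolute values of the coefficients, and obtain the universal algebra by quotienting out the null space and completing, with the universal property coming from the fact that each $\pi_y$ is norm-decreasing. The only cosmetic difference is that you quotient by the relator ideal before defining the seminorm, whereas the paper defines the seminorm directly on the free algebra and observes that the relators automatically land in the null ideal.
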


\begin{proof}
Let $\mathcal F(\mathcal G)$ denote the free $*$-algebra over $\C$ generated by $\C$ and $\mathcal G \cup \mathcal G^*$ where the $*$-map is given by $(x_\alpha)^*=x_\alpha^*$ and $(x_\alpha^*)^*= x_\alpha$ and extended to the entire algebra via conjugate-linearity and $(ab)^*=b^*a^*$. Note that any representation $\varphi: x_\alpha \mapsto y_\alpha$ of $(\mathcal G, \mathcal R)$ extends uniquely to a $*$-algebra homomorphism $\bar \varphi: \mathcal F(\mathcal G)\ra C^*(\{y_\alpha\}_{\alpha \in I})$.

For $X \in \mathcal F(\mathcal G)$, let $\vertiii{X}= \sup \{  \norm{ \bar \varphi(X)} : \varphi \text{ a rep. of } (\mathcal G, \mathcal R) \}$. Note that since $(\mathcal G, \mathcal R)$ can be represented by the zero map, this supremum is bounded below by 0, and since each generator must be represented by a partial isometry, the image of each generator has norm 1, and thus the supremum is bounded above by the sum of the absolute values of its $\C$-coefficients. Thus $\vertiii{X}$ is a well-defined, non-negative finite number. 

Let $J=\{X \in \mathcal F(\mathcal G) : \vertiii{X}=0\}$, and note that $J= \ds \bigcap_{\varphi \text{ a rep.}} \ker \bar \varphi$ where the intersection is taken over all representations. Thus $J$ is the intersection of $*$-ideals, so it is a $*$-ideal. Now let $\psi: \mathcal F(\mathcal G) \ra  \mathcal F(\mathcal G)/J$ denote the quotient map, and for all $\alpha$, let $z_\alpha= \psi(x_\alpha)$. For any relator $p(x_{\alpha_1},...,x_{\alpha_n}, x_{\alpha_1}^*,...,x_{\alpha_n}^*)=0 \in \mathcal R$, since this relator is satisfied for every representation, then $\vertiii{p(x_{\alpha_1},...,x_{\alpha_n}, x_{\alpha_1}^*,...,x_{\alpha_n}^*)}=0$, and thus $p(x_{\alpha_1},...,x_{\alpha_n}, x_{\alpha_1}^*,...,x_{\alpha_n}^*) \in J$, so $p(z_{\alpha_1},...,z_{\alpha_n}, z_{\alpha_1}^*,...,z_{\alpha_n}^*)=0$. That is, the $\{z_\alpha\}_{\alpha \in I}$ satisfy the relators.

Now note that $\vertiii{\cdot}$ is a $C^*$-seminorm, so the norm $\norm{X+J} := \vertiii{X}$ is a well-defined $C^*$-norm on $\mathcal F(\mathcal G)/J$, and thus the completion of $\mathcal F(\mathcal G)/J$ with respect to this norm is a $C^*$-algebra. Thus $\psi: x_\alpha \ra z_\alpha$ is a representation of $(\mathcal G, \mathcal R)$.

Now, given any other presentation $\varphi: x_\alpha \mapsto y_\alpha$, note that $\ker \bar \varphi = \{X \in \mathcal F(\mathcal G) : \norm{\bar \varphi(X)}=0\} \subseteq \{X \in \mathcal F(\mathcal G) : \vertiii{X}=0\}= \ker \bar \psi$, so by Lemma \ref{Factors Through Theorem}(2), there is a function on the $*$-algebras generated by $y$ and $z$ given by $y_\alpha \mapsto z_\alpha$, and by Lemma \ref{Factors Through Theorem}(3) applied to $+, \times, $ and $^*$, this function is a $*$-homomorphism, so it extends to a $*$-homomorphism $\pi: C^*(\{z_\alpha\}_{\alpha \in I}) \ra C^*(\{y_\alpha\}_{\alpha \in I})$ as desired.
\end{proof}

\subsubsection{Conditional Expectations}

We will also make frequent use of conditional expectations, whose definition we include here. For a more thorough treatment of conditional expectations, the reader is directed to \cite[Chapter III.3]{TakesakiThOpAlI}, where they are called projections of norm one.

\begin{definition}
Let $C$ be a $C^*$-algebra, and $D \subseteq C$ a $C^*$-subalgebra. We say a \ulindex{conditional expectation} or \ulindex{projection of norm one} is a linear map $\Phi: C \ra D$ such that:

\begin{itemize}
\item $\Phi$ is \ulindex{contractive}, meaning $\norm{\Phi(c)} \leq \norm{c}$ for all $c \in C$ (and is thus continuous).
\item $\Phi$ is \ulindex{idempotent}, meaning $\Phi \circ \Phi= \Phi$.
\item $\Phi(d)=d$ for all $d \in D$.
\end{itemize}

If additionally $c>0$ implies $\Phi(c)>0$, we say $\Phi$ is \ul{faithful}\index{conditional expectation!faithful}\index{faithful!conditional expectation}.
\end{definition}

\begin{example}
In a matrix algebra $M_n$, there is a faithful conditional expectation $\Phi$ given by ``restricting to the diagonal'', meaning $\Phi([a_{ij}])=[b_{ij}]$ where 

\[ b_{ij}= \begin{cases} a_{ij} & \text{ if } i=j \\ 0 & \text{ if } i \neq j \end{cases}. \]
\end{example}

The following result is \cite[Theorem 1]{tomiyama1957projection} and is known as Tomiyama's Theorem. It says that a conditional expectation has some additional properties ``for free''. Some authors will define conditional expectations as requiring these properties, and some will define them to only require the shorter list of conditions (as we have done here).

\begin{theorem}[Tomiyama's Theorem]
If $\Phi:C \ra D$ is a conditional expectation, then $\Phi$ is positive, meaning that $x \geq 0$ implies $\Phi(x) \geq 0$\index{positive linear transformation}. Furthermore, $\Phi$ is a $D$-bimodule map, meaning $\Phi(d_1cd_2)=d_1 \Phi(c) d_2$ for all $d_1,d_2 \in D$ and $c \in C$.
\end{theorem}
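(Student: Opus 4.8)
The plan is to treat the two assertions in order, first reducing to the case where $C$ is unital and its unit lies in $D$, so that $\Phi(1)=1$; extending a norm-one projection to the unitizations $\tilde C,\tilde D$ by $\tilde\Phi(c+\lambda 1)=\Phi(c)+\lambda 1$ is a routine (if slightly fiddly) check that it remains a norm-one projection, so I will take this reduction for granted. Granting it, positivity follows from a general principle: \emph{a unital contractive linear map $\phi$ between unital $C^*$-algebras is positive}. I would prove this principle in two steps. First, $\phi$ is self-adjoint: given $a=a^*$, write $\phi(a)=h+ik$ with $h,k$ self-adjoint, and for every real $t$ use $\norm{\phi(a+it\cdot 1)}\le\norm{a+it\cdot 1}=\sqrt{\norm{a}^2+t^2}$ together with the fact that the norm of an element dominates the norm of its imaginary part to get $\norm{k+t\cdot 1}\le\sqrt{\norm{a}^2+t^2}$; evaluating on a point $\mu$ of the spectrum of $k$ yields $\mu^2+2\mu t\le\norm{a}^2$ for all $t\in\R$, forcing $\mu=0$, hence $k=0$. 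Second, for $a\ge 0$ with $\norm{a}\le 1$ (after scaling) one has $\norm{1-a}\le 1$, so $\norm{1-\phi(a)}=\norm{\phi(1-a)}\le 1$; since $\phi(a)$ is self-adjoint this places its spectrum in $[0,2]$, i.e. $\phi(a)\ge 0$. Applying this to $\Phi$ (contractive by hypothesis, unital after reduction) gives positivity, and decomposing an arbitrary $c$ into self-adjoint parts upgrades self-adjointness to $\Phi(c^*)=\Phi(c)^*$ for all $c\in C$.

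For the bimodule property my plan is the multiplicative-domain argument. Once $\Phi$ is known to be $2$-positive, it satisfies the operator Kadison--Schwarz inequality $\Phi(a)^*\Phi(a)\le\Phi(a^*a)$, and the \emph{multiplicative domain}
\[ C_\Phi=\{a\in C:\ \Phi(a^*a)=\Phi(a)^*\Phi(a)\ \text{and}\ \Phi(aa^*)=\Phi(a)\Phi(a)^*\} \]
has the property that $\Phi(ab)=\Phi(a)\Phi(b)$ and $\Phi(ba)=\Phi(b)\Phi(a)$ for every $a\in C_\Phi$ and $b\in C$ (the proof of this is a $2\times 2$ Cauchy--Schwarz manipulation that again uses $2$-positivity). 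The point is that $D\subseteq C_\Phi$ essentially for free: for $d\in D$ we have $\Phi(d^*d)=d^*d=\Phi(d)^*\Phi(d)$ and likewise for $dd^*$, because $\Phi$ restricts to the identity on $D$. Taking $a=d\in D$ and $b=c\in C$ in the multiplicative-domain identities then gives exactly $\Phi(dc)=\Phi(d)\Phi(c)=d\,\Phi(c)$ and $\Phi(cd)=\Phi(c)\Phi(d)=\Phi(c)\,d$, which is the $D$-bimodule property.

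The missing input, and the step I expect to be the main obstacle, is the $2$-positivity of $\Phi$. I would obtain it by showing that each matrix amplification $\Phi^{(n)}=\Phi\otimes\id_{M_n}:M_n(C)\ra M_n(D)$ is again a norm-one projection onto $M_n(D)$: it is visibly idempotent and restricts to the identity on $M_n(D)$, and once one knows $\norm{\Phi^{(n)}}\le 1$ the positivity principle from the first paragraph (applied to the unital contraction $\Phi^{(n)}$) shows $\Phi^{(n)}$ is positive, i.e.\ $\Phi$ is $n$-positive for every $n$. Proving the complete contractivity estimate $\norm{\Phi^{(n)}}\le 1$ for a norm-one projection onto a $C^*$-subalgebra is precisely the hard core of Tomiyama's theorem; this is where idempotency and the $C^*$-identity must genuinely be combined, rather than in the formal bookkeeping above, and it is the one place I would expect to spend real effort (or, in a first pass, simply to cite \cite{tomiyama1957projection}).
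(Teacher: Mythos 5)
First, a note on what you are being compared against: the paper does not prove this statement at all — it is quoted directly as \cite[Theorem 1]{tomiyama1957projection} — so your proposal has to stand on its own. Its first half does stand: the ``unital contractions are positive'' argument (self-adjointness via $\norm{\phi(a+it\cdot 1)}\le\sqrt{\norm{a}^2+t^2}$, then the spectral estimate on $1-\phi(a)$) is a correct and complete proof of positivity, modulo one wrinkle in the reduction: if $C$ is already unital but $1_C\notin D$, the formula $\tilde\Phi(c+\lambda 1)=\Phi(c)+\lambda 1$ is not even well-defined (try $C=\C^2$, $D=\C\oplus 0$), so the reduction must adjoin a genuinely fresh unit or pass to biduals. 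The fatal gap is in the second half, exactly where you flagged it, and it is not a deferrable technicality. Contractivity of $\Phi$ does not formally imply contractivity of the amplifications $\Phi^{(n)}$: that implication is precisely the assertion that $\Phi$ is \emph{completely} contractive, which fails for general unital contractions (the transpose map on $M_2$ is a unital, positive isometry whose amplification has norm $2$), and which, for norm-one projections onto $C^*$-subalgebras, is equivalent to Tomiyama's theorem itself. So the multiplicative-domain route rests on a sub-claim as strong as the theorem being proved, and your stated fallback — citing \cite{tomiyama1957projection} for that sub-claim — collapses the whole proposal into the citation the paper already makes.

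The standard way to close the gap reverses your order: prove the $D$-bimodule property \emph{first}, directly from the norm-one hypothesis, and then deduce complete positivity from it. The key computation (with $1\in D\subseteq C$): for a projection $p\in D$ and $x\in C$, set $a=(1-p)\Phi(px)$, which lies in $D$, so that $\Phi(px+na)=\Phi(px)+na$ for every $n$; then
\[ (n+1)\norm{a}=\norm{(1-p)\bigl(\Phi(px)+na\bigr)}\le\norm{\Phi(px+na)}\le\norm{px+na}, \]
while $(px)^*a=x^*p(1-p)\Phi(px)=0$ and the $C^*$-identity give
\[ \norm{px+na}^2=\norm{x^*px+n^2a^*a}\le\norm{px}^2+n^2\norm{a}^2. \]
Combining, $(2n+1)\norm{a}^2\le\norm{px}^2$ for all $n$, forcing $a=0$; hence $\Phi(px)=p\Phi(px)$, and running the same identity with $1-p$ in place of $p$ yields $\Phi(px)=p\Phi(x)$. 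Since $D$ may have no projections, one performs this argument for $\Phi^{**}:C^{**}\to D^{**}$, which is again a norm-one idempotent onto a von Neumann algebra spanned in norm by its projections, and then restricts to $D$. Finally, positivity plus bimodularity give complete positivity: for positive $[c_{ij}]\in M_n(C)$ and $d_1,\dots,d_n\in D$ one has $\sum_{i,j}d_i^*\Phi(c_{ij})d_j=\Phi\bigl(\sum_{i,j}d_i^*c_{ij}d_j\bigr)\ge 0$, and this criterion characterizes positivity of $[\Phi(c_{ij})]$ in $M_n(D)$. If you want the $2$-positivity needed for your multiplicative-domain argument, this is where it legitimately comes from — downstream of the bimodule property, not upstream of it.
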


\subsubsection{Tensor Products}

Here we will give a very brief introduction to the minimal tensor product. A more thorough introduction can be found in \cite[Chapter 4]{TakesakiThOpAlI}. Throughout this paper, as in much of the coaction literature, we will use unadorned $\otimes$ for the minimal tensor product of $C^*$-algebras.

\begin{definition}
Given two vector spaces $V$ and $W$, we let $V \odot W$ denote the \ul{algebraic tensor product}\index{tensor product!algebraic} of $V$ and $W$, meaning the $\C$-vector space spanned by formal symbols $\{v \otimes w : v \in V, w \in W\}$ with the relations of linearity over either coordinate. 

Let $A$ and $B$ be $C^*$-algebras. Then one can give $A \odot B$ a $*$-algebra structure by 

\[ (a \otimes b)(a' \otimes b')=(aa') \otimes (bb') \text{ and }(a \otimes b)^*=a^* \otimes b^*. \] 

If $\pi:A \ra \mathcal B(H)$ and $\rho: B \ra \mathcal B(\mathcal K)$ are representations of $A$ and $B$ on Hilbert spaces $\mathcal H$ and $\mathcal K$ respectively, then there exists a $*$-algebra representation $\pi \otimes \rho:A \odot B \ra \mathcal B(\mathcal H \otimes \mathcal K)$ satisfying $\left[(\pi \otimes \rho)(a \otimes b)\right](h \otimes k)= \pi(a)h \otimes \rho(b)k$ for all $a \in A, b \in B, h \in \mathcal H, k \in \mathcal K$. The \ulindex{minimal norm} on $A \odot B$ is defined by

\[ \left| \left| \fsum_{i=1}^n a_i \otimes b_i \right| \right| _{min}= \sup \left\{ \left |\left | \fsum_{i=1}^n \pi(a_i) \otimes \rho(b_i) \right| \right | \right\} \]

\noindent where the supremum is taken over all representations $\pi$ and $\rho$ of $A$ and $B$, respectively (and this is indeed a norm which is minimal in an appropriate sense). Note that $\norm{a \otimes b}_{min}=\norm{a}\cdot \norm{b}$ (that is, $\norm{\cdot}_{min}$ is a ``$C^*$-cross norm''). The \ul{minimal tensor product}\index{tensor product!minimal} $A \otimes_{min}B$ is the $C^*$-algebra created by completing $A \odot B$ with respect to this norm. Since all of our tensor products will be minimal, we will henceforth write $\norm{\cdot}$ for $\norm{\cdot}_{min}$ and $A \otimes B$ for $A \otimes_{min} B$.

\end{definition}

The following result will be used occasionally, and it is a minor variant of \cite[Lemma A.1]{echterhoff2006categorical}:

\begin{lemma}\label{Tensor of Homomorphisms}
Let $\varphi:A \ra C$ and $\psi: B \ra D$ be $*$-homomorphisms of $C^*$-algebras. Then there is a homomorphism $\varphi \otimes \psi: A \otimes B \ra C \otimes D$ satisfying $(\varphi \otimes \psi)(a \otimes b) =\varphi(a) \otimes \psi(b)$. If $\varphi$ and $\psi$ are nondegenerate (respectively, faithful), then so is $\varphi \otimes \psi$.
\end{lemma}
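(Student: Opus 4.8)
The plan is to prove Lemma \ref{Tensor of Homomorphisms} by first constructing the map at the level of algebraic tensor products, then showing it is bounded for the minimal norm so that it extends to the completion, and finally handling the claims about nondegeneracy and faithfulness. First I would define a candidate map $\varphi \odot \psi : A \odot B \to C \odot D$ on the algebraic tensor product by setting $(\varphi \odot \psi)(a \otimes b) = \varphi(a) \otimes \psi(b)$ and extending $\C$-bilinearly; the universal property of the algebraic tensor product makes this well-defined, and a routine check on elementary tensors using $(a \otimes b)(a' \otimes b') = (aa')\otimes(bb')$ and $(a \otimes b)^* = a^* \otimes b^*$ shows it is a $*$-homomorphism of $*$-algebras.

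The main point is boundedness for $\norm{\cdot}_{min}$, and this is where I expect the real work to be. The cleanest route is to realize both the domain and codomain norms via Hilbert space representations. Choose faithful representations $\sigma: C \to \mathcal B(\mathcal H)$ and $\tau: D \to \mathcal B(\mathcal K)$; then $\sigma \otimes \tau$ is a faithful representation of $C \otimes D$, so $\norm{x}_{min} = \norm{(\sigma \otimes \tau)(x)}$ for $x \in C \otimes D$. Composing, $\sigma \circ \varphi$ and $\tau \circ \psi$ are representations of $A$ and $B$ respectively, and for $t \in A \odot B$ one has $(\sigma \otimes \tau)\big((\varphi \odot \psi)(t)\big) = \big((\sigma\circ\varphi) \otimes (\tau \circ \psi)\big)(t)$ on elementary tensors, hence everywhere by linearity. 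By the definition of the minimal norm as a supremum over all representation pairs, the operator norm of the right-hand side is bounded above by $\norm{t}_{min}$, giving $\norm{(\varphi \odot \psi)(t)}_{min} \le \norm{t}_{min}$. Thus $\varphi \odot \psi$ is contractive and extends uniquely to a $*$-homomorphism $\varphi \otimes \psi : A \otimes B \to C \otimes D$ on the completion, with the stated action on elementary tensors preserved by continuity.

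For the nondegeneracy claim, if $\varphi$ and $\psi$ are nondegenerate I would take approximate units $(e_i)$ for $A$ and $(f_j)$ for $B$; then $(e_i \otimes f_j)$ is an approximate unit for $A \otimes B$, and since $\varphi(e_i) \to$ an approximate unit for $C$ and similarly for $\psi$, the images $\varphi(e_i) \otimes \psi(f_j)$ form an approximate unit along the product directed set for $C \otimes D$, establishing nondegeneracy. For faithfulness I would again pass to representations: assuming $\varphi$ and $\psi$ faithful (hence isometric), pick faithful $\sigma, \tau$ as above so that $\sigma \circ \varphi$ and $\tau \circ \psi$ are faithful representations of $A$ and $B$; the displayed identity $(\sigma \otimes \tau)\circ(\varphi \otimes \psi) = (\sigma\circ\varphi)\otimes(\tau\circ\psi)$ shows the right side is a faithful representation of $A \otimes B$ that factors through $\varphi \otimes \psi$, forcing $\varphi \otimes \psi$ to be injective, and an injective $*$-homomorphism of $C^*$-algebras is automatically isometric.

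The hard part is the boundedness estimate, and specifically the care needed to justify the identity $(\sigma \otimes \tau)\circ(\varphi \odot \psi) = (\sigma\circ\varphi)\otimes(\tau\circ\psi)$ and to match it against the supremum defining $\norm{\cdot}_{min}$; everything else is bookkeeping. Since the statement is explicitly a minor variant of \cite[Lemma A.1]{echterhoff2006categorical}, I would keep the argument brief and lean on the representation-theoretic description of the minimal norm rather than attempting any direct norm computation on the algebraic tensor product.
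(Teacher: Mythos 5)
Your proof is correct, but it takes a genuinely different route from the paper. The paper's proof is essentially a two-line reduction to the literature: it regards $\varphi$ and $\psi$ as maps into the multiplier algebras $M(C)$ and $M(D)$, invokes \cite[Lemma A.1]{echterhoff2006categorical} to obtain a $*$-homomorphism $\varphi \otimes \psi: A \otimes B \ra M(C \otimes D)$ that already carries the nondegeneracy and faithfulness statements, and then does the one remaining piece of work: observing that elementary tensors land in $C \otimes D$ and taking closed spans to conclude the range lies in $C \otimes D$ rather than merely in $M(C \otimes D)$. You instead build the map from scratch: define it on $A \odot B$, prove contractivity for $\norm{\cdot}_{min}$ by composing with faithful representations $\sigma, \tau$ of $C, D$ and comparing against the supremum defining the minimal norm, extend by continuity, and then handle nondegeneracy by approximate units and faithfulness by factoring the faithful representation $(\sigma\circ\varphi)\otimes(\tau\circ\psi)$ through $\varphi \otimes \psi$. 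What the paper's approach buys is brevity and a cleaner division of labor (all analytic content is delegated to the cited lemma); what yours buys is self-containedness and an explicit contractivity estimate. The one thing you should make explicit is the standard fact your argument leans on twice: that the minimal norm on a tensor product is attained at (equivalently, computed by) \emph{any} pair of faithful representations, not just as a supremum over all pairs. This is what makes $\sigma \otimes \tau$ isometric on $C \odot D$ in your boundedness step and what makes $(\sigma\circ\varphi)\otimes(\tau\circ\psi)$ faithful on $A \otimes B$ in your injectivity step; it is a genuine theorem (found in the paper's cited reference \cite[Chapter 4]{TakesakiThOpAlI}), so it deserves a citation rather than silent use, but with that citation in place your argument is complete.
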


\begin{proof}
Consider $C$ and $D$ as subalgebras (respectively) of their multiplier algebras $M(C)$ and $M(D)$, and therefore consider $\varphi$ and $\psi$ as mapping into $M(C)$ and $M(D)$, respectively. By \cite[Lemma A.1]{echterhoff2006categorical}, there is a $*$-homomorphism $\varphi \otimes \psi: A \otimes B \ra M(C \otimes D)$ satisfying $(\varphi \otimes \psi)(a \otimes b) =\varphi(a) \otimes \psi(b)$, and if $\varphi$ and $\psi$ are nondegenerate (respectively, faithful), then so is $\varphi \otimes \psi$. 

It then suffices to show that $\varphi \otimes \psi$ maps into $C \otimes D$ properly, instead of mapping into $M(C \otimes D)$. But for all $a \in A, b \in B$, we have $\varphi(a) \in C$ and $\varphi(b) \in D$, so $(\varphi \otimes \psi)(a \otimes b) \in C \otimes D$ for all $a \in A, b \in B$. Taking closed spans, we get that for any $x \in A \otimes B$, we have $(\varphi \otimes \psi)(x) \in C \times D$, as desired.
\end{proof}

\subsection{Connections between Groups and $C^*$-Algebras}

In this section we will remind the reader of some of the many connections between groups and $C^*$-algebras, including group $C^*$-algebras, amenability, Fell bundles, gradings, coactions, and actions. Our context is relatively simple since we are limiting our attention to discrete groups.

\subsubsection{Group $C^*$-algebras and Amenable Groups}
The following discussion of group $C^*$-algebras and amenability is adapted from \cite[Chapter VII]{davidson1996c}, to which the reader is directed for a more thorough treatment.

\begin{definition}
Let $G$ be a discrete group. Then a \ul{representation}\index{representation!of a group} of $G$ in a $C^*$-algebra is a collection of unitary operators $\{u_g\}_{g\in G}$ such that $u_gu_h=u_{gh}$ and $u_g^* =u_{g\inv}$ for all $g, h \in G$. 

Since these relators are all polynomials, then by Lemma \ref{Universal Algebra} there is a representation of $G$ which is universal for all representations. We denote it by $\{U_g\}_{g \in G}$, and call $C^*(G) := C^*(\{U_g\}_{g \in G})$ the \ul{full group $C^*$-algebra}\index{group $C^*$-algebra!full}\index{C@$C^*(G)$}.

A particularly nice representation of a group $G$, called the \ul{left-regular representation of a group}\index{left-regular representation!of a group}, comes from the natural action of $G$ multiplying on itself. In particular, let $\{e_g\}_{g \in G}$ denote the standard orthonormal basis for $\ell^2(G)$, and define an operator $L_g$ by $L_g e_{g'}=e_{gg'}$ for $g, g' \in G$. Then $\{L_g\}_{g\in G}$ is a representation of $G$ in $\mathcal B( \ell^2(G))$. We call $C_r^*(G) := C^*(\{L_g\}_{g\in G})$ the \ul{reduced group $C^*$-algebra}\index{group $C^*$-algebra!reduced}\index{C@$C^*_r(G)$}. 

\end{definition}

The left-regular representation is more commonly denoted by $\lambda$ instead of $L$, but our choice of notation will help avoid ambiguity with paths $\lambda \in \Lambda$ later in the text.

Note that $C^*(G)$ and $C^*_r(G)$ are the closed spans of $\{U_g\}_{g\in G}$ and $\{L_g\}_{g \in G}$, respectively.

Famously, there are many equivalent definitions of amenability, but for our purposes, this one is the most convenient:

\begin{definition} \label{Amenable Group Definition}
By the universal property of $C^*(G)$, there is a \sloppy surjective $*$-homomorphism $\pi^U_L: C^*(G) \ra C^*_r(G)$ given by $U_g \mapsto L_g$. We say that $G$ is \ul{amenable}\index{amenable!group} if and only if $\pi^U_L$ is injective (and hence an isomorphism).
\end{definition}

Readers more familiar with another definition of amenability may wish to read \cite[Theorem VII.2.5]{davidson1996c} which proves the equivalence of this definition with a more common one.

The following remark summarizes some of the well-known results about amenability of groups.

\begin{remark}
By \cite[Proposition VII.2.3]{davidson1996c}, for discrete groups amenability is preserved under:

\begin{enumerate}
\item Subgroups
\item Quotients
\item Direct Limits
\item Extensions (meaning that if $1 \ra N \ra G \ra H \ra 1$ is a short exact sequence of groups, with $N$ and $H$ amenable, then $G$ is amenable).
\item Finite direct products (which is immediate from being closed under extensions).
\end{enumerate}

Furthermore, by \cite[Proposition VII.2.2]{davidson1996c}, every abelian group is amenable. Every finite group is amenable since $C^*(G)$ and $C^*_{r}(G)$ have the same finite dimension $|G|$, so by the rank-nullity theorem the surjective map $\pi^U_L$ is injective.

Notably, the free group on two generators is not amenable by \cite[Example VII.2.4]{davidson1996c}. By the subgroup property, any group containing a free group on two (or more) generators is also not amenable.
\end{remark}

\subsubsection{Fell Bundles, Topologically Graded $C^*$-algebras, and Coactions}

In this section, we will give a short introduction to Fell bundles, topologically graded $C^*$-algebras, and coactions, and show some of the ways these closely-related structures overlap. For a more detailed treatment, the reader is directed to \cite{exel2017partial,exel1996amenability} for Fell bundles, \cite{exel2017partial} for topological gradings, and \cite[Appendix A]{echterhoff2006categorical} for coactions.

Throughout this section, all of our groups will be discrete, which simplifies some definitions. The following is from \cite[Definition 16.1]{exel2017partial}:

\begin{definition}
Let $G$ be a discrete group. Let $\mathcal B= \{B_g\}_{g \in G}$ be a collection of Banach spaces, and write $\mathscr B$ for the disjoint union of the $\{B_g\}_{g\in G}$, called the \ulindex{total space}. Suppose $\mathscr B$ has a binary operation $\cdot$ called multiplication, and an involution $*$ which satisfy the following properties for all $g, h \in G$ and $b,c \in \mathscr B$:

\begin{enumerate}[label=\alph*.]
\item $B_g B_h \subseteq B_{gh}$,
\item Multiplication is bilinear from $B_g \times B_h$ to $B_{gh}$,
\item Multiplication on $\mathscr B$ is associative,
\item $\norm{bc} \leq \norm{b}\cdot \norm{c}$,
\item $(B_g)^* \subseteq B_{g\inv}$,
\item Involution is conjugate-linear from $B_g$ to $B_{g\inv}$,
\item $(bc)^* =c^*b^*$,
\item $b^{**}=b$,
\item $\norm{b^*}=\norm{b}$,
\item $\norm{b^*b} =\norm{b}^2$,
\item $b^*b \geq 0$ in $B_1$.
\end{enumerate}

Then we say that $\mathcal B$ is a \ulindex{Fell bundle} over $G$. We call each $B_g$ a \ulindex{fiber}.
\end{definition}

The following is from \cite[Definitions 16.2 and 19.2]{exel2017partial}:

\begin{definition}
Let $A$ be a $C^*$-algebra, and let $G$ be a (discrete) group. We say that a  \ul{($C^*$-) grading}\index{grading} for $A$ is a collection $\{A_g\}_{g \in G}$ of linearly independent closed subspaces such that $\bigoplus_{g \in G} A_g$ is dense in $A$, $A_g A_h \subseteq A_{gh}$, and $A_g^* \subseteq A_{g\inv}$ for $g,h \in G$. Each $A_g$ is called a \ulindex{graded subspace} or \ulindex{graded component}. 

If there is also a conditional expectation $\Phi: A \ra A_e$ satisfying 

\[\Phi(a)=\begin{cases} a &\text{ if }a \in A_e \\ 0 & \text{ if } a\in A_g \text{ for }g\neq e \end{cases},\]

\noindent then we say that $\{A_g\}_{g\in G}$ is a \ul{topological grading}\index{grading!topological}.
\end{definition}

\begin{remark}\label{Graded algebra is a Fell Bundle}
It is straightforward to verify that if $A$ is a topologically graded $C^*$-algebra, then its graded components form a Fell bundle. Most of the Fell bundles we will use arise in this way.

\end{remark}

\begin{remark} \label{Fell Bundle Conditional Expectation}
Given a Fell bundle $\mathcal B$, one can construct a ``reduced'' and ``full'' cross sectional algebra representing this bundle, respectively denoted $C^*_r (\mathcal B)$ and $C^*(\mathcal B)$ (see Definition 2.3 of \cite{exel1996amenability} and the comment following it). 

By \cite[Theorem 3.3]{exel1996amenability}, if $B$ is a topologically graded $C^*$-algebra, and $\mathcal B$ denotes its associated Fell bundle, then there is a $*$-homomorphism $L: B \ra C^*_r(\mathcal B)$ called the \ul{left-regular representation} of the Fell bundle\index{left-regular representation!of a Fell bundle}. Combining this result with their Proposition 3.7, $L$ is an isomorphism if and only if the conditional expectation from the topological grading is faithful.

In a Fell bundle $\mathcal B$, the full cross sectional algebra $C^*(\mathcal B)$ has a topological grading, and the bundle is called \ul{amenable}\index{amenable!Fell bundle} if its left-regular representation $L:C^*(\mathcal B) \ra C^*_r(\mathcal B)$ is injective (and hence an isomorphism). The reader may notice the parallel with our definition of amenability for a group (Definition \ref{Amenable Group Definition}). 

\end{remark}

The following two results are respectively \cite[Theorem 4.7]{exel1996amenability} and \cite[Proposition 4.2]{exel1996amenability} :

\begin{lemma} \label{Fell bundle over amenable group}
Let $G$ be a discrete amenable group. Then every Fell bundle over $G$ is amenable.
\end{lemma}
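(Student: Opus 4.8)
The plan is to show that the canonical surjection $L \colon C^*(\mathcal B) \ra C^*_r(\mathcal B)$ is isometric, so that it is injective and hence $\mathcal B$ is amenable in the sense of Remark \ref{Fell Bundle Conditional Expectation}. Both cross-sectional algebras are completions of the $*$-algebra $C_c(\mathcal B)$ of finitely supported sections $\xi \colon G \ra \mathscr B$ (with $\xi(g) \in B_g$), and $L$ restricts to the identity on $C_c(\mathcal B)$. Since $L$ is a $*$-homomorphism of $C^*$-algebras it is contractive, so $\norm{L(x)}_r \leq \norm{x}_u$ for every $x$; it therefore suffices to prove the reverse inequality $\norm{x}_u \leq \norm{x}_r$ for $x \in C_c(\mathcal B)$, where $\norm{\cdot}_u$ and $\norm{\cdot}_r$ are the full and reduced norms and $\norm{x}_u = \sup_\pi \norm{\pi(x)}$ ranges over all representations $\pi$ of $\mathcal B$.

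First I would fix a representation $\pi$ of $\mathcal B$ on a Hilbert space $H$ and amplify it by the left-regular representation $L$ of $G$ on $\ell^2(G)$: the assignment $b \mapsto \pi(b) \otimes L_g$ for $b \in B_g$ extends to a representation $\pi \otimes L$ of $\mathcal B$ on $H \otimes \ell^2(G)$. The key structural input is the \emph{Fell absorption principle} for Fell bundles, namely that $\pi \otimes L$ is unitarily equivalent to the regular representation of $\mathcal B$ induced from the restriction $\pi|_{B_1}$ of $\pi$ to the unit fibre. Because the regular representation induced from \emph{any} representation of $B_1$ is dominated by the reduced norm (with equality precisely when that representation of $B_1$ is faithful, which is how $\norm{\cdot}_r$ is defined), this gives $\norm{(\pi \otimes L)(x)} \leq \norm{x}_r$ for all $x \in C_c(\mathcal B)$.

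Next I would invoke amenability of $G$ through a Følner net: a net of finitely supported unit vectors $\eta_i \in \ell^2(G)$ with $\norm{L_g \eta_i - \eta_i} \ra 0$ for each $g \in G$ (this is one of the standard equivalent formulations of amenability in Definition \ref{Amenable Group Definition}). For a finitely supported $x = \sum_{g \in S} x_g$ with $x_g \in B_g$ and a unit vector $\xi \in H$, one computes $(\pi \otimes L)(x)(\xi \otimes \eta_i) = \sum_{g \in S} \pi(x_g)\xi \otimes L_g \eta_i$, which differs from $\pi(x)\xi \otimes \eta_i$ in norm by at most $\sum_{g \in S} \norm{\pi(x_g)} \, \norm{L_g \eta_i - \eta_i} \ra 0$. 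Hence $\norm{\pi(x)\xi} = \norm{\pi(x)\xi \otimes \eta_i} \leq \norm{(\pi \otimes L)(x)} + o(1) \leq \norm{x}_r + o(1)$. Taking the supremum over unit vectors $\xi$, then over representations $\pi$, and finally letting $i \ra \infty$ yields $\norm{x}_u \leq \norm{x}_r$, which completes the argument.

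I expect the main obstacle to be the second paragraph: verifying the Fell absorption principle in the Banach-fibre setting, where the fibres $B_g$ contain no unitaries, so the intertwining unitary on $H \otimes \ell^2(G)$ cannot be built directly from fibre elements but must come from the induced-module structure over $B_1$, and one must check that it genuinely realizes $\pi \otimes L$ as the induced regular representation. Once that identification is established, the Følner estimate is a routine finite-sum approximation and the reduction in the first paragraph is purely formal.
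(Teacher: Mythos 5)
You should first be aware that the paper does not prove this lemma at all: it is imported verbatim from the literature as \cite[Theorem 4.7]{exel1996amenability}, so there is no internal proof to match. Your argument is therefore a genuine (re)proof, and it is the natural one: it transports the classical crossed-product argument (Fell absorption plus a F{\o}lner net of almost-invariant unit vectors in $\ell^2(G)$) to the Fell-bundle setting. Exel's cited proof takes a related but different route: he builds from a F{\o}lner net a family of finitely supported \emph{$B_1$-valued} functions witnessing his ``approximation property'' and shows that the approximation property forces the regular representation $C^*(\mathcal B) \ra C^*_r(\mathcal B)$ to be isometric. Your version keeps the F{\o}lner data scalar-valued and pushes all the bundle-specific work into the absorption step, which makes it more elementary to state; Exel's packaging is the one that survives beyond amenable groups (the approximation property makes sense, and implies amenability of the bundle, even when $G$ is not amenable). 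Your reduction to $\norm{x}_u \leq \norm{x}_r$ on $C_c(\mathcal B)$, the verification that $b \mapsto \pi(b)\otimes L_g$ is a bundle representation, and the F{\o}lner estimate in your third paragraph are all correct as written (one small point: the almost-invariant-vector characterization is not the paper's Definition \ref{Amenable Group Definition}, so you need the standard equivalence, e.g.\ \cite[Theorem VII.2.5]{davidson1996c}).

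The step that needs repair is exactly the one you flagged. The literal claim that $\pi \otimes L$ is \emph{unitarily equivalent} to the representation induced from $\pi\mid_{B_1}$ is false in general: the natural isometry $\ell^2(\mathcal B) \otimes_{B_1} H \ra H \otimes \ell^2(G)$ sending $b_t \otimes \xi \mapsto \pi(b_t)\xi \otimes \delta_t$ has range $\bigoplus_t \overline{\pi(B_t)H} \otimes \delta_t$, which is a proper subspace whenever some fibre acts degenerately (for instance if some $B_t = 0$, in which case $\pi \otimes L$ is a direct sum of many copies of the induced representation, not one copy). What is true, and is all that your F{\o}lner estimate actually uses, is weak containment: $\norm{(\pi \otimes L)(x)} \leq \norm{x}_r$ for $x \in C_c(\mathcal B)$. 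This is precisely Fell's absorption principle for Fell bundles, available as \cite[Proposition 18.4]{exel2017partial}, so you may simply cite it; alternatively, one can prove it by discarding the null part of $\pi$ so that $\pi(B_1)$ acts nondegenerately, observing that for each $t \in G$ the translate $K_t = \bigoplus_s \overline{\pi(B_s)H} \otimes \delta_{st}$ is a reducing subspace on which $\pi \otimes L$ is unitarily equivalent to the induced representation, and noting that the $K_t$ jointly span $H \otimes \ell^2(G)$ (each contains $H \otimes \delta_t$ by nondegeneracy), so that $\norm{(\pi\otimes L)(x)} = \sup_t \norm{(\pi\otimes L)(x)\mid_{K_t}} \leq \norm{x}_r$. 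With the absorption step restated as weak containment rather than unitary equivalence, the rest of your argument goes through verbatim.
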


\begin{lemma} \label{Amenable Fell Bundle Uniqueness}
If $\mathcal B$ is an amenable Fell bundle, then all topologically graded $C^*$-algebras whose associated Fell bundles coincide with $\mathcal B$ are isomorphic to each other.
\end{lemma}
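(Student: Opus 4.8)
The plan is to show that every topologically graded $C^*$-algebra $B$ whose associated Fell bundle is $\mathcal B$ is isomorphic, by a grading-preserving isomorphism, to the full cross-sectional algebra $C^*(\mathcal B)$; since $C^*(\mathcal B)$ depends only on $\mathcal B$, transitivity of isomorphism then yields the claim for any pair of such algebras. The whole argument will be organized around the pair of canonical maps $C^*(\mathcal B) \ora{\Lambda_B} B \ora{L_B} C^*_r(\mathcal B)$ and a comparison of their composite with the left-regular representation of $C^*(\mathcal B)$ itself.

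First I would produce the two maps. For $L_B$, I invoke Remark \ref{Fell Bundle Conditional Expectation} (i.e.\ \cite[Theorem 3.3]{exel1996amenability}) directly: since $B$ is topologically graded with associated bundle $\mathcal B$, it comes equipped with a left-regular representation $L_B : B \ra C^*_r(\mathcal B)$ which on each fiber $B_g$ is the canonical inclusion of $B_g$ into $C^*_r(\mathcal B)$. For $\Lambda_B$, I would appeal to the universal property of the full cross-sectional algebra: there is a $*$-homomorphism $\Lambda_B : C^*(\mathcal B) \ra B$ which is the identity on each fiber. Its image is a $C^*$-subalgebra of $B$ containing every $B_g$, hence containing the dense subalgebra $\bigoplus_{g\in G} B_g$; since the image of a $*$-homomorphism of $C^*$-algebras is closed, $\Lambda_B$ is surjective.

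Next I would compare composites. Both $L_B \circ \Lambda_B$ and the left-regular representation $L : C^*(\mathcal B) \ra C^*_r(\mathcal B)$ of $C^*(\mathcal B)$ are $*$-homomorphisms into $C^*_r(\mathcal B)$, and on a fiber element $b \in B_g \subseteq C^*(\mathcal B)$ both send $b$ to its canonical image in $C^*_r(\mathcal B)$ (using that $\Lambda_B$ fixes fibers). Hence they agree on the dense span $\bigoplus_{g} B_g$ and therefore, by continuity, $L = L_B \circ \Lambda_B$ everywhere. Now amenability of $\mathcal B$ says exactly that $L$ is injective (Remark \ref{Fell Bundle Conditional Expectation}). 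A standard diagram chase finishes the proof: if $\Lambda_B(x)=0$ then $L(x) = L_B(\Lambda_B(x)) = 0$, forcing $x = 0$, so $\Lambda_B$ is injective; being also surjective, it is an isomorphism, and it is grading-preserving by construction.

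The main obstacle is foundational rather than computational: the argument leans on the universal property of $C^*(\mathcal B)$ --- the existence of a fiber-fixing surjection $\Lambda_B$ onto an arbitrary topologically graded algebra with bundle $\mathcal B$ --- which is part of Exel's construction of the full cross-sectional algebra but is not spelled out in the excerpt. I would therefore pin this down by citing the relevant universal property in \cite{exel1996amenability, exel2017partial}, and I would take care to verify that both left-regular representations and $\Lambda_B$ are genuinely fiber-preserving, since that compatibility on the fibers is precisely what licenses the density argument identifying $L$ with $L_B \circ \Lambda_B$.
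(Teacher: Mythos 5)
Your proof is correct. There is no internal proof in the paper to compare against: the lemma is imported verbatim as \cite[Proposition 4.2]{exel1996amenability}, and your argument is essentially a reconstruction of Exel's own proof of that proposition. The one foundational point you flag --- the existence of the fiber-fixing surjection $\Lambda_B : C^*(\mathcal B) \ra B$ and the identity $L = L_B \circ \Lambda_B$ --- is precisely \cite[Theorem 19.5]{exel2017partial}, which states that for any topologically graded $C^*$-algebra $B$ with bundle $\mathcal B$ there is a commuting triangle of surjective $*$-homomorphisms $C^*(\mathcal B) \ra B \ra C^*_r(\mathcal B)$ whose composite is the regular representation; this paper itself invokes that theorem later (in the proof of Lemma \ref{Conditional Expectation is Equivalent to Coaction}), so you can discharge your obligation by citing it rather than re-deriving the universal property of $C^*(\mathcal B)$ and the density/continuity identification of the composites. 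With that reference in hand, the remaining steps --- surjectivity of $\Lambda_B$ from closedness of ranges of $*$-homomorphisms plus density of $\bigoplus_{g} B_g$, injectivity of $\Lambda_B$ from injectivity of $L$ (which is exactly amenability of $\mathcal B$), and transitivity of isomorphism --- are exactly right, and in fact give the slightly sharper conclusion that every such algebra is isomorphic to $C^*(\mathcal B)$ via a grading-preserving isomorphism, which is the form of the statement one actually needs elsewhere (e.g.\ in the proof of Lemma \ref{Coaction Grading Fell Equivalence}).
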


Finally, we will give a brief introduction to coactions. The following is both the simplest example of a coaction, and a necessary component of its definition.

\begin{example} \label{Delta_G}
If $G$ is a discrete group, then the operators $\{U_g \otimes U_g\}_{g\in G} \subset C^*(G) \otimes C^*(G)$ are a representation of $G$. Therefore, by the universal property of $C^*(G)$, there is a $*$-homomorphism $\delta_G: C^*(G) \ra C^*(G) \otimes C^*(G)$ given by $U_g \mapsto U_g \otimes U_g$.

Note that letting $\id_G:=\id_{C^*(G)}$, then $(\delta_G \otimes \id_{G}) \circ \delta_G = (\id_{G}\otimes \delta_G) \circ \delta_G$, which can be easily verified by checking that both send $U_g \mapsto U_g \otimes U_g \otimes U_g \in C^*(G) \otimes C^*(G) \otimes C^*(G)$.
\end{example}

The following definition of coactions for discrete groups is taken from \cite[Section 2]{echterhoff1999induced}:

\begin{definition}\label{Coaction Definition}
Let $G$ be a discrete group. A \ulindex{coaction} of $G$ on a $C^*$-algebra $A$ is an injective, nondegenerate homomorphism $\delta:A \ra A \otimes C^*(G)$ satisfying the \ulindex{coaction identity} that $(\delta \otimes \id_G) \circ \delta= (\id_A \otimes \delta_G) \circ \delta$ as maps from $A$ into $A \otimes C^*(G) \otimes C^*(G)$, summarized in this diagram:

\begin{center}
\begin{tikzcd}
A \arrow[r, "\delta"] \arrow [d, "\delta"]& 
A \otimes C^*(G) \arrow[d, "\delta \otimes \id_G"]\\
A \otimes C^*(G) \arrow [r, "\id_A \otimes \delta_G"]&
A \otimes C^*(G) \otimes C^*(G) \\
\end{tikzcd}
\end{center}

In this context, nondegeneracy means that $\closedspan \left[ \delta(A) (A\otimes C^*(G)) \right]= A \otimes C^*(G)$.

We call the triple $(A, G, \delta)$ a \ulindex{cosystem}.

\end{definition}

If $G$ is discrete, a cosystem has a nice topological grading as described in \cite[Proposition A.3]{sehnem2019c}:

\begin{lemma} \label{Graded Conditional Expectation}
Let $(A, G, \delta)$ be a discrete cosystem, and for $g \in G$ let 

\[ A_g= \{a \in A | \delta(a) = a \otimes U_g\}.\]

Then the collection $\{A_g\}_{g \in G}$ is a topological grading of $A$. We will write $\Phi_A:A \ra A_e$ for the conditional expectation.
\end{lemma}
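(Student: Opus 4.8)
The plan is to realize each $A_g$ as the range of a bounded ``Fourier coefficient'' projection manufactured from $\delta$, and then to check, one at a time, the four requirements of a topological grading: that the $A_g$ are linearly independent closed subspaces with $A_gA_h\subseteq A_{gh}$ and $A_g^*\subseteq A_{g\inv}$, that there is a conditional expectation $\Phi_A:A\ra A_e$ of the required form, and that $\bigoplus_g A_g$ is dense. Write $\tau$ for the canonical tracial state on $C^*(G)$ given by $\tau(x)=\langle \pi^U_L(x)e_1,e_1\rangle$, so that $\tau(U_k)$ equals $1$ if $k=1$ and $0$ otherwise, and for each $g$ set $\phi_g:=\tau(U_{g\inv}\,\cdot\,)\in C^*(G)^*$, so that $\phi_g(U_h)$ equals $1$ if $h=g$ and $0$ otherwise, with $\norm{\phi_g}\le 1$. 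Using the slice maps $\id_A\otimes\phi_g:A\otimes C^*(G)\ra A$, define the contractions $\Phi_g:=(\id_A\otimes\phi_g)\circ\delta:A\ra A$; these will be the projections onto the $A_g$, and in particular $\Phi_e=(\id_A\otimes\tau)\circ\delta$.

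The algebraic axioms are immediate from $\delta$ being a $*$-homomorphism into $A\otimes C^*(G)$. Each $A_g$ is the kernel of the continuous linear map $a\mapsto\delta(a)-a\otimes U_g$, hence closed. If $a\in A_g$ and $b\in A_h$ then $\delta(ab)=\delta(a)\delta(b)=(a\otimes U_g)(b\otimes U_h)=ab\otimes U_{gh}$, so $A_gA_h\subseteq A_{gh}$; similarly $\delta(a^*)=\delta(a)^*=a^*\otimes U_{g\inv}$ gives $A_g^*\subseteq A_{g\inv}$. For linear independence I will use that $\Phi_g$ restricts to the identity on $A_g$ and to $0$ on $A_h$ for $h\ne g$ (immediate from $\delta(a)=a\otimes U_h$ and the values of $\phi_g(U_h)$): applying $\Phi_{g_j}$ to a finite relation $\sum_i a_{g_i}=0$ with $a_{g_i}\in A_{g_i}$ forces each summand to vanish. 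The same restriction computation shows $\Phi_e$ fixes $A_e$ and kills every other $A_g$; since $\Phi_e$ is a composition of contractions it is contractive, and once its range is known to lie in $A_e$ it is idempotent, so $\Phi_e$ is the required conditional expectation $\Phi_A$.

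The one genuinely structural point among these is that $\Phi_g(A)\subseteq A_g$, i.e.\ $\delta(\Phi_g(a))=\Phi_g(a)\otimes U_g$, and this is where the coaction identity enters. I would first record the general slice-map identity $\delta\circ(\id_A\otimes\phi_g)=(\id_A\otimes\id_{C^*(G)}\otimes\phi_g)\circ(\delta\otimes\id_{C^*(G)})$, verified on elementary tensors. Feeding $\delta(a)$ into it and rewriting $(\delta\otimes\id_{C^*(G)})\circ\delta$ as $(\id_A\otimes\delta_G)\circ\delta$ via the coaction identity turns $\delta(\Phi_g(a))$ into $(\id_A\otimes\Psi_g)(\delta(a))$, where $\Psi_g:=(\id_{C^*(G)}\otimes\phi_g)\circ\delta_G$ satisfies $\Psi_g(U_h)=U_h$ if $h=g$ and $0$ otherwise, i.e.\ $\Psi_g(x)=\phi_g(x)U_g$. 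Factoring $\Psi_g$ through $\C$ then gives $(\id_A\otimes\Psi_g)(\delta(a))=\Phi_g(a)\otimes U_g$, as needed.

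The main obstacle is density of $\bigoplus_g A_g$, since the ``Fourier series'' $\sum_g\Phi_g(a)$ need not converge: for non-amenable $G$ the trivial representation of $C^*(G)$ is not a norm-limit of finite combinations of the $\phi_g$, so no partial-sum or averaging argument can succeed, and a softer argument is forced. Here I would invoke nondegeneracy of $\delta$ in its standard form $\closedspan\!\left[\delta(A)(1\otimes C^*(G))\right]=A\otimes C^*(G)$ (this is the hypothesis actually needed and the way nondegeneracy is usually stated in the coaction references; reconciling it with the phrasing above is a separate technical point). Given $a\in A$ and $\varepsilon>0$, approximate $a\otimes 1$ within $\varepsilon$ by a finite sum $\sum_i\delta(a_i)(1\otimes U_{h_i})$ and apply the contraction $\id_A\otimes\tau$. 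A short computation, using $\tau(\,\cdot\,U_h)=\phi_{h\inv}$, gives $(\id_A\otimes\tau)\!\left(\delta(a_i)(1\otimes U_{h_i})\right)=\Phi_{h_i\inv}(a_i)\in A_{h_i\inv}$, while $(\id_A\otimes\tau)(a\otimes 1)=a$; hence $\norm{a-\sum_i\Phi_{h_i\inv}(a_i)}\le\varepsilon$ with $\sum_i\Phi_{h_i\inv}(a_i)\in\bigoplus_g A_g$. This yields density and completes the proof. I expect the bookkeeping around the precise form of nondegeneracy, together with the rigorous justification of the slice-map manipulations, to be the only delicate parts.
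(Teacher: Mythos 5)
The paper itself gives no proof of this lemma; it simply quotes it from the literature (it is introduced as the content of Proposition A.3 of the cited Sehnem reference). So your self-contained argument necessarily goes beyond what the paper records, but it is in fact the standard argument lying behind that citation (and behind Quigg's discrete coaction/Fell bundle correspondence): slice-map Fourier coefficients $\Phi_g=(\id_A\otimes\phi_g)\circ\delta$, the coaction identity to show $\delta(\Phi_g(a))=\Phi_g(a)\otimes U_g$ so that $\Phi_g(A)\subseteq A_g$, the restriction computation ($\Phi_g$ is the identity on $A_g$ and zero on $A_h$ for $h\neq g$) to get linear independence and the conditional expectation axioms, and nondegeneracy for density. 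I have checked the slice-map identity, the $\Psi_g$ computation, and the density estimate; these steps are all correct as written.

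The one genuine issue is exactly the point you flagged and deferred: which form of nondegeneracy you are allowed to use. The paper's Definition \ref{Coaction Definition} states nondegeneracy as $\closedspan[\delta(A)(A\otimes C^*(G))]=A\otimes C^*(G)$, whereas your density step requires $\closedspan[\delta(A)(1\otimes C^*(G))]=A\otimes C^*(G)$. This is not mere bookkeeping. If you run your slicing argument on approximants of the form $\sum_i\delta(a_i)(b_i\otimes U_{h_i})$, which is all the paper's version supplies, the slice becomes $(\id_A\otimes\tau)(\delta(a_i)(b_i\otimes U_{h_i}))=\Phi_{h_i\inv}(a_i)\,b_i$, with a stray factor $b_i\in A$, and you can only conclude $A=\closedspan[\bigcup_g A_g A]$. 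That alone does not give density of $\bigoplus_g A_g$: a subalgebra $B\subseteq A$ can satisfy $\closedspan[BA]=A$ without being all of $A$ (take $B=\C 1\subseteq M_2$), so closing this route needs genuinely more input. Conversely, note that in the presence of the coaction identity your hypothesis is \emph{equivalent} to the conclusion, since $A_g\otimes U_h=\delta(A_g)(1\otimes U_{g\inv h})$; thus your proof is precisely the (correct, standard) assertion that the $1\otimes C^*(G)$ form of nondegeneracy is equivalent to density of the spectral subspaces. That $1\otimes C^*(G)$ form is the one actually used in the coaction sources the paper leans on, so your proof is complete relative to the intended definition; but relative to the paper's literal definition, the unproved implication (homomorphism-nondegeneracy plus the coaction identity implies the stronger condition) remains a real gap, and you should either adopt the stronger form as the definition or supply that implication.
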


Some coactions are particularly nice, having a condition called normality. There are many definitions of normality, but we will find a definition depending on this conditional expectation to be the most convenient:

\begin{definition} \label{Normal Coaction Definition}
We say a discrete coaction $(A, G, \delta)$ is \ul{normal}\index{normal coaction} if the conditional expectation $\Phi_A$ is faithful.
\end{definition}

A more common definition of normality of a coaction (such as \cite[Definition A.50]{echterhoff2006categorical} or the comments preceding Definition 1.1 of \cite{quigg1996discrete}) is that the coaction is normal if and only if the map $j_A:= (\id_A \otimes \pi^U_L) \circ \delta$ is injective. Our definition is proved equivalent to the more common one in \cite[Lemma 1.4]{quigg1996discrete}.

For a coaction over a discrete amenable group, this property is automatic:

\begin{lemma}\label{Amenable Coactions are Normal}
Let $(A,G, \delta)$ be a cosystem. If $G$ is amenable and discrete, then $\delta$ is normal.
\end{lemma}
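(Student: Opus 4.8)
The plan is to unwind the definition of normality into a statement about a Fell bundle and then apply the amenability results for Fell bundles over amenable groups. By Definition \ref{Normal Coaction Definition}, the coaction $\delta$ is normal precisely when the conditional expectation $\Phi_A : A \to A_e$ of Lemma \ref{Graded Conditional Expectation} is faithful. The grading $\{A_g\}_{g \in G}$ supplied by that lemma is a topological grading, so by Remark \ref{Graded algebra is a Fell Bundle} its graded components form a Fell bundle $\mathcal{B} = \{A_g\}_{g \in G}$ over $G$, and $A$ becomes a topologically graded $C^*$-algebra with associated Fell bundle $\mathcal{B}$. Thus it suffices to show that $\Phi_A$ is faithful.

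The bridge to amenability is Remark \ref{Fell Bundle Conditional Expectation}: there is a left-regular representation $L_A : A \to C^*_r(\mathcal{B})$, and $L_A$ is an isomorphism if and only if $\Phi_A$ is faithful. Hence it is enough to prove that $L_A$ is an isomorphism. On the other side, since $G$ is amenable and discrete, Lemma \ref{Fell bundle over amenable group} tells us that $\mathcal{B}$ is an amenable Fell bundle, which (again by Remark \ref{Fell Bundle Conditional Expectation}) says precisely that the left-regular representation $L : C^*(\mathcal{B}) \to C^*_r(\mathcal{B})$ of the \emph{full} cross-sectional algebra is an isomorphism.

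The remaining step is to transfer the injectivity of $L$ on $C^*(\mathcal{B})$ to injectivity of $L_A$ on $A$. To do this I would use the universal property of the full cross-sectional algebra $C^*(\mathcal{B})$: the inclusion of the fibers $A_g \hookrightarrow A$ is a representation of $\mathcal{B}$, so it induces a surjective $*$-homomorphism $\pi : C^*(\mathcal{B}) \to A$ (surjective because its image is closed and contains the dense subalgebra $\bigoplus_{g \in G} A_g$). Because $\pi$, $L_A$, and $L$ all restrict to the canonical embedding of each fiber $A_g$, the maps $L_A \circ \pi$ and $L$ agree on the dense span of the fibers inside $C^*(\mathcal{B})$, so $L_A \circ \pi = L$. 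Since $L$ is injective, $\pi$ is injective; being also surjective, $\pi$ is an isomorphism, whence $L_A = L \circ \pi^{-1}$ is an isomorphism as well. By the equivalence in Remark \ref{Fell Bundle Conditional Expectation} this makes $\Phi_A$ faithful, so $\delta$ is normal.

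I expect the only real obstacle to be this last paragraph, namely justifying the factorization $L_A \circ \pi = L$ together with the surjectivity of $\pi$. This rests on the universal property of $C^*(\mathcal{B})$ and on the fact that all three maps are determined by their common behavior on the fibers, which is standard Fell-bundle bookkeeping implicit in the references behind Remark \ref{Fell Bundle Conditional Expectation} but not spelled out in the excerpt. An alternative to this paragraph would be to invoke Lemma \ref{Amenable Fell Bundle Uniqueness} to conclude directly that $A$ and $C^*_r(\mathcal{B})$ are isomorphic as topologically graded algebras; the extra care needed there is to verify that the resulting isomorphism is compatible with $L_A$, which again reduces to checking behavior on fibers.
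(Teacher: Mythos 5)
Your proof is correct, and its skeleton---the coaction yields a topological grading (Lemma \ref{Graded Conditional Expectation}), the graded components form a Fell bundle $\mathcal B$ (Remark \ref{Graded algebra is a Fell Bundle}), and amenability of $G$ makes $\mathcal B$ amenable (Lemma \ref{Fell bundle over amenable group})---is exactly the paper's. Where you diverge is the concluding step. The paper invokes Lemma \ref{Amenable Fell Bundle Uniqueness} to conclude $A \cong C^*_r(\mathcal B)$ and then cites the fact that $C^*_r(\mathcal B)$ always carries a faithful conditional expectation (Exel's Propositions 2.9 and 2.12); implicit there is that the isomorphism respects the gradings, so that faithfulness transfers back to $\Phi_A$. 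You instead work directly with the equivalence already recorded in Remark \ref{Fell Bundle Conditional Expectation} ($L_A$ is an isomorphism if and only if $\Phi_A$ is faithful) and prove $L_A$ is an isomorphism via the sandwich $C^*(\mathcal B) \xrightarrow{\pi} A \xrightarrow{L_A} C^*_r(\mathcal B)$ with $L_A \circ \pi = L$; this is, in effect, an inline proof of the special case of Lemma \ref{Amenable Fell Bundle Uniqueness} that the paper cites, and it is essentially how Exel proves that result. What your route buys: it sidesteps the grading-compatibility point entirely (no faithfulness needs to be transferred along an isomorphism), and it does not need Exel's Propositions 2.9/2.12 or the uniqueness lemma as black boxes. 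What it costs: you must invoke the universal property of $C^*(\mathcal B)$ and the fact that $\pi$, $L$, and $L_A$ all restrict to the canonical fiber embeddings---standard Fell-bundle facts, but ones the paper only gestures at through the references behind Remark \ref{Fell Bundle Conditional Expectation}, so the ``bookkeeping'' you flag is a genuine (if routine) obligation.
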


\begin{proof}
For a cosystem $(A,G, \delta)$ over an amenable group, by Lemma \ref{Graded Conditional Expectation} there is a topological grading $\{A_g\}_{g\in G}$ of $A$. By Remark \ref{Graded algebra is a Fell Bundle}, these graded components form a Fell bundle $\mathcal B$, and this Fell bundle is amenable by Lemma \ref{Fell bundle over amenable group}.

Since $\mathcal B$ is amenable, by Lemma \ref{Amenable Fell Bundle Uniqueness}, $A \cong C^*_{r}(\mathcal B)$. But for any Fell bundle, $C^*_r(\mathcal B)$ has a conditional expectation by \cite[Proposition 2.9]{exel1996amenability}, which is faithful by \cite[Proposition 2.12]{exel1996amenability}.
\end{proof}

Finally, we will show that coactions, topological gradings, and Fell bundles are equivalent over amenable groups.

\begin{lemma}\label{Coaction Grading Fell Equivalence}
Let $G$ be an amenable group, and let $A$ be a $C^*$-algebra. Then the following are equivalent:

\begin{enumerate}
\item $A$ has a coaction by $G$.
\item $A$ has a topological grading $\{A_g\}_{g \in G}$.
\item There is a Fell bundle $\mathcal B= \{B_g\}_{g \in G}$ whose fibers are linearly independent closed subspaces of $A$ such that $A \cong C^*(\mathcal B)$.
\end{enumerate}

And under these conditions, the structures coincide in the sense that the fibers $\{B_g\}_{g\in G}$ of the Fell bundle are equal to the graded components $\{A_g\}_{g \in G}$, and the graded components arise from the coaction via $A_g=\{a \in A: \delta(a)= a \otimes U_g\}$.
\end{lemma}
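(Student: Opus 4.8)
The plan is to prove the equivalence of the three conditions by establishing a cycle of implications, leaning heavily on the results already assembled for Fell bundles over amenable groups. The three structures (coactions, topological gradings, Fell bundles) are essentially the same data viewed three ways, so the real content is not producing one structure from another but verifying that the passage preserves the relevant compatibility and that amenability removes the gap between the full and reduced cross-sectional algebras.

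First I would prove $(1) \Rightarrow (2)$, which is immediate from Lemma \ref{Graded Conditional Expectation}: a discrete cosystem $(A,G,\delta)$ yields a topological grading with graded components $A_g = \{a \in A : \delta(a) = a \otimes U_g\}$, and this is precisely the formula asserted in the final ``coincide'' clause. Next, $(2) \Rightarrow (3)$: given a topological grading $\{A_g\}_{g \in G}$, Remark \ref{Graded algebra is a Fell Bundle} tells us the graded components form a Fell bundle $\mathcal B = \{A_g\}_{g\in G}$. Since $G$ is amenable, Lemma \ref{Fell bundle over amenable group} makes $\mathcal B$ amenable, so its left-regular representation $L: C^*(\mathcal B) \to C^*_r(\mathcal B)$ is an isomorphism. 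Because $A$ is a topologically graded $C^*$-algebra whose associated Fell bundle is $\mathcal B$, Lemma \ref{Amenable Fell Bundle Uniqueness} gives $A \cong C^*(\mathcal B)$ (the isomorphism being the identity on each fiber), establishing (3) with fibers equal to the graded components.

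For $(3) \Rightarrow (1)$, I would use the fact that the full cross-sectional algebra of a Fell bundle carries a canonical coaction. Concretely, $C^*(\mathcal B)$ is the universal $C^*$-algebra generated by the fibers, and one defines $\delta : C^*(\mathcal B) \to C^*(\mathcal B) \otimes C^*(G)$ on a fiber element $b \in B_g$ by $\delta(b) = b \otimes U_g$; the universal property produces this map, Lemma \ref{Tensor of Homomorphisms} and the structure of $C^*(G)$ furnish the coaction identity by checking it on generators (both composites send $b \in B_g$ to $b \otimes U_g \otimes U_g$, exactly as in Example \ref{Delta_G}), and injectivity plus nondegeneracy follow because the grading is topological and the conditional expectation onto $B_1$ is faithful in the amenable case. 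Transporting this $\delta$ across the isomorphism $A \cong C^*(\mathcal B)$ gives a coaction of $G$ on $A$, and by construction its graded components $\{a : \delta(a) = a \otimes U_g\}$ recover the fibers $B_g$, closing the cycle and simultaneously verifying the ``coincide'' assertion.

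The main obstacle I anticipate is the construction and verification of the coaction in $(3) \Rightarrow (1)$: one must check that $\delta$ is genuinely injective and nondegenerate as required by Definition \ref{Coaction Definition}, not merely a well-defined $*$-homomorphism. Injectivity is where amenability is essential, since it is what identifies $C^*(\mathcal B)$ with $C^*_r(\mathcal B)$ and guarantees the faithful conditional expectation needed to detect that no nonzero element is annihilated by the grading data; nondegeneracy requires showing $\closedspan[\delta(A)(A \otimes C^*(G))] = A \otimes C^*(G)$, which reduces to spanning arguments on elementary tensors $b \otimes U_h$. The other implications are essentially bookkeeping that the formula for the graded components is stable under each translation, so I would present them briskly and devote the bulk of the proof to exhibiting $\delta$ and confirming its defining properties.
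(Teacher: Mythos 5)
Your proposal is correct. For $(1)\Rightarrow(2)$ and $(2)\Rightarrow(3)$ it coincides with the paper's proof: $(1)\Rightarrow(2)$ is Lemma \ref{Graded Conditional Expectation}, and $(2)\Rightarrow(3)$ is Remark \ref{Graded algebra is a Fell Bundle} together with Lemmas \ref{Fell bundle over amenable group} and \ref{Amenable Fell Bundle Uniqueness}; note only that to invoke the uniqueness lemma you should also record that $C^*(\mathcal B)$ is itself a topologically graded algebra whose bundle is $\mathcal B$ (the paper cites \cite[Proposition 19.3]{exel2017partial} for this), and that the paper's explicit verification of linear independence via \cite[Corollary 19.6]{exel2017partial} is safely omitted in your version, since independence is built into the definition of a grading. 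The genuine divergence is in $(3)\Rightarrow(1)$: the paper uses amenability to identify $C^*(G)\cong C^*_r(G)$ and $C^*(\mathcal B)\cong C^*_r(\mathcal B)$ and then quotes \cite[Proposition 18.7]{exel2017partial} for the injective homomorphism $b_g\mapsto b_g\otimes U_g$, declaring the coaction axioms routine, whereas you build the dual coaction directly from the universal property of $C^*(\mathcal B)$ (the fiber maps $b\mapsto b\otimes U_g$ form a representation of the bundle) and check the coaction identity and nondegeneracy by hand on generators. Your route is more self-contained, and it is in fact stronger than you claim: the dual coaction on a full cross-sectional algebra exists and is injective for \emph{any} Fell bundle over a discrete group, with no amenability at all --- injectivity follows by slicing with the trivial character $U_g\mapsto 1$, since $(\id\otimes\epsilon)\circ\delta$ fixes every generator (the same trick used in Proposition \ref{Coactionization}(2)--(3)). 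Your proposed injectivity argument via the faithful conditional expectation does go through under amenability (slice $\delta$ by the canonical trace of the group algebra to recover $\Phi$, then use faithfulness), but it spends a hypothesis that is not needed there; the implication that truly consumes amenability is $(2)\Rightarrow(3)$, where it forces $A\cong C^*(\mathcal B)$. In short, the paper's citation-based treatment buys brevity, while your construction makes the mechanism explicit and localizes exactly where amenability enters.
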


\begin{proof}
By Lemma \ref{Graded Conditional Expectation}, (1) implies (2).

To show that (2) implies (3), suppose that $A$ has a topological grading $\{A_g\}_{g \in G}$. By Remark \ref{Graded algebra is a Fell Bundle}, these fibers form a Fell bundle. To see that the fibers are linearly independent, fix a finite sum $\fsum_{i=1}^n a_{g_i}$ where each $a_{g_i} \in A_{g_i}$ for some distinct $g_i \in G$, and suppose this sum equals 0. Then by \cite[Corollary 19.6]{exel2017partial}, there are contractive linear maps $F_g: A \ra A_g$ satisfying $F_{g_j}(\fsum_{i=1}^n a_{g_i}) =a_{g_j}$. But since $0=\fsum_{i=1}^n a_{g_i}$, then for each $g_j$, we have $0=F_{g_j}(\fsum_{i=1}^n a_{g_i}) =a_{g_j}$, so each summand is 0, so the graded components are linearly independent. Finally, we must show that $A \cong C^*(\mathcal B)$. By Lemma \ref{Fell bundle over amenable group}, $\mathcal B$ is amenable, and by \cite[Proposition 19.3]{exel2017partial}, $C^*(\mathcal B)$ is a topologically graded $C^*$-algebra. Since this amenable bundle coincides with the bundle in $A$, then by Lemma \ref{Amenable Fell Bundle Uniqueness}, $A \cong C^*(\mathcal B)$, as desired.

To see that (3) implies (1), suppose that $\mathcal B= \{B_g\}_{g \in G}$ is a Fell bundle of linearly independent closed subspaces of $A$ such that $A \cong C^*(\mathcal B)$. Since $G$ is amenable, $C^*(G) \cong C^*_r(G)$, and by Lemma \ref{Fell bundle over amenable group}, $\mathcal B$ is an amenable Fell bundle, so $C^*(\mathcal B) \cong C^*_r(\mathcal B)$. 
Combining these simplifications with \cite[Proposition 18.7]{exel2017partial}, there is an injective $*$-homomorphism $\delta: C^*(\mathcal B) \ra C^*(\mathcal B) \otimes C^*(G)$ satisfying $\delta(b_g)= b_g \otimes U_g$ for all $g \in G$ and $b_g \in B_g$. It is routine to verify that this is a coaction, and that the graded parts of this coaction coincide with the original Fell bundle $\mathcal B$.

\end{proof}

\subsubsection{Coactions as Actions of the Dual Group}

In this section we will show that if $G$ is discrete and abelian, then a coaction by $G$ is equivalent to an action by its dual group $\widehat G$. For a non-abelian group, this correspondence breaks down because there is no dual group.

First, we will remind the reader of actions by groups and dual groups. The following definition is well-known, and can be found for instance in the remarks preceding Proposition 2.1 in \cite{raeburn2005graph}:

\begin{definition}
Given a locally compact group $G$, an \ulindex{action} of $G$ on a $C^*$-algebra $A$ is a strongly continuous homomorphism $\alpha: G \ra \operatorname{Aut}(A)$, the group of automorphisms of $A$. (Here, strong continuity means that for all $a \in A$, the map $g \mapsto \alpha_g(a)$ is continuous as a function from $G$ to $A$.) The trio $(A,G,\alpha)$ is called a \ul{($C^*$-dynamical) system}\index{C$^*$@$C^*$-dynamical system}.
\end{definition}

We will now define the dual group. A more detailed treatment of dual groups can be found in \cite[Chapter 1.7]{Kaniuth2013Induced}.

\begin{definition}
Given a abelian locally compact group $G$, we say that a \ulindex{character} of $G$ is a continuous homomorphism $\chi: G \ra \T$, where $\T= \{z \in \C : |z|=1\}$ denotes the unit circle in the complex numbers. The set of characters $\widehat G$ forms an abelian locally compact group called the \ulindex{dual group} under the operation $(\chi_1* \chi_2) (g):= \chi_1(g) \chi_2(g)$ and the topology of uniform convergence on compacta.
\end{definition}

Some examples of dual groups are that $\widehat \Z^k \cong \T^k$, $\widehat{\R}^k \cong \R^k$, and that if $G$ is finite and abelian then $\widehat G \cong G$.

The following two theorems are well-known results about the duals of abelian locally compact groups. They can be found in \cite[Theorem 1.85]{Kaniuth2013Induced} and \cite[Theorem 1.88]{Kaniuth2013Induced}  respectively.

\begin{theorem}[Pontryagin duality theorem]
An abelian locally compact group is naturally isomorphic to its double dual by the map $g \mapsto [\chi \mapsto \chi(g)]$. That is, $G \cong \widehat{\widehat G}$. 

\end{theorem}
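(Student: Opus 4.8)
The plan is to prove that the evaluation map $\eta_G\colon G \to \widehat{\widehat G}$, $g \mapsto [\chi \mapsto \chi(g)]$, is an isomorphism of topological groups by first verifying it on a short list of elementary building blocks and then bootstrapping to an arbitrary locally compact abelian (LCA) group via structure theory. First I would dispatch the formal points: $\eta_G$ is a well-defined homomorphism because evaluation is multiplicative in $g$, and it is continuous because the topology on the double dual is uniform convergence on compacta, against which $g \mapsto \chi(g)$ varies continuously. Establishing that $\eta_G$ is \emph{injective} is already substantial, since it amounts to the assertion that the characters of $G$ separate its points. I would obtain this from the Gelfand theory of the commutative Banach $*$-algebra $L^1(G)$, whose spectrum is naturally identified with $\widehat G$, so that an abundance of separating characters follows from regularity of this algebra together with the Gelfand--Raikov theorem.

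Next I would verify the theorem directly on the elementary groups. One computes $\widehat{\R} \cong \R$ via $t \mapsto (s \mapsto e^{2\pi i s t})$, exhibits $\widehat{\Z} \cong \T$ and $\widehat{\T} \cong \Z$ as a mutually dual pair, and checks $\widehat{\Z/n\Z} \cong \Z/n\Z$; in each case one confirms by hand that the composite $\eta$ is the identity (or the evident isomorphism). Since the dual of a finite product is naturally the product of the duals, $\widehat{A \times B} \cong \widehat A \times \widehat B$, the result propagates to every elementary group $\R^a \times \T^b \times \Z^c \times F$ with $F$ finite abelian.

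The general case I would then handle by the structure theorem for LCA groups, which writes any such $G$ as $\R^n \times G_0$ where $G_0$ contains a compact open subgroup $K$. The engine of the reduction is the \emph{naturality} of $\eta$ combined with the \emph{exactness} of the duality functor: applying $\widehat{\,\cdot\,}$ twice to the short exact sequence $0 \to K \to G_0 \to G_0/K \to 0$ yields a second short exact sequence, and naturality produces a commutative ladder between the two in which the outer vertical maps $\eta_K$ and $\eta_{G_0/K}$ are already known to be isomorphisms (as $K$ is compact and $G_0/K$ discrete, each being an inverse or direct limit of elementary groups, so that the limit-continuity of duality applies). The Five Lemma then forces the middle map $\eta_{G_0}$ to be an isomorphism, and the $\R^n$ factor is covered by the elementary computation, completing the argument.

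The main obstacle is twofold. First, the exactness of the duality functor --- in particular the lemma that every character of a closed subgroup $H \leq G$ extends to a character of $G$ --- is the Pontryagin analogue of Hahn--Banach and rests on the divisibility (injectivity as a $\Z$-module) of $\T$ together with care about the quotient topology. Second, the Five Lemma must be run in the \emph{topological} category, so each bijective continuous homomorphism appearing in the ladder must additionally be shown to be open; this follows from the open mapping theorem for locally compact groups but has to be tracked throughout. Pinning these two points down, along with the separation-of-points fact underlying injectivity, is where the genuine harmonic-analytic content lies; the structure theorem itself I would cite rather than reprove.
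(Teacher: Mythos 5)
The paper never proves this statement: it is quoted as background, with a citation to \cite[Theorem 1.85]{Kaniuth2013Induced}, so there is no in-paper argument to compare yours against. Taken on its own terms, your proposal is a correct outline of the classical proof of Pontryagin duality: the evaluation map $\eta_G$, injectivity from the fact that characters separate points (Gelfand--Raikov via $L^1(G)$), direct verification for $\R$, $\Z$, $\T$, $\Z/n\Z$ and finite products, then the structure theorem $G \cong \R^n \times G_0$ with a compact open subgroup $K \leq G_0$, exactness of dualization applied to $0 \to K \to G_0 \to G_0/K \to 0$, and a topological Five Lemma. Two refinements on the obstacles you flag. First, the character-extension lemma is only ever needed for \emph{open} subgroups in your ladder ($K$ is open in $G_0$, and the annihilator of $K$ is compact open in $\widehat{G_0}$), where it follows from divisibility of $\T$ with continuity automatic; this is precisely what keeps the argument non-circular, since extension from a general closed subgroup is usually itself deduced from duality. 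Second, the open mapping theorem for locally compact groups requires $\sigma$-compactness of the domain, which an arbitrary LCA group need not satisfy, so the openness of the maps in the ladder should instead be checked directly on a neighborhood basis in the compact-open topology (or by passing to compactly generated open subgroups). Finally, note that for this paper's purposes the theorem is only applied to \emph{discrete} abelian $G$ (Lemma~\ref{Action Coaction Duality}), where $G$ is a direct limit of finitely generated abelian groups and duality follows from the elementary case plus limit-continuity of the dual; none of the structure-theory machinery you invoke is actually needed in that setting.
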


\begin{theorem}
If $G$ is a locally compact abelian group, then $G$ is discrete if and only if $\widehat G$ is compact.
\end{theorem}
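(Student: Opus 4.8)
The plan is to prove both directions by isolating two dual sub-claims and then invoking the Pontryagin duality theorem stated just above. Writing $\widehat{\phantom{G}}$ for the dual, I would establish: \textbf{(a)} if $H$ is discrete then $\widehat H$ is compact, and \textbf{(b)} if $H$ is compact then $\widehat H$ is discrete. Granting these, the forward implication of the theorem is exactly (a) with $H = G$. For the reverse implication, assume $\widehat G$ is compact; applying (b) to $H = \widehat G$ shows $\widehat{\widehat G}$ is discrete, and since $G \cong \widehat{\widehat G}$ by Pontryagin duality, $G$ is discrete. Thus the whole statement reduces to (a) and (b).

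To prove (a), I would first observe that when $H$ is discrete every group homomorphism $H \to \T$ is automatically continuous, so $\widehat H$ is literally the set of all such homomorphisms, viewed as a subset of the product space $\T^H$. Moreover, the compact subsets of a discrete group are exactly its finite subsets, so the topology of uniform convergence on compacta on $\widehat H$ coincides with the topology of pointwise convergence, i.e.\ the subspace topology inherited from the Tychonoff product $\T^H$. By Tychonoff's theorem $\T^H$ is compact, so it suffices to show $\widehat H$ is closed in $\T^H$. But $\widehat H = \bigcap_{g,h \in H} \{ f \in \T^H : f(gh) = f(g)f(h)\}$, and each of these sets is closed because the maps $f \mapsto f(gh)$ and $f \mapsto f(g)f(h)$ are continuous into the Hausdorff space $\T$. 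Hence $\widehat H$ is a closed subset of a compact space, and therefore compact.

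To prove (b), I would show the trivial character is isolated in $\widehat H$; since $\widehat H$ is a topological group, this forces it to be discrete. The key elementary fact is that the open arc $V = \{z \in \T : \operatorname{Re} z > 0\}$ contains no nontrivial subgroup of $\T$: if $z = e^{i\theta} \in V$ with $\theta \neq 0$, then its powers must eventually leave $V$, since the smallest $n$ with $n|\theta| \geq \pi/2$ satisfies $\pi/2 \le n|\theta| < \pi$, placing $z^n$ in the closed left half-plane. Now, because $H$ is compact, the set $W = \{\chi \in \widehat H : \chi(H) \subseteq V\}$ is a basic open neighbourhood of the trivial character in the compact-open (equivalently, uniform-on-compacta) topology. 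If $\chi \in W$, then $\chi(H)$ is a subgroup of $\T$ contained in $V$, hence trivial, so $\chi$ is the trivial character; that is, $W = \{1\}$. Thus the identity of $\widehat H$ is isolated and $\widehat H$ is discrete.

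The underlying group-theoretic verifications are routine; I expect the only genuine care to be required in the topological bookkeeping — specifically, correctly identifying the uniform-on-compacta topology with pointwise convergence in (a), and confirming that $W$ really is a neighbourhood of the identity in (b) — together with invoking Tychonoff's theorem and Pontryagin duality as black boxes.
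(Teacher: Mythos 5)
Your proof is correct, but it cannot be "the same approach as the paper" for a simple reason: the paper offers no proof of this statement at all — it is quoted as a known result with a citation to Kaniuth's book (Theorem 1.88), immediately after the Pontryagin duality theorem (Theorem 1.85). So your contribution is a genuinely self-contained argument where the paper has only a pointer to the literature. What you give is the standard textbook proof, and all the pieces check out: in (a), discreteness of $H$ makes every character automatically continuous and reduces uniform convergence on compacta to pointwise convergence, so $\widehat H$ sits inside the Tychonoff-compact space $\T^H$ as the closed set cut out by the equations $f(gh)=f(g)f(h)$; in (b), your arithmetic for the no-small-subgroups property of the arc $V=\{z\in\T:\operatorname{Re} z>0\}$ is right (minimality of $n$ gives $\pi/2\le n|\theta|<\pi/2+|\theta|<\pi$, pushing $z^n$ out of $V$), and since $H$ itself is compact, $W=\{\chi:\chi(H)\subseteq V\}$ is indeed a (sub)basic open neighbourhood of the trivial character in the compact-open topology, which coincides with uniform convergence on compacta for continuous maps into $\T$. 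The one structural observation worth making: your reverse implication leans on Pontryagin duality as a black box, which is by far the deepest ingredient in the argument — far deeper than the statement being proved — but this is legitimate here, since the paper states duality as a standing result in the immediately preceding theorem, and this is also exactly how standard references derive the converse direction; the alternative would be a direct argument that compactness of $\widehat G$ forces discreteness of $G$, which is substantially more work.
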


We are now ready to prove the equivalence between coactions and actions by a dual group.

\begin{lemma} \label{Action Coaction Duality}
Let $G$ be a discrete abelian group and let $A$ be a $C^*$-algebra. Then for every $C^*$-dynamical system $(A, \widehat G, \alpha)$, there is a unique cosystem $(A, G, \delta)$ which satisfies 

\[ A_g = \{ a \in A: \alpha_\chi(a)= \chi(g) \cdot a \text{ for all } \chi \in \widehat G\} \]

\noindent for all $g \in G$. All cosystems by discrete abelian groups arise in this way.

\end{lemma}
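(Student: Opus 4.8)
The plan is to reduce everything to the elementary fact that for a discrete \emph{abelian} group $G$ the full group algebra is commutative, with $C^*(G) \cong C(\widehat G)$ via the Gelfand transform $U_g \mapsto \widehat g$, where $\widehat g(\chi) = \chi(g)$; correspondingly $A \otimes C^*(G) \cong C(\widehat G, A)$, the continuous $A$-valued functions on the compact group $\widehat G$, and under these identifications $\delta_G$ becomes the comultiplication dual to the group law, $(\delta_G f)(\chi_1,\chi_2) = f(\chi_1\chi_2)$. The lemma is then the familiar duality between actions of a compact abelian group and coactions of its (discrete) dual.

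Given an action $(A,\widehat G,\alpha)$, I would first analyze the spectral subspaces $A_g$ from the statement using that $\widehat G$ is compact with normalized Haar measure $d\chi$. Introduce the Fourier projections $\Phi_g(a) = \int_{\widehat G} \overline{\chi(g)}\,\alpha_\chi(a)\,d\chi$ and verify the routine facts: each $\Phi_g$ maps $A$ onto $A_g$ and is idempotent; $A_gA_h \subseteq A_{gh}$ and $A_g^* \subseteq A_{g^{-1}}$ (immediate since $\alpha_\chi$ is a $*$-automorphism and $|\chi(g)|=1$); the $A_g$ are closed and linearly independent (apply the $\Phi_{g_j}$ to a vanishing finite sum, using $\int_{\widehat G}\chi(g)\,d\chi = 0$ for $g\neq e$ by orthogonality of characters, since $\widehat g$ is a nontrivial character of $\widehat G$ by Pontryagin duality); and $\Phi := \Phi_e = \int_{\widehat G}\alpha_\chi\,d\chi$ is a conditional expectation onto $A_e$ killing each $A_g$ with $g\neq e$. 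The one substantial point is density of $\bigoplus_g A_g$ in $A$; this is the standard Peter--Weyl/Fej\'er density for strongly continuous compact-group actions, obtained by approximating $a$ by Ces\`aro means of its Fourier series $\sum_g \Phi_g(a)$. Together these show $\{A_g\}_{g\in G}$ is a topological grading.

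With the grading established, existence of $\delta$ is immediate: since $G$ is abelian it is amenable, so Lemma \ref{Coaction Grading Fell Equivalence} produces a coaction $\delta$ whose graded components are exactly the $A_g$, i.e. $A_g = \{a : \delta(a) = a\otimes U_g\}$, which is the required spectral description. Alternatively one can write $\delta$ directly by $\delta(a) = [\chi \mapsto \alpha_\chi(a)] \in C(\widehat G, A) \cong A\otimes C^*(G)$, whereupon both sides of the coaction identity reduce to $(\chi_1,\chi_2)\mapsto \alpha_{\chi_1\chi_2}(a)$ and the identity is just the homomorphism property $\alpha_{\chi_1}\alpha_{\chi_2}=\alpha_{\chi_1\chi_2}$ (this route still requires a separate check of nondegeneracy, which is why I favor the grading argument). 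For uniqueness, any coaction with graded components $\{A_g\}$ must satisfy $\delta(a)=a\otimes U_g$ on each $A_g$, hence agrees with the constructed $\delta$ on the dense subspace $\bigoplus_g A_g$ and therefore everywhere by continuity.

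Finally, for the converse I would start from an arbitrary cosystem $(A,G,\delta)$, take its graded components via Lemma \ref{Graded Conditional Expectation}, and set $\alpha_\chi = (\id_A \otimes \chi)\circ\delta$, viewing $\chi\in\widehat G$ as a character of $C^*(G)$. Strong continuity of $\alpha$ is exactly continuity of $\chi\mapsto\delta(a)(\chi)$ under $A\otimes C^*(G)\cong C(\widehat G,A)$; the relations $\alpha_{\chi_1}\alpha_{\chi_2}=\alpha_{\chi_1\chi_2}$ and $\alpha_e=\id_A$ follow from the coaction identity and the fact that $\delta(a)=a\otimes U_g$ on $A_g$, so each $\alpha_\chi$ is invertible with inverse $\alpha_{\chi^{-1}}$ and $\alpha$ is an action. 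Its spectral subspaces are $\{a : \delta(a)=a\otimes U_g\}$, i.e. the graded components of $\delta$, so by the uniqueness just proved $\delta$ is precisely the coaction this $\alpha$ induces. The main obstacle is the compact-group harmonic analysis of the second paragraph—establishing that the spectral subspaces span a dense subalgebra—since every other step is forced by the algebraic identities and the identification $C^*(G)\cong C(\widehat G)$.
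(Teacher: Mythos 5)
Your proposal is correct, and it shares the paper's skeleton while replacing its key citation with a self-contained argument. The paper's proof is two sentences: since $G$ is abelian it is amenable, so Lemma \ref{Coaction Grading Fell Equivalence} identifies cosystems with topological gradings, and then \cite[Theorem 3]{Raeburn2018On} is cited for the equivalence of topological gradings with actions of the compact dual group. You perform the same reduction through topological gradings (using Lemma \ref{Coaction Grading Fell Equivalence} for existence and Lemma \ref{Graded Conditional Expectation} for the converse), but you prove the grading--action correspondence directly: spectral projections $\Phi_g(a)=\int_{\widehat G}\overline{\chi(g)}\,\alpha_\chi(a)\,d\chi$ via Haar integration, orthogonality of characters (with Pontryagin duality guaranteeing $\widehat g$ is nontrivial for $g\neq e$), Peter--Weyl density of $\bigoplus_g A_g$, and for the converse the elementary formula $\alpha_\chi=(\id_A\otimes\chi)\circ\delta$. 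What the paper buys with its approach is brevity, outsourcing all the analytic content; what yours buys is transparency about where the hypotheses enter --- you correctly isolate the density of the spectral subspaces as the only substantial analytic point (everything else being algebraic bookkeeping), and your uniqueness argument (two coactions agreeing on the dense subspace $\bigoplus_g A_g$ agree everywhere) is exactly the right one, which the paper leaves implicit inside the cited results. Two small caveats, neither fatal: ``Ces\`aro means'' only literally makes sense for $\T^k$, and for a general compact abelian dual one should instead invoke approximate identities built from trigonometric polynomials; and in the converse direction the inclusion of the $\alpha$-spectral subspace at $g$ \emph{into} the $\delta$-graded component $A_g$ deserves a line (apply the continuous projection $\Phi_g$ and note its range lies in the closed subspace $A_g$ because it kills every $A_h$, $h\neq g$, on the dense graded span).
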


\begin{proof}
Since $G$ is abelian, it is amenable. Thus by Lemma \ref{Coaction Grading Fell Equivalence}, a cosystem is equivalent to a topological grading whose graded components are the $\{A_g\}_{g \in G}$. By \cite[Theorem 3]{Raeburn2018On}, such a topological grading is equivalent to the desired group action.
\end{proof}

\subsection{Graphs and $P$-graphs}

\begin{definition}
Let $(G,P)$ be an ordered group, and consider $P$ as a category with one object where the morphisms are the elements of $P$ under their multiplication structure. A \ul{$P$-graph}\index{P-graph@$P$-graph} is a countable category $\Lambda$ along with a functor $d: \Lambda \ra P$ with the \ulindex{unique factorization property}: if $\lambda \in \Lambda$ and $p_1, p_2 \in P$ with $p_1p_2= d(\lambda)$, then there exists unique $\lambda_1, \lambda_2 \in \Lambda$ with $\lambda= \lambda_1 \lambda_2$ and $d(\lambda_1)=p_1$ (and hence necessarily $d(\lambda_2)=p_2$).

We refer to the morphisms of $ \Lambda$ as \ulindex{paths}, and identity morphisms in $\Lambda$ as \ulindex{vertices}. We let $\Lambda^0$ denote the set of vertices in $\Lambda$, and for $p \in P$, we write $\Lambda^p=\{\mu \in \Lambda : d (\mu)=p\}$. As is common in the literature, we identify the objects in $\Lambda$ with the identity morphism at those objects, so we will refer to an object $v$ with its identity morphism $\id_v$ interchangeably. Given a path $\lambda \in \Lambda$, let $r(\lambda)$ denote its range vertex and $s(\lambda)$ its source vertex. We write the composition of paths ``working backwards'' so that given $\alpha, \beta \in \Lambda$, the product $\alpha \beta$ is defined if and only if $s(\alpha)=r(\beta)$, in which case $s(\alpha \beta)=s(\beta)$ and $r(\alpha \beta) =r (\alpha)$. This can be summarized in the following diagram:

\begin{center}
\begin{tikzcd}
r(\alpha) \\
s(\alpha)=r(\beta) \arrow[u, "\alpha"] &
s(\beta) \arrow[l, "\beta"] \arrow[ul, dashed, "\alpha \beta"]

\end{tikzcd}
\end{center}

\end{definition}

\begin{remark}
The existence of the degree functor and the unique factorization property gives several nice properties to the graph.

First, $\Lambda^0=\{\lambda \in \Lambda: d(\lambda)=1_G\}$, since for a vertex $v$, we have $v^2=v$, so $d(v)^2=d(v)$, so $d(v)=1_G$, and conversely if $d(v)=1_G$, then $r(v)v=v=vs(v)$ are two factorizations with the same degrees, so $r(v)=v=s(v)$ and hence $v$ is a vertex.

Second, having $\alpha \beta= s(\beta)$ implies $\alpha=\beta=s(\beta)$, since then $d(\alpha) d(\beta) = d(s(\beta))=1$, so $d(\alpha), d(\beta) \in P \cap P\inv = \{1\}$.

Finally, the category has both left- and right-cancellation. For left cancellation, if $\alpha\beta= \alpha \gamma$, then this provides two factorizations, so by the uniqueness of the factorizations, we have $\beta=\gamma$, and a nearly identical argument shows right-cancellation.

These properties together imply that a $P$-graph is a \ulindex{category of paths} in the sense of \cite{spielberg2014groupoids}. Our representation of a $P$-graph will be a special case of their representation of a category of paths, and the reader is invited to compare our Definitions \ref{$P$-Graph Representation} and \ref{Representation Terminology}c to Theorems 6.3 and 8.2 of \cite{spielberg2014groupoids}, respectively.

\end{remark}

\begin{definition}
Let $(G,P)$ be an ordered group, let $\Lambda$ be a $P$-graph. For $\alpha \in \Lambda$, we write $\alpha \Lambda = \{\alpha \mu : \mu \in \Lambda\}$.

We can give $\Lambda$ a partial order $\leq$ by saying $\alpha \leq \beta$ if there exists some $\alpha_1 \in \Lambda$ such that $\alpha \alpha_1 =\beta$. That is, $\alpha \leq \beta$ if and only if $\beta \in \alpha \Lambda$.

We say that $\alpha$ and $\beta$ have a \ulindex{common extension} if there is a $\mu \in \Lambda$ such that $\alpha \leq \mu$ and $\beta \leq \mu$.

Given $\alpha, \beta \in \Lambda$, we say that $\mu \in \Lambda$ is a \ul{minimal common extension}\index{common extension!minimal} of $\alpha$ and $\beta$ if $\mu$ is a common extension of $\alpha$ and $\beta$, and for all other common extensions $\nu$, $\nu \leq \mu$ implies $\nu=\mu$. We let $MCE(\alpha, \beta)$ denote the set of minimal common extensions of $\alpha$ and $\beta$.

\end{definition}

For general ordered groups, the ordering on $\Lambda$ may be sufficiently poorly behaved that there are no \emph{minimal} common extensions, even if there are common extensions. However, the humble hypothesis of weak quasi-lattice ordering on $(G,P)$ prevents this catastrophe, and therefore will recur as a hypothesis in almost all results relating to $P$-graphs. The following lemma is the main ``entry-point'' for the hypothesis of weak quasi-lattice ordering into $P$-graphs:

\begin{lemma} \label{Equivalent Minimality}
Let $(G,P)$ be an ordered group, let $\Lambda$ be a $P$-graph, and $\alpha, \beta \in \Lambda$. If $(G,P)$ is weakly quasi-lattice ordered (WQLO), then:

\begin{enumerate}
\item Let $MDCE(\alpha, \beta)= \{\mu \in  \alpha \Lambda \cap \beta \Lambda : d(\mu)= d(\alpha) \vee d(\beta)\}$ denote the set of minimal degree common extensions. For every common extension $\lambda$ of $\alpha$ and $\beta$, there is a $\mu \in MDCE(\alpha, \beta)$ with $\mu \leq \lambda$.

\item $MCE(\alpha, \beta)= MDCE(\alpha, \beta)$.
\item For every common extension $\lambda$ of $\alpha$ and $\beta$, there is a $\mu \in MCE(\alpha, \beta)$ with $\mu \leq \lambda$.

\item $ \displaystyle \alpha \Lambda \cap \beta \Lambda = \bigsqcup_{\mu \in MCE(\alpha, \beta)} \mu \Lambda$, where $\bigsqcup$ denotes a disjoint union.
\end{enumerate}
\end{lemma}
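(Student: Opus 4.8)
The plan is to prove (1) first as the foundational fact, deduce (2) from it, read off (3) immediately from (1) and (2), and finally establish (4) using (3) together with unique factorization. Throughout I will use two basic observations. First, since $d$ is a functor, $\alpha \leq \mu$ forces $d(\alpha) \leq d(\mu)$. Second, any common extension $\lambda$ of $\alpha, \beta$ has $d(\lambda)$ as a common upper bound of $d(\alpha), d(\beta)$ in $P$, so WQLO guarantees the supremum $d(\alpha)\vee d(\beta)$ exists and satisfies $d(\alpha)\vee d(\beta) \leq d(\lambda)$. (If there are no common extensions, every statement is vacuous, so I may assume one exists.)

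For (1), given a common extension $\lambda$, I would factor out the degree-$(d(\alpha)\vee d(\beta))$ prefix: since $d(\alpha)\vee d(\beta) \leq d(\lambda)$, the unique factorization property yields a unique $\mu \leq \lambda$ with $d(\mu) = d(\alpha)\vee d(\beta)$. The content of the argument is then to show $\alpha \leq \mu$ and $\beta \leq \mu$, so that $\mu \in \alpha\Lambda \cap \beta\Lambda$ and hence $\mu \in MDCE(\alpha,\beta)$. To see $\alpha \leq \mu$, I would further factor $\mu = \mu_1\mu_2$ with $d(\mu_1) = d(\alpha)$ (possible since $d(\alpha) \leq d(\mu)$), so that writing $\lambda = \mu\lambda' = \mu_1\mu_2\lambda'$ exhibits a factorization of $\lambda$ whose first factor $\mu_1$ has degree $d(\alpha)$; comparing with the factorization coming from $\alpha \leq \lambda$ and invoking uniqueness gives $\mu_1 = \alpha$, whence $\alpha \leq \mu$. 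The argument for $\beta$ is identical. This is the step I expect to be the main obstacle, since it requires chaining the unique factorization property across nested factorizations while carefully matching degrees.

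For (2) I would prove both inclusions. For $MDCE \subseteq MCE$: if $\mu \in MDCE(\alpha,\beta)$ and $\nu \leq \mu$ is a common extension, then $d(\nu) \geq d(\alpha)\vee d(\beta) = d(\mu)$ while $\nu \leq \mu$ forces $d(\nu) \leq d(\mu)$, so $d(\nu) = d(\mu)$; writing $\mu = \nu\nu_1$ gives $d(\nu_1) = 1_G$, so $\nu_1$ is a vertex and $\mu = \nu$, proving minimality. For $MCE \subseteq MDCE$: given $\mu \in MCE(\alpha,\beta)$, apply (1) to the common extension $\lambda = \mu$ to obtain $\mu' \in MDCE(\alpha,\beta)$ with $\mu' \leq \mu$; since $\mu' \in MDCE(\alpha,\beta) \subseteq MCE(\alpha,\beta)$ is a common extension below the minimal $\mu$, minimality gives $\mu' = \mu \in MDCE(\alpha,\beta)$. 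Part (3) is then immediate: any common extension $\lambda$ admits $\mu \in MDCE(\alpha,\beta) = MCE(\alpha,\beta)$ with $\mu \leq \lambda$ by (1).

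For (4), I would first note that $\alpha\Lambda \cap \beta\Lambda$ is exactly the set of common extensions. The inclusion $\bigsqcup_{\mu} \mu\Lambda \subseteq \alpha\Lambda \cap \beta\Lambda$ is transitivity of $\leq$, and the reverse inclusion is exactly (3). For disjointness, if $\lambda \in \mu\Lambda \cap \mu'\Lambda$ with $\mu, \mu' \in MCE(\alpha,\beta)$, then $d(\mu) = d(\mu') = d(\alpha)\vee d(\beta)$ by (2), so the degree-$d(\mu)$ prefix of $\lambda$ is unique by unique factorization, forcing $\mu = \mu'$. Hence distinct minimal common extensions give disjoint sets, completing the proof.
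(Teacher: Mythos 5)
Your proposal is correct and follows essentially the same route as the paper's proof: part (1) by factoring out the degree-$(d(\alpha)\vee d(\beta))$ prefix of $\lambda$ and matching nested factorizations via uniqueness, then (2), (3), and (4) deduced exactly as the paper does, with disjointness in (4) coming from unique factorization at equal degrees. The only cosmetic difference is that for the inclusion $MDCE(\alpha,\beta) \subseteq MCE(\alpha,\beta)$ you argue directly by comparing degrees (forcing $d(\nu_1)=1_G$, so $\nu_1$ is a vertex) rather than re-invoking part (1) as the paper does; both are equally valid.
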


\begin{proof}

(1) Suppose that $\lambda$ is a common extension of $\alpha$ and $\beta$, so $\lambda= \alpha \alpha_1=\beta \beta_1$. 

Note that $d(\lambda) \geq d(\alpha)$ and $d(\lambda) \geq d(\beta)$, so $d(\lambda) \geq d(\alpha) \vee d(\beta)$. Thus by factorization, we may write $\lambda= \mu \mu_1$ where $d(\mu)=d(\alpha) \vee d(\beta)$. Now, $d(\mu) \geq d(\alpha)$, so $\mu$ may be additionally factored as $\mu=\alpha' \alpha_2$, where $d(\alpha')=d(\alpha)$. Then we have $\lambda= \alpha \alpha_1 =\mu \mu_1= \alpha' \alpha_2 \mu_1$, meaning we have two factorizations of $\lambda$ whose initial segments are of equal length $d(\alpha')=d(\alpha)$. Then by uniqueness of factorizations we have $\alpha'=\alpha$, so $\mu$ is an extension of $\alpha$. Similarly, $\mu$ will be an extension of $\beta$, so $\mu$ is a common extension of $\alpha$ and $\beta$ of length $d(\alpha) \vee d(\beta)$, so $\mu \in MDCE(\alpha, \beta)$ as desired.

(2) If $\nu \in MCE(\alpha, \beta)$, then by (1) there is some $\mu\in MDCE(\alpha, \beta)$ with $\mu \leq \nu$. Then by minimality of $\nu$, we have $\mu=\nu$, so $\nu \in MDCE(\alpha, \beta)$ and thus $MCE(\alpha, \beta) \subseteq MDCE(\alpha, \beta)$.

Conversely, if $\mu \in MDCE(\alpha, \beta)$, suppose there were some $\nu \in \Lambda$ with $\alpha \leq \nu, \beta \leq \nu, \nu \leq \mu$. Then by (1), there would be some $\mu' \in MDCE(\alpha, \beta)$ with $\mu' \leq \nu \leq \mu$. Then $\mu' \leq \mu$, so we may write $\mu s(\mu)=\mu= \mu' \mu_1$ for some $\mu_1 \in \Lambda$, and since $d(\mu)=d(\alpha) \vee d(\beta) = d(\mu')$, then by the uniqueness of the factorization we have $\mu=\mu'$. Then $\mu=\nu$, so $\mu$ is ``minimal'' in the appropriate sense such that $\mu \in MCE(\alpha, \beta)$. Thus $MDCE(\alpha, \beta)=MCE(\alpha, \beta)$.

(3) Immediately follows from (1) and (2).

(4) It is immediate that $\bigcup_{\mu \in MCE(\alpha, \beta)} \mu \Lambda \subseteq \alpha \Lambda \cap \beta \Lambda$, and by (3) we have $ \alpha \Lambda \cap \beta \Lambda \subseteq \bigcup_{\mu \in MCE(\alpha, \beta)} \mu \Lambda$, so it suffices to show the union is disjoint. To this end, if $\mu, \nu \in MCE( \alpha, \beta)$ and $\lambda \in \mu \Lambda \cap \nu \Lambda$, then we would have $\lambda = \mu \mu_1 = \nu \nu_1$, but this presents two factorizations with equal degrees $d(\mu) =d (\alpha) \vee d(\beta) = d(\nu)$, so we must have $\mu=\nu$. Thus the $\{\mu \Lambda : \mu \in MCE(\alpha, \beta)\}$ are pairwise disjoint, as desired.
\end{proof}

\begin{remark}
It should be noted that much of the literature (e.g. \cite[Definition 2.3]{brownlowe2013co}) defines $MCE(\alpha,\beta)$ as we have defined $MDCE(\alpha, \beta)$, which the previous lemma shows are equivalent. We have chosen to emphasize the definition arising purely from the category structure because later we will be considering a category as a $P$-graph and as a $Q$-graph, and this definition clarifies that the category structure does not depend on the choice of the grading (as long as one exists).

\end{remark}

\subsection{Representations of $P$-graphs and $P$-graph Algebras}

Two additional notions will be needed in our study of $P$-graph algebras.

\begin{definition}
Let $(G,P)$ be a WQLO group, and let $\Lambda$ be a $P$-graph. We say that $\Lambda$ is \ulindex{finitely-aligned} if for all $\mu, \nu \in \Lambda$, $MCE(\mu, \nu)$ is finite.

We say that a set $A \subseteq \Lambda$ is \ulindex{exhaustive} for a set $B \subseteq \Lambda$ if for all $\beta \in B$ there is an $\alpha \in A$ such that $\alpha$ and $\beta$ have a common extension.

\end{definition}

We are at last ready to define our main object of study, the representations of $P$-graphs in a $C^*$-algebra. The following definition is due to \cite{brownlowe2013co}, with the slight modification to apply to WQLO groups:

\begin{definition} \label{$P$-Graph Representation}
Let $(G,P)$ be a WQLO group and $\Lambda$ a finitely-aligned $P$-graph. A \ul{representation}\index{representation!of a $P$-graph} of $\Lambda$ in a $C^*$-algebra $B$ is a function $t: \Lambda \ra B, \lambda \mapsto t_\lambda$ such that:

\begin{enumerate}
\item [(T1)] $t_v=t_v^*$ and $t_v t_w=\delta_{v,w} t_v$ for all $v,w \in \Lambda^0$ (here $\delta$ denotes the Kronecker delta). That is, $\{t_v: v \in \Lambda^0\}$ is a collection of pairwise orthogonal projections.
\item [(T2)] $t_\mu t_\nu = t_{\mu \nu}$ whenever $s(\mu)=r(\nu)$.
\item [(T3)] $t_\mu ^*t_\mu = t_{s(\mu)}$ for all $\mu \in \Lambda$.
\item [(T4)] $t_\mu t_\mu^*t_\nu t_\nu^*= \fsum_{\lambda \in MCE(\mu, \nu)} t_\lambda t_\lambda^*$ for all $\mu, \nu \in \Lambda$.
\end{enumerate}

We denote by $C^*(t)$ the $C^*$-algebra generated by the $\{t_\lambda\}_{\lambda \in \Lambda}$, and $C^*(t)$ is called a \ul{$P$-graph $C^*$-algebra}\index{P-graph $C^*$-algebra@$P$-graph $C^*$-algebra} or just $P$-graph algebra.

\end{definition}

We require that our graphs be finitely-aligned so that the expression in the (T4) relator is a finite sum. The fact that an infinite sum of projections does not (usually) converge in the norm topology is a perennial problem in the field of generalizing Cuntz-Krieger algebras, and our solution is to limit our attention to the finitely-aligned case where the issue does not arise. 

We will often want to check the properties of $C^*(t)$ on a dense spanning set, so the following lemma is useful. This result is widely known (see for instance \cite[Remark 5.2]{brownlowe2013co}).

\begin{lemma} \label{Zigzag Resolution}
Let $(G,P)$ be a WQLO group, let $\Lambda$ a finitely aligned $P$-graph, and let $t$ be a representation of a $P$-graph. Then $C^*(t) = \closedspan\{ t_\alpha t_\beta^* : \alpha, \beta \in \Lambda\}$.
\end{lemma}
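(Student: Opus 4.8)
The plan is to show that the candidate set $S := \closedspan\{t_\alpha t_\beta^* : \alpha, \beta \in \Lambda\}$ is itself a $C^*$-algebra containing every generator $t_\lambda$; since $S \subseteq C^*(t)$ is automatic, this forces $S = C^*(t)$. Now $S$ is norm-closed by construction and visibly closed under the involution, as $(t_\alpha t_\beta^*)^* = t_\beta t_\alpha^*$, and each generator lies in $S$ via the identity $t_\lambda = t_\lambda t_{s(\lambda)}^*$, using $t_{s(\lambda)} = t_{s(\lambda)}^*$ from (T1). So the only real content is to show that $S$ is closed under multiplication, and for that it suffices to check that a product of two spanning elements, $(t_\alpha t_\beta^*)(t_\gamma t_\delta^*)$, again lies in the finite span of such terms.

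First I would record the elementary consequences of the representation axioms: from (T2) and (T3) one gets $t_\mu = t_\mu t_{s(\mu)}$, $t_\mu^* = t_{s(\mu)} t_\mu^*$, and $t_\mu^* t_\mu t_\mu^* = t_\mu^*$, together with the orthogonality $t_v t_w = \delta_{v,w} t_v$ from (T1). A useful consequence is that $t_\alpha t_\beta^* = 0$ unless $s(\alpha) = s(\beta)$.

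The crux is a \emph{zigzag formula} for the one problematic adjacency, an adjoint followed by a non-adjoint: for any $\mu, \nu \in \Lambda$,
\[ t_\mu^* t_\nu = \fsum_{\lambda \in MCE(\mu,\nu)} t_{\mu'_\lambda} t_{\nu'_\lambda}^*, \]
where each $\lambda$ is factored uniquely, by the unique factorization property, as $\lambda = \mu \mu'_\lambda = \nu \nu'_\lambda$. I would derive this by sandwiching the (T4) relation between $t_\mu^*$ on the left and $t_\nu$ on the right: the left-hand side $t_\mu^*(t_\mu t_\mu^* t_\nu t_\nu^*)t_\nu$ collapses to $t_\mu^* t_\nu$ via $t_\mu^* t_\mu t_\mu^* = t_\mu^*$ and $t_\nu t_\nu^* t_\nu = t_\nu$, while on the right each summand $t_\mu^* t_\lambda t_\lambda^* t_\nu$ simplifies, using $t_\lambda = t_\mu t_{\mu'_\lambda} = t_\nu t_{\nu'_\lambda}$ together with (T3) and the vertex identities, to $t_{\mu'_\lambda} t_{\nu'_\lambda}^*$. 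Finite alignment guarantees the sum is finite. This step is where the axioms and Lemma \ref{Equivalent Minimality}, identifying $MCE$ with minimal-degree common extensions so that the degrees of $\mu'_\lambda, \nu'_\lambda$ are controlled, do all the work, and I expect it to be the main obstacle: tracking which vertex projections appear and checking that the factorizations are the correct ones requires care.

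Given the zigzag formula, closure under multiplication is immediate:
\[ (t_\alpha t_\beta^*)(t_\gamma t_\delta^*) = t_\alpha (t_\beta^* t_\gamma) t_\delta^* = \fsum_{\lambda \in MCE(\beta,\gamma)} t_\alpha t_{\beta'_\lambda} t_{\gamma'_\lambda}^* t_\delta^* = \fsum_{\lambda \in MCE(\beta,\gamma)} t_{\alpha\beta'_\lambda} t_{\delta\gamma'_\lambda}^*, \]
where the last equality uses (T2) to merge the composable pairs: on a nonzero term $s(\alpha) = s(\beta) = r(\beta'_\lambda)$, so $t_\alpha t_{\beta'_\lambda} = t_{\alpha\beta'_\lambda}$, and symmetrically $t_{\gamma'_\lambda}^* t_\delta^* = (t_\delta t_{\gamma'_\lambda})^* = t_{\delta\gamma'_\lambda}^*$. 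This is a finite linear combination of terms $t_\zeta t_\eta^*$, so $\spanset\{t_\alpha t_\beta^*\}$ is a $*$-subalgebra; its closure $S$ is therefore a $C^*$-algebra containing all $t_\lambda$, giving $C^*(t) \subseteq S$. Combined with the trivial reverse inclusion, this yields $C^*(t) = S$, as claimed.
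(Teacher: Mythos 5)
Your proposal is correct and follows essentially the same route as the paper's proof: both resolve the problematic adjacency $t_\beta^* t_\gamma$ by inserting range projections and applying (T4), yielding the same finite sum $\fsum_{\lambda \in MCE(\beta,\gamma)} t_{\alpha(\beta\inv\lambda)} t_{\delta(\gamma\inv\lambda)}^*$, and then conclude that the span is a $*$-subalgebra containing the generators. The only difference is packaging — you isolate the identity $t_\mu^* t_\nu = \fsum_{\lambda \in MCE(\mu,\nu)} t_{\mu'_\lambda} t_{\nu'_\lambda}^*$ as a standalone formula, while the paper performs the identical manipulation inline on the full product — which is purely cosmetic.
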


\begin{proof}
Fix some $\alpha, \beta, \mu, \nu \in \Lambda$. If $s(\alpha) \neq s(\beta)$ or $s(\mu) \neq s(\nu)$, then by the T1 relator we have $t_\alpha t_\beta^* =0$ or $t_\mu t_\nu^*=0$ respectively, and in either case $(t_\alpha t_\beta^*)(t_\mu t_\nu^*)=0$. If instead $s(\alpha) = s(\beta)$ and $s(\mu) = s(\nu)$, then

\bea
(t_\alpha t_\beta^*)(t_\mu t_\nu^*) &=&(t_\alpha t_\beta^*)(t_\beta t_\beta^*)(t_\mu t_\mu^*)(t_\mu t_\nu^*) \text{ by T3 and T2} \\
&=&(t_\alpha t_\beta^*)\left( \fsum_{\lambda \in MCE(\beta, \mu)} t_\lambda t_\lambda^* \right)(t_\mu t_\nu^*) \text{ by T4} \\
&=& \fsum_{\lambda \in MCE(\beta, \mu)} t_{\alpha (\beta \inv \lambda)} t_{\nu (\mu\inv \lambda)} \text{ by T2 and T3} \\
\eea

\noindent where $\beta\inv \lambda$ denotes the unique path for which $\beta (\beta\inv \lambda)=\lambda$, and similarly $\mu\inv \lambda$ denotes the unique path for which $\mu (\mu\inv \lambda)=\lambda$.

This calculation shows that $\spanset\{ t_\alpha t_\beta^* : \alpha, \beta \in \Lambda\}$ is closed under multiplication, so it is a $*$-subalgebra of $C^*(t)$, and it contains each $t_\alpha$ since $t_\alpha = t_\alpha t_{s(\alpha)}^*$. Thus $\closedspan\{ t_\alpha t_\beta^* : \alpha, \beta \in \Lambda\}$ is a $C^*$-subalgebra of $C^*(t)$ containing all of the generators, so $C^*(t)=\closedspan\{ t_\alpha t_\beta^* : \alpha, \beta \in \Lambda\}$.

\end{proof}

\begin{example} \label{Left Regular Representation}
As we have already seen with groups and Fell bundles, a simple way of representing an object often comes from its own action on itself, and such an example arises for representations of $P$-graphs as well. To define the \ul{left-regular representation} of a $P$-graph $\Lambda$\index{left-regular representation!of a $P$-graph}\index{representation!of a $P$-graph!left-regular}, let $\mathcal H= \ell^2(\Lambda)$, with the typical orthonormal basis $\{e_\alpha\}_{\alpha \in \Lambda}$. (Kribs and Power call this the \ulindex{Fock space} of $\Lambda$ in their papers \cite{kribs2003free, kribs2004h}.) For $\mu \in \Lambda$ define a ``forward shift by $\mu$'' operator $L_\mu$ by 

\[ L_\mu e_\alpha= \begin{cases}
e_{\mu \alpha} & \text{ if } s(\mu)=r(\alpha) \\
0 & \text{ if } s(\mu) \neq r(\alpha)
\end{cases} \]

\noindent and extending this linearly. A short computation on the inner product shows that $L_\mu^*$ is the ``backwards shift by $\mu$'' operator, which is given by

\[ L_\mu^* e_\beta = \begin{cases}
e_{\alpha} & \text{ if there exists }\alpha \text{ with } \mu \alpha= \beta \\
0 & \text{ if no such } \alpha \text{ exists}
\end{cases} \]

\noindent (noting that if such an $\alpha$ does exist, it is unique by left cancellation).

We will now show that $\{L_\mu\}_{\mu \in \Lambda}$ is a representation of $\Lambda$. The T1 and T2 relations are immediate.

For the T3 relation, observe that

\[ L_\mu^* L_\mu e_\alpha = L_\mu^*\left( \begin{cases}
e_{\mu \alpha} & \text{ if } s(\mu)=r(\alpha) \\
0 & \text{ if } s(\mu) \neq r(\alpha) \end{cases} \right)
= \begin{cases}
e_{\alpha} & \text{ if } s(\mu)=r(\alpha) \\
0 & \text{ if } s(\mu) \neq r(\alpha) \end{cases} = L_{s(\mu)} e_\alpha \]

\noindent as desired.

For the T4 relation, first note that 

\[ L_\mu L_\mu^* e_\alpha = \begin{cases}
e_{\alpha} & \text{ if } \alpha \in \mu \Lambda \\
0 & \text{ if } \alpha \not \in \mu \Lambda \end{cases}. \]

Thus 

\[ L_\mu L_\mu^* L_\nu L_\nu^* e_\alpha = \begin{cases}
e_{\alpha} & \text{ if } \alpha \in \mu \Lambda \cap \nu \Lambda \\
0 & \text{ if } \alpha \not \in \mu \Lambda \cap \nu \Lambda \end{cases}.\] 

But since $\mu \Lambda \cap \nu \Lambda = \bigsqcup_{\lambda \in MCE(\mu, \nu)} \lambda \Lambda$ by Lemma \ref{Equivalent Minimality}(4), then if $\alpha \in \mu \Lambda \cap \nu \Lambda$ there exists a unique $\lambda_0 \in MCE(\mu, \nu)$ with $\alpha \in \lambda \Lambda$, so $L_{\lambda_0} L_{\lambda_0}^* e_\alpha = e_\alpha$, and $L_\lambda L_\lambda^* e_\alpha =0$ for $\lambda \in MCE(\mu, \nu) \setminus \lambda_0$. Thus 

\[ \fsum_{\lambda \in MCE(\mu, \nu)} L_\lambda L_\lambda^* e_\alpha=e_\alpha \]

\noindent when $\alpha \in \mu \Lambda \cap \nu \Lambda$, so $L_\mu L_\mu^* L_\nu L_\nu^*=\fsum_{\lambda \in MCE(\mu, \nu)} L_\lambda L_\lambda^*$ as desired.

Thus $\{L_\mu\}_{\mu \in \Lambda}$ is a representation, as desired.
\end{example}

There are several additional properties that we will sometimes ask of our representation. In order, we will ask for our generators to be nonzero, that our representation preserve its knowledge of the $P$-graded structure through a certain coaction called a gauge coaction, and that our representation be ``tight'' in the sense of \cite{exel2009tight}. More precisely:

\begin{definition}\label{Representation Terminology}
Let $(G,P)$ be a WQLO group and $\Lambda$ a finitely aligned $P$-graph. We say that a representation $t$ of $\Lambda$...

\begin{enumerate}[label=\alph*.]
\item is \ulindex{$\Lambda$-faithful}\index{faithful!$\Lambda$-faithful} if each $t_\lambda$ is nonzero.

\item has a \ulindex{gauge coaction}\index{coaction!gauge} if there is a $G$-coaction $\delta$ on $C^*(t)$ such that

\[ \delta(t_\lambda ) = t_\lambda \otimes U_{d(\lambda)} \]

\noindent for all $\lambda \in \Lambda$.

\item is \ulindex{tight} if whenever $\mu \in \Lambda$ and $E \subset \mu \Lambda$ is finite and exhaustive for $\mu \Lambda$, we have

 \[ \fprod_{\alpha \in E} (t_\mu t_\mu^* -t_{\alpha}t_{\alpha}^* )=0. \] 
 
(Recall that $E$ is exhaustive for $\mu \Lambda$ if for all $\nu \in \mu \Lambda$, there is an $\alpha \in E$ such that $\alpha$ and $\nu$ have a common extension.) This terminology is motivated by \cite{exel2009tight}, which defines a tight representation of a semilattice. We show in Appendix 1 that this notion of tight is equivalent to the one in \cite{exel2009tight}.

\item \ul{canonically covers}\index{cover! of a representation}\index{cover! canonical} another representation $s$ if there is a (necessarily surjective) $*$-homomorphism from $C^*(t)$ to $C^*(s)$ given by $t_\lambda \mapsto s_\lambda$. In such a case, we will write $\pi_s^t: C^*(t)\ra C^*(s)$ to denote the $*$-homomorphism, which we call the \ul{canonical covering}. The notation $\pi_s^t$ should hopefully be suggestive of $t$ ``covering'' $s$.

\item is \ulindex{canonically isomorphic} to another representation $s$ if there is an isomorphism between $C^*(s)$ and $C^*(t)$ given by $t_\lambda \mapsto s_\lambda$ (i.e. if $s$ and $t$ canonically cover each other).

\end{enumerate} 
\end{definition}

It should be noted that the conditions (T1)-(T4) and tightness are all polynomial relations of the form suitable for Lemma \ref{Universal Algebra} (finite-alignment being necessary in the case of (T4)). However, $\Lambda$-faithfulness and the existence of a gauge coaction are not polynomial relators in an obvious way.

\begin{remark} \label{Regular Representation Properties}
The left-regular representation $L$ from Example \ref{Left Regular Representation} is $\Lambda$-faithful. 

However, this representation is as far from tight as possible: given $\mu \in \Lambda$ and $E \subset \mu \Lambda$ which is finite and exhaustive for $\mu \Lambda$, $\fprod_{\alpha \in E} (L_\mu L_\mu^* -L_{\alpha}L_{\alpha}^* )=0$ if and only if $\mu \in E$ (i.e. when one of the terms of the product is 0). To see this, note that if $\alpha \in \mu \Lambda \setminus \mu$, then $L_\alpha L_\alpha^* e_\mu=0$, so $(L_\mu L_\mu^* -L_{\alpha}L_{\alpha}^*) e_\mu=e_\mu$, and thus taking the product over $\alpha \in E \subset \mu \Lambda \setminus \mu$, we get that $\fprod_{\alpha \in E} (L_\mu L_\mu^* -L_{\alpha}L_{\alpha}^* ) e_\mu = e_\mu \neq 0$.

We will show in Corollary \ref{Left Regular is Toeplitz} that $L$ has a gauge coaction if $(G,P)$ has a strong reduction to an amenable group.
\end{remark}

\begin{lemma} \label{Toeplitz Construction}
Let $(G,P)$ be a WQLO group, and let $\Lambda$ be a finitely aligned $P$-graph. One may apply Lemma \ref{Universal Algebra} to the relators (T1)-(T4), so there is a representation of $\Lambda$ which is universal.
\end{lemma}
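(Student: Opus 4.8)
The plan is to recast the defining relations (T1)--(T4) as a family of polynomial relators on the formal generating set $\mathcal G = \{x_\lambda\}_{\lambda \in \Lambda}$ (with adjoints $\mathcal G^* = \{x_\lambda^*\}_{\lambda \in \Lambda}$) and then invoke Lemma \ref{Universal Algebra}. Concretely, I would encode (T1) as the relators $x_v - x_v^* = 0$ together with $x_v x_w - \delta_{v,w}\, x_v = 0$ for $v, w \in \Lambda^0$; (T2) as $x_\mu x_\nu - x_{\mu\nu} = 0$ for each pair with $s(\mu) = r(\nu)$; (T3) as $x_\mu^* x_\mu - x_{s(\mu)} = 0$; and (T4) as $x_\mu x_\mu^* x_\nu x_\nu^* - \sum_{\lambda \in MCE(\mu,\nu)} x_\lambda x_\lambda^* = 0$. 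Each of these is a polynomial in the generators and their adjoints with complex coefficients, which is exactly the form required by Lemma \ref{Universal Algebra}. The one place where finiteness is not automatic is (T4): here the hypothesis that $\Lambda$ is finitely aligned guarantees that $MCE(\mu,\nu)$ is a finite set, so the right-hand sum is a genuine (finite) polynomial rather than an infinite series. (There may be infinitely many relators, but that is permitted, since $\mathcal R$ is only required to be a set of polynomial relators.)

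To apply Lemma \ref{Universal Algebra}, the remaining obligation --- and the crux of the argument --- is to check its standing hypothesis that the relators force every generator to be represented by a partial isometry. So let $\{y_\lambda\}_{\lambda \in \Lambda}$ be any representation satisfying (T1)--(T4) in a $C^*$-algebra. For a vertex $v \in \Lambda^0$, (T1) gives $y_v = y_v^*$ and $y_v^2 = y_v$, so each $y_v$ is a projection. For an arbitrary path $\lambda$, (T3) gives $y_\lambda^* y_\lambda = y_{s(\lambda)}$, which is therefore a projection. It then remains to deduce that $y_\lambda$ itself is a partial isometry, i.e. that $y_\lambda y_\lambda^* y_\lambda = y_\lambda$.

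The standard fact I would use is that in any $C^*$-algebra an element $a$ with $a^* a$ a projection is automatically a partial isometry: writing $p = a^* a$ (so $p = p^* = p^2$), one computes $(a - ap)^*(a - ap) = p - p^2 - p^2 + p^3 = 0$, whence $\norm{a - ap}^2 = 0$ and therefore $a = ap = a a^* a$. Applying this with $a = y_\lambda$ and $p = y_{s(\lambda)}$ shows that each $y_\lambda$ is a partial isometry, verifying the hypothesis of Lemma \ref{Universal Algebra}. That lemma then produces a representation of $\Lambda$ which is universal for representations satisfying (T1)--(T4), completing the proof. I expect the partial-isometry verification to be the only substantive step; the translation of (T1)--(T4) into polynomial relators is routine, with finite-alignment being precisely the hypothesis that keeps (T4) polynomial.
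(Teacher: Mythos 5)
Your proposal is correct and follows essentially the same route as the paper's proof: translate (T1)--(T4) into polynomial relators (with finite alignment ensuring the (T4) sum is finite), observe that (T1) and (T3) force the generators to be partial isometries, and invoke Lemma \ref{Universal Algebra}. The only difference is that you spell out the standard $C^*$-algebra argument (that $a^*a$ being a projection makes $a$ a partial isometry), which the paper leaves implicit.
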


\begin{proof}
Observe that (T1) and (T3) together imply that the generators are partial isometries. Also, all of the relators are polynomials in the generators (in the case of (T4), we must note that the expression is finite since the graph is finitely aligned). Thus there is a universal representation of $\Lambda$.
\end{proof}

\begin{definition} \label{Toeplitz Definition}
We call the universal representation of a $P$-graph $\Lambda$ the \ulindex{Toeplitz(-Cuntz-Krieger) representation}\index{representation!of a $P$-graph!Toeplitz} of $\Lambda$. We will use $\mathcal T$\index{T@$\mathcal T$}\index{T@$\mathcal TC^*(\Lambda)$} to denote this representation, and write $\mathcal T C^*(\Lambda)$ or $C^*(\mathcal T)$ for the algebra generated by it which we call the \ulindex{Toeplitz(-Cuntz-Krieger) algebra} of the graph.
\end{definition}

\begin{lemma}\label{Representation Order}

Let $(G,P)$ be a WQLO group, and let $\Lambda$ be a finitely aligned $P$-graph. 
\begin{enumerate}

\item Let $Rep(\Lambda)$ denote the set of all representations of $\Lambda$ up to canonical isomorphism. Then $Rep(\Lambda)$ can be given a partial ordering $\geq_{rep}$ by saying that $s \geq_{rep} t$\index{$\geq_{rep}$} if and only if $s$ canonically covers $t$ (and $t \cong s$ if and only if they are canonically isomorphic). 

\item There is a bijection $\Psi$ from $Rep(\Lambda)$ to ideals in $\mathcal TC^*(\Lambda)$ given by $t \mapsto \ker \pi_t^{\mathcal T}$, which is order-reversing in the sense that $t \leq_{rep} s \iff \Psi(t) \supseteq \Psi(s)$.

\end{enumerate}
\end{lemma}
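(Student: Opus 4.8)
The plan is to route everything through the universal property of the Toeplitz algebra (Lemma \ref{Universal Algebra}, via Lemma \ref{Toeplitz Construction}) together with the Factors Through Theorem (Lemma \ref{Factors Through Theorem}). The single recurring principle is that a $*$-homomorphism out of $C^*(t)$ is determined by its values on the generators $\{t_\lambda\}$, whose $*$-algebra has dense span (Lemma \ref{Zigzag Resolution}); consequently any two canonical coverings with the same domain and target agree, and any self-map of $C^*(t)$ fixing each $t_\lambda$ is the identity. Throughout, ``ideal'' means closed two-sided ideal, so that quotients remain $C^*$-algebras.

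For Part 1, I would first check that $\geq_{rep}$ descends to canonical-isomorphism classes: if $s$ covers $t$ and we replace $s,t$ by canonically isomorphic $s',t'$, then composing the covering $\pi_t^s$ with the two canonical isomorphisms produces a $*$-homomorphism $C^*(s') \to C^*(t')$ sending $s'_\lambda \mapsto t'_\lambda$. Reflexivity is witnessed by $\id_{C^*(t)}$; transitivity follows by composing coverings $\pi_t^s \colon C^*(s) \to C^*(t)$ and $\pi_u^t \colon C^*(t) \to C^*(u)$, whose composite still sends $s_\lambda \mapsto u_\lambda$; and antisymmetry is exactly the definition of canonical isomorphism, since if $s \geq_{rep} t$ and $t \geq_{rep} s$ then $\pi_s^t \circ \pi_t^s$ and $\pi_t^s \circ \pi_s^t$ fix generators and hence are identities, making $\pi_t^s$ a canonical isomorphism.

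For Part 2 I would handle well-definedness, order-reversal, and surjectivity in turn, with injectivity falling out of order-reversal. By the universal property each $\pi_t^{\mathcal T}$ is a surjective canonical covering, so $\ker \pi_t^{\mathcal T}$ is a well-defined closed ideal; if $t \cong t'$ then the canonical isomorphism $\theta \colon C^*(t) \to C^*(t')$ satisfies $\theta \circ \pi_t^{\mathcal T} = \pi_{t'}^{\mathcal T}$ by agreement on generators, and injectivity of $\theta$ forces the two kernels to coincide, so $\Psi$ is well-defined on classes. For order-reversal: if $s \geq_{rep} t$ then $\pi_t^s \circ \pi_s^{\mathcal T} = \pi_t^{\mathcal T}$, giving $\ker \pi_s^{\mathcal T} \subseteq \ker \pi_t^{\mathcal T}$, i.e. $\Psi(t) \supseteq \Psi(s)$; conversely, if $\ker \pi_s^{\mathcal T} \subseteq \ker \pi_t^{\mathcal T}$, the Factors Through Theorem applied to the surjection $\pi_s^{\mathcal T}$ yields $h \colon C^*(s) \to C^*(t)$ with $h \circ \pi_s^{\mathcal T} = \pi_t^{\mathcal T}$, whence $h(s_\lambda) = t_\lambda$ and $s \geq_{rep} t$. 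Applying this equivalence in both directions shows $\Psi(s) = \Psi(t)$ forces $s \cong t$, so $\Psi$ is injective. For surjectivity, given a closed ideal $I$, let $q \colon \mathcal T C^*(\Lambda) \to \mathcal T C^*(\Lambda)/I$ be the quotient and set $t_\lambda = q(\mathcal T_\lambda)$; since $q$ is a $*$-homomorphism it preserves the polynomial relations (T1)--(T4), so $t$ is a representation, $C^*(t) = q(\mathcal T C^*(\Lambda)) = \mathcal T C^*(\Lambda)/I$, and $\pi_t^{\mathcal T} = q$ gives $\Psi(t) = I$.

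The arguments are largely bookkeeping in the generators, and the only genuine content sits in the surjectivity step and the backward direction of order-reversal, both resting on the Factors Through Theorem. The point requiring care is verifying its hypotheses: $\pi_s^{\mathcal T}$ must be surjective (supplied by the universal property) and the relevant kernel containment must hold, after which parts (2) and (3) of Lemma \ref{Factors Through Theorem} guarantee the bonding map $h$ is an honest $*$-homomorphism and not merely a set map. The subtle verification in the surjectivity step is that the quotient map genuinely yields a representation, which amounts to observing that (T1)--(T4) are preserved by any $*$-homomorphism; this is exactly what lets the abstract correspondence between representations and ideals get off the ground.
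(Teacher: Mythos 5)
Your proposal is correct and follows essentially the same route as the paper: reflexivity, transitivity, and antisymmetry via generator-fixing compositions for Part 1, then the universal property plus the Factors Through Theorem for order-reversal, with injectivity falling out of it and surjectivity witnessed by the quotient representation $\lambda \mapsto \mathcal{T}_\lambda + I$. The only difference is that you explicitly verify the order and the map $\Psi$ descend to canonical-isomorphism classes, a point the paper leaves implicit; this is harmless extra rigor, not a different argument.
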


\begin{proof}
For (1), we must check the partial order properties of reflexivity, transitivity, and antisymmetry. 

Certainly each representation canonically covers itself by the identity map, so $\leq_{rep}$ is reflexive.

If $t \leq_{rep} s$ and $s \leq_{rep} r$, then there are canonical coverings $\pi_s^r:C^*(r) \ra C^*(s)$ and $\pi_t^s:C^*(s) \ra C^*(t)$. Then the composition $\pi_t^r:= \pi_t^s \circ \pi_s^r$ is a canonical covering of $t$ by $r$, so $t \leq_{rep} r$, giving transitivity.

If $t \leq_{rep} s$ and $s \leq_{rep} t$, then there are canonical coverings $\pi_t^s:C^*(s) \ra C^*(t)$ and $\pi_s^t:C^*(t) \ra C^*(s)$. Composing these maps in either direction shows that they fix each generator, and therefore are the identity maps. Therefore $\pi_t^s$ and $\pi_s^t$ are inverses and hence canonical isomorphisms.

Thus $\leq_{rep}$ is indeed a partial order, as desired.

For (3), it is certainly true that $\Psi$ is a well-defined function from $Rep(\Lambda)$ to $\{\text{ideals in }\mathcal TC^*(\Lambda)\}$. 

We will first check that if $s,t \in Rep(\Lambda)$, then $t \leq_{rep} s \iff \Psi(t) \supseteq \Psi(s)$. If $t \leq_{rep} s$, then there is a canonical covering $\pi^s_t: C^*(s) \ra C^*(t)$, and since $\pi^s_t  \circ \pi^{\mathcal T}_s = \pi^{\mathcal T}_t$, then $\Psi(t)=\ker \pi^{\mathcal T}_t  \supseteq \pi^{\mathcal T}_s= \Psi(s)$. Conversely, if $\Psi(t) \supseteq \Psi(s)$, then by Lemma \ref{Factors Through Theorem} applied to $\pi^{\mathcal T}_t$ and $\pi^{\mathcal T}_s$, there is a  $*$-homomorphism $\pi_t^s:C^*(s) \ra C^*(t)$ with $\pi_t^s \circ \pi^{\mathcal T}_s =\pi^{\mathcal T}_t$so $\pi_t^s$ is a canonical covering and $t \leq_{rep} s$. This completes the proof that $\Psi$ is order-reversing.

Now to show that $\Psi$ is a bijection, if $\Psi(s)=\Psi(t)$, then by the above $s\leq_{rep} t$ and $t \leq_{rep} s$, so $s \cong t$ and hence $\Psi$ is injective. For surjectivity, if $J \lhd \mathcal T C^*(\Lambda)$ is an ideal, represent $\Lambda$ by $\lambda \mapsto \mathcal T_\lambda +J$. Then this representation has kernel $J$, so $\Psi$ is surjective, as desired.
\end{proof}

As a consequence of this, representations have the order structure of the family of ideals of an algebra, and in particular have a lattice order (where meet and join are represented by intersection and addition of ideals).

\begin{remark}
$\Lambda$-faithfulness is a ``largeness'' condition in that if $s \leq t$ and $s$ is $\Lambda$-faithful, then so is $t$.

Tightness is a ``smallness'' condition in that if $s \leq t$ and $t$ is tight, then so is $s$.
\end{remark}

\subsubsection{Gauge coactions and Coactionization}

It may not be obvious what role ``has a gauge coaction'' plays: is it saying a representation is ``large'' or ``small''? 

The following proposition is a tangled web of results, but it shows that the existence of a gauge coaction is properly considered a ``largeness'' condition: any representation can be ``lifted'' (1) to have a gauge coaction (4), the lifted version covers the original (2) and is the smallest gauge coacting representation to do so (6), and they are isomorphic if and only if the original had a gauge coaction (5). We also simplify the notion of a gauge coaction by showing that any homomorphism satisfying $t_\lambda \mapsto t_\lambda \otimes U_{d(\lambda)}$ is automatically a gauge coaction (3), and show that a representation is $\Lambda$-faithful or tight if and only if its coactionization is (7).

\begin{proposition} \label{Coactionization}
Let $(G,P)$ be a WQLO group, and let $\Lambda$ be a finitely aligned $P$-graph. Let $t$ be a representation of $\Lambda$. 

\begin{enumerate}

\item There is a representation $t'$ of $\Lambda$ given by $t'_\lambda= t_\lambda \otimes U_{d(\lambda)}$. 
\item There is a canonical covering $t' \mapsto t$.
\item If there is a $*$-homomorphism $\delta: C^*(t) \ra C^*(t) \otimes C^*(G)$ satisfying $\delta(t_\lambda) = t'_\lambda$, then $\delta$ is a coaction, and hence a gauge coaction.
\item $t'$ has a gauge coaction.
\item $t$ is canonically isomorphic to $t'$ if and only if $t$ has a gauge coaction.
\item If $s \leq t$, then $s' \leq t'$, and in particular for any representation $s$, $s'$ is the smallest gauge coacting representation that covers $s$.
\item $t'$ is $\Lambda$-faithful (respectively, tight) if and only if $t$ is.
\end{enumerate}

Due to these properties, we believe that $t'$ deserves to be called the \ulindex{coactionization} of $t$.
\end{proposition}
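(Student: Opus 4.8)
The plan is to treat the seven claims in order, exploiting the fact that a $*$-homomorphism out of $C^*(t)$ or $C^*(t')$ is determined by its values on the generators, so that almost every assertion reduces to a check on generators plus elementary facts about the minimal tensor product. Three auxiliary maps, all produced from Lemma \ref{Tensor of Homomorphisms} together with the universal property of $C^*(G)$, will do most of the work: the augmentation $\epsilon \colon C^*(G) \to \C$ coming from the trivial representation $U_g \mapsto 1$, the comultiplication $\delta_G$ of Example \ref{Delta_G}, and, for any covering $\pi$, its amplification $\pi \otimes \id_G$. For (1) I would verify (T1)--(T4) directly for $t'_\lambda = t_\lambda \otimes U_{d(\lambda)}$: because $d$ is a functor and each $U_g$ is unitary, the $C^*(G)$-factor collapses to $1$ in (T1), (T3), (T4) and multiplies correctly in (T2), so each relation reduces to the corresponding relation for $t$.

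For (2) I would restrict $\id_{C^*(t)} \otimes \epsilon$ to $C^*(t') \subseteq C^*(t) \otimes C^*(G)$; it sends $t'_\lambda = t_\lambda \otimes U_{d(\lambda)} \mapsto t_\lambda$ and has image the $C^*$-algebra generated by the $t_\lambda$, hence is a canonical covering $C^*(t') \to C^*(t)$. For (3), given $\delta$ with $\delta(t_\lambda)=t'_\lambda$, the coaction identity holds because both $(\delta \otimes \id_G)\circ\delta$ and $(\id_{C^*(t)} \otimes \delta_G)\circ\delta$ send $t_\lambda \mapsto t_\lambda \otimes U_{d(\lambda)} \otimes U_{d(\lambda)}$; injectivity follows because $(\id_{C^*(t)} \otimes \epsilon)\circ\delta = \id_{C^*(t)}$ is a left inverse; and for nondegeneracy I would produce, for each $\alpha,\beta\in\Lambda$ and $g\in G$, the element $t_\alpha t_\beta^* \otimes U_g$ inside $\closedspan[\delta(C^*(t))(C^*(t)\otimes C^*(G))]$ from a product of the shape $\delta(t_\alpha t_\beta^*)(t_\beta t_\beta^* \otimes U_h)$, then invoke Lemma \ref{Zigzag Resolution} to conclude these have dense span in $C^*(t)\otimes C^*(G)$. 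Part (4) is then (3) applied to $t'$: the restriction of $\id_{C^*(t)} \otimes \delta_G$ is a homomorphism $C^*(t') \to C^*(t')\otimes C^*(G)$ with $t'_\lambda \mapsto t'_\lambda \otimes U_{d(\lambda)}$, which (3) upgrades to a gauge coaction.

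For (5), the reverse direction is immediate: a gauge coaction $\delta$ is in particular an injective homomorphism $C^*(t) \to C^*(t')$ with $t_\lambda \mapsto t'_\lambda$ whose image contains every generator, hence a canonical isomorphism. For the forward direction I would take a canonical isomorphism $\theta\colon C^*(t)\to C^*(t')$ and transport the gauge coaction $\delta'$ from (4) by $\delta := (\theta^{-1}\otimes \id_G)\circ\delta'\circ\theta$, which sends $t_\lambda \mapsto t_\lambda \otimes U_{d(\lambda)}$ and so is a gauge coaction by (3). For (6), the amplification $\pi\otimes\id_G$ of a covering $\pi\colon C^*(t)\to C^*(s)$ restricts to a covering $C^*(t')\to C^*(s')$ with $t'_\lambda \mapsto s'_\lambda$, giving $s\leq_{rep}t \Rightarrow s'\leq_{rep}t'$; minimality among gauge-coacting covers of $s$ then follows by combining this monotonicity with (5), since any gauge-coacting cover $r \geq_{rep} s$ satisfies $r\cong r' \geq_{rep} s'$. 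Finally (7) is a cross-norm computation: $\norm{t'_\lambda}=\norm{t_\lambda}$ handles $\Lambda$-faithfulness, and $t'_\mu (t'_\mu)^* - t'_\alpha (t'_\alpha)^* = (t_\mu t_\mu^* - t_\alpha t_\alpha^*)\otimes 1$ shows the tightness product for $t'$ is that for $t$ tensored with $1$, which vanishes exactly when the latter does.

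I expect the main obstacle to be the nondegeneracy in (3), as it is the one point that is not a formal manipulation of generators: it requires exhibiting enough elements of $\delta(C^*(t))(C^*(t)\otimes C^*(G))$ and appealing to the spanning description in Lemma \ref{Zigzag Resolution}. Everything else is a relation check, a left-inverse argument, a transport by conjugation, or a property of the minimal tensor norm.
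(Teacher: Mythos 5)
Your proposal is correct and follows essentially the same route as the paper's proof: verify (T1)--(T4) directly for (1), slice off the $C^*(G)$ factor via the trivial representation for (2), prove (3) by the left-inverse injectivity argument, a spanning-set nondegeneracy computation using Lemma \ref{Zigzag Resolution}, and a generator check of the coaction identity, then obtain (4) by restricting $\id_{C^*(t)}\otimes\delta_G$, (5) from (3), (6) by amplifying the covering with $\otimes\,\id_{C^*(G)}$, and (7) from the cross-norm property. The only differences are cosmetic (the paper uses $\delta(t_{s(\mu)})(t_\mu t_\nu^*\otimes U_g)$ rather than your $\delta(t_\alpha t_\beta^*)(t_\beta t_\beta^*\otimes U_h)$ for nondegeneracy, and your treatment of (5) is actually slightly more explicit than the paper's, which writes out only one direction).
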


\begin{proof}
The proof of (1) is a routine confirmation of the T1-T4 relators, which we include here. We have:


\begin{tabular}{cccl}
T1: &  $t_v' t_w'$&=& $(t_v \otimes U_e)(t_w \otimes U_e)= t_v t_w \otimes U_e = \delta_{v,w} t_v \otimes U_e= \delta_{v,w} t_v'$ \\
 T2: & $t_\mu' t_\nu'$ &=& $(t_\mu \otimes U_{d(\mu)}) (t_\nu \otimes U_{d(\nu)})= (t_{\mu} t_\nu) \otimes U_{d(\mu)} U_{d(\nu)}$ \\
 &&=& $t_{\mu \nu} \otimes U_{d(\mu \nu)} = t_{\mu \nu}'$ \\
 T3: &${t_\mu'} ^* t_\mu'$ &=& $(t_\mu \otimes U_{d(\mu)})^* (t_\mu \otimes U_{d(\mu)}) = t_\mu^* t_\mu \otimes U_{d(\mu)}^* U_{d(\mu)}$ \\
 &&=& $t_{s(\mu)} \otimes U_e=  t_{s(\mu)}'$\\
 T4: & $t_\nu' {t_\nu'} ^* t_\mu' t_\mu'^*$ &=& $(t_\nu \otimes U_{d(\nu)}) (t_\nu \otimes U_{d(\nu)})^* (t_\mu \otimes U_{d(\mu)})(t_\mu \otimes U_{d(\mu)})^*$ \\
 &&=& $(t_\nu t_\nu^* t_\mu t_\mu^*) \otimes U_e = \fsum_{\lambda \in MCE(\mu, \nu)} t_\lambda t_\lambda^* \otimes U_e$ \\
 &&=& $ \fsum_{\lambda \in MCE(\mu, \nu)} t_\lambda' {t_\lambda'}^* $
 \end{tabular}

\noindent all as desired.

For (2), let $V$ denote the trivial representation of $G$ (given by $V_g=1 \in \C$ for all $g \in G$). Then by the universal property of $C^*(G)$, there is a covering map $\pi^U_V: C^*(G) \ra \C$ given by $\pi^U_V(U_g)=V_g=1$.

Now by Lemma \ref{Tensor of Homomorphisms}, there is a map $\id_{C^*(t)} \otimes \pi^U_V: C^*(t) \otimes C^*(G) \ra C^*(t) \otimes \C$ satisfying 

\bea
(\id_{C^*(t)} \otimes \pi^U_V) (t'_\lambda) &=& (\id_{C^*(t)} \otimes \pi^U_V) (t_\lambda \otimes U_{d(\lambda)}) \\
&=& t_\lambda \otimes 1.
\eea

By identifying $C^*(t)$ with $C^*(t) \otimes \C$ in the natural way, this gives the desired canonical covering.

For (3), we must check that $\delta$ is injective, nondegenerate, and satisfies the coaction identity of Definition \ref{Coaction Definition}. To this end, note that if $\pi_t^{t'}$ is the canonical covering from the previous part, then $\pi_t^{t'} \circ \delta = \id_{C^*(t)}$, so $\delta$ must be injective. To check nondegeneracy, we must check that 

\[ \closedspan \left[ \delta(C^*(t)) (C^*(t) \otimes C^*(G)) \right]= C^*(t) \otimes C^*(G). \]

To this end, note that for all $g \in G$ and $\mu, \nu \in \Lambda$,  

\[ \delta(t_{s(\mu)}) (t_\mu t_\nu^* \otimes U_g) = (t_{s(\mu)} \otimes U_1) (t_\mu t_\nu^* \otimes U_g)= t_\mu t_\nu^* \otimes U_g. \] 

That is, $\{t_\mu t_\nu^* \otimes U_g: \mu, \nu \in \Lambda, g \in G\} \subseteq \delta(C^*(t))(C^*(t) \otimes C^*(G))$. But since $C^*(t) = \closedspan\{ t_\mu t_\nu^* : \mu, \nu \in \Lambda\}$ and $C^*(G)= \closedspan \{U_g \}_{g \in G}$, then these simple tensors have dense span $C^*(t) \otimes C^*(G)$. Thus

\[ C^*(t) \otimes C^*(G)= \closedspan \{t_\mu t_\nu^* \otimes U_g\}_{\mu, \nu \in \Lambda, g \in G} = \closedspan \left[ \delta(C^*(t)) (C^*(t) \otimes C^*(G)) \right] \]

\noindent as desired.

Finally, it suffices to check the coaction identity on each generator $t_\mu$. To this end,

\bea 
((\delta \otimes \id_G) \circ \delta)(t_\mu) &=& (\delta \otimes \id_G) (t_\mu \otimes U_{d(\mu)})\\
&=& t_\mu \otimes U_{d(\mu)}\otimes U_{d(\mu)}\\
&=& t_\mu \otimes \delta_G(U_{d(\mu)})\\
&=& ((\id_{C^*(t)} \otimes \delta_G) ( t_\mu \otimes U_{d(\mu)})\\
&=& ((\id_{C^*(t)} \otimes \delta_G) \circ \delta)(t_\mu)\\
\eea

\noindent as desired.

For (4), recall that there is a coaction $\delta_G: C^*(G) \ra C^*(G) \otimes C^*(G)$ given by $U_g \mapsto U_g \otimes U_g$. By Lemma \ref{Tensor of Homomorphisms} we may define $\bar \delta: C^*(t) \otimes C^*(G) \ra C^*(t) \otimes C^*(G) \otimes C^*(G)$ by $\bar \delta=\id_{C^*(t)} \otimes \delta_G$. Let $\delta = \bar \delta \mid_{C^*(t')}$. Then for $\lambda \in \Lambda$, 

\[ \delta(t')= \delta(t_\lambda \otimes U_{d(\lambda)})= t_\lambda \otimes \delta_G(U_{d(\lambda)})= t_\lambda \otimes U_{d(\lambda)} \otimes U_{d(\lambda)} = t_{\lambda}' \otimes U_{d(\lambda)}.\] 

Thus by (3), $\delta$ is a gauge coaction of $C^*(t')$.

For (5), if $t$ is canonically isomorphic to $t'$, then there is a homomorphism between their $C^*$-algebras given by $t_\lambda \mapsto t'_\lambda =t_\lambda \otimes U_{d(\lambda)}$. By (3) this is precisely a gauge coaction.

For (6), if $s \leq t$, recall that $\pi_s^t$ denotes the canonical covering. Then by Lemma \ref{Tensor of Homomorphisms} there is a $*$-homomorphism $\pi_{s'}^{t'}: C^*(t') \ra C^*(s')$ given by $\pi_{s'}^{t'}= \pi_s^t \otimes \id_{C^*(G)}$, and it is immediate that this is a canonical covering. For the ``in particular'', if $s \leq t$ and $t$ has a gauge coaction, then $t \cong t'$ by (5), so $s' \leq t' \cong t$, and thus $s' \leq t$, as desired.

For (7), observe that for a fixed $\lambda \in \Lambda$, 

\[ \norm{t_\lambda'}^2=  \norm{t_\lambda \otimes U_{d(\lambda)}}^2 = \norm{t_\lambda^*t_\lambda \otimes 1 }= \norm{t_\lambda^* t_\lambda} = \norm{t_\lambda}^2\]

\noindent so any $t_\lambda'$ is 0 if and only if $t_\lambda$ is 0. Thus $t$ is $\Lambda$-faithful if and only if $t'$ is $\Lambda$-faithful.

For tightness, fix a $\mu \in \Lambda$ and finite $E \subset \mu\Lambda$ which is exhaustive for $\mu \Lambda$. Then

\bea
\norm{ \fprod_{\alpha \in E} t_\mu' {t_\mu'}^* - t_\alpha' {t_\alpha'}^*}&=& \norm{ \left( \fprod_{\alpha \in E} t_\mu {t_\mu}^* - t_\alpha {t_\alpha}^* \right) \otimes 1} \\
&=&  \norm{\fprod_{\alpha \in E} t_\mu {t_\mu}^* - t_\alpha {t_\alpha}^*}
\eea

\noindent and in particular $\fprod_{\alpha \in E} t_\mu' {t_\mu'}^* - t_\alpha' {t_\alpha'}^*=0$ if and only if $\fprod_{\alpha \in E} t_\mu {t_\mu}^* - t_\alpha {t_\alpha}^*=0$, as desired.
\end{proof}

Finally, note that property (3) says that the notion of coaction is dramatically simplified when considering just gauge coactions. Indeed, the necessary and sufficient condition to having a gauge coaction is that the map $t_\lambda \mapsto t_\lambda \otimes U_{d(\lambda)}$ extends to a $*$-homomorphism on $C^*(t)$.

\subsection{The Classical Gauge-Invariant Uniqueness Theorem}

In this section we will introduce the gauge-invariant uniqueness theorem for directed graphs (what we would call $\N$-graphs), and explain how we hope to generalize it.

The following definitions and theorems are adapted from Chapters 1 and 2 of \cite{raeburn2005graph}.

\begin{definition}
A \ulindex{directed graph} is a tuple $E=(E^0, E^1, r,s)$ where $E^0$ and $E^1$ are countable sets, and $r,s:E^1 \ra E^0$ are functions. We think of $E^0$ as the set of vertices in our directed graph, $E^1$ as our set of edges, and $r$ and $s$ as the range and source maps for $E^1$. We say a directed graph is \ulindex{row-finite} if for each $v \in V$, $|r\inv(v) |<\infty$.

A \ulindex{Cuntz-Krieger $E$-family} of a row-finite directed graph $(E^0, E^1, r,s)$ is a collection of operators $\{P_v\}_{v \in E^0} \cup \{S_e\}_{e \in E^1}$ in a $C^*$-algebra such that $\{P_v\}_{v \in E^0} $ are a family of pairwise orthogonal projections, and

\begin{enumerate}
\item [] (CK1) $P_{s(e)}=S_e^*S_e$ for all $e\in E^1$ and
\item [] (CK2) $P_v = \fsum_{\substack{e \in E^1 \\ r(e)=v}} S_eS_e^*$ whenever $r\inv(v)$ is nonempty.
\end{enumerate}
\end{definition}

The reader may wish to compare this definition to the definition of a representation of a $P$-graph (Definition \ref{$P$-Graph Representation}). Having done so, there are a few differences that we will wish to reconcile.

\begin{remark}
\sloppy Given a directed graph $(E^0, E^1, r,s)$, for $n>1$ we let $E^n=\{(e_1,e_2,...,e_n) : s(e_i)=r(e_{i+1}) \text{ for } 1\leq i<n\}$. Then $E^*= \bigcup_{n\geq 0} E^n$ forms an $\N$-graph in a natural way, where the operation is concatenation of tuples, and $d(e_1,...,e_n)=n$. This $\N$-graph is always finitely aligned, since for any $\N$-graph, 

\[MCE(\alpha, \beta) = \begin{cases} \{\alpha\}& \text{ if } \beta \leq \alpha \\ \{\beta\} &\text{ if } \alpha \leq \beta \\ \emptyset &\text{ otherwise} \end{cases} \]

\noindent so $|MCE(\alpha, \beta)| \leq 1 <\infty$.

Any Cuntz-Krieger $E$-family extends to a representation of $E^*$ by $S_{(e_1,e_2,...,e_n)}=S_{e_1}S_{e_2}...S_{e_n}$. In proving this, the (T1) and (T2) relators are immediate, the (T3) relator follows rapidly from (CK1), and the (T4) relator follows from the expression above for $MCE(\alpha, \beta)$.

One should note that the (CK2) condition is analogous to tightness, since for any vertex $v$, $r\inv(v)$ is finite and exhaustive for $v \Lambda$, and since $\{S_eS_e^*\}_{e \in r\inv(v)}$ are pairwise orthogonal projections, then 

 \[ \fprod_{e \in r\inv(v)} (P_v P_v^* -S_{e}S_{e}^* )=0 \text{ if and only if } P_v = \fsum_{\substack{e \in E^1 \\ r(e)=v}} S_eS_e^*. \]
 
In other words, the (CK2) condition is saying that the representation is tight in the case that $\mu \in \Lambda^0$ and $E \subset \Lambda^1$. Since every path in an $\N$-graph can be uniquely factored into paths of length 1, tightness in all cases is equivalent to this.
\end{remark}

Due to this last point, readers should be aware that in the literature representations of directed graphs (and higher rank graphs) are what we would call tight representations.

In the notation of \cite{raeburn2005graph}, $C^*(E)$ denotes the universal tight representation of $E$, and if $\{T_v\}_{v \in E^0}\cup \{Q_{e}\}_{e\in E^1}$ is a Cuntz-Krieger $E$-family, then $\pi_{T,Q}: C^*(E) \ra C^*(T,Q)$ denotes the canonical covering. 

The foundational result we wish to generalize is this, as stated in \cite[Theorem 2.2]{raeburn2005graph}:

\begin{theorem}[Gauge Invariant Uniqueness Theorem for Graphs] \label{GIUT for Graphs}
\sloppy Let $E=(E^0, E^1, r,s)$ be a row-finite directed graph, and suppose that $\{T_v\}_{v \in E^0}\cup \{Q_{e}\}_{e\in E^1}$ is a Cuntz-Krieger $E$-family in a $C^*$-algebra $B$ with each $Q_v \neq 0$. If there is a continuous action $\beta: \T \ra \operatorname{Aut} B$ such that $\beta_z(T_e)=zT_e$ for every $e \in E^1$ and $\beta_z(Q_v)=Q_v$ for every $v \in E^0$, then $\pi_{T,Q}$ is an isomorphism of $C^*(E)$ onto $C^*(T,Q)$.
\end{theorem}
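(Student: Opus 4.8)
The plan is to prove that the canonical covering $\pi_{T,Q}: C^*(E) \ra C^*(T,Q)$, which is surjective by construction, is in fact an isomorphism by showing it is injective. The standard route is to use the two circle actions to produce faithful conditional expectations onto ``core'' subalgebras, and then to reduce injectivity of $\pi_{T,Q}$ to injectivity on the core, where an explicit matrix-unit description is available.

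First I would use the universal property of $C^*(E)$ to build the canonical \emph{gauge action} $\gamma: \T \ra \operatorname{Aut}(C^*(E))$ with $\gamma_z(S_e) = z S_e$ and $\gamma_z(P_v) = P_v$: for each $z \in \T$ the rescaled family $\{z S_e\} \cup \{P_v\}$ is again a Cuntz-Krieger $E$-family (the relations (CK1)--(CK2) are unchanged since $|z|=1$), so it is covered by the universal family, and the resulting endomorphisms for $z$ and $z\inv$ are mutually inverse automorphisms. Comparing on generators with the given action $\beta$ (which scales the edge generators by $z$ and fixes the vertex projections) shows that $\pi_{T,Q}$ is equivariant: $\pi_{T,Q} \circ \gamma_z = \beta_z \circ \pi_{T,Q}$ for all $z$. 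Averaging then yields conditional expectations $\Phi(a) = \int_\T \gamma_z(a)\,dz$ and $\Psi(b) = \int_\T \beta_z(b)\,dz$ onto the fixed-point algebras $C^*(E)^\gamma$ and $C^*(T,Q)^\beta$; both are faithful (a standard averaging argument shows $\Phi(a^*a)=0$ forces $a=0$), and equivariance gives the intertwining identity $\Psi \circ \pi_{T,Q} = \pi_{T,Q} \circ \Phi$.

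The heart of the argument, and the step I expect to be the main obstacle, is to show that $\pi_{T,Q}$ is injective on the core $C^*(E)^\gamma = \closedspan\{S_\mu S_\nu^* : d(\mu)=d(\nu)\}$. Here I would exhibit the AF structure of the core via the increasing subalgebras $\mathcal F_n = \closedspan\{S_\mu S_\nu^* : d(\mu)=d(\nu)=n\}$: for each length $n$ and vertex $v$, the elements $\{S_\mu S_\nu^* : d(\mu)=d(\nu)=n,\ s(\mu)=s(\nu)=v\}$ satisfy the matrix-unit relations $(S_\mu S_\nu^*)(S_\sigma S_\tau^*) = \delta_{\nu,\sigma} S_\mu S_\tau^*$ (using $S_\nu^* S_\sigma = \delta_{\nu,\sigma} P_v$ for equal-length paths with source $v$), so they span a simple building block (a matrix algebra $M_k$, or the compacts $\mathcal K$ when there are infinitely many such paths). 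Row-finiteness, via (CK2), makes the inclusions $\mathcal F_n \hookrightarrow \mathcal F_{n+1}$ well-behaved, so the core is the closure of an increasing union of finite sums of such simple algebras. Since each block is simple, the restriction of $\pi_{T,Q}$ to it is either zero or injective; and it is nonzero because $\pi_{T,Q}(S_\mu S_\mu^*)=Q_\mu Q_\mu^*$ is nonzero whenever $Q_\mu^* Q_\mu = T_{s(\mu)} = T_v \neq 0$, which is exactly the hypothesis that the vertex projections survive. As the blocks for distinct $v$ have orthogonal ranges, injectivity on each block yields injectivity on each $\mathcal F_n$, and hence on the whole core.

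Finally, injectivity on all of $C^*(E)$ follows by a standard argument: if $\pi_{T,Q}(a) = 0$ then $\pi_{T,Q}(a^*a) = 0$, so $\pi_{T,Q}(\Phi(a^*a)) = \Psi(\pi_{T,Q}(a^*a)) = 0$; since $\Phi(a^*a)$ lies in the core where $\pi_{T,Q}$ is injective, $\Phi(a^*a) = 0$, and faithfulness of $\Phi$ gives $a^*a = 0$, hence $a = 0$. Combined with surjectivity, this shows $\pi_{T,Q}$ is an isomorphism. Since $\Z$ is amenable and abelian, one could alternatively obtain the faithful expectation $\Phi$ from the coaction machinery of Lemmas~\ref{Graded Conditional Expectation} and~\ref{Amenable Coactions are Normal}, viewing the gauge $\T$-action as a $\Z$-coaction via Lemma~\ref{Action Coaction Duality}; but the core analysis remains the essential content, and it is precisely this step that the later $P$-graph generalization must replace.
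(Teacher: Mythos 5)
The paper never proves this theorem: it is imported as background, cited to \cite[Theorem 2.2]{raeburn2005graph}, so there is no internal proof to compare against. Your argument is essentially the standard proof from that source --- build the gauge action $\gamma$ on $C^*(E)$ by universality, check that $\pi_{T,Q}$ intertwines $\gamma$ and $\beta$, average over $\T$ to obtain faithful conditional expectations onto the fixed-point algebras, prove injectivity on the AF core via matrix units and simplicity of the blocks (which is where the hypothesis that the vertex projections are nonzero enters), and finish with the $\Phi(a^*a)=0$ argument --- and it is correct in outline. Two points need attention in a complete write-up. First, you must verify that $z \mapsto \gamma_z(a)$ is norm-continuous for each fixed $a$ (an $\epsilon/3$ argument on finite sums of words in the generators); without this the integral defining $\Phi$ does not make sense. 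Second, the nesting $\mathcal F_n \subseteq \mathcal F_{n+1}$ that you assert actually fails at vertices $v$ with $r\inv(v) = \emptyset$: there (CK2) is not imposed, so $S_\mu S_\nu^*$ with $s(\mu)=v$ cannot be rewritten as a sum of length-$(n+1)$ terms. The standard fix is to enlarge $\mathcal F_n$ to also contain all $S_\mu S_\nu^*$ with $d(\mu)=d(\nu) \leq n$ and $s(\mu)$ receiving no edges, after which the union is increasing and dense in the core. Neither issue changes the structure of the argument. Your closing observation is also accurate and matches the paper's point of view: the core analysis is exactly the step that does not survive the passage to $P$-graphs, and the paper's generalization (Theorem \ref{uniquenesstheorem}) replaces it with the balanced-algebra, bolt, and coaction machinery of its later sections rather than an explicit matrix-unit decomposition.
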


\begin{remark}
The reader may recognize many of the hypotheses of Theorem \ref{GIUT for Graphs} from our earlier list of terminology for representations (Definition \ref{Representation Terminology}). The statement $Q_v \neq 0$ for $v \in E^0$ implies that $T_\mu \neq 0$ for all $\mu \in E^*$, so it is equivalent to our $\Lambda$-faithfulness. The continuous action $\beta$ is called a \ulindex{gauge action}\index{action!gauge} and is equivalent by Lemma \ref{Action Coaction Duality} to a gauge coaction. Also recall that tightness is a built-in hypothesis in the representations considered in \cite{raeburn2005graph}.

Therefore, we may restate Theorem \ref{GIUT for Graphs} like so: for any $\N$-graph, there is exactly one $\Lambda$-faithful, tight, gauge coacting representation up to canonical isomorphism.

What we mean by a gauge invariant uniqueness theorem for $P$-graphs is a similar statement: for any $P$-graph, there is exactly one $\Lambda$-faithful, tight, gauge coacting representation up to canonical isomorphism.

We have already seen that in generalizing from $\N$-graphs to $P$ graphs, we needed an additional hypothesis (finite alignment). We will see in Lemma \ref{GIUT Failure} that such a gauge invariant uniqueness theorem need not be true in general, so another hypothesis is needed. We spend Chapter 3 developing this hypothesis on $(G,P)$, and then in Chapter 4 prove a gauge invariant uniqueness theorem for $P$-graphs that satisfy this new hypothesis.

\end{remark}

\section{Reductions of Ordered Groups}

In this section we will develop the notion of a reduction of an ordered group. Roughly speaking, a reduction occurs when a positive cone $P$ in a group $G$ does not carry ``enough'' information about the group, and one is able to replace $(G,P)$ with some $(H,Q)$ while preserving the essential properties that will be required for a $P$-graph (namely, the structure of intervals $[1,p]$). A special case of a reduction (called a ``strong reduction'') is when one can embed the positive cone $P$ in another group $H$ such that this embedding extends to an order homomorphism from $(G,P)$ to $(H,P)$.

\subsection{Definition and Basic Properties}

\begin{definition}\label{Reduction Definition}
An order homomorphism $\varphi: (G,P) \ra (H,Q)$ is a \ulindex{reduction} if for all $p \in P$, $\varphi$ is a bijection between the interval  $[1,p]:=\{x \in P : 1 \leq x \leq p\}$ and the interval $[1,\varphi(p)]:=\{y \in Q : 1 \leq y \leq \varphi(p)\}$.

We say that $\varphi$ is a \ul{strong reduction}\index{reduction!strong} if $\varphi \mid_P :P \ra Q$ is a bijection between $P$ and $Q$.

We say $(G,P)$ has a (strong) reduction to an amenable group or (strongly) reduces to an amenable group if there is an amenable group $H$, a positive cone $Q \subset H$, and a (strong) reduction $\varphi: (G, P) \ra (H,Q)$. 

\end{definition}

Intuitively, a reduction is a map that preserves the order structure of $(G,P)$ ``locally'', meaning on every interval. For strong reductions, the reader should think of them as arising from embedding the positive cone $P$ in two distinct groups $(G,P)$ and $(H,P)$ such that there is a homomorphism from one to the other taking $P$ to itself bijectively. 

\begin{example}
Let $G=\langle a,b \rangle$, the free group on two generators. If $P=\{a,b\}^*$, then define $\varphi:(G,P) \ra (\Z, \N)$ by $\varphi(a)=\varphi(b)=1$, which is an order homomorphism.

Then $\varphi$ is a reduction since, for example, it maps the interval \sloppy $[e, abba]= \{e,a,ab,abb,abba\}$ bijectively onto the interval $[\varphi(e), \varphi(abba)]= [0,4]=\{0,1,2,3,4\}$. Since $\Z$ is amenable, then this shows that $(G,P)$ reduces to an amenable group.

However, $\varphi$ is not injective even when restricted to $P$, as $\varphi(a)=1=\varphi(b)$, so $\varphi$ is not a strong reduction.
\end{example}

It may not be immediately obvious from the definition that a strong reduction is a reduction, so we will prove this as a short lemma:

\begin{lemma}
Let $\varphi:(G,P) \ra (H,Q)$ be a strong reduction. Then it is a reduction.
\end{lemma}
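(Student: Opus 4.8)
We must show a strong reduction is a reduction. A strong reduction $\varphi$ is an order homomorphism that restricts to a bijection $\varphi|_P : P \to Q$. To prove it's a reduction, we must show that for every $p \in P$, $\varphi$ maps $[1,p]$ bijectively onto $[1,\varphi(p)]$. Let me think about what the obstacles are.

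The plan is to fix $p \in P$ and show $\varphi$ restricts to a bijection $[1,p] \to [1,\varphi(p)]$. First, by Lemma \ref{Order Homomorphism} we already have $\varphi([1,p]) \subseteq [1,\varphi(p)]$, so $\varphi$ does land in the right interval. Since $\varphi|_P$ is injective (being a bijection onto $Q$), its restriction to the subset $[1,p] \subseteq P$ is automatically injective. So the only real content is \emph{surjectivity}: given $y \in [1,\varphi(p)]$, I must produce $x \in [1,p]$ with $\varphi(x) = y$.

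For surjectivity, take $y \in Q$ with $1 \leq y \leq \varphi(p)$. Since $\varphi|_P : P \to Q$ is surjective, there is a unique $x \in P$ with $\varphi(x) = y$. The question is whether this $x$ actually lies in $[1,p]$, i.e. whether $x \leq p$. We know $1 \leq x$ automatically since $x \in P$. To get $x \leq p$: since $y \leq \varphi(p)$ means $y^{-1}\varphi(p) \in Q$, and $\varphi|_P$ is onto $Q$, there is $w \in P$ with $\varphi(w) = y^{-1}\varphi(p)$. Then $\varphi(xw) = \varphi(x)\varphi(w) = y \cdot y^{-1}\varphi(p) = \varphi(p)$, and since $xw \in P$ and $p \in P$ both map to $\varphi(p)$, injectivity of $\varphi|_P$ forces $xw = p$. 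Hence $x \leq p$ (witnessed by $w$), so $x \in [1,p]$, giving surjectivity.

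The main obstacle is exactly this surjectivity step — one must be careful to exploit that $\varphi|_P$ is a bijection of \emph{monoids} in both directions, using the factorization $y^{-1}\varphi(p) \in Q$ to pull back a witness $w$ and then invoke injectivity to conclude $xw = p$. Everything else (containment from Lemma \ref{Order Homomorphism}, injectivity from the hypothesis) is immediate. I would write the proof as: containment, injectivity, then the surjectivity argument above.
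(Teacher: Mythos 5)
Your proof is correct and follows essentially the same route as the paper's: injectivity on $[1,p]$ is inherited from injectivity on $P$, and surjectivity is proved by pulling back both $y$ and the complementary factor $y^{-1}\varphi(p)$ through the bijection $\varphi|_P$, then using injectivity on $P$ to conclude the product equals $p$. The paper's argument is the same, merely phrased with $q_1, q_2$ pulled back to $p_1, p_2$ simultaneously rather than sequentially.
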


\begin{proof}
We must show that if $\varphi$ is bijective as a map from $P$ to $Q$, then it is bijective from $[1,p]$ to $[1, \varphi(p)]$ for each $p \in P$. Fixing a $p \in P$, since $[1,p] \subset P$, then $\varphi$ is injective on $[1,p]$, so it suffices to show it is surjective onto $[1,\varphi(p)]$.

To this end, fix some $q_1 \in [1, \varphi(p)]$. Since $q_1 \leq \varphi(p)$, there is a $q_2 \in Q$ such that $q_1q_2=\varphi(p)$. Now since $q_1,q_2 \in Q$ and $\varphi$ is a bijection from $P$ to $Q$, there is a $p_1, p_2 \in P$ such that $\varphi(p_1)=q_1$ and $\varphi(p_2)=q_2$. Then $\varphi(p)=q_1q_2= \varphi(p_1) \varphi(p_2)=\varphi(p_1p_2)$. Since $p_1p_2 \in P$ and $\varphi$ is injective on $P$, then $p_1p_2=p$, so $p_1 \in [1,p]$, and in particular there is a $p_1 \in [1,p]$ such that $\varphi(p_1)=q_1$. That is, $\varphi$ is a bijection from $[1,p]$ onto $[1, \varphi(p)]$, as desired. 
\end{proof}

We'll now provide several alternative characterizations of being a reduction and a strong reduction.

\begin{proposition} \label{Reduction Criterion}
Let $\varphi:(G,P) \ra (H,Q)$ be an order homomorphism. Then the following are equivalent:

\begin{enumerate}
\item $\varphi$ is a reduction in the sense of Definition \ref{Reduction Definition}.
\item $d^P_Q:= \varphi \mid_P : P \ra Q$ is a functor that makes $P$ into a $Q$-graph.
\item For all $q \in Q, p \in P$ such that $q \leq \varphi(p)$, there exist unique $p_1,p_2 \in P$ such that $p_1p_2=p$ and $\varphi(p_1)=q$.
\item For each nonempty interval $[x,y] \subset G$, $\varphi$ is an order isomorphism between $[x,y]$ and $[\varphi(x), \varphi(y)]$.
\item The following two statements together:
\begin{enumerate}
\item $S \cap \ker \varphi=\{1\}$ where $S= \bigcup_{p \in P} [1,p][1,p] \inv $.
\item For all $p \in P$, $\varphi([1,p])=[1, \varphi(p)]$.
\end{enumerate}

\end{enumerate}
\end{proposition}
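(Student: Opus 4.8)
The plan is to treat condition (1) as a central hub and prove it equivalent to each of (2)--(5), first disposing of the essentially tautological link (2) $\iff$ (3). The organizing principle is that, because $Q$ is a positive cone, the relation $q \leq \varphi(p)$ carries exactly the same information as a factorization $q q_2 = \varphi(p)$ in $Q$ (with $q_2 = q\inv \varphi(p)$ forced), and dually that $[1,p] = \{g \in P : g \leq p\}$ is precisely the set of left divisors of $p$ inside $P$. Throughout, the inclusion $\varphi([1,p]) \subseteq [1,\varphi(p)]$ is free from Lemma \ref{Order Homomorphism}, so the content always lies in the reverse inclusion together with injectivity.

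First I would record (2) $\iff$ (3): unwinding the definition of a $Q$-graph, the assertion that $d^P_Q = \varphi \mid_P$ makes $P$ into a $Q$-graph is exactly the unique factorization property, namely that for $p \in P$ and $q_1, q_2 \in Q$ with $q_1 q_2 = \varphi(p)$ there are unique $p_1, p_2 \in P$ with $p = p_1 p_2$ and $\varphi(p_1) = q_1$; reading $q_1 q_2 = \varphi(p)$ as $q_1 \leq \varphi(p)$ turns this verbatim into (3), with $\varphi(p_2) = q_2$ following by cancellation in $H$. Next, (1) $\iff$ (3): the existence clause of (3) is surjectivity of $\varphi : [1,p] \to [1,\varphi(p)]$ --- given $q \in [1,\varphi(p)]$ one takes $p_1$ with $\varphi(p_1) = q$ and $p_2 = p_1\inv p$, and conversely any such $p_1$ lies in $[1,p]$ --- while the uniqueness clause is injectivity of $\varphi$ on $[1,p]$.

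The equivalence (1) $\iff$ (5) is the same bookkeeping. Condition (5a) says precisely that $\varphi$ is injective on every $[1,p]$: for $s, t \in [1,p]$ one has $st\inv \in S$ and $\varphi(st\inv) = 1$ iff $\varphi(s) = \varphi(t)$, so $S \cap \ker \varphi = \{1\}$ is exactly the statement that $\varphi(s) = \varphi(t)$ forces $s = t$ on each interval, and conversely any $g = st\inv \in S \cap \ker \varphi$ is then trivial. Condition (5b), given the automatic inclusion, is just surjectivity onto $[1,\varphi(p)]$. Together (5a) and (5b) amount to bijectivity, which is (1).

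The remaining and most substantial step is (1) $\iff$ (4). One direction is immediate: applying (4) to the interval $[1,p]$ makes $\varphi$ an order isomorphism, hence a bijection, from $[1,p]$ onto $[\varphi(1), \varphi(p)] = [1,\varphi(p)]$, which is (1). For (1) $\Rightarrow$ (4) I would use left-invariance to reduce to this case: writing $p = x\inv y \in P$, left translation by $x\inv$ and by $\varphi(x)$ are order isomorphisms carrying $[x,y]$ onto $[1,p]$ and $[1,\varphi(p)]$ onto $[\varphi(x), \varphi(y)]$, through which $\varphi \mid_{[x,y]}$ is conjugate to $\varphi \mid_{[1,p]}$; so it suffices that $\varphi : [1,p] \to [1,\varphi(p)]$ be an order isomorphism. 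By (1) it is a bijection and by Lemma \ref{Order Homomorphism} it preserves order, so the only real content is that it \emph{reflects} order, and this is the main obstacle. Given $s, t \in [1,p]$ with $\varphi(s) \leq \varphi(t)$, the key idea is to invoke the reduction property at the smaller element $t$ rather than at $p$: since $\varphi(s) \in [1,\varphi(t)]$, surjectivity on $[1,t]$ yields $u \in [1,t]$ with $\varphi(u) = \varphi(s)$, and because $t \leq p$ gives $[1,t] \subseteq [1,p]$, both $s$ and $u$ lie in $[1,p]$ with equal image; injectivity on $[1,p]$ then forces $s = u \leq t$. I expect every implication other than this order-reflecting argument to be pure unwinding of the positive-cone and factorization definitions.
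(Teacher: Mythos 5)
Your proposal is correct and is essentially the paper's own proof: every substantive step coincides with the paper's, in particular the order-reflection argument for $(1)\Rightarrow(4)$ (use surjectivity of $\varphi$ on $[1,t]$ to produce $u\in[1,t]$ with $\varphi(u)=\varphi(s)$, then injectivity on $[1,p]$ to conclude $s=u\leq t$), the reading of (2) and (3) as the same factorization statement, and the bookkeeping identifying (5a) with injectivity and (5b) with surjectivity on intervals. The only difference is organizational --- the paper proves the cycle $(2)\Rightarrow(3)\Rightarrow(1)\Rightarrow(4)\Rightarrow(2)$ together with $(1)\Leftrightarrow(5)$, while you use (1) as a hub --- which changes nothing of substance.
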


\begin{proof}
Fix $\varphi:(G,P) \ra (H,Q)$ an order homomorphism. We will prove that $(2) \Rightarrow(3) \Rightarrow (1) \Rightarrow (4) \Rightarrow (2)$ and $(1) \iff (5)$.

$(2 \Rightarrow 3)$ Fix $q \in Q$, $p \in P$ with $q \leq \varphi(p)= d_Q^P(p)$. Then since $P$ is a $Q$-graph, we can uniquely factorize $p$ as $p=p_1p_2$ where $q=d^P_Q(p_1)=\varphi(p_1)$, as desired.

$(3 \Rightarrow 1)$ Fix $p \in P$. By Lemma \ref{Order Homomorphism}, $\varphi([1,p]) \subset [1, \varphi(p)]$. It then suffices to show that if $q \in [1, \varphi(p)]$, then there exists a unique $s \in [1,p]$ with $\varphi(s)=q$. To this end, if $q \in [1, \varphi(p)]$, then by (3), there exists $p_1, p_2 \in P$ such that $p_1p_2=p$ and with $\varphi(p_1)=q$. Then $p_1 \in [1,p]$, so $p_1$ is such an $s$. If there were some $p_1' \in [1, p]$ with $\varphi(p_1)=\varphi(p_1')$, then by the former, there exists $p_2' \in P$ with $p_1'p_2' = p$, so by the uniqueness condition of (2), we have $p_1'=p_1$ and $p_2'=p_2$, and thus our $p_1$ is unique, as desired.

$(1 \Rightarrow 4)$ Fix $x, y \in G$, and suppose the interval $[x,y]= \{z \in G: x \leq z \leq y\}$ is nonempty. Since this interval is nonempty, then $x\leq y$, so $1 \leq x\inv y$, and we will write $p=x\inv y \in P$. By left-invariance, $x \leq z \leq y$ if and only if $1 \leq x\inv z \leq x\inv y$, so $[x,y]=x[1,x\inv y]=x[1,p]$. By left-invariance, $x[1,p]$ is order-isomorphic to $[1,p]$, and $\varphi(x)[1,\varphi(p)]$ is order-isomorphic to $[1, \varphi(p)]$, so it suffices to show that $[1,p]$ is order-isomorphic to $[1, \varphi(p)]$. By (1), $\varphi$ is a bijection between $[1,p]$ and $[1, \varphi(p)]$, so it suffices to show that $s \leq t \iff \varphi(s) \leq \varphi(t)$ for $s,t \in [1, p]$. The $\Rightarrow$ direction is immediate from Lemma \ref{Order Homomorphism}. For the other direction, suppose $s,t \in [1,p]$ and $\varphi(s) \leq \varphi(t)$. Then $\varphi(s) \in [1, \varphi(t)]$, and by (3), $\varphi$ maps $[1,t]$ surjectively onto $[1, \varphi(t)]$, so there exists some $s' \in [1,t]$ such that $\varphi(s')=\varphi(s)$. But $s' \in [1,t] \subseteq [1,p]$, and $\varphi$ is injective on $[1, p]$, so $s'=s$. Thus $s \in [1,t]$, so $s \leq t$.

$(4 \Rightarrow 2)$ Certainly $d^P_Q:= \varphi \mid_P$ is a functor from $P$ to $Q$. It then suffices to check unique factorization. To this end, suppose $p \in P$ and $q_1, q_2 \in Q$ with $d_Q^P(p)=q_1q_2$. Then $q_1 \in [1, d_Q^P(p)]= [1, \varphi(p)]$, so by (4) there is a unique $p_1 \in [1,p]$ with $q_1=\varphi(p_1)=d_Q^P(p_1)$. Since $p_1 \leq p$, then $p_2:= p_1\inv p \in P$. Thus $p$ is uniquely factorized as $p=p_1p_2$ with $d_Q^P(p_1)=q_1$.

$(1 \Rightarrow 5a)$ Suppose that $s \in S \cap \ker \varphi$. Then $s= q r\inv $ where $q,r \in [1,p]$ for some $p \in P$. Then $\varphi(r)= \varphi(s) \varphi(r) = \varphi(sr)= \varphi(q)$. By (1), $\varphi \mid_{[1,p]}$ is injective, so $q=r$ and thus $s=q r\inv = 1$, as desired. .

$(1 \Rightarrow 5b)$ is immediate from the fact that $\varphi$ is a bijection between the intervals $[1,p]$ and $[1, \varphi(p)]$.

$(5 \Rightarrow 1)$ Fix $p \in P$. By Lemma \ref{Order Homomorphism}, $\varphi([1,p]) \subseteq [1, \varphi(p)]$. It then suffices to show that if $q \in [1, \varphi(p)]$, then there exists a unique $s \in [1,p]$ with $\varphi(s)=q$. By (5b), there exists at least one $s \in [1,p]$ with $\varphi(s)=q$. Assume for the sake of contradiction that there were distinct $s_1, s_2 \in [1,p]$ with $\varphi(s_1)=q=\varphi(s_2)$. Then $s_1 s_2 \inv \in \ker \varphi$. But since $s_1 s_2 \inv \in [1,p] [1,p]\inv \subseteq S$, then by (5a) we have $s_1 s_2 \inv=1$, so $s_1=s_2$, a contradiction of distinctness. Thus $\varphi$ is bijective as a map from $[1,p]$ to $[1, \varphi(p)]$.

\end{proof}

\begin{lemma}\label{Strong Reduction}
Let $(G,P)$, $(H,Q)$ be ordered groups, and $\varphi:G \ra H$ a group homomorphism. The following are equivalent:

\begin{enumerate}
\item $\varphi$ is a strong reduction in the sense of Definition \ref{Reduction Definition}.
\item $\varphi \mid_P$ is an order isomorphism from $P$ to $Q$.
\end{enumerate} 

\end{lemma}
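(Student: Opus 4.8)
The plan is to prove the two implications separately. The key observation guiding the proof is that the notion of ``order isomorphism'' on $P$ carries strictly more information than ``bijection'' only in the reverse order-reflecting direction, and it is exactly this reverse direction where the content of the lemma lives; everything else is either definitional unwinding or a direct appeal to Lemma \ref{Order Homomorphism}.

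For $(1) \Rightarrow (2)$, I would assume $\varphi$ is a strong reduction, so by Definition \ref{Reduction Definition} it is an order homomorphism whose restriction $\varphi\mid_P: P \ra Q$ is a bijection. To promote this bijection to an order isomorphism, I must verify $p \leq p' \iff \varphi(p) \leq \varphi(p')$ for all $p, p' \in P$. The forward implication is immediate from Lemma \ref{Order Homomorphism}. For the reverse, suppose $\varphi(p) \leq \varphi(p')$. Then $q := \varphi(p)\inv \varphi(p') \in Q$, and surjectivity of $\varphi\mid_P$ produces some $r \in P$ with $\varphi(r) = q$. Now $\varphi(pr) = \varphi(p)\varphi(r) = \varphi(p')$, and since $pr, p' \in P$ with $\varphi\mid_P$ injective, we conclude $pr = p'$; as $r \in P$, this is exactly $p \leq p'$. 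This step, which is the main (and essentially only) obstacle, is where both the surjectivity and the injectivity of $\varphi\mid_P$ are genuinely used.

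For $(2) \Rightarrow (1)$, I would assume $\varphi\mid_P$ is an order isomorphism from $P$ to $Q$. This is almost immediate: an order isomorphism is in particular a bijection onto $Q$, so $\varphi(P) = Q \subseteq Q$, and since $\varphi$ is already a group homomorphism, it is therefore an order homomorphism. As $\varphi\mid_P$ is bijective, $\varphi$ is a strong reduction directly from Definition \ref{Reduction Definition}. I expect no difficulty in this direction beyond noting that an order isomorphism is by definition a bijection.
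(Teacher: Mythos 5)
Your proposal is correct and follows essentially the same route as the paper: the forward order-preservation comes from Lemma \ref{Order Homomorphism}, and the reverse direction uses surjectivity of $\varphi\mid_P$ to find $r \in P$ with $\varphi(p r)=\varphi(p')$ and then injectivity to conclude $pr=p'$, exactly as in the paper's argument. The paper dismisses $(2)\Rightarrow(1)$ as immediate, which your slightly more explicit unwinding matches in substance.
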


\begin{proof}
$(2 \Rightarrow 1)$ is immediate.

$(1 \Rightarrow 2)$ Since $\varphi \mid_P$ is already a bijection, we must show that for $p_1, p_2 \in P$, $p_1 \leq p_2 \iff \varphi(p_1) \leq \varphi(p_2)$. The $\Rightarrow$ direction is immediate from Lemma \ref{Order Homomorphism}. For the other direction, suppose that $\varphi(p_1) \leq \varphi(p_2)$. Then there is a $q \in Q$ such that $\varphi(p_1)q =\varphi(p_2)$. Since $\varphi$ is a surjection from $P$ to $Q$, there exists a $p \in P$ such that $\varphi(p)=q$, and hence $\varphi(p_1p)=\varphi(p_2)$. Since $\varphi$ is injective on $P$, then $p_1p=p_2$, and thus $p_1 \leq p_2$, as desired.
\end{proof}

While the theory of reductions of ordered groups can be entirely severed from the amenability of the groups involved, it is perhaps unsurprising that a reduction to an amenable group will allow us to do more interesting analysis of the $C^*$-algebras of $P$-graphs that will appear in later sections.


The following example shows that the existence of a reduction to an amenable group (or even a non-injective reduction!) depends on not just the group $G$, but also the positive cone $P$.

\begin{example} \label{No Amenable Reduction}
Let $G=\langle a,b \rangle$, the free group on two generators. If $P=\{a,b\}^*$, then we've seen that $(G,P)$ is an ordered group which reduces to an amenable group via the map $\varphi:(G,P) \ra (\Z, \N)$ given by $\varphi(a)=\varphi(b)=1$.

Suppose that $(G,R)$ is a total ordering on a group (meaning that $R \cup R\inv = G$ in addition to $R \cap R\inv= \{1\}$). Then for any reduction $\varphi$, by Proposition \ref{Reduction Criterion}(5), since $S=\bigcup_{r \in R} [1,r][1,r] \inv =G$, then $\ker \varphi = \ker \varphi \cap G=\{1\}$. That is, every reduction $\varphi$ is injective.

In particular, taking $G=F_2$ and $R$ a total ordering on $G$ (such as the ordering arising from the Magnus expansion given in \cite[Section 3.2]{clay2016ordered}), $(F_2,R)$ cannot reduce to an amenable group since the range group of a reduction will always contain a copy of $F_2$.

\end{example}

\begin{example}
For any group $(G,P)$, $\id_G$ is a reduction, so if $G$ is amenable, $\id_G$ is a reduction to an amenable group.
\end{example}

\begin{example}
Let $G=\langle a,b \rangle$, the free group on two generators and $P=\{a,b\}^*$. One can give a strong reduction of $(G,P)$ onto the amenable group $H=BS(1,2)= \langle c, t | tc=c^2t \rangle$. Let $Q= \{t, ct\}^*$, which is a positive cone in $BS(1,2)$, and let $\varphi:(G,P) \ra (H,Q)$ by $\varphi(a)=ct, \varphi(b)=t$. Certainly $\varphi$ is an order homomorphism. Then one may verify that $\varphi$ is bijective from $P$ to $Q$ in this way: if $\varphi(p_1)=\varphi(p_2)$, then write this word in $H$ as $c^i t^j$. It must be that $j$ is the length of $p_1$ and the length of $p_2$, so $p_1$ and $p_2$ have equal length. Then, $i$ will be the number that comes from substituting $1$ for $a$ and $0$ for $b$ in the original word and interpreting the result as a binary number. Two binary numbers with the same number of digits are equal if and only if their digits are equal, so it must be that $p_1=p_2$. Thus $\varphi$ is injective on $P$, and hence bijective on $P$, meaning $\varphi$ is a strong reduction.

\end{example}

The following result shows that reductions preserve weakly quasi-lattice order.

\begin{lemma}
Let $\varphi: (G,P) \ra (H,Q)$ be a reduction. Then:

\begin{enumerate}
\item If $(H,Q)$ is WQLO, then so is $(G,P)$.
\item If $\varphi(P)=Q$ and $(G,P)$ is WQLO, then so is $(H,Q)$.
\end{enumerate}
\end{lemma}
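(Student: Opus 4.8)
The plan is to prove both directions by transporting suprema across $\varphi$, exploiting the fact (Proposition \ref{Reduction Criterion}(4)) that a reduction restricts to an \emph{order isomorphism} $\varphi\colon [a,b]\to[\varphi(a),\varphi(b)]$ on every nonempty interval, and that $P$ may be viewed as a $Q$-graph via $\varphi\mid_P$ (Proposition \ref{Reduction Criterion}(2)), so that Lemma \ref{Equivalent Minimality} applies. For part (1) I would start from $x,y\in P$ with a common upper bound $z\in P$. Applying $\varphi$ and Lemma \ref{Order Homomorphism} gives $\varphi(x),\varphi(y)\le\varphi(z)$ in $Q$, so by WQLO of $(H,Q)$ the supremum $w:=\varphi(x)\vee\varphi(y)$ exists and satisfies $w\le\varphi(z)$. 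Since $\varphi$ is a bijection of $[1,z]$ onto $[1,\varphi(z)]$ and $w\in[1,\varphi(z)]$, I can define $m:=(\varphi\mid_{[1,z]})\inv(w)\in[1,z]$. Because $\varphi\mid_{[1,z]}$ is an order isomorphism and $\varphi(x),\varphi(y)\le w=\varphi(m)$, it follows that $x\le m$ and $y\le m$, so $m$ is a common upper bound of $x,y$ with $\varphi(m)=w$. The claim is that $m=x\vee y$. By Lemma \ref{Equivalent Minimality} every minimal common extension of $x,y$ has degree exactly $w$, and one checks that $m$ is \emph{the} minimal common extension lying below $z$; by Lemma \ref{Equivalent Minimality}(3), $m$ would then be the desired supremum \emph{provided it is the unique minimal common extension}.

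The main obstacle is precisely this uniqueness. To finish I must show $m$ lies below \emph{every} common upper bound $n$ of $x,y$, not merely those comparable with $z$; equivalently I must rule out two distinct minimal common extensions $\mu,\nu$, which, having the same degree $w$, are identified by $\varphi$. For a fixed common upper bound $n$ the same construction yields a lift $n_1\in[1,n]$ with $\varphi(n_1)=w$ and $x,y\le n_1\le n$, and it suffices to prove $m=n_1$. Since $\varphi(m\, n_1\inv)=ww\inv=1$, this reduces to showing $m\, n_1\inv$ is trivial, which by the condition $S\cap\ker\varphi=\{1\}$ of Proposition \ref{Reduction Criterion}(5) holds as soon as $m$ and $n_1$ are exhibited inside a common interval $[1,p]$. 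Producing such a $p$ — that is, showing that the two lifts of $w$ share a common upper bound — is the delicate step: the preimage of $w$ genuinely depends on the interval used to compute it, and matching the two lifts is where the global ordered-group structure (that $P$ is a positive cone inside $G$ and $\varphi$ a homomorphism on all of $G$), rather than merely the interval bijection, must be brought to bear. I expect this lift-uniqueness argument to be the crux of the entire lemma.

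For part (2), the extra hypothesis $\varphi(P)=Q$ lets me run the construction in reverse. Given $q_1,q_2\in Q$ with a common upper bound $q_3$, I lift $q_3=\varphi(p_3)$ for some $p_3\in P$, and then use the bijection $\varphi\mid_{[1,p_3]}$ to lift $q_1,q_2$ to elements $p_1,p_2\in[1,p_3]$. Now $p_1,p_2$ have the common upper bound $p_3$ in $P$, so WQLO of $(G,P)$ supplies $m:=p_1\vee p_2\le p_3$, and transporting through the order isomorphism $\varphi\mid_{[1,p_3]}$ shows $\varphi(m)$ is the least upper bound of $q_1,q_2$ \emph{within} $[1,q_3]$. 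The remaining work is to confirm that $\varphi(m)$ is least among \emph{all} common upper bounds of $q_1,q_2$ in $Q$; comparing against an arbitrary such bound $q'=\varphi(p')$ produces a second family of lifts $p_1',p_2'\in[1,p']$ and a second sup $m'=p_1'\vee p_2'\le p'$, and one needs $\varphi(m)=\varphi(m')$. This is again the same lift-independence phenomenon, now read off through $\varphi$, and I would resolve it by the identical mechanism (triviality of $\ker\varphi$ on $S$, after placing the competing lifts in a common interval). I regard this shared uniqueness-of-lift step as the principal difficulty in both parts.
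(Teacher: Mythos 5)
Your proposal correctly isolates the crux---showing that the locally constructed supremum is independent of the interval used to compute it, i.e.\ that the lift $m$ of $w=\varphi(x)\vee\varphi(y)$ lies below \emph{every} common upper bound---but it never fills that gap, and in fact the gap cannot be filled: part (1) of the lemma is false as stated. Take $G=F_2\times\Z$ with $F_2$ free on $f_1,f_2$ and $\Z$ generated by a central element $s$; let $P$ be the submonoid generated by $f_1,\,f_2,\,e_1:=f_1s,\,e_2:=f_2s$, so that $P=\{ws^k : w \text{ a positive word in } f_1,f_2,\ 0\le k\le |w|\}$; and let $\varphi:G\ra\Z^2$ be the homomorphism with $\varphi(f_i)=(0,1)$ and $\varphi(s)=(1,-1)$, so $\varphi(ws^k)=(k,|w|-k)$ on $P$ and $\varphi(P)=\N^2$. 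One checks that $[1,ws^k]=\{us^j : u \text{ a prefix of } w,\ \max(0,|u|-|w|+k)\le j\le \min(|u|,k)\}$ and that $q\mapsto\varphi(q)$ maps this set bijectively onto $[(0,0),(k,|w|-k)]$ (inside an interval, the prefix $u$ is determined by its length, and the pair $(|u|,j)$ is determined by the degree); hence $\varphi$ is a reduction onto $(\Z^2,\N^2)$, which is WQLO, indeed lattice ordered. But $(G,P)$ is not WQLO: $e_1$ and $f_1$ have the two common upper bounds $e_1f_1=f_1^2s$ and $e_1f_2=f_1f_2s$ (indeed $e_1\inv f_1^2s=f_1$, $f_1\inv f_1^2s=e_1$, $e_1\inv f_1f_2s=f_2$, $f_1\inv f_1f_2s=e_2$, all in $P$), both of degree $(1,1)$; since any supremum would be squeezed to have degree exactly $(1,1)$, and the only degree-$(0,0)$ element of $P$ is $1$, a supremum would have to equal both, which is impossible. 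This is precisely the configuration you feared: the two local sups $f_1^2s$ and $f_1f_2s$ are distinct lifts of $(1,1)$ with no common upper bound, so they can never be exhibited in a common interval and the kernel condition of Proposition \ref{Reduction Criterion}(5) is never applicable. Your instinct that lift-independence is ``the crux of the entire lemma'' is exactly right: for part (1) there is no way around it, because the statement fails.

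It is worth comparing this with the paper's own proof, which takes a different route: it asserts that failure of WQLO in $(G,P)$ produces an infinite strictly decreasing chain $b_1>b_2>\cdots$ of common upper bounds, and that the image chain contradicts WQLO of $(H,Q)$. Both implications are non sequiturs. In the example above every strictly decreasing chain in $P$ is finite (degrees strictly decrease in $\N^2$), so a supremum can fail to exist without any such chain---the failure comes from two incomparable minimal upper bounds; and conversely an infinite strictly decreasing chain of common upper bounds is perfectly compatible with WQLO (in $(\Q,\Q_{\ge 0})$ the elements $1>1/2>1/3>\cdots$ are all upper bounds of $0$, yet suprema exist since the order is total). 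So the paper's argument does not close your gap either; it sidesteps it with an invalid reduction, and the same two flaws afflict its proof of part (2). Concerning part (2), note that the example above does \emph{not} refute it, since there the source fails WQLO and part (2) assumes the source is WQLO; the surjectivity hypothesis genuinely changes the situation (for instance, no single-vertex $2$-graph monoid covering the one above degree-preservingly and surjectively can be singly aligned, by a pigeonhole argument on edges). But neither your outline nor the paper's proof establishes (2); if it is true, a proof will require an idea beyond what either supplies, and since (2) is what the paper implicitly needs to make sense of $C^*_{tight}(\Lambda,Q)$ and $C^*_{min}(\Lambda,Q)$ in Proposition \ref{Uniqueness Theorem}, this is a gap worth flagging to the authors rather than attempting to patch within your proposal.
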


\begin{proof}
For (1), assume for the sake of contradiction that $(H,Q)$ is WQLO but $(G,P)$ is not. Then there are $x,y \in P$ with a common upper bound but no least common upper bound. That is, there are upper bounds $b_1 > b_2 >b_3>...$ such that $x,y \leq b_i$ for all $i \in \N$.

For all $i \in \N$, since $1 \leq x \leq b_i$, then $1 \leq b_i$. In particular, $b_i \in [1,b_1]$ for all $i \in \N$. Also note that $x,y \in [1,b_1]$.

Now, since $\varphi$ is a reduction, $\varphi$ is an order isomorphism from $[1,b_1]$ to $[1, \varphi(b_1)]$, so in $Q$, $\varphi(b_1)> \varphi(b_2) >...$ is a strictly decreasing sequence of common upper bounds of $\varphi(x)$ and $\varphi(y)$. This contradicts the fact that $(H,Q)$ is WQLO.

For (2), assume for the sake of contradiction that $\varphi(P)=Q$ and $(G,P)$ is WQLO but that $(H,Q)$ is not WQLO. Then there are $z,w \in Q$ with a common upper bound but no least common upper bound. That is, there are upper bounds $c_1 > c_2 >c_3>...$ such that $z,w \leq c_i$ for all $i \in \N$.

For all $i \in \N$, since $1 \leq z \leq c_i$, then $1 \leq c_i$. In particular, $c_i \in [1,c_1]$ for all $i \in \N$. Also note that $z,w \in [1,c_1]$.

Now, since $\varphi(P)=Q$, there is some $p \in P$ such that $\varphi(p)=c_1$. Since $\varphi$ is a reduction, $\varphi$ is an order isomorphism from $[1,p]$ to $[1, \varphi(p)]=[1,c_1]$. Let $x$, $y$, and $b_i$ denote the unique elements in $[1,p]$ such that $\varphi(x)=z, \varphi(y)=w$, and $\varphi(b_i)=c_i$ for all $i \in \N$. Then in $P$, $b_1>b_2>b_3...$ is a strictly decreasing sequence of common upper bounds of $x$ and $y$. This contradicts the fact that $(G,P)$ is WQLO.

\end{proof}

%
%

\begin{remark}
There is a notion of amenability for semigroups, and $(G,P)$ may strongly reduce to an amenable group even if neither $G$ nor $P$ is amenable as a group or semigroup (respectively). For example, in $(F_2, P_2)$, the free group on 2 generators with positive cone $P_2$ which is the free monoid on 2 generators, we will show that $(F_2, P_2)$ strongly reduces to $(\Z \wr \Z, \N \wr \N)$, which is amenable, but $F_2$ is famously not amenable, and $P_2$ is also not amenable as a semigroup. 

The fact that $P_2$ is not amenable but can be embedded in an amenable group has been known for more than half a century. In \cite{hochster1969subsemigroups}, the author shows an embedding of $P_2$ into $\Z \wr \Z^2$.

\end{remark}

\subsection{Constructions}

In this section, we will show that the notion of ``(strongly) reduces to an (amenable) group'' behaves well with the usual group theory constructions such as composition, hereditary subgroups, direct products, and free products.

\subsubsection{Composition}

\begin{lemma} \label{Composition of Reductions}
A composition of (strong) reductions is a (strong) reduction. That is, if $\varphi: (G,P) \ra (H,Q)$ and $\psi: (H, Q) \ra (F,R)$ are two (strong) reductions, then $\psi \circ \varphi$ is a (strong) reduction.
\end{lemma}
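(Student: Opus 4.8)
The plan is to verify directly from Definition \ref{Reduction Definition} that the composite $\psi \circ \varphi$ inherits the defining bijection property, handling the strong and ordinary cases in parallel. First I would confirm that $\psi \circ \varphi$ is an order homomorphism in the first place: it is a composition of group homomorphisms, and since $\varphi(P) \subseteq Q$ and $\psi(Q) \subseteq R$ we get $(\psi \circ \varphi)(P) \subseteq \psi(Q) \subseteq R$, so it is an order homomorphism $(G,P) \ra (F,R)$ and the statement at least typechecks.

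For the strong reduction case the argument is immediate. By Definition \ref{Reduction Definition}, $\varphi \mid_P : P \ra Q$ and $\psi \mid_Q : Q \ra R$ are both bijections, so their composite $(\psi \circ \varphi) \mid_P = (\psi \mid_Q) \circ (\varphi \mid_P) : P \ra R$ is a bijection, which is exactly the condition for $\psi \circ \varphi$ to be a strong reduction.

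For the ordinary reduction case I would fix $p \in P$ and chain the two interval bijections. Since $\varphi$ is a reduction, it restricts to a bijection $[1,p] \ra [1, \varphi(p)]$; in particular its image is exactly $[1, \varphi(p)]$. Because $p \in P$ forces $\varphi(p) \in Q$, and $\psi$ is a reduction, $\psi$ restricts to a bijection $[1, \varphi(p)] \ra [1, \psi(\varphi(p))] = [1, (\psi \circ \varphi)(p)]$. Composing these two restrictions yields a bijection $[1,p] \ra [1, (\psi \circ \varphi)(p)]$, and since this composite agrees pointwise with $(\psi \circ \varphi) \mid_{[1,p]}$, the map $\psi \circ \varphi$ is a bijection on each such interval, hence a reduction.

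There is essentially no obstacle here; the result is a formal consequence of the definitions. The only point requiring any care is observing that $\varphi([1,p])$ equals $[1, \varphi(p)]$ exactly, rather than merely being contained in it, so that $\psi$'s interval bijection can legitimately be applied to all of $\varphi([1,p])$ --- but this surjectivity is precisely what being a reduction guarantees, so the chaining is valid.
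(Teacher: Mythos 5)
Your proof is correct and follows essentially the same route as the paper's: verify that $\psi \circ \varphi$ is an order homomorphism, then compose the interval bijections $[1,p] \ra [1,\varphi(p)]$ and $[1,\varphi(p)] \ra [1,\psi(\varphi(p))]$ for the reduction case, and compose the cone bijections for the strong case. Your explicit remark that $\varphi([1,p])$ equals $[1,\varphi(p)]$ exactly (not merely a subset) is a nice touch of care that the paper leaves implicit.
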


\begin{proof}
Suppose $\varphi: (G,P) \ra (H,Q)$ and $\psi: (H, Q) \ra (F,R)$ are two reductions. Then we will show that $\psi \circ \varphi$ is a reduction.

Certainly, $\psi \circ \varphi$ is an order homomorphism, so by Lemma  \ref{Reduction Criterion} (3), we must show that for all $p \in P$, $\psi \circ \varphi$ bijectively maps $[1,p]$ onto $[1, \psi (\varphi(p))]$. 

To this end, we know that $\varphi: [1,p] \ra [1, \varphi (p)]$ is bijective, and $\psi: [1, \varphi(p)] \ra [1, \psi(\varphi(p))]$ are bijections since $\varphi$ and $\psi$ are reductions, so therefore their composition is a bijection, as desired.

For the composition of strong reductions, if $\varphi: (G,P) \ra (H,Q)$ and $\psi: (H, Q) \ra (F,R)$ are two strong reductions, then $\varphi$ maps $P$ bijectively onto $Q$ and $\psi$ maps $Q$ bijectively onto $R$, so $\psi \circ \varphi$ maps $P$ bijectively onto $R$, as desired.

\end{proof}

\subsubsection{Hereditary Subgroups}

In the context of ordered groups, the ``correct'' notion of a subgroup is often a hereditary subgroups in the sense of \cite[Corollary 5.6]{brownlowe2013co}. In this section we will remind the reader of the definition and some basic results about the concept.

\begin{definition}
Given an ordered group $(G,P)$, if $Q \subseteq P$ is a subsemigroup, we say that $Q$ is \ulindex{hereditary} in $P$ if $p_1,p_2 \in P$, $p_1p_2 \in Q$ implies that $p_1,p_2 \in Q$.

If $(G,P)$ is an ordered group, we will say that $(H,Q)$ is a \ul{hereditary subgroup}\index{hereditary!subgroup} if $H$ is a subgroup of $G$, $Q$ is a subsemigroup of $P \cap H$, and $Q$ is hereditary in $P$.
\end{definition}

\begin{lemma}\label{Hereditary Same Order}
Let $(H,P)$ be an ordered group, and $Q \subseteq P$ a hereditary subsemigroup. For the sake of clarity, let $\leq_P$ and $\leq_Q$ denote the orderings on $G$ arising from $P$ and $Q$, respectively. Then for all $q \in Q$ and $p \in P$, $1 \leq_P p \leq_P q$ if and only if $1 \leq_Q p \leq_Q q$. 

In particular, for $q \in Q$, the intervals $[1,q]_{\leq_P}$ and $[1,q]_{\leq_Q}$ are equal as sets.
\end{lemma}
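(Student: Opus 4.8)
The plan is to prove the biconditional by rewriting each order relation in terms of membership in the relevant cone and then invoking the hereditary hypothesis exactly once, in the forward direction only. Recall that $1 \leq_P p$ is by definition the statement $p \in P$, and $p \leq_P q$ is the statement $p\inv q \in P$; likewise $1 \leq_Q p$ means $p \in Q$ and $p \leq_Q q$ means $p\inv q \in Q$. Note that each side of the claimed equivalence already forces $p$ into the ambient group's positive cone $P$, so there is no harm in quantifying over all group elements $p$.

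For the reverse implication, suppose $1 \leq_Q p \leq_Q q$, so that $p \in Q$ and $p\inv q \in Q$. Since $Q \subseteq P$, we immediately get $p \in P$ and $p\inv q \in P$, i.e.\ $1 \leq_P p \leq_P q$. This direction uses nothing beyond the inclusion $Q \subseteq P$.

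For the forward implication, suppose $1 \leq_P p \leq_P q$, so that $p \in P$ and $p\inv q \in P$. The key observation is that $q = p \cdot (p\inv q)$ exhibits the element $q \in Q$ as a product of the two elements $p$ and $p\inv q$, both of which lie in $P$. The hereditary property of $Q$ in $P$ then forces both factors into $Q$, giving $p \in Q$ and $p\inv q \in Q$. Translating back, this is exactly $1 \leq_Q p$ and $p \leq_Q q$, as desired.

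The ``in particular'' is then immediate: the sets $[1,q]_{\leq_P}$ and $[1,q]_{\leq_Q}$ are by definition the collections of group elements satisfying the left- and right-hand sides of the equivalence just proved, so they coincide. The only real content is recognizing that heredity is precisely the hypothesis needed to upgrade a $P$-order relation lying below an element of $Q$ into the corresponding $Q$-order relation; I do not anticipate any genuine obstacle beyond this, and in particular no appeal to weak quasi-lattice order or to amenability is required.
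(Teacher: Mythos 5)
Your proof is correct and takes essentially the same route as the paper's: the reverse implication uses only the inclusion $Q \subseteq P$, and the forward implication applies heredity exactly once to the factorization $q = p\cdot(p\inv q)$, where the paper simply names the second factor $s$ with $ps = q$ instead of writing it as $p\inv q$.
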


\begin{proof}
Fix some $q \in Q, p \in P$. 

Suppose $1 \leq_P p \leq_P q$. Then $p \in P$ and there exists an $s \in P$ such that $ps=q$. Since $q \in Q$ and $Q$ is hereditary, then this implies that $p,s \in Q$, so $1 \leq_Q p \leq_Q q$, as desired.

Suppose $1 \leq_Q p \leq_Q q$. Then $p \in Q$ and there exists an $s \in Q$ such that $ps=q$. Since $Q \subseteq P$, then $1 \leq_P p \leq_P q$, as desired.

The ``in particular'' part is immediate.

\end{proof}

\begin{lemma} \label{Surjective Reduction}
Let $(G,P), (H,Q)$ be ordered groups, and $\varphi:G \ra H$ a homomorphism. Let $H'=\varphi(G)$, $Q'=\varphi(P)$, and let $\varphi'$ denote the codomain restriction of $\varphi$ to the codomain $H'$. Then the following are equivalent:

\begin{enumerate}
\item $\varphi$ is a reduction.
\item $(H',Q')$ is a hereditary subgroup of $(H,Q)$ and $\varphi'$ is a surjective reduction of $(G,P)$ onto $(H', Q')$.
\end{enumerate}

\end{lemma}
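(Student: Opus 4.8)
The plan is to prove the two implications separately, with the bridge in both directions being Lemma \ref{Hereditary Same Order}, which guarantees that once $Q'$ is known to be hereditary in $Q$, the interval $[1,\varphi(p)]$ computed inside $(H,Q)$ coincides as a set with the same interval computed inside $(H',Q')$. This interval-matching is what lets me pass the bijection condition freely between the ambient group and the subgroup.

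For $(1)\Rightarrow(2)$: First I note that $Q'=\varphi(P)$ is a submonoid of $H'=\varphi(G)$ with $Q'\subseteq Q$, so $Q'\cap (Q')\inv \subseteq Q\cap Q\inv=\{1\}$, hence $Q'$ is automatically a positive cone and $(H',Q')$ is an ordered group. The substantive step is showing $Q'$ is hereditary in $Q$: given $q_1,q_2\in Q$ with $q_1q_2\in Q'$, I choose $p\in P$ with $\varphi(p)=q_1q_2$; since $q_1\le q_1q_2=\varphi(p)$, the reduction property of $\varphi$ supplies some $p_1\in[1,p]$ with $\varphi(p_1)=q_1$, so $q_1\in Q'$, and then factoring $p=p_1p_2$ with $p_2\in P$ gives $\varphi(p_2)=q_1\inv q_1q_2=q_2\in Q'$. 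This establishes that $(H',Q')$ is a hereditary subgroup of $(H,Q)$. The map $\varphi'$ is surjective onto $H'$ by construction and is an order homomorphism since $\varphi'(P)=Q'$; to see it is a reduction I fix $p\in P$ and use Lemma \ref{Hereditary Same Order} to identify $[1,\varphi(p)]$ in $(H',Q')$ with $[1,\varphi(p)]$ in $(H,Q)$, so the bijection $\varphi\colon[1,p]\to[1,\varphi(p)]$ coming from hypothesis (1) is exactly the bijection required for $\varphi'$.

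For $(2)\Rightarrow(1)$: the map $\varphi$ is an order homomorphism because $\varphi(P)=Q'\subseteq Q$. Fixing $p\in P$, the reduction $\varphi'$ gives a bijection $[1,p]\to[1,\varphi(p)]$ where the target interval is computed in $(H',Q')$; since we are given that $Q'$ is hereditary in $Q$ and $\varphi(p)\in Q'$, Lemma \ref{Hereditary Same Order} identifies that interval with $[1,\varphi(p)]$ computed in $(H,Q)$. Thus $\varphi$ restricts to the required bijection for every $p\in P$ and is therefore a reduction in the sense of Definition \ref{Reduction Definition}.

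The main obstacle is the heredity verification in $(1)\Rightarrow(2)$; everything else is bookkeeping once Lemma \ref{Hereditary Same Order} is invoked to reconcile the orderings induced by $Q$ and $Q'$. A secondary point to handle carefully is confirming that $Q'$ is a genuine positive cone so that $(H',Q')$ is even a candidate ordered group, but as noted this follows immediately from $Q'\subseteq Q$.
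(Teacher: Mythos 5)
Your proof is correct and follows essentially the same route as the paper's: heredity of $Q'$ in $Q$ is established by pulling a factorization of $\varphi(p)$ back through the interval bijection $[1,p]\to[1,\varphi(p)]$, and both implications are then finished by invoking Lemma \ref{Hereditary Same Order} to identify the interval $[1,\varphi(p)]$ computed in $(H,Q)$ with the one computed in $(H',Q')$. The only cosmetic difference is that you work directly from Definition \ref{Reduction Definition} while the paper routes through the equivalent characterization in Proposition \ref{Reduction Criterion}(3), and you additionally spell out that $Q'$ is a positive cone, which the paper leaves implicit.
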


\begin{proof}
$(1 \Rightarrow 2)$ If $\varphi$ is a reduction, we will first show that $Q'=\varphi(P)$ is a hereditary subsemigroup of $Q$. If $q= \varphi(p) \in \varphi(P)$, and $rr'=q$ for some $r,r' \in Q$, then $r \in [1,\varphi(p)]$, so by Proposition \ref{Reduction Criterion}(3), there exists a unique $s \in [1,p]$ such that $\varphi(s)=r$. Then $\varphi(s)r'= \varphi(p)$, so $r'= \varphi(s\inv p)$. Since $s \leq p$, then there exists $s'\in P$ such that $ss'=p$, so $r'=\varphi(s')$. Thus $r,r' \in \varphi(P)$, so $\varphi(P)$ is a hereditary subsemigroup of $Q$.

Since additionally $H'<H$ and $Q' <Q \cap \varphi(G)$, then $(H', Q')$ is a hereditary subgroup of $(H,Q)$.

To show that $\varphi'$ is a surjective reduction, it is immediately surjective since its codomain was restricted to the image. Now fix a $p \in P$. Since $\varphi$ is a reduction, then by Proposition \ref{Reduction Criterion} (3), $\varphi$ maps $[1,p]$ bijectively onto $[1, \varphi(p)]$. By Lemma \ref{Hereditary Same Order}, we have $[1, \varphi(p)]= [1, \varphi'(p)]$ as sets, so $\varphi'$ maps $[1,p]$ bijectively onto $[1, \varphi'(p)]$. Then by Proposition \ref{Reduction Criterion} (3), $\varphi'$ is a reduction.

$(2 \Rightarrow 1)$ Fix a $p \in P$. Since $\varphi'$ is a reduction, then by Proposition \ref{Reduction Criterion} (3), $\varphi'$ maps $[1,p]$ bijectively onto $[1, \varphi'(p)]$. By Lemma \ref{Hereditary Same Order}, we have $[1, \varphi'(p)]_{\leq Q'}= [1, \varphi(p)]_{\leq Q}$ as sets, so $\varphi$ maps $[1,p]$ bijectively onto $[1, \varphi(p)]$. Then by Proposition \ref{Reduction Criterion} (3), $\varphi$ is a reduction.

\end{proof}

\begin{lemma} 
Let $(G,P)$ be an ordered group and $(H,Q)$ a hereditary subgroup of $(G,P)$. Then the inclusion map $i:(H,Q) \ra (G,P)$ is a reduction. 
\end{lemma}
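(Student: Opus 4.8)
The plan is to verify the two requirements of Definition \ref{Reduction Definition} directly, letting Lemma \ref{Hereditary Same Order} do the real work. First I would observe that the inclusion $i:(H,Q)\ra(G,P)$ is an order homomorphism: it is a group homomorphism (the inclusion of the subgroup $H$ into $G$), and $i(Q)=Q\subseteq P$ by the definition of a hereditary subgroup. So the hypotheses of Definition \ref{Reduction Definition} are in place, and it remains only to check the interval condition.

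Next, to show $i$ is a reduction I must show that for each $q\in Q$, the map $i$ restricts to a bijection from the interval $[1,q]$ computed in $(H,Q)$ onto the interval $[1,i(q)]=[1,q]$ computed in $(G,P)$. Writing $\leq_Q$ and $\leq_P$ for the two orderings, the domain interval is $[1,q]_{\leq_Q}=\{x\in Q:1\leq_Q x\leq_Q q\}$ and the codomain interval is $[1,q]_{\leq_P}=\{y\in P:1\leq_P y\leq_P q\}$. Since $i$ acts as the identity on elements, it suffices to show that these two intervals coincide as subsets of $G$.

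This is precisely the content of Lemma \ref{Hereditary Same Order}: applying that lemma with ambient group $G$, positive cone $P$, and hereditary subsemigroup $Q\subseteq P$ yields $[1,q]_{\leq_P}=[1,q]_{\leq_Q}$ as sets. The one small bookkeeping point is that the $\leq_Q$-interval read inside $G$ is the same as the one read inside $H$, since every element of $[1,q]_{\leq_Q}$ lies in $Q\subseteq H$ by definition; hence no ambiguity arises from the fact that $\leq_P$ lives on $G$ while the ordered group $(H,Q)$ carries $\leq_Q$ on $H$. With the two intervals identified, $i$ is the identity on their common underlying set, hence a bijection between them, and by Definition \ref{Reduction Definition} the inclusion is a reduction.

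Because the heavy lifting has already been done in Lemma \ref{Hereditary Same Order}, I do not expect a genuine obstacle here; the only point requiring care is keeping the two orderings $\leq_P$ and $\leq_Q$ straight and confirming that the interval $[1,q]_{\leq_Q}$ is genuinely the same whether it is computed inside $H$ or inside $G$.
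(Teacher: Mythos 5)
Your proof is correct, but it takes a more direct route than the paper. The paper's own proof is a two-line appeal to Lemma \ref{Surjective Reduction}: since the inclusion restricted to its image is the identity map on $(H,Q)$, which is trivially a surjective reduction, and $(H,Q)$ is hereditary in $(G,P)$ by hypothesis, the equivalence $(2)\Rightarrow(1)$ of that lemma immediately gives that $i$ is a reduction. You instead verify Definition \ref{Reduction Definition} by hand, letting Lemma \ref{Hereditary Same Order} identify the interval $[1,q]_{\leq_Q}$ computed in $(H,Q)$ with the interval $[1,q]_{\leq_P}$ computed in $(G,P)$, so that $i$ is the identity on a common underlying set. These are not really different mathematics: the paper's proof of Lemma \ref{Surjective Reduction}$(2\Rightarrow 1)$ consists precisely of Proposition \ref{Reduction Criterion}(3) (which is vacuous when the reduction is the identity) plus Lemma \ref{Hereditary Same Order}, so your argument is what the paper's citation unfolds to in this special case. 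What each buys: the paper's version is shorter given the machinery already in place and emphasizes that this lemma is the degenerate case of the more general correspondence between reductions and hereditary images; yours is self-contained, makes explicit exactly where hereditariness of $Q$ enters, and correctly handles the one genuine subtlety — that the $\leq_Q$-interval is the same whether read inside $H$ or inside $G$, because positivity forces its elements into $Q$ — a point the paper leaves implicit.
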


\begin{proof}
Observe that the inclusion map, restricted to its image, is the identity map, and thus a surjective reduction. Then the inclusion map is a reduction by Lemma \ref{Surjective Reduction}.

\end{proof}

We can now conclude that ``reduces to an amenable group'' is closed under taking hereditary subgroups.

\begin{corollary}\label{Hereditary Subgroup Preserves Amenable Reduction}
Let $(G,P)$ be an ordered group and $(H,Q)$ a hereditary subgroup of $(G,P)$. If $(G,P)$ reduces to an amenable group, then $(H,Q)$ reduces to an amenable group.
\end{corollary}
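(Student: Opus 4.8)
The plan is to chain together the two reductions that have already been produced. By hypothesis $(G,P)$ reduces to an amenable group, so unwinding Definition \ref{Reduction Definition} there is an amenable group $K$, a positive cone $R \subseteq K$, and a reduction $\psi:(G,P) \ra (K,R)$. Separately, since $(H,Q)$ is a hereditary subgroup of $(G,P)$, the immediately preceding lemma guarantees that the inclusion map $i:(H,Q) \ra (G,P)$ is a reduction.

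First I would form the composite $\psi \circ i:(H,Q) \ra (K,R)$ and invoke Lemma \ref{Composition of Reductions} to conclude that it is again a reduction. Since the target group $K$ is amenable by construction and is left unchanged by the composition, this composite reduction witnesses that $(H,Q)$ reduces to an amenable group, which is precisely the desired conclusion.

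There is essentially no obstacle here: both ingredients---that the inclusion of a hereditary subgroup is a reduction, and that reductions are closed under composition---have already been established, so the corollary follows in one line. The only points requiring any care are unpacking the phrase ``reduces to an amenable group'' to extract the witnessing reduction $\psi$ and its amenable target, and then observing that composing with $i$ does not alter that target, so amenability is preserved automatically.
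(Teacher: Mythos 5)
Your proof is correct and is essentially identical to the paper's own argument: both use the preceding lemma that the inclusion of a hereditary subgroup is a reduction, compose it with the given reduction to an amenable group via Lemma \ref{Composition of Reductions}, and note that the amenable target is unchanged. No gaps.
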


\begin{proof}
Let $i: H \ra G$ denote the inclusion map, which by the previous lemma is a reduction.

If $\varphi:(G,P) \ra (F,R)$ is a reduction to an amenable group, then $\varphi \circ i: (H,Q) \ra (F,R)$ is a composition of reductions, and hence a reduction by Lemma \ref{Composition of Reductions}. It has amenable range, so it is a reduction to an amenable group.
\end{proof}

\subsubsection{Direct Products}

Now, we'll show that reductions respect direct products, which implies that ``reduces to an amenable group'' is preserved under direct products.

First, let us check that direct products preserve WQLO:

\begin{lemma}
Let $(G,P)$ and $(H,Q)$ be ordered groups. Let $P \times Q$ denote the submonoid of $G \times H$ generated by (the canonical copies of) $P$ and $Q$. Then $(G \times H, P \times Q)$ is an ordered group, and if $(G,P)$ and $(H,Q)$ are WQLO, then so is $(G \times H, P \times Q)$.
\end{lemma}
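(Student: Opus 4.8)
The plan is to first pin down exactly what $P \times Q$ is as a subset of $G \times H$, then verify the positive-cone axioms, and finally establish WQLO by forming suprema coordinatewise. First I would observe that the two canonical copies $P \times \{1_H\}$ and $\{1_G\} \times Q$ commute inside the direct product and satisfy $(p,1_H)(1_G,q) = (p,q)$, so the submonoid they generate is exactly the Cartesian product $P \times Q = \{(p,q) : p \in P,\ q \in Q\}$. A direct computation from the definition of the induced order then shows that the order on $(G \times H, P \times Q)$ is the product order: $(a,b) \le (c,d)$ if and only if $a\inv c \in P$ and $b\inv d \in Q$, i.e. if and only if $a \le c$ in $G$ and $b \le d$ in $H$. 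I would record this identity explicitly, since both remaining steps rely on it.

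To see that $(G \times H, P \times Q)$ is an ordered group, it remains to check that $P \times Q$ is a positive cone. It is plainly a submonoid, and its inverse set is $(P \times Q)\inv = P\inv \times Q\inv$, so
\[ (P \times Q) \cap (P \times Q)\inv = (P \cap P\inv) \times (Q \cap Q\inv) = \{1_G\} \times \{1_H\}, \]
using that $P$ and $Q$ are positive cones. Hence the intersection is the singleton $\{(1_G, 1_H)\}$, which is exactly the positive-cone condition.

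For the WQLO claim, suppose $(G,P)$ and $(H,Q)$ are WQLO and let $(x_1,y_1),(x_2,y_2) \in P \times Q$ have a common upper bound $(u,v) \in P \times Q$. By the product-order description, $x_1,x_2 \le u$ in $G$ and $y_1,y_2 \le v$ in $H$, so each coordinate pair has a common upper bound in its respective cone; WQLO of the factors then yields suprema $x_1 \vee x_2 \in P$ and $y_1 \vee y_2 \in Q$. I would then claim that $(x_1 \vee x_2,\, y_1 \vee y_2)$ is the supremum of $(x_1,y_1)$ and $(x_2,y_2)$ in $P \times Q$: it is an upper bound coordinatewise, and if $(a,b)$ is any common upper bound then $x_1,x_2 \le a$ and $y_1,y_2 \le b$ force $x_1 \vee x_2 \le a$ and $y_1 \vee y_2 \le b$ by the least-upper-bound property in each factor, so $(x_1 \vee x_2, y_1 \vee y_2) \le (a,b)$.

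This argument is essentially routine bookkeeping; there is no serious obstacle. The only point requiring mild care is the reduction of ``common upper bound in the product'' to ``common upper bound in each factor,'' which is precisely where the product-order identity from the first step is invoked, together with the standing observation that any upper bound of positive elements is itself positive (so the relevant upper bounds genuinely lie in $P$ and in $Q$).
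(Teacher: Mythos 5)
Your proof is correct and follows essentially the same route as the paper's: verify the positive-cone condition via $(P \times Q) \cap (P \times Q)\inv = (P \cap P\inv) \times (Q \cap Q\inv)$, and establish WQLO by showing the supremum is formed coordinatewise. The paper leaves the supremum verification as a quick confirmation; you have simply written out that bookkeeping (including the correct handling of the common-upper-bound hypothesis), which is a faithful elaboration rather than a different argument.
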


\begin{proof}
By definition, $P \times Q$ is a submonoid of $G \times H$, and it is immediate that $P \times Q \cap (P \times Q)\inv = (P \cap P\inv) \times (Q \cap Q\inv)= \{1_G\} \times \{1_H\}$, so $(G \times H, P \times Q)$ is an ordered group.

\sloppy If $(G,P)$ and $(H,Q)$ are WQLO, then one may quickly confirm that given $(p_1, q_1), (p_2, q_2) \in P \times Q$, their least upper bound is $(p_1 \vee p_2, q_1 \vee q_2)$.

\end{proof}

\begin{lemma} \label{Direct Product Preserves Reduction}
For $i=1,2$, let $(G_i, P_i)$ and $(H_i, Q_i)$ be ordered groups, $\varphi_i: G_i \ra H_i$ a homomorphism. Let $G=G_1 \times G_2$, and similarly define $P, H, Q$. Then there is a homomorphism $\varphi: G \ra H$ given by $\varphi(g_1, g_2)= (\varphi_1(g_1), \varphi_2(g_2))$, and $\varphi$ is an order homomorphism (respectively, reduction or strong reduction) if $\varphi_1$ and $\varphi_2$ both are.

\end{lemma}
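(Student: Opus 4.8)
The plan is to exploit the fact that every relevant structure in a direct product factors coordinate-wise, so that each of the three claims reduces to the corresponding claim for $\varphi_1$ and $\varphi_2$ separately. I would first dispense with the easy parts: that $\varphi$ is a group homomorphism is immediate from its coordinate-wise definition, and checking it is an order homomorphism amounts to observing that $P = P_1 \times P_2$, so for $(p_1,p_2) \in P$ we have $\varphi(p_1,p_2) = (\varphi_1(p_1), \varphi_2(p_2)) \in Q_1 \times Q_2 = Q$, using that each $\varphi_i$ is an order homomorphism.

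The heart of the argument is a small lemma that intervals factor: for $p = (p_1,p_2) \in P$, I claim $[1,p] = [1,p_1] \times [1,p_2]$. This follows by unwinding the order on $G = G_1 \times G_2$. Indeed, $(x_1,x_2) \in [1,p]$ iff $(x_1,x_2) \in P$ and $(x_1,x_2)\inv p \in P$; since membership in $P = P_1 \times P_2$ is tested coordinate-wise, and $(x_1,x_2)\inv p = (x_1\inv p_1, x_2\inv p_2)$, this holds precisely when $x_i \in [1,p_i]$ for $i = 1,2$. The same factorization holds for intervals in $H$.

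Granting this, the reduction claim follows from Definition \ref{Reduction Definition} (or Proposition \ref{Reduction Criterion}). Since $\varphi$ acts coordinate-wise, $\varphi([1,p]) = \varphi_1([1,p_1]) \times \varphi_2([1,p_2])$, while $[1,\varphi(p)] = [1,\varphi_1(p_1)] \times [1,\varphi_2(p_2)]$ by the interval-factorization applied in $H$. If each $\varphi_i$ is a reduction, then each restricts to a bijection $[1,p_i] \ra [1,\varphi_i(p_i)]$, and a product of two bijections is a bijection, so $\varphi$ maps $[1,p]$ bijectively onto $[1,\varphi(p)]$, which is exactly the reduction condition.

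For the strong reduction case, I would note that $\varphi\mid_P = (\varphi_1\mid_{P_1}) \times (\varphi_2\mid_{P_2})$ as a map $P_1 \times P_2 \ra Q_1 \times Q_2$; again a product of bijections is a bijection, so if each $\varphi_i\mid_{P_i}$ is a bijection then so is $\varphi\mid_P$. I expect no serious obstacle here: the only point requiring any care is the interval-factorization lemma, and even that is a routine coordinate-wise verification; everything else is assembling products of the corresponding facts for the two factors.
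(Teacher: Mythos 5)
Your proposal is correct and follows essentially the same route as the paper's proof: both rest on the coordinate-wise factorization of intervals, $[(1,1),(p_1,p_2)] = [1,p_1]\times[1,p_2]$, and the fact that a product of bijections is a bijection. The only difference is that you verify the interval factorization explicitly, whereas the paper asserts it without proof, so your write-up is if anything slightly more complete.
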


\begin{proof}
The existence of such a homomorphism is an elementary fact of group theory.

If $\varphi_1, \varphi_2$ are both order homomorphisms, then $\varphi_1(P_1) \subseteq Q_1$ and $\varphi_2(P_2) \subseteq Q_2$, so $\varphi(P_1 \times P_2) \subseteq \varphi_1(P_1) \times \varphi_2(P_2)  \subseteq Q_1 \times Q_2$, as desired.

If $\varphi_1, \varphi_2$ are both reductions, then we will use Proposition \ref{Reduction Criterion} (3) as our notion of a reduction, so it suffices to show that for all $p_1 \in P_1, p_2 \in P_2$, $[(1,1), (p_1, p_2)]$ is mapped bijectively onto $[(1,1), (\varphi_1(p_1), \varphi_2(p_2)]$. But in $(G_1 \times G_2, P_1 \times P_2)$, we have that $[(1,1), (x_1, x_2)]= [1,x_1] \times [1,x_2]$, so we know that $\varphi= \varphi_1 \times \varphi_2$ bijectively carries $[(1,1), (p_1, p_2)]=[1,p_1] \times [1,p_2]$ onto $[(1,1), (\varphi_1(p_1), \varphi_2(p_2))]=[1, \varphi_1(p_1)] \times [1, \varphi_2(p_2)]$, as desired.

If $\varphi_1, \varphi_2$ are both strong reductions, then $\varphi_1$ and $\varphi_2$ map $P_1$ and $P_2$ bijectively onto $Q_1$ and $Q_2$ respectively, so for all $q=(q_1, q_2) \in Q$ there is a unique $p_1 \in P_1, p_2 \in P_2$ such that $\varphi_1(p_1)=q_1, \varphi_2(p_2)=q_2$, and hence $p=(p_1, p_2)$ is the unique element of $P$ with $\varphi(p)=q$, so $\varphi$ is a bijection from $P$ to $Q$, as desired.

\end{proof}

\begin{corollary}\label{Direct Product Preserves Amenable Reduction}
If $(G_1, P_1)$, $(G_2, P_2)$ are ordered groups which reduce to amenable ordered groups, then $(G_1 \times G_2, P_1 \times P_2)$ reduces to an amenable group.
\end{corollary}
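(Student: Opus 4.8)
The plan is to reduce this immediately to the already-established product lemma together with the standard closure of amenability under finite direct products. First I would unpack the hypotheses: since $(G_1,P_1)$ and $(G_2,P_2)$ reduce to amenable groups, there are reductions $\varphi_1:(G_1,P_1) \ra (H_1,Q_1)$ and $\varphi_2:(G_2,P_2) \ra (H_2,Q_2)$ with $H_1$ and $H_2$ amenable. The point is that all the real work of checking the reduction condition on a product has already been done in Lemma \ref{Direct Product Preserves Reduction}, which verifies that product intervals factor as $[(1,1),(p_1,p_2)] = [1,p_1] \times [1,p_2]$ and hence that the product map carries them bijectively onto the corresponding product intervals in the target.

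So the second step is simply to apply Lemma \ref{Direct Product Preserves Reduction} to $\varphi_1$ and $\varphi_2$, yielding a reduction $\varphi = \varphi_1 \times \varphi_2 : (G_1 \times G_2, P_1 \times P_2) \ra (H_1 \times H_2, Q_1 \times Q_2)$. It then remains only to confirm that the target group $H_1 \times H_2$ is amenable, which is immediate from the recalled fact that amenability of discrete groups is preserved under finite direct products. Combining these two observations exhibits $\varphi$ as a reduction of $(G_1 \times G_2, P_1 \times P_2)$ onto a group whose ambient group is amenable, which is exactly the assertion that $(G_1 \times G_2, P_1 \times P_2)$ reduces to an amenable group.

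There is essentially no obstacle here: the statement is a clean corollary, and both ingredients (Lemma \ref{Direct Product Preserves Reduction} for the reduction structure, and closure of amenability under products for the target) are available off the shelf. The only thing to be careful about is bookkeeping the positive cones correctly, i.e.\ that the target cone $Q_1 \times Q_2$ is the one produced by the product lemma and that $H_1 \times H_2$ really is the relevant ambient group; once this matching is noted, the proof is a one-line composition of prior results.
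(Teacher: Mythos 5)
Your proof is correct and follows exactly the paper's own argument: apply Lemma \ref{Direct Product Preserves Reduction} to the two given reductions to obtain a reduction onto $(H_1 \times H_2, Q_1 \times Q_2)$, then invoke closure of amenability under finite direct products. Nothing is missing.
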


\begin{proof}
For $i=1,2$, let $\varphi_i:(G_i, P_i) \ra (H_i, Q_i)$ denote the reduction of $(G_i,P_i)$ to an amenable ordered group.

By the previous Lemma, we know that $(G_1 \times G_2, P_1 \times P_2)$ reduces to $(H_1 \times H_2, Q_1 \times Q_2)$. Since $H_1$ and $H_2$ are amenable, then $H_1 \times H_2$ is amenable, so this is a reduction to an amenable group.
\end{proof}

\subsubsection{Free Products}

In this section we will show that the class of groups which have reductions to amenable groups is closed under (finite) free products. That is, we will show that if $(G, P)$ and $(H,Q)$ reduce to amenable groups, then $(G*H, P*Q)$ also reduces to an amenable group.

This will consist of two steps, analogous to Lemma \ref{Direct Product Preserves Reduction} and Corollary \ref{Direct Product Preserves Amenable Reduction} from the direct product case. A free product analogue of Lemma \ref{Direct Product Preserves Reduction} is straightforward. However, in the free product case, an analogue of Corollary \ref{Direct Product Preserves Amenable Reduction} is more difficult, since the free product of two amenable groups is almost never amenable. Therefore, our second step is longer.

Given ordered groups $(G_1, P_1)$ and $(G_2,P_2)$, we denote by $P_1 * P_2$ the submonoid of $G_1 *G_2$ generated by the naturally embedded copies of $P_1$ and $P_2$. First, we will prove a lemma characterizing the order on $(G_1*G_2, P_1 *P_2)$:

\begin{lemma} \label{Free Positive Cone}
Let $(G_1,P_1), (G_2, P_2)$ be ordered groups, and let $(G,P)=(G_1*G_2, P_1 *P_2)$. 

\begin{enumerate}
\item For $p \in P$, we may write $p=p_1p_2...p_{2n}$ where $p_i \in P_1$ if $i$ is odd, and $p_i \in P_2$ if $i$ even, and $p_i \neq 1$ for $1<i<2n$. 

\item This representation of $p$ is unique. That is, there is exactly one such choice of $n$ and elements $p_i$ such that $p_i \in P_1$ if $i$ is odd, and $p_i \in P_2$ if $i$ even, and $p_i \neq 1$ for $1<i<2n$. 

\item Given such a representation $p=p_1p_2...p_{2n}$, we have $[1,p] = \bigcup_{i=1}^{2n} X_i$, where $X_i= p_1...p_{i-1} [1, p_i]$.

\item Given such a decomposition $[1,p] = \bigcup_{i=1}^{2n} X_i$, if $a \in X_i$ and $b \in X_j$ for $i < j$, then $a \leq b$.

\end{enumerate}

\end{lemma}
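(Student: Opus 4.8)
The plan is to base everything on the free-product normal form in $G = G_1 * G_2$ together with one key observation about positivity: if $a,b$ are nontrivial elements of $P_i$ (for a fixed $i$), then $ab \in P_i$ and $ab \neq 1$, since $P_i$ is a positive cone (a relation $ab=1$ would force $b = a\inv \in P_i \cap P_i\inv = \{1\}$). Consequently, when one multiplies two elements of $P$ written in free-product normal form, \emph{no} cancellation occurs beyond possibly merging the last syllable of the first factor with the first syllable of the second when these lie in the same $G_i$ — and even then the merged syllable is a nontrivial element of $P_i$, so the deep telescoping cancellation that is permitted in an arbitrary free product stops immediately. I expect this ``no-cancellation-of-positives'' fact to be the workhorse of the whole proof, and it also shows that each $P_i$ is hereditary in $P$.

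For Parts (1) and (2), I would first show that $P$ equals the set $T$ of all alternating products $q_1 q_2 \cdots q_m$ of nontrivial syllables drawn alternately from $P_1$ and $P_2$, together with $1$. The inclusion $T \subseteq P$ is clear; for $P \subseteq T$ I would take an arbitrary product of generators in $P_1 \cup P_2$, merge consecutive same-factor terms (which stay positive and nontrivial by the key observation), and read off the alternating form. Uniqueness of this unpadded normal form is exactly uniqueness of the free-product normal form, since each syllable is a nontrivial element of its $G_i$. The padded form $p_1 \cdots p_{2n}$ demanded in the statement is then obtained by prepending $p_1 = 1$ when the first syllable lies in $P_2$ and appending $p_{2n}=1$ when the last lies in $P_1$; the parity requirement and the interior-nontriviality requirement force $n$ and all the $p_i$ uniquely.

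For Part (3), the inclusion $\bigcup_i X_i \subseteq [1,p]$ is a direct computation: for $g = p_1 \cdots p_{i-1} x$ with $x \in [1,p_i]$ one has $g \in P$ and $g\inv p = (x\inv p_i)\, p_{i+1} \cdots p_{2n} \in P$, where $x \leq p_i$ gives $x\inv p_i \in P_i$ because $P_i$ is hereditary in $P$ (here I would invoke Lemma \ref{Hereditary Same Order} to identify $[1,p_i]$ computed in $G$ with the interval computed inside $P_i$). The reverse inclusion is the crux: given $g$ with $1 \leq g \leq p$, write $p = gh$ with $h \in P$ and compare the normal forms of $g$, $h$, and $p$. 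By the no-cancellation observation the syllables of $p$ are those of $g$ followed by those of $h$, with at most one merge at the seam; this forces $g$ to be an initial segment of the normal form of $p$ that cuts through at most one syllable, i.e. $g = s_1 \cdots s_{m-1} x$ with $x \leq s_m$. Re-expressing this in the padded indexing (the leading and trailing pads contribute nothing) yields $g \in X_i$ for the appropriate $i$.

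Part (4) is then immediate: for $a = p_1 \cdots p_{i-1} x \in X_i$ and $b = p_1 \cdots p_{j-1} y \in X_j$ with $i < j$, I would compute $a\inv b = (x\inv p_i)\, p_{i+1} \cdots p_{j-1}\, y$, which is a product of elements of $P$ (again using $x\inv p_i \in P_i$ and $y \in P$), hence $a \leq b$. The main obstacle will be the reverse inclusion in Part (3): making the ``initial segment cutting through one syllable'' argument fully rigorous requires careful treatment of the single possible merge at the seam and of the harmless end-padding, and it is precisely here that positivity is genuinely used, to exclude the iterated cancellation that a general free product would allow.
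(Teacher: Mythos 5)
Your proposal is correct and follows essentially the same route as the paper: both rest on the free-product Normal Form Theorem plus the fact that nontrivial positive syllables cannot cancel, with the reverse inclusion in (3) handled by analyzing the single possible merge at the seam when writing $p = gh$ with $g,h \in P$ (the paper carries this out as an explicit four-case analysis on whether the seam syllables are trivial), and (4) by a direct positivity computation. Your explicit isolation of the no-cancellation observation, the heredity of $P_i$ in $P$, and the appeal to Lemma \ref{Hereditary Same Order} is a cleaner packaging of what the paper uses implicitly, but it is not a genuinely different argument.
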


\begin{proof}
For (1), by definition every element of $P$ can be written as a product of elements of $P_1$ and $P_2$. If two consecutive elements are from the same $P_i$, they can be combined, and if any intermediate term is a 1, it can be removed and the now-consecutive terms combined. Finally, if necessary a term of 1 can be put at the beginning or end of the expression to make the alternating product begin with a term from $P_1$ and end with a term from $P_2$. 

For (2), this representation is unique because of the Normal Form Theorem for Free Products (\cite[Chapter IV, Theorem 1.2]{lyndon2015combinatorial}).

For (3), fix a $p=p_1p_2...p_{2n}$. It is immediate that each $X_i$ is a subset of $[1,p]$, so it suffices to show containment in the other direction. To this end, suppose $q \in [1,p]$, so $q \in P$ and $p=qr$ for some $r\in P$. Then writing $q=q_1...q_{2m}$ and $r=r_1...r_{2k}$, we have that $p=p_1p_2...p_{2n}=q_1...q_{2m}r_1...r_{2k}$. We now have a few cases depending on whether or not $q_{2m}$ and $r_1$ are the identity.

\begin{itemize}
\item $q_{2m} =1, r_1=1$: Then $p=p_1p_2...p_{2n}= q_1...q_{2m-1}r_2...r_{2k}$ is the unique alternating presentation with nonidentity terms, so $p_1=q_1, p_2=q_2$, etc. In particular, $q=q_1...q_{2m-1}=p_1...p_{2m-1} \in X_{2m}= p_1...p_{2m-1} [1, p_{2m}]$.
\item $q_{2m} \neq 1, r_1 \neq 1$: Then $p=p_1p_2...p_{2n}=q_1...q_{2m}r_1...r_{2k}$ is the unique alternating presentation with nonidentity terms, so $p_1=q_1, p_2=q_2$, etc. In particular, $q=q_1...q_{2m}=p_1...p_{2m} \in X_{2m}= p_1...p_{2m-1} [1, p_{2m}]$. 
\item $q_{2m} \neq 1, r_1=1$: Then $p=p_1p_2...p_{2n}=q_1...(q_{2m}r_2)r_3...r_{2k}$ is the unique alternating presentation with nonidentity terms, so $p_1=q_1, p_2=q_2$, etc, ending with $p_{2m}=q_{2m}r_2$. Thus $q_{2m} \in [1, p_{2m}]$, so $q= q_1...q_{2m} =p_1p_2...p_{2m-1}q_{2m} \in X_{2m} = p_1...p_{2m-1} [1,p_{2m}]$. 
\item $q_{2m} =1, r_1 \neq 1$: Then $p=p_1p_2...p_{2n}=q_1...q_{2m-2}(q_{2m-1}r_1) r_2...r_{2k}$ is the unique alternating presentation with nonidentity terms, so $p_1=q_1, p_2=q_2$, etc, ending with $p_{2m-1}=q_{2m-1}r_1$. Thus $q_{2m-1} \in [1, p_{2m-1}]$, so $q= q_1...q_{2m-1} =p_1p_2...p_{2m-2}q_{2m-1} \in X_{2m-1} = p_1...p_{2m-2} [1,p_{2m-1}]$. 
\end{itemize}

In all four cases, $q$ is in some $X_i$, so $[1,p]= \bigcup_{i=1}^{2n} X_i$, as desired.

For (4), note that for each $y \in X_i$, we have $p_1p_2...p_{i-1} \leq y \leq p_1p_2...p_i$, so $a \leq p_1p_2...p_{i} \leq p_1p_2...p_{j-1} \leq b$, as desired. 
\end{proof}

We can now confirm that weak quasi-lattice order is preserved under free products.

\begin{lemma}
If $(G_1,P_1)$ and $(G_2,P_2)$ are ordered groups (respectively, WQLO groups) then so is $(G_1*G_2, P_1*P_2)$.
\end{lemma}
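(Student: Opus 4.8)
The plan is to prove the two assertions separately, leaning throughout on Lemma \ref{Free Positive Cone} and on uniqueness of normal forms in a free product (the Normal Form Theorem, \cite[Chapter IV, Theorem 1.2]{lyndon2015combinatorial}). First, for the positive-cone claim, I would record that the reduced normal form of any element of $P=P_1*P_2$ has all of its syllables in $(P_1\setminus\{1\})\cup(P_2\setminus\{1\})$: a product of elements of $P_1\cup P_2$ only merges adjacent same-factor syllables (which stay positive, and stay nontrivial since $P_k\cap P_k\inv=\{1\}$) and deletes identities. To get $P\cap P\inv=\{1\}$, suppose $p\in P$ is nontrivial with reduced form $g_1\cdots g_m$; then $p\inv$ has reduced form $g_m\inv\cdots g_1\inv$, and if $p\inv\in P$ each $g_i\inv$ would be positive, forcing $g_i\in P_k\cap P_k\inv=\{1\}$, a contradiction. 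Hence $(G_1*G_2,P_1*P_2)$ is an ordered group. I would also note the auxiliary facts $P\cap G_k=P_k$ and ``$P_k$ is hereditary in $P$'' (both immediate from a syllable-length count on normal forms), so that by Lemma \ref{Hereditary Same Order} intervals inside a factor agree with those computed in $P$, and in particular $[1,q]\subseteq P_k$ for $q\in P_k$.

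For the WQLO claim, suppose $x,y\in P$ have a common upper bound $u$, and work inside $[1,u]$. By Lemma \ref{Free Positive Cone}(3),(4), $[1,u]$ is an ``ordinal sum'' of the intervals $[1,u_i]$: writing $X_i=u_1\cdots u_{i-1}[1,u_i]$, every element of $X_i$ lies below every element of $X_j$ for $i<j$. Letting $i_0,j_0$ be the least block indices containing $x,y$, part (4) shows $x$ and $y$ are comparable unless $i_0=j_0$. If they are comparable, the larger one is their supremum, and it lies below every common upper bound of the pair, so it is their supremum in $P$. If they are incomparable they share a block $X_{i_0}$, so $x=wx'$ and $y=wy'$ with $w=u_1\cdots u_{i_0-1}$ and $x',y'\in[1,u_{i_0}]\subseteq P_k$ for a single factor $k$. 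Here $x',y'$ must be nontrivial (otherwise $x,y$ would be comparable), so $wx'$ and $wy'$ are the reduced normal forms of $x$ and $y$; thus $w$ is exactly $x$ (equivalently $y$) with its last syllable deleted, and $w,x',y'$ depend only on $x,y$, not on the chosen $u$.

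I would then produce the supremum explicitly. In the factor, $x',y'\in P_k$ have the common upper bound $u_{i_0}$, so weak quasi-lattice order of $(G_k,P_k)$ yields a supremum $x'\vee y'\in P_k$, and I claim $x\vee y=w(x'\vee y')$ in $P$. This is clearly a common upper bound. For minimality, let $u'$ be any common upper bound of $x,y$; the same block analysis inside $[1,u']$ recovers the same $w,x',y'$ by uniqueness of normal forms, and the corresponding syllable $u'_{j_0}$ of $u'$ lies in $P_k$ and bounds both $x'$ and $y'$, so $x'\vee y'\le u'_{j_0}$ and hence $w(x'\vee y')\le w\,u'_{j_0}=u'_1\cdots u'_{j_0}\le u'$. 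Therefore $w(x'\vee y')$ is the least common upper bound, completing the WQLO case.

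The main obstacle is precisely this incomparable case: the work lies in showing that the mere existence of a common upper bound forces the normal forms of $x$ and $y$ to agree except in a single final syllable lying in one factor, which is what collapses the join in $P_1*P_2$ to a join in that factor and makes the candidate supremum independent of the bound $u$. A pleasant feature of this framework is that the degenerate configurations (first syllables in different factors, or both $x$ and $y$ genuinely ``long'') simply never admit a common upper bound, so they are excluded automatically rather than needing a separate argument.
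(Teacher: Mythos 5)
Your proof is correct and takes essentially the same route as the paper's: the Normal Form Theorem gives $P\cap P^{-1}=\{1\}$, and the block decomposition $[1,u]=\bigcup_i X_i$ from Lemma \ref{Free Positive Cone} splits the WQLO question into a comparable case and a same-block case, where the supremum is $w(x'\vee y')$ computed in the relevant factor. The one difference is that you also verify minimality of $w(x'\vee y')$ against \emph{arbitrary} common upper bounds (via uniqueness of normal forms and hereditariness of $P_k$ in $P$), a step the paper's proof asserts without detail; this is a genuine improvement in completeness rather than a different method.
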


\begin{proof}
If $(G_1,P_1)$ and $(G_2,P_2)$ are ordered groups, then by definition $P_1*P_2$ is a submonoid of $G * H$. We must now check that $(P_1*P_2) \cap (P_1*P_2)\inv =\{1\}$, or equivalently that if $a,b \in P_1*P_2$ with $ab=1$, then $a=b=1$. But if $ab=1$, then by writing $a=p_1p_2...p_{2n}$ and $b=q_1q_2...q_{2m}$ as in the previous theorem, then $1=ab=p_1p_2...p_{2n}q_1q_2...q_{2m}$. By the Normal Form Theorem for Free Products (\cite[Chapter IV, Theorem 1.2]{lyndon2015combinatorial}, a reduced sequence in a free product can equal 1 if and only if its length is 1, so all the $p_i$s and $q_i$s in the products must cancel with each other. But since each term of the product is positive, they can only cancel if each term is equal to 1. Thus $a=b=1$, as desired, so $(G_1*G_2, P_1*P_2)$ is an ordered group.

If $(G_1,P_1)$ and $(G_2,P_2)$ are WQLO, we will check that $(G_1*G_2, P_1*P_2)$ is WQLO. To this end, fix some $y_1, y_2 \in P_1*P_2$ and suppose $y_1, y_2$ have some common upper bound $y_3 \in P_1 *P_2$. By part (1) of the previous lemma, we can write $y_3=p_1p_2p_3...p_{2n-1}p_{2n}$, where each $p_i \in P_1$ for $i$ odd and $p_i \in P_2$ for $i$ even. By part (3) of that lemma, $[1,y_3]= \bigcup_{i=1}^{2n} X_i$ where $X_i= p_1...p_{i-1} [1, p_i]$. Since $y_1, y_2 \in [1,y_3]$, then $y_1 \in X_i, y_2 \in X_j$ for some $i, j \leq 2n$. Without loss of generality, suppose $i \leq j$.

If $i<j$, then by part (4) of the previous Lemma, we have $y_1 \leq y_2$, so $y_1 \vee y_2=y_2$.

If $i=j$, then $y_1, y_2 \in X_i= p_1...p_{i-1} [1, p_i]$, so there are $r_1,r_2$ such that $y_1=p_1...p_{i-1}r_1$ and $y_2=p_1...p_{i-1}r_2$ where $r_1,r_2 \in P_1$ if $i$ is odd, and $r_1,r_2 \in P_2$ if $i$ is even. In either case, $p_1...p_{i-1} (r_1 \vee r_2)$ will be the supremum of $y_1$ and $y_2$.

\end{proof}

We will now show that if two groups have a reduction, their free product reduces to the free product of the reductions.

\begin{lemma} \label{Reduction of Free Product}
For $i=1,2$, let $\varphi_i:(G_i, P_i) \ra (H_i, Q_i)$ be a reduction. Then let
\[ G=G_1 * G_2 , P=P_1*P_2 ,H=H_1*H_2, \text{ and } Q=Q_1*Q_2, \]
 and let $\varphi: G \ra H$ be the homomorphism satisfying $\varphi \mid_{G_i}=\varphi_i$ for $i=1,2$. Then $\varphi:(G, P) \ra (H,Q)$ is a reduction.
\end{lemma}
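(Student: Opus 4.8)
The plan is to verify Definition \ref{Reduction Definition} directly: for each $p \in P$ I will show that $\varphi$ restricts to a bijection from $[1,p]$ onto $[1,\varphi(p)]$, exploiting the explicit description of intervals in a free product supplied by Lemma \ref{Free Positive Cone}. First I would fix $p \in P$ and write it in its unique normal form $p = p_1 p_2 \cdots p_{2n}$ from Lemma \ref{Free Positive Cone}(1)--(2), with $p_i \in P_1$ for $i$ odd, $p_i \in P_2$ for $i$ even, and $p_i \neq 1$ for $1 < i < 2n$. A preliminary observation makes everything run smoothly: because each $\varphi_k$ is a reduction, $\varphi_k$ maps $[1,p_i]$ bijectively onto $[1,\varphi_k(p_i)]$, so $\varphi_k(p_i) = 1$ would force $[1,p_i]$ to be the singleton $\{1\}$ and hence $p_i = 1$. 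Thus $\varphi(p_i) \neq 1$ whenever $p_i \neq 1$, and consequently $\varphi(p) = \varphi(p_1)\cdots\varphi(p_{2n})$ is again in the normal form of Lemma \ref{Free Positive Cone}, with its interior terms nonidentity.

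Next I would apply Lemma \ref{Free Positive Cone}(3) to both $p$ and $\varphi(p)$ to obtain the decompositions $[1,p] = \bigcup_{i=1}^{2n} X_i$ with $X_i = p_1 \cdots p_{i-1}[1,p_i]$, and $[1,\varphi(p)] = \bigcup_{i=1}^{2n} Y_i$ with $Y_i = \varphi(p_1)\cdots\varphi(p_{i-1})[1,\varphi(p_i)]$. Since $\varphi(p_1\cdots p_{i-1}\, s) = \varphi(p_1)\cdots\varphi(p_{i-1})\,\varphi(s)$ and $\varphi$ carries $[1,p_i]$ bijectively onto $[1,\varphi(p_i)]$ (as $\varphi|_{G_k} = \varphi_k$ is a reduction), I get $\varphi(X_i) = Y_i$; moreover $\varphi$ is injective on each $X_i$, since $\varphi(p_1\cdots p_{i-1}s) = \varphi(p_1\cdots p_{i-1}t)$ yields $\varphi(s) = \varphi(t)$ by left-cancellation in $H$, whence $s = t$. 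Surjectivity of $\varphi|_{[1,p]}$ is then immediate, because $\varphi([1,p]) = \bigcup_i \varphi(X_i) = \bigcup_i Y_i = [1,\varphi(p)]$.

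The one genuinely delicate point, and the step I expect to be the main obstacle, is global injectivity on $[1,p]$: the pieces $X_i$ overlap at the ``junction'' points $p_1 \cdots p_i$, and so far I only control $\varphi$ on each piece separately. To handle this, suppose $a \in X_i$ and $b \in X_j$ with $i \le j$ and $\varphi(a) = \varphi(b) =: c$. When $i = j$, injectivity on $X_i$ already gives $a = b$. When $i < j$, I would observe that $c \in Y_i \cap Y_j$; since the maximum of $Y_i$ is $\varphi(p_1)\cdots\varphi(p_i)$ and the minimum of $Y_j$ is $\varphi(p_1)\cdots\varphi(p_{j-1})$, and elements of $Y_i$ lie below those of $Y_j$ by Lemma \ref{Free Positive Cone}(4) applied to $[1,\varphi(p)]$, I obtain $\varphi(p_1)\cdots\varphi(p_{j-1}) \le c \le \varphi(p_1)\cdots\varphi(p_i)$. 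Since $i \le j-1$, the outer terms satisfy $\varphi(p_1)\cdots\varphi(p_i) \le \varphi(p_1)\cdots\varphi(p_{j-1})$, so the squeeze forces $\varphi(p_{i+1}) = \cdots = \varphi(p_{j-1}) = 1$. But each such index $k$ satisfies $1 < k < 2n$, so by the preliminary observation these factors are nonidentity unless the range is empty, forcing $j = i+1$. In that case $c = \varphi(p_1)\cdots\varphi(p_i)$ is exactly the junction point, whose unique preimage in $X_i$ is $p_1\cdots p_i$ and whose unique preimage in $X_{i+1}$ is also $p_1 \cdots p_i$, so again $a = b$. This establishes injectivity, and together with the surjectivity above shows that $\varphi|_{[1,p]}$ is a bijection onto $[1,\varphi(p)]$ for every $p$, i.e.\ that $\varphi$ is a reduction.
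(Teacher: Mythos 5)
Your proof is correct and takes essentially the same route as the paper's: the normal-form decomposition $[1,p]=\bigcup_{i} X_i$ and $[1,\varphi(p)]=\bigcup_i Y_i$ from Lemma \ref{Free Positive Cone}, piecewise bijectivity of $\varphi$ from each $X_i$ onto $Y_i$, and an analysis of overlaps at the junction points $p_1\cdots p_i$. The only cosmetic difference is that where the paper simply records the intersection formula for $Y_i \cap Y_j$ (empty, a junction singleton, or all of $Y_i$), you derive the same conclusion by an order squeeze forcing the intermediate $\varphi(p_k)$ to be trivial; the substance is identical.
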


\begin{proof}
Certainly, $\varphi$ is a homomorphism. By Proposition \ref{Reduction Criterion} (3), it suffices to show that for all $p \in P$, the interval $[1,p]$ is mapped bijectively onto $[1, \varphi(p)]$. 

Fix some $p \in P$, and then by Lemma \ref{Free Positive Cone}, we may write $p=p_1p_2...p_{2n}$ where $p_i$ is in $P_1$ if $i$ is odd, and in $P_2$ if $i$ is even. Then $[1,p] = \bigcup_{i=1}^{2n} X_i$, where $X_i= p_1...p_{i-1} [1, p_i]$. 

Now, for $1 \leq i \leq 2n$, let $q_i=\varphi_1(p_i)$ if $i$ is odd, and $q_i=\varphi_2(p_i)$ if $i$ is even. Thus $\varphi(p)=q_1...q_{2n}$. Note that for $1<i<2n$, $q_i \neq 1$ since $p_i \neq 1$ and by Proposition \ref{Reduction Criterion} (4), both $\varphi_1$ and $\varphi_2$ send strictly positive elements to strictly positive elements.

Thus $\varphi(p)=q_1...q_{2n}$ is the unique way to write $\varphi(p) \in Q$ as an alternating product of non-unit elements, so by Lemma \ref{Free Positive Cone}, $[1, \varphi(p)]= \bigcup_{i=1}^{2n} Y_i$ where $Y_i= q_1...q_{i-1} [1, q_i]$. But since $\varphi_1$ and $\varphi_2$ are reductions, we know that each $X_i$ is carried bijectively to its $Y_i$, so it suffices to explain why bijectivity is preserved when taking the union of the $X_i$ and $Y_i$s. It is immediate that the map from $\bigcup X_i$ to $\bigcup Y_i$ is surjective. 

For injectivity, first note that for $1 \leq i \leq j \leq n$,

\[ Y_i \cap Y_j= \begin{cases}
\emptyset & j>i+1 \\
\{ q_1...q_{i} \} & j=i+1\\
Y_i & i=j
\end{cases} \]

and similarly,

\[ X_i \cap X_j= \begin{cases}
\emptyset & j>i+1 \\
\{ p_1...p_{i} \} & j=i+1\\
X_i & i=j
\end{cases} \]

Then, if $\varphi(s_1) \in Y_i$ and $\varphi(s_2) \in Y_j$ for $s_1, s_2 \in [1,p]$ such that $\varphi(s_1)=\varphi(s_2)$, either $i=j$ so $s_1, s_2 \in X_i$ and thus by injectivity on $X_i$ we have $s_1=s_2$, or $j=i+1$, so $\varphi(s_1)=\varphi(s_2)= q_1...q_i$, so $s_1=s_2=p_1...p_i$. In either case, $s_1=s_2$, so we have injectivity of $\varphi$. Thus $\varphi$ is bijective on $[1,p]$ as desired, so it is a reduction.

\end{proof}

This completes the first step, showing that if two groups have reductions to amenable groups, their free product reduces to a free product of amenable groups. Now comes the harder step: showing that a free product of amenable groups reduces to an amenable group! 

The key is a construction from group theory called a wreath product. Recall the following definition of a wreath product:

\begin{definition}
Let $G$ and $H$ be groups. Let 

\[ G^H=\{ f: H \ra G \text{ a function} | \supp(f) \text{ is finite}\}\]

\noindent where $\supp (f) = \{h \in H: f(h) \neq 1\}$, and give $G^H$ a group structure by pointwise multiplication. Note that $G^H$ is isomorphic to a direct sum of $|H|$ copies of $G$, hence the notation.

Give $G^H$ an action $\alpha:H \ra Aut(G^H)$ by translation: $\left[ \alpha_h(f) \right] (h')= f(h\inv h')$.

Then we define the \ul{(restricted) wreath product}\index{wreath product} to be the semidirect product $G \wr H := G^H \rtimes_{\alpha} H$, so there is a (split) short exact sequence:

\[ 1 \ra G^H \ra G \wr H \ra H \ra 1.\]

\end{definition}

The following remark establishes the basic properties of wreath products, and are all routine to verify.

\begin{remark}

For each $g \in G$ and $h \in H$, let $g \delta_h$ denote the function in $G^H$ given by $g \delta_h(h') = \begin{cases} g & h =h'\\ 1 &h \neq h'\end{cases}$. Then there is a natural embedding of $G$ into $G \wr H$ by $g \mapsto (g \delta_{1_H}, 1_H)$ and there is a natural embedding of $H$ into $G \wr H$ by $h \mapsto (1_{G^H}, h)$. In a slight abuse of notation, we will write $g$ or $g \delta_1$ for the embedded element of $G$ in $G \wr H$, and write $h$ for the embedded element of $H$ in $G \wr H$.

Now note that 

\bea
\left[\alpha_{h_2}(g\delta_{h_1}) \right] (h_3) &=& g\delta_{h_1}(h_2\inv h_3) \\
&=& \begin{cases} g & h_1 =h_2\inv h_3 \\ 1 &h_1 \neq h_2\inv h_3 \end{cases}\\
&=& \begin{cases} g & h_2h_1 =h_3 \\ 1 &h_2 h_1\neq h_3 \end{cases}\\
&=& g \delta_{h_2h_1}(h_3)
\eea

\noindent so unbinding the $h_3$, we have that $\alpha_{h_2}(g\delta_{h_1})=g \delta_{h_2h_1}$. Now,

\bea
h_2 (g \delta_{h_1}) h_2\inv  &=& (1_G, h_2) (g \delta_{h_1}, 1_H) (1_G, h_2 \inv) \\
&=& (1_G, h_2) (g \delta_{h_1}, h_2 \inv)\\
&=& (1_G, h_2) (g \delta_{h_1}, h_2 \inv)\\
&=& ( \alpha_{h_2} (g\delta_{h_1}), 1_H) \\
&=& g\delta_{h_2h_1} \\
\eea

Since $G^H$ is generated by the $g \delta_{h}$, then together (the embedded copies of) $G$ and $H$ generate $G \wr H$.

Given $P,Q$ positive cones in $G, H$, we let $P \wr Q$ denote the monoid generated by the naturally embedded copies of $P$ and $Q$ within $G \wr H$.

With these natural embeddings of $G$ and $H$ into $G \wr H$, by the universal property of free products of groups, we get a homomorphism $\varphi: G *H \ra G \wr H$ given by sending (the naturally embedded copies of) $G$ and $H$ to (the naturally embedded copies of) $G$ and $H$. We will call this the natural homomorphism of $G*H$ into $G \wr H$. Since $G \wr H$ is generated by $G$ and $H$, it is immediate that the natural homomorphism is surjective. For the same reason, $P*Q$ will be mapped surjectively onto $P \wr Q$. We will show later that $\varphi: (G*H, P*Q) \ra (G \wr H, P \wr Q)$ is a strong reduction.

Finally, note that every element of $G^H$ can be written as a (possibly empty) product of a finite number of $g_i \delta_{h_i}$ where each $g_i$ is nonunital and each $h_i$ is distinct, and this presentation is unique up to a permutation of the $g_i \delta_{h_i}$. Note that the element $g_i \delta_{h_i}$ commutes with the element $g_j \delta_{h_j}$ as long as $h_i \neq h_j$. When $f \in G^H$ is written as $f= g_1 \delta_{h_1}... g_n \delta_{h_n}$ where the $h_i$ are distinct, then

\[ f(h)= 
\begin{cases} 
g_1 & \text{ if } h=h_1 \\
g_2 & \text{ if } h=h_2 \\
\vdots \\
g_n & \text{ if } h=h_n \\
1 & \text{ otherwise}
\end{cases}.\]
\end{remark}

Finally, we will remind the reader of this fact about amenability of wreath products:

\begin{lemma} \label{Wreath Product is Amenable}
Let $G$ and $H$ be amenable groups. Then $G \wr H$ is amenable.
\end{lemma}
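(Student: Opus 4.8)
The plan is to realize $G \wr H$ as an extension of one amenable group by another, and invoke the closure properties of amenability recorded in the Remark following Definition \ref{Amenable Group Definition}. By construction, the wreath product sits in the split short exact sequence
\[ 1 \ra G^H \ra G \wr H \ra H \ra 1, \]
so $G \wr H$ is an extension of $H$ by $G^H$. Since $H$ is amenable by hypothesis, closure under extensions reduces the whole problem to showing that the base group $G^H$ is amenable.

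To handle $G^H$, I would exhibit it as a direct limit of finite direct products of copies of $G$. For each finite subset $F \subseteq H$, set $G^F = \{ f : H \ra G \mid \supp(f) \subseteq F \}$, which is a subgroup of $G^H$ isomorphic to the finite direct product $\prod_{h \in F} G$. Each $G^F$ is a finite direct product of copies of the amenable group $G$, hence amenable by closure under finite direct products. Whenever $F \subseteq F'$ the inclusion $G^F \hookrightarrow G^{F'}$ is a group homomorphism, and these inclusions form a directed system indexed by the finite subsets of $H$ ordered by containment. Because every element of $G^H$ has finite support, its support lies in some finite $F$, so $G^H = \bigcup_{F} G^F$ is exactly the direct limit of this system. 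By closure of amenability under direct limits, $G^H$ is amenable.

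Combining these, $G \wr H$ is an extension of the amenable group $H$ by the amenable group $G^H$, and is therefore amenable by closure under extensions, completing the proof.

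I do not anticipate a serious obstacle here: the only point requiring a little care is the bookkeeping identifying $G^H$ with the directed union $\bigcup_F G^F$ and checking that the transition maps are honest inclusions, which is immediate from the finite-support condition in the definition of $G^H$. The substance of the argument is entirely carried by the quoted permanence properties (extensions, finite direct products, and direct limits), so the write-up should be short.
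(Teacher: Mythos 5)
Your proposal is correct and follows essentially the same route as the paper: the split short exact sequence $1 \ra G^H \ra G \wr H \ra H \ra 1$, amenability of $G^H$ via direct limits of finite direct products of copies of $G$, and closure under extensions. If anything, your indexing of the directed system by finite subsets $F \subseteq H$ is slightly more careful than the paper's direct limit of $\{G^n\}_{n \in \N}$, which implicitly assumes $H$ is countable.
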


\begin{proof}
Recall that the class of amenable groups are closed under finite direct sums, direct limits, and extensions.

From our definition of the wreath product, there is a short exact sequence

\[ 1 \ra G^H \ra G \wr H \ra H \ra 1.\]

Recall also that $G^H$ is isomorphic to the direct sum of $|H|$ copies of $G$. Thus $G^H$ is a direct limit of $\{G^n\}_{n \in \N}$. Since amenable groups are closed under finite direct sum and $G$ is amenable by hypothesis, then each $G^n$ is amenable, and since amenable groups are closed under direct limit, then $G^H$ is amenable.

Also, $H$ is amenable by hypothesis, so $G \wr H$ is an extension of an amenable group by an amenable group. Thus $G \wr H$ is amenable.

\end{proof} 

Now, we will have two results that establish why $\varphi: (G_1*G_2, P_1*P_2) \ra (G_1 \wr G_2, P_1 \wr P_2)$ is a reduction.

\begin{lemma}
Let $(G_1, P_1)$ and $(G_2,P_2)$ be ordered groups, and let $p= p_1p_2...p_{2n} \in P_1 \wr P_2$ be an element such that $p_i \in P_1$ for $i$ odd and $p_i \in P_2$ for $i$ even and $p_i \neq 1$ for $1<i<n$. Then, as an element of $P_1 \wr P_2 \subset G_1^{G_2} \rtimes G_2$, $p=(f, p_{\leq2n})$ where $p_{\leq2n}=p_2p_4...p_{2n}$ and 

\[ f(h) = 
\begin{cases} 
p_1 & \text{ if } h=1_{G_2} \\
p_3 & \text{ if } h=p_2 \\
p_5 & \text{ if } h=p_2p_4 \\
\vdots \\
p_{2n-1} & \text{ if } h=p_2...p_{2n-2} \\
1 & \text{ otherwise}
\end{cases}
\]

\end{lemma}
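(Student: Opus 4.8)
The plan is to prove this by induction on the number of factors, computing the partial products $p_1 p_2 \cdots p_k$ explicitly inside the semidirect product $G_1^{G_2} \rtimes_\alpha G_2$. Under the natural embeddings, each odd-indexed factor $p_{2j-1} \in P_1$ is the element $(p_{2j-1}\delta_1, 1)$ and each even-indexed factor $p_{2j} \in P_2$ is the element $(1, p_{2j})$, and multiplication in the semidirect product is $(f, h)(f', h') = (f \cdot \alpha_h(f'), hh')$. The key tool is the identity $\alpha_{h_2}(g\delta_{h_1}) = g\delta_{h_2 h_1}$ established in the preceding remark: this is exactly what converts the $G_2$-component accumulated so far into the support point (index) of the next $G_1$-valued ``spike.''

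First I would prove the inductive claim that for $1 \le m \le n$, writing $\ell_j := p_2 p_4 \cdots p_{2j-2}$ (with the empty product denoting $1_{G_2}$),
\[ p_1 p_2 \cdots p_{2m-1} = \Bigl( \textstyle\prod_{j=1}^{m} p_{2j-1}\,\delta_{\ell_j}, \; p_2 p_4 \cdots p_{2m-2} \Bigr) \quad\text{and}\quad p_1 p_2 \cdots p_{2m} = \Bigl( \textstyle\prod_{j=1}^{m} p_{2j-1}\,\delta_{\ell_j}, \; p_2 p_4 \cdots p_{2m} \Bigr). \]
The base case $m=1$ is the direct computation $p_1 = (p_1\delta_1, 1)$ and $p_1 p_2 = (p_1\delta_1, 1)(1,p_2) = (p_1\delta_1, p_2)$. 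For the inductive step, right-multiplying the even formula by $p_{2m+1} = (p_{2m+1}\delta_1, 1)$ uses $\alpha_{\ell_{m+1}}(p_{2m+1}\delta_1) = p_{2m+1}\delta_{\ell_{m+1}}$ (noting $\ell_{m+1} = p_2\cdots p_{2m}$) to append exactly the next term to the product while leaving the $G_2$-component unchanged; right-multiplying the odd formula by $p_{2m+2} = (1, p_{2m+2})$ leaves the function unchanged, since $\alpha_h(1)=1$, and appends $p_{2m+2}$ to the $G_2$-component. Taking $m=n$ gives $p = (f, p_2 p_4 \cdots p_{2n}) = (f, p_{\le 2n})$ with $f = \prod_{j=1}^n p_{2j-1}\,\delta_{\ell_j}$.

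It remains to read off the pointwise values of $f$ and match them to the displayed formula, and this is where the hypothesis on the internal factors enters. The support points satisfy $\ell_{j+1} = \ell_j\, p_{2j}$, and since each internal even factor $p_{2j}$ (for $1 \le j \le n-1$) is a nonidentity element of the positive cone $P_2$, we get $\ell_j < \ell_{j+1}$ strictly, using $P_2 \cap P_2\inv = \{1\}$. Hence $\ell_1 = 1 < \ell_2 < \cdots < \ell_n$ are pairwise distinct, so the product $\prod_j p_{2j-1}\delta_{\ell_j}$ evaluates to $p_{2j-1}$ at $\ell_j$ and to $1$ elsewhere — precisely the piecewise description of $f$ in the statement.

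The main obstacle is genuinely this distinctness point: the product $\prod_j p_{2j-1}\delta_{\ell_j}$ only decomposes as the claimed ``one spike per point'' function when no two indices $\ell_j$ coincide, since at a repeated point the values would multiply together and corrupt the formula; everything else is routine bookkeeping in the semidirect product. I note that the final even factor $p_{2n}$ never occurs as a support point (it enters only the $G_2$-component), so no hypothesis on $p_{2n}$ is required, and likewise $p_1$ may be trivial without affecting the stated value $f(1_{G_2}) = p_1$.
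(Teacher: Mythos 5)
Your proof is correct and takes essentially the same route as the paper: the paper telescopes the alternating product in one algebraic step, writing $p=\left[(p_{\leq 0}p_1p_{\leq 0}^{-1})(p_{\leq 2}p_3p_{\leq 2}^{-1})\cdots(p_{\leq 2n-2}p_{2n-1}p_{\leq 2n-2}^{-1})\right]p_{\leq 2n}$ and applying the conjugation identity $h(g\delta_{1})h^{-1}=g\delta_{h}$, which is exactly the closed form of your induction (your $\ell_j$ are the paper's $p_{\leq 2j-2}$), and it then concludes with the same distinctness argument for the support points, noting the same two edge cases ($p_1$ possibly trivial, $p_{2n}$ never a support point). The only difference is organizational — induction on partial products versus a one-shot telescoping rewrite — so there is nothing of substance to distinguish the two arguments.
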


\begin{proof}
By elementary algebra, we may rewrite the product $p= p_1p_2p_3... p_{2n}$ as:


\[ p= p_1p_2p_3... p_{2n} = p_1 (p_2 p_3 p_2 \inv)... (p_2 p_4... p_{2n-2} p_{2n-1} p_{2n-2}\inv ... p_4\inv p_2\inv) (p_2p_4... p_{2n})\]

With the shorthand that $p_{\leq0}= 1_H$ and $p_{\leq2i}=p_2p_4...p_{2i}$ for $1 \leq i \leq n$, we then have that

\[ p =\left[ (p_{\leq0}p_1p_{\leq0}\inv) (p_{\leq2} p_3 p_{\leq2} \inv) (p_{\leq4} p_5 p_{\leq4} \inv) ... (p_{\leq2n-2} p_{2n-1} p_{\leq2n-2} \inv ) \right]p_{\leq2n}.\]

Let $f=(p_{\leq0}p_1p_{\leq0}\inv) (p_{\leq2} p_3 p_{\leq2} \inv) (p_{\leq4} p_5 p_{\leq4} \inv) ... (p_{\leq2n-2} p_{2n-1} p_{\leq2n-2} \inv )$ be the bracketed term, so then $p$ is represented in $G_1^{G_2} \rtimes G_2$ by $(f,p_{\leq2n})$. It then suffices to show that $f$ assumes the values as claimed.

Recall that for $i$ odd, $p_i$ is embedded as $p_i \delta_{1_H}$, and for $i$ even,  $p_i$ is embedded as an element of $G_2$. By the calculation in the above remark, we then have that $p_{\leq2i} p_{2i+1} p_{\leq2i}\inv = p_{\leq2i} (p_{2i+1} \delta_{1_H})p_{\leq2i} \inv = p_{2i+1}\delta_{p_{\leq2i}}$, so $f= (p_1 \delta_{p_{\leq0}}) (p_3 \delta_{p_{\leq2}}) ... (p_{2n-1} \delta_{p_{\leq2n-2}})$. 

Note that $p_{\leq 2i+2} =p_{\leq 2i} p_{2i+2}$, and since $p_{2i+2} > 1$ for $0 \leq i <n-1$, we have that $p_{\leq2i}< p_{\leq 2i+2}$ for $0 \leq i <n-1$, and in particular the $p_{\leq2i}$ are distinct (except possibly that $p_{\leq2n-2}=p_{\leq2n}$).

Since the $\{p_{\leq0},..., p_{\leq2n-2} \}$ are distinct, then by the above remark, we know that 

\[ f= (p_1 \delta_{p_{\leq0}}) (p_3 \delta_{p_{\leq2}}) ... (p_{2n-1} \delta_{p_{\leq2n-2}})\]

\noindent is the unique representation of $f$ as a product of $g_i \delta_{h_i}$ (except possibly that the $p_1 \delta_{p_{\leq0}}$ term is equal to 1), so 

\[
f(h)= \begin{cases}
p_1 &\text{ if } h=p_{\leq0} \\
p_3 &\text{ if } h=p_{\leq2} \\
\vdots \\
p_{2n-1} &\text{ if } h=p_{\leq2n-2} \\
1 & \text{ otherwise}
\end{cases}
\]

\noindent as desired.

\end{proof}

\begin{proposition} \label{Free to Wreath Reduction}
Let $(G_1, P_1)$, $(G_2, P_2)$ be two ordered groups. Let $\varphi$ denote the natural homomorphism from $G_1* G_2$ onto $G_1 \wr G_2$. If $p,q  \in P:= P_1 *P_2$ and $\varphi(p)=\varphi(q)$, then $p=q$.

In particular, $\varphi$ is a strong reduction of $G_1* G_2$ onto $G_1 \wr G_2$.
\end{proposition}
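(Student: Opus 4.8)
The plan is to use the preceding lemma, which computes $\varphi(p)$ explicitly in the coordinates of $G_1^{G_2} \rtimes G_2$, and then to argue that these coordinates determine the (unique) alternating decomposition of $p$ coming from Lemma \ref{Free Positive Cone}. So suppose $p, q \in P = P_1 * P_2$ satisfy $\varphi(p) = \varphi(q)$. By Lemma \ref{Free Positive Cone}(1)--(2), I would write the unique alternating forms $p = p_1 p_2 \cdots p_{2n}$ and $q = q_1 q_2 \cdots q_{2m}$, with the odd-indexed factors in $P_1$, the even-indexed factors in $P_2$, and every interior factor nontrivial. Since $\varphi$ is the natural homomorphism restricting to the identity on the embedded copies of $P_1$ and $P_2$, the product $\varphi(p) = p_1 p_2 \cdots p_{2n}$ is again an alternating product of this form inside $P_1 \wr P_2$, so the preceding lemma applies and gives $\varphi(p) = (f_p, p_{\le 2n})$ with $p_{\le 2n} = p_2 p_4 \cdots p_{2n} \in P_2$ and $f_p \in G_1^{G_2}$ described explicitly; likewise $\varphi(q) = (f_q, q_{\le 2m})$.

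Comparing the two coordinates of $\varphi(p) = \varphi(q)$ yields $f_p = f_q =: f$ in $G_1^{G_2}$ and $p_{\le 2n} = q_{\le 2m} =: s$ in $P_2$. The core of the argument is then to show that the pair $(f, s)$ already recovers the whole alternating sequence. The point is that the even factors and the odd factors of $p$ are stored separately: the partial products $p_{\le 2i} = p_2 p_4 \cdots p_{2i}$ (for $0 \le i \le n$) form an increasing chain $1 = p_{\le 0} \le p_{\le 2} \le \cdots \le p_{\le 2n} = s$ in $P_2$, and by the formula for $f_p$ its support is exactly $\{ p_{\le 2i} : 0 \le i \le n-1,\ p_{2i+1} \ne 1 \}$, with $f(p_{\le 2i}) = p_{2i+1}$. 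Since every interior factor is nontrivial, this chain is strictly increasing except possibly at its last step, and $\supp(f) \cup \{1, s\}$ equals the set of partial products $\{ p_{\le 0}, \dots, p_{\le 2n} \}$.

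Hence, working purely from $(f, s)$, I would sort $\supp(f) \cup \{1, s\}$ by the order on $P_2$ into a strictly increasing chain $1 = c_0 < c_1 < \cdots < c_k = s$, recover the even factors as the successive differences $c_{j-1}^{-1} c_j$ and the odd factors as the values $f(c_j)$, using the value $f(s)$ to detect whether the final even factor $p_{2n}$ is trivial. As this reconstruction depends only on $(f, s)$, the sequences $(p_i)$ and $(q_i)$ must coincide, so $n = m$, $p_i = q_i$ for all $i$, and therefore $p = q$. I expect the main obstacle to be precisely this bookkeeping: one must justify the order-theoretic recovery of the chain (comparability and strictness of the partial products in $P_2$), and carefully handle the two boundary cases $p_1 = 1$ and $p_{2n} = 1$, which are the only places the alternating form permits a trivial factor, so that the chain length together with $f(s)$ unambiguously pins down $n$ and the positions of the factors.

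Finally, for the ``in particular'' statement: the preceding remark already observes that the natural homomorphism carries $P * Q$ onto $P_1 \wr P_2$, so $\varphi\mid_P : P \ra P_1 \wr P_2$ is surjective; combined with the injectivity just established, $\varphi\mid_P$ is a bijection from $P$ onto $Q = P_1 \wr P_2$, which is exactly the defining condition for $\varphi$ to be a strong reduction (Definition \ref{Reduction Definition}).
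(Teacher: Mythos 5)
Your proposal is correct and takes essentially the same route as the paper: both apply the preceding lemma to write $\varphi(p)=(f_p,p_{\le 2n})$ and $\varphi(q)=(f_q,q_{\le 2m})$, then exploit the strict monotonicity of the partial products $p_{\le 2i}$ to recover the alternating factors from $\supp(f_p)$, the values of $f_p$, and the last coordinate, handling the possibly trivial factors $p_1$ and $p_{2n}$ separately. The paper organizes this as a direct term-by-term comparison of $f_p$ with $f_q$ rather than as a reconstruction of the decomposition from the pair $(f,s)$, but the mathematical content is identical.
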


\begin{proof}
Since $p,q \in P$, then by Lemma \ref{Free Positive Cone} we may write $p=p_1p_2...p_{2n}$ where $p_i$ is in $P_1$ if $i$ is odd, and in $P_2$ if $i$ is even, and all $p_i \neq 1$ for $1<i<n$. Similarly, we may write $q=q_1q_2...q_{2m}$ where $q_i$ is in $P_1$ if $i$ is odd, and in $P_2$ if $i$ is even, and all $p_i \neq 1$ for $1<i<m$.

Now, by the previous Lemma, we may write $\varphi(p)= (f_p, p_{\leq2n})$ and $\varphi(q)= (f_q, q_{\leq2m})$ where $p_{\leq2i}=p_2p_4...p_{2i}$, $q_{\leq2i}=q_2q_4...q_{2i}$, 

\[
f_p(h)= \begin{cases} 
p_1 & \text{ if } h=1_{G_2} \\
p_3 & \text{ if } h=p_2 \\
p_5 & \text{ if } h=p_2p_4 \\
\vdots \\
p_{2n-1} & \text{ if } h=p_2...p_{2n-2} \\
1 & \text{ otherwise}
\end{cases}
\text{ and } 
f_q(h)= \begin{cases} 
q_1 & \text{ if } h=1_{G_2} \\
q_3 & \text{ if } h=q_2 \\
q_5 & \text{ if } h=q_2q_4 \\
\vdots \\
q_{2m-1} & \text{ if } h=q_2...q_{2m-2} \\
1 & \text{ otherwise}
\end{cases}
\]

Since $\varphi(p)=\varphi(q)$, then $f_p=f_q$, so $p_1=q_1$. Furthermore, $\{p_2, p_2p_4,... ,p_2...p_{2n-2} \}= \supp (f_p) \setminus \{1\} = \supp (f_q) \setminus \{1\}=\{q_2, q_2q_4,... ,q_2...q_{2m-2} \}$, and these sets are strictly increasing sequences, so it must be that $n=m$ and $p_2=q_2$, $p_2p_4=q_2q_4$, and so on. By cancelling common terms, we get that $p_{2i}=q_{2i}$ for $1 \leq i <n$. Returning to our expressions for $f_p$ and $f_q$, we can now compare their values at $p_2=q_2, p_2p_4=q_2q_4$, etc  to get that $p_3=q_3, p_5=q_5$, and so on. Finally, recall that $(f_p, p_{\leq2n}) =\varphi(p)= \varphi(q)= (f_q, q_{\leq2n})$ so $p_{\leq2n}=q_{\leq2n}$ and by cancelling the common factor of $p_2...p_{2n-2}= q_2...q_{2n-2}$, we have that $p_{2n}=q_{2n}$.

Thus for all $1 \leq i \leq 2n$, we have that $p_i=q_i$, so $p=q$, as desired.

For the ``in particular'', by Corollary \ref{Strong Reduction} it suffices to show that $\varphi$ is a bijection of $P_1*P_2$ onto $P_1 \wr P_2$. It is immediate that $\varphi$ is surjective, and by the previous part of the proposition it must be injective. Thus $\varphi$ is a bijection of $P_1*P_2$ onto $P_1 \wr P_2$, so $\varphi$ is a strong reduction as desired.

\end{proof}

We have done the hard work already, so we may now reap our rewards:

\begin{proposition} \label{Amenable Reduction of Free Product}
Let $(G, P), (H,Q)$ be ordered groups, and suppose that $G$ and $H$ are amenable. Then the natural homomorphism $\varphi:(G*H, P *Q) \ra (G \wr H, P \wr Q)$ is a strong reduction to an amenable group.
\end{proposition}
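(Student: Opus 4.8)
The plan is to assemble the proposition directly from the two results immediately preceding it, since essentially all of the substantive work has already been carried out. First I would invoke Proposition \ref{Free to Wreath Reduction} with $(G_1, P_1) = (G, P)$ and $(G_2, P_2) = (H, Q)$: this tells me precisely that the natural homomorphism $\varphi:(G*H, P*Q) \ra (G \wr H, P \wr Q)$ is a strong reduction onto $(G \wr H, P \wr Q)$. No additional computation is needed here, because the map $\varphi$ named in the present statement is by definition the same natural homomorphism, so the hypotheses of the cited proposition are met verbatim.

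Next I would verify amenability of the codomain group. By hypothesis $G$ and $H$ are amenable, so Lemma \ref{Wreath Product is Amenable} immediately yields that $G \wr H$ is amenable. Combining the two observations, $\varphi$ is a strong reduction whose target group $G \wr H$ is amenable, which is exactly the definition of $(G*H, P*Q)$ strongly reducing to an amenable group. This completes the argument in a single synthesis step.

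The genuine difficulty has, in effect, already been dispatched in the earlier lemmas rather than in this proposition itself. The real content lies in Proposition \ref{Free to Wreath Reduction}, whose proof rests on the normal-form analysis of $P_1 * P_2$ furnished by Lemma \ref{Free Positive Cone} together with the explicit wreath-product coordinate computation showing that the natural map is injective on positive cones; and in the amenability bookkeeping for wreath products recorded in Lemma \ref{Wreath Product is Amenable}. Given those inputs, I anticipate no genuine obstacle at this stage beyond correctly matching the hypotheses of the two cited results to the objects $(G,P)$ and $(H,Q)$ at hand.
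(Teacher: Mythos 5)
Your proposal is correct and matches the paper's own proof exactly: the paper likewise cites Proposition \ref{Free to Wreath Reduction} for the strong reduction and Lemma \ref{Wreath Product is Amenable} for amenability of $G \wr H$, then combines the two. Nothing is missing.
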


\begin{proof}
By Propositon \ref{Free to Wreath Reduction}, $\varphi$ is a strong reduction. By Lemma \ref{Wreath Product is Amenable}, $G \wr H$ is amenable, so $\varphi$ is a strong reduction to an amenable group.

\end{proof}

And combining this with a previous result gives:

\begin{corollary} \label{Free Product Preserves Amenable Reduction}
The class of ordered groups which reduce to an amenable group is closed under finite free products.
\end{corollary}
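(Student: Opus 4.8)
The plan is to chain together the two preceding results in this subsection, using composition (Lemma \ref{Composition of Reductions}) as the glue. The essential point is that reducing to an amenable group is a strictly more flexible condition than being amenable: although the free product of two amenable groups is almost never amenable, Proposition \ref{Amenable Reduction of Free Product} shows that such a free product always \emph{reduces} to an amenable group, namely to a wreath product. So the strategy is to first transport the two given reductions into a free product of amenable groups, and then collapse that free product down to the wreath product.

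Concretely, for binary free products I would argue as follows. Suppose $(G_1, P_1)$ and $(G_2, P_2)$ reduce to amenable groups, so there are reductions $\varphi_i : (G_i, P_i) \ra (H_i, Q_i)$ with each $H_i$ amenable. By Lemma \ref{Reduction of Free Product}, the induced homomorphism $\varphi : (G_1 * G_2, P_1 * P_2) \ra (H_1 * H_2, Q_1 * Q_2)$ is again a reduction. The target $H_1 * H_2$ is in general not amenable, so this alone does not finish the argument. However, $H_1$ and $H_2$ are amenable, so Proposition \ref{Amenable Reduction of Free Product} applies and yields a strong reduction $\psi : (H_1 * H_2, Q_1 * Q_2) \ra (H_1 \wr H_2, Q_1 \wr Q_2)$ whose target is amenable by Lemma \ref{Wreath Product is Amenable}. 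Then $\psi \circ \varphi$ is a composition of reductions, hence a reduction by Lemma \ref{Composition of Reductions}, and its range $H_1 \wr H_2$ is amenable. Thus $(G_1 * G_2, P_1 * P_2)$ reduces to an amenable group.

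Finally, to pass from binary to arbitrary finite free products, I would induct on the number of factors, using associativity of the free product (together with the corresponding identification of the positive cone $P_1 * \cdots * P_n$ under $G_1 * \cdots * G_n \cong (G_1 * \cdots * G_{n-1}) * G_n$) so that the binary case lets me fold one factor in at a time. I do not expect any genuine obstacle here, since all the hard work — both the free-product functoriality of reductions and the wreath-product collapse — has already been isolated in the preceding lemmas. The one thing to be careful about is that one cannot skip the wreath-product step, because the intermediate group $H_1 * H_2$ need not be amenable; it is precisely Proposition \ref{Amenable Reduction of Free Product} that repairs this, and without it the naive composition would land in a non-amenable group.
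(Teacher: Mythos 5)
Your proposal is correct and matches the paper's own argument essentially step for step: reduce the free product to $(H_1 * H_2, Q_1 * Q_2)$ via Lemma \ref{Reduction of Free Product}, collapse to the amenable wreath product via Proposition \ref{Amenable Reduction of Free Product}, and compose using Lemma \ref{Composition of Reductions}. The paper likewise reduces to the two-factor case (stating it without the explicit induction you sketch), so there is nothing to add.
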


\begin{proof}
It suffices to check two-term free products. Suppose that $(G_1,P_1)$ and $(G_2,P_2)$ are ordered groups with reductions $\varphi_1, \varphi_2$ to amenable groups $(H_1, Q_1)$ and $(H_2, Q_2)$ respectively. Let $G=G_1*G_2$, $P=P_1*P_2$, $H=H_1*H_2$, and $Q=Q_1*Q_2$.

Now by Lemma \ref{Reduction of Free Product} there is a reduction $\varphi$ of $(G,P)$ onto $(H,Q)$. By Proposition \ref{Amenable Reduction of Free Product}, since $(H,Q)$ is the free product of two amenable groups, then there is a strong reduction $\varphi':(H_1*H_2, Q_1 *Q_2) \ra (H_1 \wr H_2, Q_1 \wr Q_2)$ where $H_1 \wr H_2$ is amenable. Then $\varphi' \varphi:(G, P) \ra (F,R)$ is a composition of reductions, hence a reduction by Lemma \ref{Composition of Reductions}, and since its range is amenable, $(G,P)$ reduces to an amenable group.
\end{proof}

\subsection{Summary}
\label{sectionSummary}

To summarize our results on reductions to amenable groups: 

\AmenableReductionTheorem

\begin{proof}
If $(G,P)$ is an ordered group with $G$ amenable, then $\id_G: (G,P) \ra (G,P)$ is a reduction to an amenable group, so every amenable ordered group contains a reduction to an amenable group.

We've seen that this class is closed under hereditary subgroups in Corollary \ref{Hereditary Subgroup Preserves Amenable Reduction}, direct products in Lemma \ref{Direct Product Preserves Amenable Reduction}, and free products in Corollary \ref{Free Product Preserves Amenable Reduction}.

\end{proof}

\begin{corollary} \label{N^2*N Reduces}
Let $G=\Z^2*\Z$ and $P=\N^2*\N$. Then $(G,P)$ strongly reduces to an amenable group.
\end{corollary}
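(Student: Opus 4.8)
The plan is to apply Proposition \ref{Amenable Reduction of Free Product} directly, rather than routing through Corollary \ref{Free Product Preserves Amenable Reduction}. The key observation is that both free factors are already amenable: $\Z^2$ and $\Z$ are abelian, hence amenable. Corollary \ref{Free Product Preserves Amenable Reduction} only guarantees a (possibly non-strong) reduction, since its proof composes the factorwise reduction of Lemma \ref{Reduction of Free Product} with the strong reduction of Proposition \ref{Amenable Reduction of Free Product}, and by Lemma \ref{Composition of Reductions} a composite of a reduction with a strong reduction need only be a reduction. So strongness cannot simply be read off from that corollary. But when the factors are themselves amenable, the first (factorwise) step is unnecessary, and we can land the stronger conclusion.

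Concretely, I would set $G = \Z^2$, $H = \Z$, $P = \N^2$, and $Q = \N$. Both $G$ and $H$ are amenable, so the hypotheses of Proposition \ref{Amenable Reduction of Free Product} are satisfied. That proposition yields that the natural homomorphism
\[ \varphi: (\Z^2 * \Z,\ \N^2 * \N) \ra (\Z^2 \wr \Z,\ \N^2 \wr \N) \]
is a strong reduction to an amenable group. The amenability of the target $\Z^2 \wr \Z$ is exactly Lemma \ref{Wreath Product is Amenable} applied to the two amenable groups $\Z^2$ and $\Z$. Since $(\Z^2 * \Z, \N^2 * \N) = (G, P)$, this is precisely the desired strong reduction of $(G,P)$ to an amenable group.

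There is essentially no obstacle here beyond bookkeeping; the one point demanding care is the distinction between a \emph{reduction} and a \emph{strong reduction}, and the corresponding decision to invoke Proposition \ref{Amenable Reduction of Free Product} (which delivers strongness outright, because its proof realizes $P*Q$ as mapping bijectively onto $P \wr Q$) rather than Corollary \ref{Free Product Preserves Amenable Reduction}. Once the factors are recognized as amenable, the wreath-product construction does all the work and the strong reduction is immediate.
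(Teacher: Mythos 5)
Your proof is correct and is essentially the paper's own argument: the paper cites Proposition \ref{Free to Wreath Reduction} (for the strong reduction onto $(\Z^2 \wr \Z, \N^2 \wr \N)$) together with Lemma \ref{Wreath Product is Amenable}, while you cite Proposition \ref{Amenable Reduction of Free Product}, which is precisely those two results packaged together under the (satisfied) hypothesis that the free factors are amenable. Your side remark about why Corollary \ref{Free Product Preserves Amenable Reduction} alone would not deliver strongness is also accurate.
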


\begin{proof}
By Proposition \ref{Free to Wreath Reduction}, $(\Z^2*\Z, \N^2*\N)$ strongly reduces to $(\Z^2 \wr \Z, \N^2 \wr \N)$, which is amenable by Lemma \ref{Wreath Product is Amenable}.
\end{proof}

Although the theory we have developed gives a rich class of examples of groups which reduce to amenable groups, there are still many open questions worthy of exploration:

\begin{question}
Are there any ordered groups that reduce to an amenable group, but do not strongly reduce to an amenable group?
\end{question}

\begin{question}
Is the class of ordered groups which reduce to amenable groups closed under other group theoretic constructions (direct limits, HNN extensions, amalgamated free products, graph products, etc)? 
\end{question}

For the latter question, we would conjecture that the answer is yes, and that this should follow from this class being closed under quotients by ``positive relators'':

\begin{conjecture} \label{IncomparableQuotient}
Let $(G,P)$ be an ordered group. Let $p_1, p_2 \in P$ be incomparable (meaning $p_1 \not \leq p_2$ and $p_1 \not \geq p_2$), and let $N$ denote the normal subgroup of $G$ generated by $p_1p_2\inv$. Then:

\begin{enumerate}[label=\alph*.]
\item If $(G,P)$ reduces to an amenable group, then $(G/N, P/N)$ reduces to an amenable group.
\item If $(G,P)$ strongly reduces to an amenable group, then $(G/N, P/N)$ strongly reduces to an amenable group.
\end{enumerate}
\end{conjecture}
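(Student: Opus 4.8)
The natural plan is to push the relator through an existing reduction. Suppose $\varphi : (G,P) \ra (H,Q)$ is a reduction with $H$ amenable, and let $M$ denote the normal subgroup of $H$ generated by $\varphi(p_1)\varphi(p_2)\inv$. Since $H$ is amenable, the quotient $H/M$ is again amenable. Because $\varphi(p_1p_2\inv) = \varphi(p_1)\varphi(p_2)\inv \in M$ and both $N$ and $M$ are normal, $\varphi$ descends to a homomorphism $\bar\varphi : G/N \ra H/M$ satisfying $\bar\varphi \circ \pi_G = \pi_H \circ \varphi$, where $\pi_G, \pi_H$ are the two quotient maps. By Lemma \ref{Order Homomorphism}, $\bar\varphi$ is an order homomorphism $(G/N, P/N) \ra (H/M, Q/M)$. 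Thus part (a) reduces to showing that $\bar\varphi$ is a reduction, and part (b), under the extra hypothesis that $\varphi$ restricts to a bijection $P \ra Q$, to showing that $\bar\varphi$ restricts to a bijection $P/N \ra Q/M$.

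The heart of the matter is a single sub-lemma: \emph{the quotient map by the normal closure of a positive relator between incomparable elements is itself a reduction.} Precisely, I would try to show that for any ordered group $(G,P)$ and incomparable $p_1,p_2 \in P$, the image $\pi_G(P) = P/N$ is a genuine positive cone in $G/N$ and $\pi_G$ restricts to a bijection $[1,p] \ra [1, \pi_G(p)]$ for every $p \in P$ (equivalently, that $\pi_G$ satisfies Proposition \ref{Reduction Criterion}(5)). Granting this for both $(G,P)$ with relator $p_1p_2\inv$ and $(H,Q)$ with relator $\varphi(p_1)\varphi(p_2)\inv$, part (a) follows by a diagram chase on intervals: on each $[1,p]$ the maps $\pi_G$, $\varphi$, and $\pi_H$ are all bijections onto the corresponding intervals, and the identity $\bar\varphi \circ \pi_G = \pi_H \circ \varphi$ then forces $\bar\varphi$ restricted to $[1,\pi_G(p)]$ to be the composite bijection $\pi_H \circ \varphi \circ (\pi_G|_{[1,p]})\inv$; this is exactly the defining property of a reduction (Proposition \ref{Reduction Criterion}(1)). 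One first needs that $\varphi(p_1),\varphi(p_2)$ are again incomparable in $Q$: when $p_1,p_2$ have a common upper bound $b$ this is immediate from Proposition \ref{Reduction Criterion}(4) applied to the interval $[1,b]$, and the no-common-upper-bound case must be treated separately.

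To prove the sub-lemma I would invoke Proposition \ref{Reduction Criterion}(5), which isolates exactly the two points that can go wrong. First, $\pi_G(P)$ being a positive cone amounts to $\pi_G(P) \cap \pi_G(P)\inv = \{1\}$; this is where the incomparability of $p_1,p_2$ is essential, since identifying two \emph{comparable} elements would collapse an interval $[p_1,p_2]$ and could introduce a nontrivial element that is simultaneously positive and negative in the quotient. Second, condition (5a), $\bar S \cap \ker\pi_G = \{1\}$ with $\bar S = \bigcup_{p \in P}[1,p][1,p]\inv$, says no two distinct elements of a common interval become identified modulo $N$, yielding injectivity on intervals, while condition (5b), $\pi_G([1,p]) = [1,\pi_G(p)]$, yields surjectivity. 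Both require understanding which words of $G$ lie in the normal closure $N$, i.e. a normal-form or rewriting analysis of products of conjugates of $p_1p_2\inv$.

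That last point is the main obstacle, and the reason the statement is only conjectured: normal closures in a general (possibly non-amenable) group $G$ resist explicit description, and there is no automatic guarantee that $\pi_G(P)$ remains a positive cone or that intervals are undistorted. A realistic line of attack would be to realize the identification $p_1 = p_2$ as a confluent rewriting system on positive words and to show that reductions never cross the boundary of any interval $[1,p]$, with the incomparability hypothesis supplying confluence. For the strong-reduction part (b) one cannot simply apply the sub-lemma to $\pi_G$, since $\pi_G$ identifies $p_1$ with $p_2$ and so is \emph{not} injective on $P$; instead I would show directly that $\bar\varphi : P/N \ra Q/M$ is injective, using that $\varphi : P \ra Q$ is a bijection together with the assertion that $\varphi$ carries $N$-equivalence on $P$ exactly onto $M$-equivalence on $Q$ — which again rests on the same control of the two normal closures.
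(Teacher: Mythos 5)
This statement is a \emph{conjecture} in the paper --- there is no proof to compare against, and the discussion immediately following Conjecture \ref{IncomparableQuotient} explains precisely why the approach you propose runs into trouble. Your plan is exactly the ``carrying the quotient forward'' idea the paper considers, and both your central sub-lemma and your main claim for part (a) are refuted by the paper's own example. Take $(G,P)=(F_2,\{a,b\}^*)$ with the reduction $\varphi:(F_2,P)\ra(\Z,\N)$ given by $\varphi(a)=\varphi(b)=1$, and take $p_1=ab$, $p_2=ba$, which are incomparable and have no common upper bound. Then $G/N\cong\Z^2$ and $P/N\cong\N^2$. Your sub-lemma asserts that $\pi_G$ is a reduction, but $\pi_G([1,ab])=\{(0,0),(1,0),(1,1)\}$ whereas $[1,\pi_G(ab)]=[(0,0),(1,1)]$ has four elements, so condition (5b) of Proposition \ref{Reduction Criterion} fails: $\pi_G$ is injective on this interval but not surjective onto the target interval, hence not a reduction. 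Worse, $\varphi(p_1)=2=\varphi(p_2)$, so the images are not incomparable --- they are equal --- which makes $M$ trivial and $H/M=\Z$; the induced map $\bar\varphi:\Z^2\ra\Z$ sends both generators to $1$ and is not a reduction, since $[(0,0),(1,1)]$ has four elements while $[0,2]$ has three. So for part (a) it is not merely a step of your argument that fails: the conclusion of your strategy (that $\bar\varphi:G/N\ra H/M$ is a reduction) is false.

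The case you flag as needing separate treatment --- incomparable elements with no common upper bound --- is therefore not a loose end but the exact point where the approach collapses: a non-strong reduction can identify such elements, and then $H/M$ is simply the wrong target group. This is what the paper means in saying it ``is not clear in general how one would avoid this type of issue'': in the example, $(\Z^2,\N^2)$ does reduce to an amenable group (namely itself), but not to $(\Z,\N)$ or any quotient of it, so any proof of (a) must be able to produce an amenable target unrelated to $H/M$. For part (b) your outline is more viable and matches the paper's stated intuition that (b) is the more plausible half: a strong reduction is an order isomorphism of $P$ onto $Q$ (Lemma \ref{Strong Reduction}), so incomparability of $p_1,p_2$ is automatically preserved and the collapse above cannot occur. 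But the remaining content is exactly your final unproved assertion --- that $\varphi$ carries $N$-equivalence on $P$ precisely onto $M$-equivalence on $Q$ --- and that control of the two normal closures is the open problem, not a routine verification.
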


Of these two sub-conjectures, (b) seems more plausible: one could imagine ``carrying the quotient forward'' into the reduction. However, attempting that in the case of a non-strong reduction of (a) seems more difficult. For instance, we've seen that $(F_2,P_2)$ reduces to $(\Z, \N)$, and taking $p_1=ab, p_2=ba$ results in the relator $ab=ba$, so $F_2/N\cong \Z^2$ and $P_2/N \cong \N^2$, but $(\Z^2, \N^2)$ does not reduce to $(\Z, \N)$ or any quotient thereof. While $(\Z^2, \N^2)$ reduces to a different amenable group (itself), it is not clear in general how one would avoid this type of issue.

\section{Gauge-Invariant Uniqueness for $P$-graphs}

In this section, our goal is to prove a gauge-invariant uniqueness theorem for $P$-graphs: there is exactly one representation of a $P$-graph which is $\Lambda$-faithful, tight, and has a gauge coaction. However, we will show in Lemma \ref{GIUT Failure} that the gauge-invariant uniqueness theorem can fail in general, so we require an additional hypothesis. That additional hypothesis is that $(G,P)$ can reduce to an amenable ordered group, and we will prove a gauge-invariant uniqueness theorem for such $P$-graphs in Theorem \ref{uniquenesstheorem}.

\subsection{A Co-Universal Algebra}

In this section, we will prove a weaker form of a gauge invariant uniqueness theorem which says that there is exactly one representation of a $P$-graph which is $\Lambda$-faithful, tight, and has a \emph{normal} gauge coaction. This result is a slight generalization of \cite[Theorem 5.3]{brownlowe2013co} to the context of \emph{weakly} quasi-lattice ordered groups.

To prove this, we define and study the balanced algebra of a representation, culminating in Theorem \ref{Kernel Generated by Bolts 2}, which classifies the balanced algebras of a graph.

To give a brief summary of the argument:

\begin{enumerate}
\item Use the structure of ``infinite paths'' in $\Lambda$ to construct a tight, $\Lambda$-faithful representation called the ultrafilter representation $f$.
\item For a representation $t$, we define the balanced algebra $\mathcal B(t)= \closedspan\{t_\mu t_\nu^* : d(\mu)=d(\nu)\}$ (Lemma \ref{Balanced Subalgebra}). If there is a gauge coaction on $t$, there is a conditional expectation $\Phi_t: C^*(t) \ra \mathcal B(t)$ which is given by 

\[ \Phi_t (t_\mu t_\nu^*) = \begin{cases} t_\mu t_\nu^* &\text{ if } d(\mu)=d(\nu) \\ 0 &\text{ otherwise} \end{cases},\]

\noindent and this conditional expectation is faithful if and only if the gauge coaction is normal (Lemma \ref{Balanced Conditional Expectation}). We define a balanced covering to be a map $\psi^t_s: \mathcal B(t) \ra \mathcal B(s)$ given by $t_\mu t_\nu^* \mapsto s_\mu s_\nu^*$.
\item Recalling that $\mathcal T$ denotes the Toeplitz representation of $\Lambda$ from Definition \ref{Toeplitz Definition}, in Theorem \ref{Kernel Generated by Bolts 2} we show that $\ker \psi^{\mathcal T}_t$ is generated by the ``bolts'' and range projections it contains. Using this, we show in Lemma \ref{Balanced Tight is Minimal} that any tight representation is balanced covered by any $\Lambda$-faithful representation.
\item In Lemma \ref{Covering Extension} we show that in the presence of appropriate coactions, we can lift a balanced covering to a covering of the entire algebras.
\item Finally, we show in Theorem \ref{Co-Universal Algebra} that there is a unique tight, $\Lambda$-faithful representation with a normal gauge coaction, and that this representation is co-universal for $\Lambda$-faithful coacting representations in the sense of \cite[Theorem 5.3]{brownlowe2013co}.
\end{enumerate}

\subsubsection{The Ultrafilter Representation}

In this subsection, we construct the ultrafilter representation $f$ of a $P$-graph $\Lambda$, and demonstrate that it is $\Lambda$-faithful and tight. This construction follows the one in \cite[Section 3]{brownlowe2013co} and is included here for completeness.

\begin{definition}
Let $(G,P)$ be a WQLO group, and $\Lambda$ a finitely aligned $P$-graph. Recall that $\Lambda$ has a partial order given by $\alpha \leq \beta$ if $\beta \in \alpha \Lambda$. 

A \ulindex{filter} of $\Lambda$ is a nonempty subset $U \subseteq \Lambda$ such that:

\begin{enumerate}
\item [(F1)] If $\mu \in U$ and $\lambda \leq \mu $, then $\lambda \in U$ 
\item [(F2)] If $\mu, \nu \in U$, there exists a $\lambda \in U$ such that $\mu \leq \lambda$ and $\nu \leq \lambda$.
\end{enumerate}

Given a filter $U$, it is nonempty so it contains some $\mu$. By $F1$, $r(\mu) \in U$, and by $F2$ since no distinct vertices have a common extension, then $r(\mu)$ is the only vertex in $U$. Therefore, for any $\mu, \nu \in U$, $r(\mu)= r(\nu)$, so we write $r(U)$ for this unique vertex.

A \ulindex{ultrafilter} of $\Lambda$ is a maximal filter. We denote the set of filters by $\widehat \Lambda$ and the set of ultrafilters by $\widehat \Lambda _\infty$. 

\end{definition}

\begin{lemma} \label{Filters are Contained in Ultrafilters}
Every filter is contained in an ultrafilter.
\end{lemma}

\begin{proof}
The proof is a standard Zorn's Lemma argument: fix a filter $U$, and let $\widehat \Lambda_U = \{ V \in \widehat \Lambda : U \subseteq V\}$. We will show that $\widehat \Lambda_U$ contains a maximal element by Zorn's lemma, and that this maximal element of $\widehat \Lambda_U$ is also maximal in $\widehat \Lambda$.

To apply Zorn's Lemma, we must check that $\widehat \Lambda_U = \{ V \in \widehat \Lambda : U \subseteq V\}$ is nonempty and that chains in $\widehat \Lambda_U$ have an upper bound in $\widehat \Lambda_U$. The former is immediate since $U \in \widehat \Lambda_U$. If $\mathcal C$ is a chain in $\widehat \Lambda_U$, then let $W= \bigcup_{V \in \mathcal C} V$, which we will show is a filter. To check F1, if $\mu \in W$ and $\nu \leq \mu$, then $\mu \in V$ for some $V \in C$, so $\nu \in V$ and thus $\nu \in W$. To check F2, if $\mu, \nu \in W$, then $\mu \in V_1$, $\nu \in V_2$ for some $V_1, V_2 \in \mathcal C$. Since $\mathcal C$ is a chain, either $V_1 \subseteq V_2$ or $V_2 \subseteq V_1$. Without loss of generality assuming the former, we have that $\mu, \nu \in V_2$, so there is some $\lambda \in V_2 \subseteq W$ with $\mu, \nu\leq \lambda$ as desired. Since $W$ also contains $U$, then $W \in \widehat \Lambda_U$, so by Zorn's Lemma $\widehat \Lambda_U$ contains a maximal element $U_\infty$.

Finally, we will show that $U_\infty$ is an ultrafilter, meaning that we will check that $U_\infty$ is maximal in $\widehat \Lambda$. If it were not, there would be some $V \supsetneq U_\infty \supseteq U$, but then we'd have $V \in \widehat \Lambda_U$, so $U_\infty$ was not maximal in $\widehat \Lambda_U$, a contradiction. Thus $U_\infty$ is indeed maximal in $\widehat \Lambda$, so $U_\infty \in \widehat \Lambda_\infty$ as desired.

\end{proof}

\begin{lemma}
Let $U$ be an ultrafilter, and suppose $\mu \in U$, $E \subset \mu \Lambda$, and that $E$ is exhaustive for $\mu \Lambda$. Then $E \cap U$ is nonempty.
\end{lemma}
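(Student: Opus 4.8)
The plan is to argue by contradiction: I would assume that $\mu \in U$, that $E \subseteq \mu\Lambda$ is exhaustive for $\mu\Lambda$, and that $E \cap U = \emptyset$, and then derive a contradiction with the maximality of $U$. The finiteness of $E$ is used crucially below (and cannot be dropped — already for a single vertex carrying countably many loops one can build an infinite exhaustive $E$ disjoint from an ultrafilter); in all of our applications $E$ is finite, coming from tightness.

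First I would record two easy structural facts about the ``upper part'' $U_\mu := U \cap \mu\Lambda = \{\nu \in U : \mu \leq \nu\}$. By (F2) it is directed, and by (F1)--(F2) it is cofinal in $U$ (any $\eta \in U$ has a common upper bound with $\mu$ inside $U$, which then lies in $U_\mu$ and dominates $\eta$). Next I would isolate a single troublesome element of $E$. Since $E$ is exhaustive, every $\nu \in U_\mu$ has a common extension with some $\alpha \in E$, so $U_\mu = \bigcup_{\alpha \in E} C_\alpha$, where $C_\alpha = \{\nu \in U_\mu : \nu \text{ and } \alpha \text{ have a common extension}\}$. A directed set that is a \emph{finite} union of subsets must have one of the subsets cofinal, so (here is where finiteness of $E$ enters) some $\alpha^* \in E$ has $C_{\alpha^*}$ cofinal in $U_\mu$; chasing common extensions upward then shows that $\alpha^*$ has a common extension with every element of $U$. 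Since $\alpha^* \in E$ and $E \cap U = \emptyset$, we have $\alpha^* \notin U$.

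The heart of the argument — and the step I expect to be the main obstacle — is to manufacture from $\alpha^*$ a filter strictly larger than $U$, contradicting maximality. The naive candidate, the downward closure of all common extensions of $\alpha^*$ with elements of $U$, fails to be directed, because $\alpha^*$ and $\beta$ may have several incomparable minimal common extensions and the ``branches'' need not fit together. To repair this I would select common extensions \emph{coherently}. For each $\beta \in U_\mu$ set $M_\beta = MCE(\alpha^*,\beta)$, which is finite by finite alignment and nonempty by Lemma \ref{Equivalent Minimality}(3). For $\beta \leq \beta'$, every $m' \in M_{\beta'}$ is a common extension of $\alpha^*$ and $\beta$, hence lies in exactly one $m\Lambda$ with $m \in M_\beta$ by the disjoint decomposition of Lemma \ref{Equivalent Minimality}(4); this gives restriction maps $M_{\beta'} \to M_\beta$ making $(M_\beta)_{\beta \in U_\mu}$ an inverse system of finite nonempty sets over the directed poset $U_\mu$. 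A standard compactness argument shows its inverse limit is nonempty, producing a coherent family $m_\beta \in M_\beta$ with $m_\beta \leq m_{\beta'}$ whenever $\beta \leq \beta'$.

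Finally I would set $V = \{\gamma \in \Lambda : \gamma \leq m_\beta \text{ for some } \beta \in U_\mu\}$ and check that it is a filter strictly containing $U$. It is downward closed by construction, and directed because for $\gamma_1 \leq m_{\beta_1}$ and $\gamma_2 \leq m_{\beta_2}$ one takes $\beta_3 \geq \beta_1,\beta_2$ in $U_\mu$ and uses coherence to get $\gamma_1,\gamma_2 \leq m_{\beta_3} \in V$. Each $\eta \in U$ satisfies $\eta \leq \beta \leq m_\beta$ for a suitable $\beta \in U_\mu$, so $U \subseteq V$, while $\alpha^* \leq m_\beta$ gives $\alpha^* \in V$; since $\alpha^* \notin U$ this makes $V \supsetneq U$, contradicting that $U$ is a maximal filter. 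Hence $E \cap U \neq \emptyset$. The two delicate points are exactly the coherent selection of common extensions (where finite alignment and the disjointness in Lemma \ref{Equivalent Minimality}(4) are indispensable) and the appeal to finiteness of $E$ when extracting $\alpha^*$; everything else is routine verification of the filter axioms.
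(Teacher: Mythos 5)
Your proof is correct, and at the strategic level it mirrors the paper's: use finiteness of $E$ to isolate one $\alpha^* \in E$ compatible with a cofinal part of $U$, coherently select minimal common extensions of $\alpha^*$ with elements of $U$, take the downward closure to get a filter containing $U$ together with $\alpha^*$, and invoke maximality. But the implementation of both key steps is genuinely different. The paper leans on the countability of $\Lambda$: it splits into the cases $U$ finite and $U$ infinite, enumerates $U$, builds a strictly increasing cofinal sequence $\{\eta_n\}$, finds $\alpha^*$ by pigeonhole (some $\alpha \in E$ serves infinitely many $\eta_n$), and then performs an explicit K\"onig-style induction, choosing $\zeta_n \in MCE(\eta_n, \alpha^*)$ with $\zeta_{n-1} \leq \zeta_n$ so that infinitely many later common extensions survive above $\zeta_n$ at each stage. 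You instead work with the directed set $U_\mu = U \cap \mu\Lambda$ itself: the pigeonhole is replaced by the fact that a directed set written as a finite union of subsets has a cofinal member, and the inductive selection is replaced by nonemptiness of the inverse limit of the system $\bigl(MCE(\alpha^*,\beta)\bigr)_{\beta \in U_\mu}$ of finite nonempty sets, whose restriction maps are well defined precisely because of the disjoint decomposition in Lemma \ref{Equivalent Minimality}(4). The paper's route buys elementarity — sequences and induction only, no abstract compactness theorem — at the cost of a case split and reliance on countability; yours buys uniformity (no case split) and generality (it works verbatim for uncountable graphs) at the cost of citing the standard inverse-limit compactness fact. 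Two smaller points in your favor: you are right that finiteness of $E$ must be added to the hypotheses — the paper's own pigeonhole step uses it, and your counterexample sketch with infinitely many loops at a single vertex is sound — and, as in the paper, the contradiction framing can be dropped, since maximality gives $V = U$ outright and hence $\alpha^* \in U$.
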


\begin{proof}
We split into two cases: U is finite and U is infinite. If $U$ is finite, it has a greatest element $\omega$ by F2. Since $\mu \leq \omega$, then $\omega \in \mu \Lambda$, and since $E$ is exhaustive for $\mu \Lambda$ there is some $\alpha \in E$ with $MCE(\alpha, \omega) \neq \emptyset$. That is, we can find some $\beta \in MCE(\alpha, \omega)$, so define $U'= \{\nu : \nu \leq \beta\}$. Then $U'$ is a filter containing $U$, and since $U$ was an ultrafilter then $U'=U$. Thus $\beta \in U$ and by F1, $\alpha \leq \beta \in U$, as desired.

Suppose instead that $U$ is infinite. Since $\Lambda$ is countable, so is $U$, so let give $U$ an enumeration $U=\{\nu_1, \nu_2,...\}$ and without loss of generality suppose $\mu=\nu_1$. By the F2 property, for each $n \in \N$, we may choose an $\eta_n$ such that $\eta_{n-1} \leq \eta_n$ if $n>1$ and $\nu_i \leq \eta_n$ for all $1 \leq i \leq n$. Then in particular $\{\eta_n\}_{n \in \N}$ is a rising sequence of elements of $U$ such that $\eta_n \in \mu \Lambda$ for all $n$, and by the F1 property $\lambda \in U$ if and only if $\lambda \leq \eta_n$ for some $n$. Finally, we may assume that the sequence $\{\eta_n\}_{n\in \N}$ is strictly increasing by passing to a subsequence.

Now, for each $\eta_n$, since $\eta_n \in \mu \Lambda$ and $E$ is exhaustive is $\mu \Lambda$, there is an $\alpha_n \in E$ such that $\eta_n$ has a common extension with $\alpha_n$. But $E$ is finite, so one $\alpha \in E$ must appear infinitely many times in the sequence $\{\alpha_n\}_{n \in \N}$. Thus there are infinitely many $n$ with $MCE(\eta_n, \alpha)$ nonempty. Our goal now is to show that $\alpha \in U$ by constructing a potentially bigger ultrafilter that contains $\alpha$, but since $U$ is already an ultrafilter it must be that $\alpha$ is already in $U$.
 
To this end, for $i \leq j$, if $\lambda \in MCE(\eta_j, \alpha)$, we can factorize $\lambda = \zeta \zeta'$ where $d(\zeta) = d(\eta_i) \vee d(\alpha)$, which implies $\zeta \in MCE(\eta_i, \alpha)$, and therefore $\lambda \in \zeta \Lambda$ for some $\zeta \in MCE(\eta_i, \alpha)$. Thus

\[ MCE(\eta_j, \alpha) = \bigcup_{\zeta \in MCE(\eta_i, \alpha)} \left[ MCE(\eta_j, \alpha) \cap \zeta \Lambda \right]. \] 

Considering $i$ as fixed but taking the union over all $j \geq i$, we have that

\bea \bigcup_{j \geq i} MCE(\eta_j, \alpha) &=&\bigcup_{j \geq i} \left( \bigcup_{\zeta \in MCE(\eta_i, \alpha)}  \left[ MCE(\eta_j, \alpha) \cap \zeta \Lambda \right]  \right) \\
&=&\bigcup_{\zeta \in MCE(\eta_i, \alpha)} \left( \bigcup_{j \geq i}MCE(\eta_j, \alpha) \cap \zeta \Lambda  \right)  
\eea

Since infinitely many $MCE(\eta_j, \alpha)$ are nonempty and the sequence $\{\eta_n\}_{n \in \N}$ is strictly increasing, the lefthand side is infinite, and thus the righthand side must have one of the terms $\bigcup_{j \geq i}MCE(\eta_j, \alpha) \cap \zeta \Lambda$ being infinite. In particular, when $i=1$, there is some $\zeta_1$ such that $MCE(\eta_j, \alpha) \cap \zeta_1 \Lambda$ is infinite. Then we may repeat a version of this argument: we have

\[ \bigcup_{j \geq 2} MCE(\eta_j, \alpha)  \cap \zeta_1\Lambda  = \bigcup_{\zeta \in MCE(\eta_2, \alpha)} \left( \bigcup_{j \geq 2}MCE(\eta_j, \alpha) \cap \zeta \Lambda \cap \zeta_1 \Lambda \right)\]

\noindent so one of the terms $\bigcup_{j \geq 2}MCE(\eta_j, \alpha) \cap \zeta \Lambda \cap \zeta_1$ must be infinite, so we take $\zeta_2$ to be the element of $MCE(\eta_2, \alpha)$ giving that infinite term. Next, since $\zeta_1 \leq \zeta_2$ then $\zeta_2 \Lambda \cap \zeta_1= \zeta_2\Lambda$, and we may repeat this process inductively.

In this way we create a sequence $\{\zeta_n\}_{n \in \N}$ with $\zeta_n \in MCE(\eta_n, \alpha)$ and $\zeta_{n-1} \leq \zeta_{n}$ for all $n$. Finally, let $U_{\infty}= \{ \lambda: \lambda \leq \zeta_n \text{ for some }n \in \N\}$. Then it is immediate to verify that $U_\infty$ is a filter. Furthermore, if $\lambda \in U$, then $\lambda \leq \eta_n$ for some $n$, so $\lambda \leq \eta_n \leq \zeta_n$, so $\lambda \in U_{\infty}$. That is, $U \subseteq U_\infty$, but $U$ was an ultrafilter, so it must be that $U=U_{\infty}$. Since $\alpha \leq \zeta_1$, then $\alpha \in U_{\infty} =U$, as desired.

In either case, we have shown that there is some $\alpha \in E \cap U$. 
\end{proof}

The following result is \cite[Lemma 3.4]{brownlowe2013co}:

\begin{lemma}
Let $(G,P)$ be a WQLO group, $\Lambda$ a finitely-aligned $P$-graph, $\lambda \in \Lambda$ and let $U$ and $V$ be filters. If $r(U)=s(\lambda)$ and if $\lambda \in V$, we define 

\[ \lambda \cdot U:= \bigcup_{\mu \in U} \{ \alpha \leq \lambda \mu\} \text{ and }\] 

\[\lambda^* \cdot V := \{ \mu \in \Lambda: \lambda \mu \in V\}\] 

Then $\lambda \cdot U$ and $\lambda^* \cdot V $ are filters and if $U$ and $V$ are ultrafilters, then so are $\lambda \cdot U$ and $\lambda^* \cdot V $. Finally, $\lambda^* \cdot (\lambda \cdot U) = U$ and $\lambda \cdot (\lambda^* \cdot V)=V$. 
\end{lemma}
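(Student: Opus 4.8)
The plan is to recognize $U \mapsto \lambda \cdot U$ and $V \mapsto \lambda^* \cdot V$ as mutually inverse, inclusion-preserving bijections between the filters with range vertex $s(\lambda)$ and the filters containing $\lambda$. Once this is set up, the ultrafilter statements fall out almost formally, so the genuine work lies in (i) checking the filter axioms for $\lambda \cdot U$ and $\lambda^* \cdot V$, and (ii) proving the two inversion identities $\lambda^* \cdot (\lambda \cdot U) = U$ and $\lambda \cdot (\lambda^* \cdot V) = V$. I would then (iii) note monotonicity and (iv) transfer maximality. Throughout, the only $P$-graph facts I need are left-cancellation and the consequent equivalence $\lambda\mu \le \lambda\nu \iff \mu \le \nu$, both immediate from unique factorization; notably WQLO and finite alignment play no role.

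For step (i), nonemptiness is easy: $\lambda \in \lambda \cdot U$ (any $\mu \in U$ gives $\lambda \le \lambda\mu$, and $\lambda\mu$ is defined since $r(\mu)=r(U)=s(\lambda)$), while $s(\lambda) \in \lambda^* \cdot V$ since $\lambda\, s(\lambda) = \lambda \in V$. Axiom (F1) for both reduces to transitivity of $\le$ together with downward closure of $U$, resp. $V$. For (F2) on $\lambda \cdot U$, given $\alpha_i \le \lambda\mu_i$ I choose a common upper bound $\mu_3 \in U$ of $\mu_1,\mu_2$ and use $\mu_i \le \mu_3 \Rightarrow \lambda\mu_i \le \lambda\mu_3 \in \lambda \cdot U$. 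Dually, for $\lambda^* \cdot V$ I take $\sigma \in V$ above $\lambda\mu_1,\lambda\mu_2$; since $\lambda \le \sigma$ I write $\sigma = \lambda\tau$, and left-cancellation yields $\mu_i \le \tau$ with $\tau \in \lambda^* \cdot V$.

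Step (ii) is the technical heart. For $\lambda^* \cdot (\lambda \cdot U) = U$ I compute $\lambda\mu \in \lambda \cdot U \iff \exists\, \nu \in U,\ \lambda\mu \le \lambda\nu \iff \exists\, \nu \in U,\ \mu \le \nu \iff \mu \in U$, the final equivalence being (F1). For $\lambda \cdot (\lambda^* \cdot V) = V$, the inclusion $\subseteq$ is downward closure of $V$; for $\supseteq$ I take $\beta \in V$, apply (F2) to $\lambda,\beta \in V$ to obtain $\sigma \in V$ above both, write $\sigma = \lambda\mu$, and observe $\mu \in \lambda^* \cdot V$ with $\beta \le \sigma = \lambda\mu$. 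This is exactly where the hypotheses $r(U)=s(\lambda)$ and $\lambda \in V$ are consumed.

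Finally, in step (iv) both maps are visibly inclusion-preserving. Let $U$ be an ultrafilter and $W$ any filter with $\lambda \cdot U \subseteq W$. Since $\lambda \in \lambda \cdot U \subseteq W$, the filter $\lambda^* \cdot W$ is defined, and monotonicity plus step (ii) give $U = \lambda^* \cdot (\lambda \cdot U) \subseteq \lambda^* \cdot W$; maximality of $U$ forces $U = \lambda^* \cdot W$, and applying $\lambda \cdot (-)$ and step (ii) again yields $W = \lambda \cdot (\lambda^* \cdot W) = \lambda \cdot U$. Hence $\lambda \cdot U$ is maximal. The claim for $\lambda^* \cdot V$ is symmetric: for $W \supseteq \lambda^* \cdot V$ one has $s(\lambda) \in \lambda^* \cdot V \subseteq W$, so $r(W)=s(\lambda)$ and $\lambda \cdot W$ is defined, after which the same inversion-and-maximality argument closes. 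The main obstacle is precisely keeping these partially-defined compositions legitimate: one must verify that the enlarging filter $W$ still satisfies $\lambda \in W$, resp. $r(W)=s(\lambda)$. Each such condition is inherited from the containment because filters are downward closed and possess a unique vertex, so no enlargement can escape the domain of the inverse operation.
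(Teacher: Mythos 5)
Your proof is correct. Note that the paper does not actually prove this lemma --- it is quoted verbatim from \cite{brownlowe2013co} (their Lemma 3.4) --- so there is no in-paper argument to compare against; judged on its own, your argument is complete and sound. Two features are worth highlighting. First, packaging the two operations as mutually inverse, inclusion-preserving maps between filters with $r(U)=s(\lambda)$ and filters containing $\lambda$ makes the ultrafilter statements purely formal, and you correctly identify the one delicate point in that transfer: an enlarging filter $W$ must be shown to remain in the domain of the inverse operation, which you get from downward closure (giving $\lambda\in W$) and from the uniqueness of the vertex in a filter (giving $r(W)=s(\lambda)$). Second, your observation that neither WQLO nor finite alignment is used is accurate; the only structural inputs are left cancellation and the equivalence $\lambda\mu\le\lambda\nu \iff \mu\le\nu$, which follow from unique factorization, so the lemma holds for arbitrary $P$-graphs. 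This matches the role the lemma plays in the paper, where those standing hypotheses are carried along for the surrounding results rather than for this statement.
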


We are now ready to define our $\Lambda$-faithful tight representation by letting $\Lambda$ act by translation on its set of ultrafilters:

\begin{lemma}\label{Ultrafilter Representation}
Let $(G,P)$ be a WQLO group, and $\Lambda$ a finitely-aligned $P$-graph. For $\lambda \in \Lambda$, define an operator $f_\lambda \in \mathcal B(\ell^2(\widehat \Lambda_\infty))$ by 

\[f_\lambda e_U = \begin{cases} e_{\lambda \cdot U} &\text{ if }r(U)=s(\lambda) \\ 0 &\text{ otherwise} \end{cases}.\]

Then $f$ is a representation of $\Lambda$ which is $\Lambda$-faithful and tight. We call $f$ the \ulindex{ultrafilter representation}\index{representation!of a $P$-graph!ultrafilter}.
\end{lemma}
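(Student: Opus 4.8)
The plan is to mirror the verification of the left-regular representation in Example \ref{Left Regular Representation}, replacing paths by ultrafilters. First I would identify the adjoint $f_\lambda^*$. By the preceding lemma (Lemma 3.4 of \cite{brownlowe2013co}), the operations $U \mapsto \lambda\cdot U$ and $V\mapsto\lambda^*\cdot V$ are mutually inverse bijections between $\{U : r(U) = s(\lambda)\}$ and $\{V : \lambda\in V\}$; a short inner-product computation then gives
\[ f_\lambda^* e_V = \begin{cases} e_{\lambda^*\cdot V} & \text{if } \lambda\in V \\ 0 & \text{otherwise.} \end{cases} \]
With this in hand, $f_\lambda$ is a partial isometry carrying $\{e_U : r(U) = s(\lambda)\}$ onto $\{e_V : \lambda\in V\}$ and annihilating the rest.

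The relators T1--T3 then follow by direct computation. For T1, since $v\cdot U = U$ whenever $r(U) = v$ for a vertex $v$ (because $v\mu=\mu$ and $U$ is downward closed by F1), $f_v$ is the orthogonal projection onto $\closedspan\{e_U : r(U) = v\}$, and distinct vertices give orthogonal ranges. For T2 I would check the associativity $\mu\cdot(\nu\cdot U) = (\mu\nu)\cdot U$ directly from the definition of the action, using that $\beta\leq\gamma$ implies $\mu\beta\leq\mu\gamma$, together with $r(\nu\cdot U) = r(\nu)$. T3 is immediate from $\mu^*\cdot(\mu\cdot U) = U$ and the fact that $\mu\in\mu\cdot U$ when $r(U)=s(\mu)$.

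The crux is T4, and here the key observation is that $f_\mu f_\mu^*$ is the orthogonal projection onto $\closedspan\{e_V : \mu\in V\}$, so all the operators appearing in T4 are diagonal (hence commuting) projections in the basis $\{e_V\}$. Thus T4 reduces to the purely combinatorial claim that, for every ultrafilter $V$, one has $\mu,\nu\in V$ if and only if there is a \emph{unique} $\lambda\in MCE(\mu,\nu)$ with $\lambda\in V$. The ``only if'' and uniqueness directions I would extract from Lemma \ref{Equivalent Minimality}: F2 gives a common extension of $\mu,\nu$ inside $V$, part (3) refines it to an element of $MCE(\mu,\nu)$ lying below it (which is in $V$ by F1), and the disjointness in part (4) forces uniqueness; the ``if'' direction is immediate from F1. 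Finite-alignedness guarantees the sum in T4 is finite. Balancing this projection identity against the disjoint-union structure of $MCE(\mu,\nu)$ is where I expect the real work to lie.

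Finally, for $\Lambda$-faithfulness I would exhibit, for each $\lambda$, an ultrafilter $U$ with $r(U) = s(\lambda)$ so that $f_\lambda e_U = e_{\lambda\cdot U}\neq 0$: the singleton $\{s(\lambda)\}$ is a filter, which by Lemma \ref{Filters are Contained in Ultrafilters} extends to an ultrafilter with the required range. For tightness, I would use that $f_\mu f_\mu^* - f_\alpha f_\alpha^*$ is the projection onto $\{e_V : \mu\in V,\ \alpha\notin V\}$ whenever $\alpha\in\mu\Lambda$, so the product over $\alpha\in E$ is the projection onto $\{e_V : \mu\in V \text{ and } \alpha\notin V \text{ for all }\alpha\in E\}$. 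That this set is empty is precisely the content of the earlier lemma asserting that $E\cap V\neq\emptyset$ whenever $\mu\in V$ and $E$ is finite and exhaustive for $\mu\Lambda$, so the product vanishes.
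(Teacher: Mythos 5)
Your proposal is correct and takes essentially the same approach as the paper's proof: the same adjoint formula, the same identification of $f_\mu f_\mu^*$ as the projection onto the span of ultrafilters containing $\mu$, the same reduction of T4 to the claim that an ultrafilter contains $\mu$ and $\nu$ iff it contains exactly one element of $MCE(\mu,\nu)$ (via F1, F2, and Lemma \ref{Equivalent Minimality}), $\Lambda$-faithfulness by extending the singleton filter $\{s(\lambda)\}$ to an ultrafilter, and tightness from the lemma that a finite exhaustive subset of $\mu\Lambda$ meets every ultrafilter containing $\mu$. The only differences are presentational: you phrase T4 and the bolt computation as identities of commuting diagonal projections, where the paper verifies the same identities one basis vector $e_U$ at a time.
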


\begin{proof}

Defining the operators $f_\lambda$ as given above, a typical inner product argument shows that $f_\lambda^* e_V= \begin{cases}e_{\lambda^* \cdot V} &\text{ if } \lambda \in V \\ 0 &\text{ otherwise} \end{cases}$.

To check the T1 operator, it is immediate that if $v \in \Lambda^0$, then $f_v$ is a projection, namely a projection onto the subspace spanned by the set of ultrafilters containing $v$. To show that the $\{f_v\}_{v \in \Lambda^0}$ are orthogonal, first note that this family commutes, and fix distinct $v,w \in \Lambda^0$. For any ultrafilter $U$, $r(U)$ is the unique vertex contained in $U$, so either $v \not \in U$ or $w \not \in U$. In the former case, $f_w f_v e_U= 0$ and in the latter case $f_v f_w e_U=0$, so in either case $f_vf_w=f_wf_v=0$.

The T2 operator is immediate.

The T3 operator is immediate from the previous lemma.

For the T4 operator, observe that $f_\lambda f_\lambda^*$ is projection onto the ultrafilters containing $\lambda$. Therefore, $f_\mu f_\mu^*f_\nu f_\nu^*$ is projection onto the ultrafilters containing $\mu$ and $\nu$. By the F2 and F1 properties, such an ultrafilter would contain some $\lambda \in MCE(\mu, \nu)$. In fact, it would contain exactly one such term, since if it contained two distinct $\lambda_1, \lambda_2 \in MCE(\mu, \nu)$, by the F2 property it would contain a common extension of $\lambda_1$ and $\lambda_2$, but no such extension exists by the uniqueness of factorizations. Thus the set of ultrafilters containing $\mu$ and $\nu$ is precisely the disjoint union of the ultrafilters containing a $\lambda \in MCE(\mu, \nu)$. As operators, this is to say that $f_\mu f_\mu^*f_\nu f_\nu^*=\fsum_{\lambda \in MCE(\mu, \nu)} f_\lambda f_\lambda^*$.

Thus $f$ is indeed a representation.

To show that $f$ is $\Lambda$-faithful, fix some $\lambda \in \Lambda$. Since $\{s(\lambda)\}$ is a filter, then by Lemma \ref{Filters are Contained in Ultrafilters}, there is an ultrafilter $U$ containing $s(\lambda)$, in which case $r(U)=s(\lambda)$. Then, $f_\lambda \cdot e_U= e_{\lambda \cdot U} \neq 0$, so $f_\lambda \neq 0$.

To show that $f$ is tight, fix some $\mu \in \Lambda$ and $E \subset \mu \Lambda$ which is finite and exhaustive for $\mu \Lambda$. Let $B= \fprod_{\alpha \in E} (f_\mu f_\mu^* -f_\alpha f_\alpha^*)$ be the corresponding bolt (see Definition \ref{Bolt Definition}). We'll now show that for any ultrafilter $U$, $Be_U=0$, and this implies that $B=0$ as desired.

First, if $U$ does not contain $\mu$, then it also does not contain any of the $\alpha \in E$, so $f_\mu f_\mu^*e_U= 0 = f_\alpha f_\alpha^* e_U$, and thus $Be_U=0$.

If instead $U$ does contain $\mu$, by the previous lemma, then $U$ contains one of the $\alpha \in E$. 

Then, since $\mu, \alpha \in U$, $f_\mu f_\mu^* e_U= e_U= f_\alpha f_\alpha^* U$, so $B e_U= \fprod_{\alpha \in E} (f_\mu f_\mu^*-f_\alpha f_\alpha^*) e_U= 0$. 

We have now shown that $Be_U=0$ for all ultrafilters $U$, so $B=0$ as an operator in $\mathcal B(\ell^2(\widehat \Lambda_\infty))$. Thus $f$ is tight.

\end{proof}

Since the ultrafilter representation is $\Lambda$-faithful and tight, it is natural to ask if it also has a gauge coaction. The answer is often no, as we will show in the next example.

\begin{example}
Let $(G,P)$ be a WQLO group, and suppose that $P$ is directed, meaning that any two elements of $P$ have a common upper bound. Examples of directed positive cones include $\N^k$ and any total order. Then letting $\Lambda=P$, $\Lambda$ itself is a filter, so $\Lambda$ is the unique ultrafilter. Thus $\widehat \Lambda_\infty= \{\Lambda\}$, so the ultrafilter representation is given by $f_\mu= 1$ for all $\mu \in \Lambda$. In particular, the ultrafilter representation fails to have a gauge coaction if there are distinct $\mu, \nu \in \Lambda=P$, since such a coaction $\delta$ cannot send $f_\mu=f_\nu$ to the distinct elements $f_\mu \otimes U_{d(\mu)}$ and $f_\mu \otimes U_{d(\nu)}$.

\end{example}

\subsubsection{Balanced Algebras and Balanced Coverings}

We will now discuss a particularly nice AF subalgebra of a $P$-graph algebra. Our approach in this section mimics \cite[Section 4]{brownlowe2013co}, although we use slightly different notation.

\begin{lemma} \label{Balanced Subalgebra}
Let $(G,P)$ be a WQLO group, $\Lambda$ a finitely-aligned $P$-graph, and $t$ a representation of $\Lambda$. Then $\mathcal B(t)= \closedspan \{ t_\mu t_\nu^* : d(\mu) =d(\nu)\}$ is a closed $*$-subalgebra of $C^*(t)$. We call $\mathcal B(t)$ the \ulindex{balanced (sub)algebra}.
\end{lemma}

\begin{proof}
It is immediate that $\mathcal B(t)$ is a closed $*$-invariant subspace, so it suffices to check that it is closed under multiplication. To this end, suppose $t_\mu t_\nu^*, t_\alpha t_\beta^*$ satisfy $d(\mu)=d(\nu)$ and $d(\alpha)=d(\beta)$. Then,

\bea
(t_\mu t_\nu^*) (t_\alpha t_\beta^*) &=& (t_\mu t_\nu^*)( t_\nu t_\nu^* t_\alpha t_\alpha^*) (t_\alpha t_\beta^*) \\
&=& (t_\mu t_\nu^*)( \fsum_{\lambda \in MCE(\nu, \alpha)} t_\lambda t_\lambda^*)(t_\alpha t_\beta^*) \\
&=& \fsum_{\lambda \in MCE(\nu, \alpha)} t_{\mu (\nu \inv \lambda)} t_{\beta (\alpha \inv \lambda)}^* \\
\eea

\noindent where $\nu \inv \lambda$ denotes the unique path such that $\nu (\nu \inv \lambda)=\lambda$ and similarly $\alpha \inv \lambda$ denotes the unique path such that $\alpha (\alpha \inv \lambda)=\lambda$. Then note that 

\[ d(\mu (\nu \inv \lambda)) = d(\mu) d(\nu)\inv d(\lambda) = d(\lambda) = d(\alpha) d(\beta) \inv d(\lambda) = d(\alpha (\beta\inv \lambda))\]

\noindent so the product is in $\mathcal B(t)$ as desired. 
\end{proof}

In the following lemma, we rephrase Lemma \ref{Graded Conditional Expectation} for the context of a gauge coaction on a $P$-graph $C^*$-algebra:

\begin{lemma} \label{Balanced Conditional Expectation}
Let $(G,P)$ be a WQLO group, $\Lambda$ a finitely-aligned $P$-graph, $t$ a representation of $\Lambda$ and suppose $t$ has a gauge coaction $\delta$. Then there is a conditional expectation $\Phi_t : C^*(t) \ra \mathcal B(t)$ such that

\[ 
\Phi_t (t_\mu t_\nu^*) = \begin{cases} t_\mu t_\nu^* &\text{ if } d(\mu)=d(\nu) \\ 0 &\text{ otherwise} \end{cases}.
\]
\end{lemma}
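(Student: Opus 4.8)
The plan is to recognize that a gauge coaction $\delta$ on $A := C^*(t)$ is precisely a discrete cosystem $(A, G, \delta)$, so that Lemma \ref{Graded Conditional Expectation} applies verbatim. That lemma furnishes a topological grading $\{A_g\}_{g \in G}$ with $A_g = \{a \in A : \delta(a) = a \otimes U_g\}$, together with a conditional expectation $\Phi_A : A \ra A_e$ satisfying $\Phi_A(a) = a$ for $a \in A_e$ and $\Phi_A(a) = 0$ for $a \in A_g$ with $g \neq e$. The whole task then reduces to identifying the degree-$e$ component $A_e$ with the balanced algebra $\mathcal B(t)$, after which I can simply set $\Phi_t := \Phi_A$.

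First I would locate each generator $t_\mu t_\nu^*$ in the grading by computing its image under $\delta$. Using that $\delta$ is a gauge coaction and that $U_g^* = U_{g\inv}$, I obtain
\[ \delta(t_\mu t_\nu^*) = (t_\mu \otimes U_{d(\mu)})(t_\nu \otimes U_{d(\nu)})^* = t_\mu t_\nu^* \otimes U_{d(\mu) d(\nu)\inv}, \]
so that $t_\mu t_\nu^* \in A_{d(\mu)d(\nu)\inv}$, and in particular $t_\mu t_\nu^* \in A_e$ exactly when $d(\mu) = d(\nu)$. Feeding this into the defining property of $\Phi_A$ immediately yields the claimed formula: $\Phi_A(t_\mu t_\nu^*) = t_\mu t_\nu^*$ when $d(\mu) = d(\nu)$, and $0$ otherwise.

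The one genuine step is the equality $A_e = \mathcal B(t)$. The inclusion $\mathcal B(t) \subseteq A_e$ is immediate, since each spanning element $t_\mu t_\nu^*$ with $d(\mu) = d(\nu)$ lies in $A_e$ and $A_e$ is a closed subspace. For the reverse inclusion I would take $a \in A_e$ and use Lemma \ref{Zigzag Resolution} to write $A = \closedspan\{t_\mu t_\nu^* : \mu, \nu \in \Lambda\}$, so that $a$ is a norm limit of finite sums $\sum_i c_i\, t_{\mu_i} t_{\nu_i}^*$. Applying the contractive (hence continuous) map $\Phi_A$ and using $\Phi_A(a) = a$, I get that $a$ is the limit of the corresponding sums $\sum_i c_i \Phi_A(t_{\mu_i} t_{\nu_i}^*)$; by the formula just established each $\Phi_A(t_{\mu_i} t_{\nu_i}^*)$ is either $t_{\mu_i} t_{\nu_i}^*$ (a balanced term) or $0$, hence lies in $\mathcal B(t)$. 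Thus $a$ is a limit of elements of the closed set $\mathcal B(t)$, so $a \in \mathcal B(t)$.

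With $A_e = \mathcal B(t)$ in hand, $\Phi_A$ is a conditional expectation from $C^*(t)$ onto $\mathcal B(t)$ acting exactly as prescribed on the $t_\mu t_\nu^*$, and setting $\Phi_t := \Phi_A$ finishes the proof. The main obstacle, modest as it is, is the $A_e \subseteq \mathcal B(t)$ direction; everything else is bookkeeping on top of Lemma \ref{Graded Conditional Expectation}, and the only point requiring care is invoking continuity of $\Phi_A$ to pass from the spanning set to its closure.
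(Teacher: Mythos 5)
Your proposal is correct and follows essentially the same route as the paper: invoke Lemma \ref{Graded Conditional Expectation} for the cosystem $(C^*(t), G, \delta)$, compute $\delta(t_\mu t_\nu^*) = t_\mu t_\nu^* \otimes U_{d(\mu)d(\nu)\inv}$ to place each generator in its graded component, identify $A_e$ with $\mathcal B(t)$, and take $\Phi_t$ to be the resulting conditional expectation. The only difference is that you spell out the inclusion $A_e \subseteq \mathcal B(t)$ (via Lemma \ref{Zigzag Resolution} and continuity of the expectation), a step the paper compresses into ``so $\mathcal B(t) = A_e$''; your added detail is sound and closes that small gap.
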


\begin{proof}
Let $A=C^*(t)$. For any $\mu, \nu \in \Lambda$, observe that $\delta(t_\mu t_\nu^*) = t_\mu t_\nu^* \otimes U_{d(\mu) d(\nu) \inv}$, so $t_\mu t_\nu^* \in A_{d(\mu) d(\nu) \inv}$. Then $t_\mu t_\nu^* \in A_e$ if and only if $d(\mu)=d(\nu)$, so $\mathcal B(t)= A_e$.

Let $\Phi_t : C^*(t) \ra \mathcal B(t)$ be the conditional expectation arising from Lemma \ref{Graded Conditional Expectation}. If $d(\mu) =d (\nu)$, then $t_\mu t_\nu^* \in A_e$, and $\Phi_t$ fixes its range space, so $\Phi_t(t_\mu t_\nu^*)=t_\mu t_\nu^*$. If $d(\mu) \neq d(\nu)$, then $t_\mu t_\nu^* \in A_g$ for $g = d(\mu) d(\nu) \inv \neq e$, and by Lemma \ref{Graded Conditional Expectation}, $\Phi_t$ vanishes on all $A_g$ for $g\neq e$, so $\Phi_t(t_\mu t_\nu^*)=0$, as desired.

\end{proof}

Recall from Definition \ref{Normal Coaction Definition} that $\Phi_t$ is a faithful conditional expectation if and only if $\delta$ is a normal coaction.

\begin{definition}
Let $(G,P)$ be a WQLO group, $\Lambda$ a $P$-graph, and $s,t$ two representations of $\Lambda$. Let us say that a \ul{balanced covering}\index{cover!balanced}\index{cover!of the balanced subalgebra} is a (necessarily surjective) $*$-homomorphism $\psi^t_s: \mathcal B(t) \ra \mathcal B(s)$ given by

 \[ \psi^t_s (t_\mu t_\nu^*)= s_\mu s_\nu^* \]
 
\noindent  for all $\mu, \nu \in \Lambda$ with $d(\mu)=d(\nu)$.

If such a $*$-homomorphism exists, we will write $t \geq_{bal}s$\index{$\geq_{bal}$}. It is immediate that $\geq_{bal}$ is reflexive and transitive, but it may not be a partial ordering since it may not be antisymmetric (that is, there may be two representations whose balanced algebras are isomorphic, but which are not canonically isomorphic as representations). If $t \geq_{bal} s$ and $s \geq_{bal} t$, we will write $t \cong_{bal} s$, and say that their balanced algebras are canonically isomorphic.
\end{definition}

When there may be ambiguity between a balanced covering a canonical covering, we will write $\geq_{rep}$ to clarify that we mean a canonical covering of the full $P$-graph $C^*$-algebra. Note that a canonical covering gives rise to a balanced covering by restricting the canonical covering to the balanced algebra. That is, $t \geq_{rep} s$ implies $t \geq_{bal} s$. The converse can fail, but we prove a partial converse in Lemma \ref{Covering Extension}.

\subsubsection{The Kernel of $\psi^{\mathcal T}_t$}

This section is devoted to proving the following fact which is our Theorem \ref{Kernel Generated by Bolts 2}: letting $\mathcal T$ denote the Toeplitz representation from Definition \ref{Toeplitz Definition}, then for any representation $t$, $\ker \psi^{\mathcal T}_t$ is generated (as an ideal) by the ``bolts'' (see Definition \ref{Bolt Definition}) and $\mathcal T_\mu \mathcal T_\mu^*$ it contains. This is our analogue of \cite[Theorem 4.9]{brownlowe2013co}, although slightly generalized to allow the $\ker \psi^{\mathcal T}_t$ to contain $\mathcal T_\mu \mathcal T_\mu^*$.

This result is highly involved in the sense that the proof is many times longer than the statement. The key to the argument is to show that the property ``$\ker \psi^{\mathcal T}_t \cap A$ is generated by the bolts and $\mathcal T_\mu \mathcal T_\mu^*$ it contains'' is preserved under direct limits, and that this property is true for a dense collection of subalgebras $A \subseteq\mathcal B(\mathcal T)$. This will take several lemmas to establish.

\begin{definition}\label{Bolt Definition}
Let $(G,P)$ be a WQLO group, let $\Lambda$ be a finitely aligned $P$-graph and let $t$ be a representation of $\Lambda$. We will say that a \ulindex{bolt} in $C^*(t)$ is an element of the form

\[ \fprod_{\alpha \in E} (t_\mu t_\mu^* - t_{ \alpha} t_{ \alpha}^*) \]

\noindent where $\mu \in \Lambda$ and $E \subset \mu \Lambda$ is finite and exhaustive for $\mu \Lambda$. We say it is a \ul{proper bolt}\index{bolt!proper} if $E \subset s(\mu) \Lambda \setminus s(\mu)$. 
\end{definition}

The term ``bolt'' comes from a tortured metaphor: a representation is tight if all of its bolts are fastened down (i.e. equal to 0).

\begin{lemma}\label{Generated by Bolts Preserved Under Limit}
Let $(G,P)$ be a WQLO group, let $\Lambda$ be a finitely aligned $P$-graph, and let $t$ a representation of $\Lambda$.

Let $\{A_n\}_{n \in \N}$ be an increasing sequence of subalgebras of $\mathcal B(\mathcal T)$, and let $A= \overline {\bigcup_{n \in \N} A_n}$. Suppose that for each $n \in \N$, $\ker \psi^{\mathcal T}_t \cap A_n$ is generated by the bolts and $\mathcal T_\mu \mathcal T_\mu^*$ it contains. Then $\ker \psi^{\mathcal T}_t \cap A$ is generated by the bolts and $\mathcal T_\mu \mathcal T_\mu^*$ it contains.
\end{lemma}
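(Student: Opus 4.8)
The plan is to identify the closed ideal $J$ of $A$ generated by the bolts and range projections $\mathcal T_\mu \mathcal T_\mu^*$ lying in $\ker\psi^{\mathcal T}_t \cap A$, and to show it equals $\ker\psi^{\mathcal T}_t \cap A$. Write $I = \ker\psi^{\mathcal T}_t$, a closed ideal of $\mathcal B(\mathcal T)$, so that $I \cap A$ is a closed ideal of $A$. Since every generator of $J$ already lies in the ideal $I \cap A$, we get $J \subseteq I \cap A$ for free; the whole content is the reverse inclusion.

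That reverse inclusion I would deduce from two observations. First, $I \cap A_n \subseteq J$ for every $n$: by hypothesis $I \cap A_n$ is the closed ideal of $A_n$ generated by the bolts and range projections it contains, and those generators lie in $I \cap A_n \subseteq I \cap A$, hence are among the generators of $J$; since the closed ideal they generate inside $A_n$ is contained in the closed ideal they generate inside the larger algebra $A$, we conclude $I \cap A_n \subseteq J$. As $J$ is closed it then contains $\overline{\bigcup_n (I \cap A_n)}$. Second --- and this is the crux --- I claim $I \cap A = \overline{\bigcup_n (I \cap A_n)}$; granting this, $I \cap A \subseteq J$ and the two inclusions give $I \cap A = J$, which is exactly the assertion.

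The heart of the argument is therefore the identity $I \cap A = \overline{\bigcup_n (I \cap A_n)}$, whose only nontrivial inclusion is $\subseteq$. The key device is that the natural $*$-homomorphism $A_n/(I \cap A_n) \to A/(I \cap A)$ is injective, hence isometric, so the two quotient norms agree and $\mathrm{dist}(a, I \cap A_n) = \mathrm{dist}(a, I \cap A)$ for every $a \in A_n$. Now fix $x \in I \cap A$ and, using $A = \overline{\bigcup_n A_n}$, approximate it by $x_k \in A_{n_k}$ with $x_k \to x$. Then $\mathrm{dist}(x_k, I \cap A_{n_k}) = \mathrm{dist}(x_k, I \cap A) \le \|x_k - x\| \to 0$, since $x \in I \cap A$, so I may choose $y_k \in I \cap A_{n_k}$ with $\|x_k - y_k\| \to 0$; then $y_k \to x$ and $x \in \overline{\bigcup_n (I \cap A_n)}$. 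I expect this last identity to be the main obstacle: it is a standard but not automatic fact about ideals in direct limits of $C^*$-algebras, resting entirely on the isometry of injective $*$-homomorphisms, whereas the remaining steps are bookkeeping comparing closed ideals generated in a subalgebra with those generated in the whole algebra.
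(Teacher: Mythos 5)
Your proof is correct and follows essentially the same route as the paper's: both arguments reduce the lemma to the identity $\ker \psi^{\mathcal T}_t \cap A = \overline{\bigcup_{n} \left(\ker \psi^{\mathcal T}_t \cap A_n\right)}$ and then note that each $\ker \psi^{\mathcal T}_t \cap A_n$ is contained in the ideal of $A$ generated by the bolts and range projections, since its generators lie in $\ker \psi^{\mathcal T}_t \cap A$ and ideals generated in a subalgebra sit inside those generated in the ambient algebra. The only difference is that the paper simply cites \cite[Lemma III.4.1]{davidson1996c} for that identity, whereas you prove it from scratch via the injectivity (hence isometry) of the induced maps $A_n/(\ker \psi^{\mathcal T}_t \cap A_n) \to A/(\ker \psi^{\mathcal T}_t \cap A)$ --- a correct and standard argument.
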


\begin{proof}
Let $J= \ker \psi^{\mathcal T}_t \cap A$ which is an ideal in $A$, and let $I$ denote the ideal generated by the bolts and $\mathcal T_\mu \mathcal T_\mu^*$ contained in $J$. Certainly $I \subseteq J$, and it suffices to show $I=J$.

By \cite[Lemma III.4.1]{davidson1996c},

\[ J= \overline { \bigcup_{n \in \N} (J \cap A_n)} \]

But note that $J \cap A_n=  \ker \psi^{\mathcal T}_t \cap A \cap A_n = \ker \psi^{\mathcal T}_t \cap A_n$, so $J \cap A_n$ is generated by the bolts and $\mathcal T_\mu \mathcal T_\mu^*$ it contains. Such a bolt or $\mathcal T_\mu \mathcal T_\mu^*$ is certainly in $J \supseteq J \cap A_n$, and thus $J \cap A_n \subseteq I$, and taking the union and closure, we have

\[ I \supseteq  \overline { \bigcup_{n \in \N} (J \cap A_n)} =J\]

\noindent so $I=J$ as desired.
 
\end{proof}

We will now build towards our dense family of subalgebras.

\begin{definition}
Let $(G,P)$ be a WQLO group, let $\Lambda$ be a finitely aligned $P$-graph, and let $S \subseteq \Lambda$. We will say that $S$ is \ulindex{MCE closed} if for all $\mu, \nu \in S$, $MCE(\mu, \nu) \subseteq S$.

We will say that $S$ is \ulindex{substitution closed} if for all $\mu, \nu \in S$ with $d(\mu)=d(\nu)$ and $s(\mu)=s(\nu)$, and for all $\alpha \in \Lambda$, we have that $\mu \alpha \in S$ implies $\nu \alpha \in S$.

For $S \subseteq \Lambda$, we will write $D(S)=\{d(\mu): \mu \in S\}$.
\end{definition}

As we will see in the next lemma, the condition that $S$ is MCE closed and substitution closed is the correct condition in order to make $A_S= \closedspan \{ \mathcal T_\mu \mathcal T_\nu^* : \mu, \nu \in S, d(\mu)=d(\nu)\}$ a subalgebra of the balanced algebra.

\begin{lemma} \label{Contains Bolt or Range Projection}
Let $(G,P)$ be a WQLO group and let $\Lambda$ be a finitely-aligned $P$-graph. For each $S \subseteq \Lambda$, let $A_S= \closedspan \{ \mathcal T_\mu \mathcal T_\nu^* : \mu, \nu \in S, d(\mu)=d(\nu)\}$. Then:

\begin{enumerate}
\item If $S$ is MCE closed and substitution closed, then $A_S$ is a closed $^*$-subalgebra of $\mathcal B(\mathcal T)$.
\item If $S$ is MCE closed and substitution closed, and $D(S)= \{ d(\mu): \mu \in S\}$ contains a minimal element $m$, then $S'=\{ \mu \in S: d(\mu) \neq m\}$ is MCE closed and substitution closed. 
\item Let $S, D(S), m,$ and $S'$ be as above. For any representation $t$ of $\Lambda$, if $\mu, \nu \in \Lambda^m \cap S = \{\lambda \in \Lambda \cap S: d(\lambda)=m\}$ with $s(\mu)=s(\nu)$, then $t_\mu t_\nu^* \in \psi^{\mathcal T}_t (A_{S'})$ implies that either $t_\mu=0$ or there is a finite set $E \subset \mu \Lambda \cap S'$ which is exhaustive for $\mu \Lambda$ such that the bolt $B=\fprod_{\alpha\in E} (t_\mu t_\mu^* -t_\alpha t_\alpha^*)$ is equal to $0$.
\end{enumerate}

\end{lemma}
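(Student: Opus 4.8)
The plan is to dispatch (1) and (2) by direct verification and then concentrate on (3), whose engine is a reduction to a single range projection followed by an approximation-and-compression argument.

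For (1), I would feed the multiplication formula from Lemma \ref{Balanced Subalgebra}, which expands $(\mathcal T_\mu\mathcal T_\nu^*)(\mathcal T_\alpha\mathcal T_\beta^*)=\sum_{\lambda\in MCE(\nu,\alpha)}\mathcal T_{\mu(\nu^{-1}\lambda)}\mathcal T_{\beta(\alpha^{-1}\lambda)}^*$, into the two closure hypotheses: MCE-closure of $S$ puts each $\lambda$ into $S$, and substitution-closure (applied to the equal-degree, equal-source pairs $\mu,\nu$ and $\alpha,\beta$) pushes the indices $\mu(\nu^{-1}\lambda)$ and $\beta(\alpha^{-1}\lambda)$ back into $S$. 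Since these indices have equal degree, the product lies in $A_S$, and as $A_S$ is visibly self-adjoint and norm-closed it is a $C^*$-subalgebra. For (2) I would argue degree by degree: for a minimal common extension $\lambda$ of $\mu,\nu\in S'$ one has $d(\lambda)=d(\mu)\vee d(\nu)\ge d(\mu)$, so $d(\lambda)=m$ would force $d(\mu)\le m$ and hence $d(\mu)=m$ by minimality, a contradiction; substitution-closure of $S'$ follows from $d(\nu\alpha)=d(\mu\alpha)\ne m$.

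For (3), the first move is to reduce from $t_\mu t_\nu^*$ to the range projection $P:=t_\mu t_\mu^*$. Because (1) and (2) make $B_{S'}:=\psi^{\mathcal T}_t(A_{S'})$ a $C^*$-algebra and $s(\mu)=s(\nu)$ gives $(t_\mu t_\nu^*)(t_\mu t_\nu^*)^*=t_\mu t_{s(\mu)}t_\mu^*=t_\mu t_\mu^*$, we obtain $P\in B_{S'}$, and the target bolt involves only $\mu$. Assuming $t_\mu\ne0$, I would pick a finite sum $X=\sum_i c_i\,t_{\alpha_i}t_{\beta_i}^*$ with $\alpha_i,\beta_i\in S'$ and $\|P-X\|<1$, set $E_0=\{\alpha_i\}\cup\{\beta_i\}$, and take $E=\bigcup_{\gamma\in E_0}MCE(\mu,\gamma)$. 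This $E$ is finite by finite alignment and lies in $\mu\Lambda\cap S'$: each $\lambda\in MCE(\mu,\gamma)$ has $\mu\le\lambda$, lies in $S$ by MCE-closure, and has degree $m\vee d(\gamma)\ne m$ by the minimality argument of (2). The bolt then vanishes by a clean estimate: the range projections $\{t_\lambda t_\lambda^*\}$ commute (their products are symmetric MCE-sums), so $B:=\prod_{\alpha\in E}(P-t_\alpha t_\alpha^*)$ is a projection below $P$ with $B\,t_\alpha t_\alpha^*=0$ for $\alpha\in E$; since $MCE(\mu,\gamma)\subseteq E$, one gets $B\,t_\gamma t_\gamma^*=0$ and hence $B\,t_{\alpha_i}t_{\beta_i}^*=B\,t_{\alpha_i}t_{\alpha_i}^*(t_{\alpha_i}t_{\beta_i}^*)=0$, so $BX=0$. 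Then $B=BP=B(P-X)+BX=B(P-X)$, forcing $\|B\|\le\|B\|\,\|P-X\|<\|B\|$ unless $B=0$; thus $B=0$.

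The hard part will be exhaustiveness of $E$ for $\mu\Lambda$, since the estimate above controls only the $t$-supported directions. If $\nu\in\mu\Lambda$ has no common extension with any $\alpha\in E$, then $t_\alpha t_\alpha^*\,t_\nu t_\nu^*=0$, so $t_\nu t_\nu^*=B\,t_\nu t_\nu^*=0$ (using $t_\nu t_\nu^*\le P$ and $B=0$), whence $t_\nu=0$ and $t_{s(\nu)}=0$. So $E$ is automatically exhaustive for the nonzero part of $\mu\Lambda$, and the remaining work is to absorb the degenerate directions: any missed $\nu$ that still has a common extension with some element of $\mu\Lambda\cap S'$ can be covered by enlarging $E$, and enlargement preserves $B=0$ by commutativity and monotonicity of the bolt, while the genuinely uncovered directions kill a vertex projection $t_{s(\nu)}$. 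Turning this representation-level covering into honest combinatorial exhaustiveness for all of $\mu\Lambda$—keeping the enlargement finite via finite alignment and inside $\mu\Lambda\cap S'$ via the closure hypotheses—is the delicate step, and is precisely where the minimality of $m$ and substitution-closure must do their real work.
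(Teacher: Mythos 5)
Your treatment of parts (1) and (2) is the same as the paper's, and your bolt-vanishing argument in part (3) is correct and in fact tidier than the paper's: where the paper approximates $t_\mu t_\nu^*$ itself within distance $1$, compresses by $t_\mu t_\mu^*$, and twice invokes the fact that a partial isometry of norm less than $1$ is zero, your reduction to the range projection $P=(t_\mu t_\nu^*)(t_\mu t_\nu^*)^*\in\psi^{\mathcal T}_t(A_{S'})$ together with the estimate $B=B(P-X)$ among commuting projections reaches the same conclusion with less bookkeeping. (Your $E$ also differs harmlessly from the paper's, which takes only $\bigcup_i MCE(\mu,\alpha_i)$ rather than including the $\beta_i$'s.)

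The gap is exactly where you flagged it, and it cannot be closed as the statement stands. Your enlargement idea fails on two counts: an uncovered $\gamma\in\mu\Lambda$ need not lie in $S$, let alone $S'$, so adjoining it violates the requirement $E\subset\mu\Lambda\cap S'$, and there may be infinitely many uncovered directions; more fundamentally, the representation-level fact $t_\gamma=0$ can never produce the purely combinatorial assertion that $E$ is exhaustive for $\mu\Lambda$. Indeed the dichotomy in (3) is false without a further hypothesis. Take $(G,P)=(\Z,\N)$; let $\Lambda$ consist of one vertex $v$ receiving infinitely many edges $e_1,e_2,\dots$ with distinct sources $w_1,w_2,\dots$; let $S=\{v\}\cup\{e_i: i\geq 1\}$, which is MCE closed and substitution closed with $m=0$ and $S'=\{e_i : i \geq 1\}$; and let $t$ be the representation on a $4$-dimensional Hilbert space with $t_{e_i}=t_{w_i}=0$ for $i\geq 3$ and $t_v=t_{e_1}t_{e_1}^*+t_{e_2}t_{e_2}^*$ (the relators (T1)--(T4) are routine to check). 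Then $t_vt_v^*=\psi^{\mathcal T}_t(\mathcal T_{e_1}\mathcal T_{e_1}^*+\mathcal T_{e_2}\mathcal T_{e_2}^*)\in\psi^{\mathcal T}_t(A_{S'})$ and $t_v\neq 0$, yet no finite $E\subset v\Lambda\cap S'$ is exhaustive for $v\Lambda$, because an edge $e_j\notin E$ has no common extension with any other edge.

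You should also know that the paper's own proof of this step is not sound as written: from $t_{\mu\gamma'}t_{\nu\gamma'}^*=0$ it concludes $t_\mu=0$ using the identity $s(\mu\gamma')=s(\mu)$, but $s(\mu\gamma')=s(\gamma')$, so the argument actually yields only $t_{s(\gamma')}=0$ --- precisely the dead end you reached, and the example above shows no repair is possible in this generality. The missing hypothesis is $\Lambda$-faithfulness of $t$: this is in effect the setting of the antecedent \cite[Theorem 4.9]{brownlowe2013co} (whose kernels contain no range projections), and it holds in the only places Theorem \ref{Kernel Generated by Bolts 2} is ultimately needed, namely for the $\Lambda$-faithful representation $s$ in Lemmas \ref{Balanced Tight is Minimal} and \ref{Toeplitz Criteria}. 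Under that assumption your observation that an uncovered $\gamma$ forces $t_{s(\gamma)}=0$ is itself the contradiction proving $E$ exhaustive, and your proof closes verbatim.
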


\begin{proof}
(1): It is immediate that $A_S$ is a closed $*$-invariant subspace of $\mathcal B(\mathcal T)$. It then suffices to check that it is closed under multiplication. If $\mathcal T_{\mu} \mathcal T_{\nu}^*,\mathcal T_{\alpha} \mathcal T_{\beta}^* \in A_S$, then

\bea
(\mathcal T_{\mu} \mathcal T_{\nu}^*)(\mathcal T_{\alpha} \mathcal T_{\beta}^*) &=&  (\mathcal T_{\mu} \mathcal T_{\nu}^*) \fsum_{\lambda \in MCE(\nu, \alpha)} \mathcal T_\lambda \mathcal T_\lambda^* (\mathcal T_{\alpha} \mathcal T_{\beta}^*) \\
&=& \fsum_{\lambda \in MCE(\nu, \alpha)} \mathcal T_{\mu (\nu\inv \lambda)} \mathcal T_{\alpha (\beta \inv\lambda)}^*.
\eea

Note that since $S$ is MCE closed, then each $\lambda \in MCE(\nu, \alpha)$ is in $S$, and since $S$ is substitution closed, then $\mu (\nu\inv \lambda), \alpha (\beta \inv\lambda) \in S$. Thus $(\mathcal T_{\mu} \mathcal T_{\nu}^*)(\mathcal T_{\alpha} \mathcal T_{\beta}^*) \in A_S$, as desired.

(2): Given $\mu, \nu \in S'$, $MCE(\mu, \nu) \subseteq S$ since $S$ is MCE closed, and for $\lambda \in MCE(\mu, \nu)$, we have $d(\lambda) \geq d(\mu) \neq m$, so $d(\lambda) \neq m$, so $\lambda \in S'$. Similarly, given $\mu, \mu\alpha, \nu \in S'$ with $d(\mu)=d(\nu)$, we have that $\nu \alpha \in S$ since $S$ is substitution closed, but $d(\nu \alpha)=d(\mu \alpha) \neq m$, so $\nu \alpha \in S'$. Thus $S'$ is MCE closed, and substitution closed.

(3): Suppose that $t_\mu t_\nu^* \in \psi^{\mathcal T}_t (A_{S'})= \closedspan \{ t_\alpha t_\beta^* : \alpha, \beta \in S', d(\alpha)=d(\beta)\}$. Then there is an element of $\spanset \{ t_\alpha t_\beta^* : \alpha, \beta \in S', d(\alpha)=d(\beta)\}$ within a distance of $1$ of $t_\mu t_\nu^*$, which is to say there are $c_i \in \C$ and $\alpha_i, \beta_i \in S'$ with $d(\alpha_i)=d(\beta_i)$ such that writing $x=t_\mu t_\nu^*-\fsum_{i=1}^n c_i t_{\alpha_i} t_{\beta_i}^*$, then $\norm{x}<1$.

Now, let $y=t_\mu t_\mu^* x$, we have $\norm{y}\leq \norm{t_\mu}^2\cdot \norm{x} <1$, and 

\bea
 y&=& t_\mu t_\mu^* \left(t_\mu t_\nu^*-\fsum_{i=1}^n c_i t_{\alpha_i} t_{\beta_i}^*\right) \\
 &=& t_\mu t_\nu^*- \fsum_{i=1}^n c_i t_\mu t_\mu^* t_{\alpha_i} t_{\beta_i}^* \\
 &=& t_\mu t_\nu^*- \fsum_{i=1}^n \fsum_{\lambda \in MCE(\alpha_i, \mu)}  c_i t_\lambda t_{\beta (\alpha\inv \lambda)}\\
 &=&t_\mu t_\nu^*- \fsum_{j=1}^N c_j t_{\lambda_j} t_{\eta_j}
\eea

\noindent where the last line is simply relabelling the previous line. Note that the sum is still indeed finite since $\Lambda$ is finitely aligned. Also, since $S$ is substitution closed and MCE closed, then $\lambda_j, \eta_j \in S$ for all $j$. Since $d(\lambda_j) \geq d(\alpha_i) \neq m$ and $m$ was minimal, then $d(\lambda_j) \neq m$ and thus $\lambda_j \in S'$. Also, $\lambda_j$ is a common extension of $\mu$ and some $\alpha_i$, so $\lambda_j \in \mu \Lambda$.

Now let $E= \{\lambda_1,...,\lambda_N\}$, which is finite and satisfies $E \subseteq \mu \Lambda \cap S'$ by our previous remarks. Suppose for the sake of contradiction that $E$ were not exhaustive for $\mu \Lambda$. Then there would be some $\gamma \in \mu \Lambda$ such that $MCE(\gamma, \lambda_i)=\emptyset$ for all $\alpha_i \in E$. Then, $\norm{(t_\gamma t_\gamma^*)y} \leq \norm{t_\gamma}^2 \norm{y} <1$, and

\bea
(t_\gamma t_\gamma^*)y &=&(t_\gamma t_\gamma^*)(t_\mu t_\nu^*- \fsum_{j=1}^N c_j t_{\lambda_j} t_{\eta_j}) \\
&=&t_\gamma t_\gamma^*t_\mu t_\nu^*
\eea

\noindent since each $MCE(\lambda_i, \gamma)=\emptyset$ for $1 \leq i \leq N$. Next, since $\gamma \in \mu \Lambda$, then $\gamma = \mu \gamma'$ for some $\gamma' \in \Lambda$, so we may simplify this expression to 

\[ (t_\gamma t_\gamma^*) (t_\mu t_\nu^*) = t_{\mu \gamma'} t_{\gamma'}^* t_{\mu}^* t_\mu t_\nu^* =t_{\mu \gamma'} t_{\nu \gamma'}^*\]

which is a partial isometry. Since partial isometries have norm either 0 or 1, and we've seen it has norm less than 1, we have that $0=t_{\mu \gamma'} t_{\nu \gamma'}^*$, and since $s(\mu \gamma')=s(\mu)=s(\nu)=s(\nu \gamma')$, we have $0=t_{s(\mu \mu_1)}=t_{s(\mu)} =t_\mu^* t_\mu= t_\mu$, showing that either $E$ is exhaustive or $t_\mu=0$, as desired.

Finally, we wish to show that $B=0$, where $B$ is the bolt $B=\fprod_{\lambda\in E} (t_\mu t_\mu^* -t_\lambda t_\lambda^*)$. To this end, we will make a similar argument that $By$ is a partial isometry of norm less than 1, so it must be 0. The latter is immediate: $\norm{By} \leq \norm{B}\cdot \norm{y}<1$. Now let us simplify the expression $By$:

\bea B y &=& B(t_\mu t_\nu^*- \fsum_{j=1}^N c_j t_{\lambda_j} t_{\eta_j}) \\
&=&B t_\mu t_\nu^* - \fsum_{j=1}^N c_j \left(\fprod_{k=1}^N (t_\mu t_\mu^* - t_{\lambda_k}t_{\lambda_k}^* ) \right)  t_{\lambda_j} t_{\eta_j})\\
&=&B t_\mu t_\nu^* - \fsum_{j=1}^N c_j \left(\fprod_{k \neq j}^N (t_\mu t_\mu^* - t_{\lambda_k}t_{\lambda_k}^* ) \right)  \left((t_\mu t_\mu^* - t_{\lambda_j}t_{\lambda_j}^*) t_{\lambda_j} \right) t_{\eta_j})\\
&=&B t_\mu t_\nu^* - \fsum_{j=1}^N 0\\
&=&B t_\mu t_\nu^*
\eea

\noindent where the main simplification occurs since $(t_\mu t_\mu^* - t_{\lambda_j}t_{\lambda_j}^*) t_{\lambda_j} = t_{\lambda_j} -t_{\lambda_j}=0$.

Thus $By=B t_\mu t_\nu^*$, so it is again a partial isometry of norm strictly less than 1, and thus $By=0$. Finally, since $B t_\mu t_\nu^*=0$, then

\bea
0&=& (B t_\mu t_{\nu}^*)(B t_\mu t_{\nu}^*)^* \\
&=&B t_\mu t_\nu^* t_\nu t_\mu^* B \\
&=&B t_\mu t_\mu^* B \\
\eea
and recalling that $B$ is a subprojection of $t_\mu t_\mu^*$, we have that $0=B$, completing the proof of (3).

\end{proof}

The next lemma shows that a sufficient collection of $A_S$ have the property that $A_S \cap \ker \psi^{\mathcal T}_t$ is generated by the bolts and $\mathcal T_\mu \mathcal T_\mu^*$s it contains.

\begin{lemma}\label{Kernel Generated by Bolts 1}
Let $(G,P)$ be a WQLO group and let $\Lambda$ be a finitely-aligned $P$-graph. For each $S \subseteq \Lambda$ which is MCE closed and substitution closed, let $A_S= \closedspan \{ \mathcal T_\mu \mathcal T_\nu^* : \mu, \nu \in S, d(\mu)=d(\nu)\}$. Let $t$ be a representation of $\Lambda$, and let $\psi^{\mathcal T}_t: \mathcal B(\mathcal T) \ra \mathcal B(t)$ be the balanced covering. If $D(S)=\{d(\mu) : \mu \in S\}$ is finite, then $\ker \psi^{\mathcal T}_t \cap A_S$ is generated (as an ideal) by the bolts and $\mathcal T_\mu \mathcal T_\mu^*$s it contains.

\end{lemma}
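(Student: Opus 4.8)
The plan is to induct on the cardinality of $D(S)$, which is finite by hypothesis. When $D(S)=\emptyset$ we have $S=\emptyset$ and $A_S=\{0\}$, so the claim is vacuous. For the inductive step I would fix a minimal element $m$ of the finite poset $D(S)$ and set $S'=\{\mu\in S: d(\mu)\neq m\}$, which is again MCE closed and substitution closed by Lemma \ref{Contains Bolt or Range Projection}(2) and satisfies $|D(S')|=|D(S)|-1$. The first thing to check is that $A_{S'}$ is a closed two-sided ideal of $A_S$: it is a closed $*$-subalgebra by Lemma \ref{Contains Bolt or Range Projection}(1), and the multiplication formula of Lemma \ref{Balanced Subalgebra} shows that a product $(\mathcal T_\mu\mathcal T_\nu^*)(\mathcal T_\alpha\mathcal T_\beta^*)$ with one factor from $A_{S'}$ produces only terms $\mathcal T_{\mu(\nu^{-1}\lambda)}\mathcal T_{\beta(\alpha^{-1}\lambda)}^*$ of degree $d(\nu)\vee d(\alpha)$; minimality of $m$ forces this degree to be $\neq m$, so the product lands in $A_{S'}$. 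Next I would identify the quotient $A_S/A_{S'}$: the degree-$m$ elements $\mathcal T_\mu\mathcal T_\nu^*$ multiply exactly as matrix units, since for $d(\mu)=d(\nu)=d(\alpha)=d(\beta)=m$ the set $MCE(\nu,\alpha)$ is empty unless $\nu=\alpha$ (a degree-$m$ common extension of degree-$m$ paths must equal both), giving $\mathcal T_\mu\mathcal T_\beta^*$. Grouping by source vertex $v$, their closed span $B_m$ is a $c_0$-direct sum $\bigoplus_v\mathcal K_v$ of algebras of compact operators, and since $A_S=A_{S'}+B_m$ the quotient $A_S/A_{S'}$ is a quotient of $\bigoplus_v\mathcal K_v$, hence itself a $c_0$-direct sum of simple elementary $C^*$-algebras, whose closed ideals are exactly the sub-sums.

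Now let $J=\ker\psi^{\mathcal T}_t\cap A_S$ and let $I$ be the ideal of $A_S$ generated by the bolts and range projections $\mathcal T_\mu\mathcal T_\mu^*$ lying in $J$; plainly $I\subseteq J$. By the inductive hypothesis applied to $S'$, the ideal $J\cap A_{S'}=\ker\psi^{\mathcal T}_t\cap A_{S'}$ is generated by bolts and range projections it contains, and these are among the generators of $I$, so $J\cap A_{S'}\subseteq I$. Writing $\pi\colon A_S\to A_S/A_{S'}$ for the quotient map, it then suffices to prove $\pi(I)=\pi(J)$: given $x\in J$ and $y\in I$ with $\pi(y)=\pi(x)$, we get $x-y\in J\cap A_{S'}\subseteq I$, whence $x\in I$ and $J=I$.

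To establish $\pi(J)\subseteq\pi(I)$ I would argue one simple summand at a time. Fix a summand $\mathcal K_v\subseteq\pi(J)$ and a $\mu$ with $d(\mu)=m$, $s(\mu)=v$, $\mu\in S$, so that $\pi(\mathcal T_\mu\mathcal T_\mu^*)\in\mathcal K_v\subseteq\pi(J)$. If $t_\mu=0$, then $\mathcal T_\mu\mathcal T_\mu^*\in J$ is itself a range-projection generator of $I$ with $\pi(\mathcal T_\mu\mathcal T_\mu^*)\neq0$. Otherwise there is $z\in J$ with $\mathcal T_\mu\mathcal T_\mu^*-z\in A_{S'}$, so $t_\mu t_\mu^*=\psi^{\mathcal T}_t(\mathcal T_\mu\mathcal T_\mu^*)\in\psi^{\mathcal T}_t(A_{S'})$; Lemma \ref{Contains Bolt or Range Projection}(3) with $\nu=\mu$ then yields a finite $E\subset\mu\Lambda\cap S'$, exhaustive for $\mu\Lambda$, with $\prod_{\alpha\in E}(t_\mu t_\mu^*-t_\alpha t_\alpha^*)=0$. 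The Toeplitz bolt $\hat B=\prod_{\alpha\in E}(\mathcal T_\mu\mathcal T_\mu^*-\mathcal T_\alpha\mathcal T_\alpha^*)$ expands to a combination of $\mathcal T_\lambda\mathcal T_\lambda^*$ with $\lambda\in\mu\Lambda\cap S$, so $\hat B\in A_S$, and $\psi^{\mathcal T}_t(\hat B)=0$, so $\hat B$ is a bolt in $J$ and a generator of $I$. Since every $\alpha\in E$ strictly extends $\mu$ (as $d(\alpha)\neq m$ and $d(\alpha)\geq m$), all non-leading terms of $\hat B$ have degree $\neq m$ and lie in $A_{S'}$, whence $\pi(\hat B)=\pi(\mathcal T_\mu\mathcal T_\mu^*)\neq0$. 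In either case $\pi(I)\cap\mathcal K_v\neq0$, and simplicity of $\mathcal K_v$ forces $\mathcal K_v\subseteq\pi(I)$; ranging over all summands of $\pi(J)$ gives $\pi(J)\subseteq\pi(I)$ and closes the induction.

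The main obstacle is the quotient analysis: recognizing that modulo the ideal $A_{S'}$ the balanced algebra splits into simple elementary blocks indexed by source vertices, and that a degree-$m$ range projection is either a generator outright (when $t_\mu=0$) or is witnessed by a vanishing bolt supported in $S'$ (when $t_\mu\neq0$), which is precisely the dichotomy supplied by Lemma \ref{Contains Bolt or Range Projection}(3). Verifying that $A_{S'}$ is an ideal and that the produced bolt reduces to $\pi(\mathcal T_\mu\mathcal T_\mu^*)$ modulo $A_{S'}$ is the technical heart of the argument; the surrounding ideal-chasing is routine.
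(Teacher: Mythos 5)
Your proof is correct, and while it shares the paper's overall skeleton (induction on $|D(S)|$, the choice of a minimal degree $m$ and the subset $S'$, and the dichotomy from Lemma \ref{Contains Bolt or Range Projection}(3) producing either a range-projection generator or a bolt generator of $I$), the mechanism of your inductive step is genuinely different. The paper never observes that $A_{S'}$ is an ideal of $A_S$; instead it splits into cases according to whether $S^m=\{\mu\in S: d(\mu)=m\}$ is finite. In the finite case it writes a coset representative as $x_m+x'$ with $x_m$ a degree-$m$ combination having the fewest possible monomial terms, compresses by $\mathcal T_{\mu_1}\mathcal T_{\mu_1}^*(\cdot)\mathcal T_{\nu_1}\mathcal T_{\nu_1}^*$ to isolate a single term, and derives a contradiction; the infinite case is then recovered by a separate appeal to Lemma \ref{Generated by Bolts Preserved Under Limit}. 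You instead prove $A_{S'}\lhd A_S$ (correctly, via minimality of $m$ and the multiplication formula), identify $A_S/A_{S'}$ as a $c_0$-sum of elementary $C^*$-algebras whose matrix units are the images of the degree-$m$ monomials, and finish with the ideal lattice of such sums plus simplicity of the blocks. This treats finite and infinite $S^m$ uniformly, eliminates both the case split and the use of the direct-limit lemma inside this proof, and yields structural insight (the degree-$m$ layer splits off as an elementary quotient of $A_S$); the paper's argument, by contrast, is more self-contained, using only element-level manipulations rather than the structure theory of ideals in $c_0$-sums of compacts.

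Three small points you should make explicit, all standard: (i) $A_{S'}+B_m$ is closed --- this follows because $A_{S'}$ is a closed ideal and $B_m$ a $C^*$-subalgebra, and it is needed for the equality $A_S=A_{S'}+B_m$; (ii) the matrix units $\mathcal T_\mu\mathcal T_\nu^*$ are nonzero (e.g.\ because the Toeplitz representation covers the $\Lambda$-faithful left-regular representation), so each block of $B_m$ really is a full algebra of compacts; (iii) the nonvanishing $\pi(\mathcal T_\mu\mathcal T_\mu^*)\neq 0$, on which both of your cases rely, holds because $B_m\cap A_{S'}$ is an ideal of $B_m$ and hence a sub-sum, so $\pi$ is injective on any block that survives to a summand $\mathcal K_v$ of the quotient --- your framing supplies this, but it deserves a sentence, since a priori one must rule out $\mathcal T_\mu\mathcal T_\mu^*\in A_{S'}$.
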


\begin{proof}
Given a $MCE$-closed and substitution closed subset $S \subseteq \Lambda$, let $J_{S}$ denote $\ker \psi^{\mathcal T}_t \cap A_S$, which is an ideal in $A_S$. Let $I_S$ denote the ideal in $A_S$ generated by the bolts and $\mathcal T_\mu \mathcal T_\mu^*$s contained in $J_S$. Then certainly $I_S \subseteq J_S$, and our claim is equivalent to proving that $J_S/I_S=0$.

We proceed by induction on $|D(S)|$. In the base case of $|D(S)|=0$, the claim is trivial since $A_S=J_S=I_S=0$.

In the inductive case, suppose that $|D(S)|>0$, and that the claim is true for all $D'$ with $|D'|<|D(S)|$. Since $D(S)$ is finite, it has some minimal element $m$. First consider the case where $S^m=\{ \mu \in S: d(\mu)=m\}$ is finite. Note that $S^m$ is itself substitution closed and $MCE$-closed and that $A_{S}=A_{S'}+A_{S^m}$.

Fix some $x+I_S \in J_S/I_S$, in which case $x \in J_S$. Since $x \in J_S \subseteq A_S= A_{S'}+A_{S^m}$, we may write $x= x_m+ x'$, where $x_m \in A_{S^m}$ and $x' \in A_{S'}$. Additionally, since $A_{S^m}$ is finite dimensional, then we may write $x_m= \fsum_{i=1}^n c_{i} \mathcal T_{\mu_i} \mathcal T_{\nu_i}^*$ where $d(\mu_i)=d(\nu_i)=m$ for all $i$. There are potentially many such representations of $x+I_S$ as $x_m+x'+I_S$. We will assume without loss of generality we have chosen the decomposition with the fewest terms in the sum $x_m= \fsum_{i=1}^n c_{i} \mathcal T_{\mu_i} \mathcal T_{\nu_i}^*$, and our goal is to show that in fact there are zero terms in the sum and thus $x_m=0$.

To this end, suppose for the sake of contradiction that $x_m= \fsum_{i=1}^n c_{i} \mathcal T_{\mu_i} \mathcal T_{\nu_i}^*$ has a nonzero summand, and without loss of generality that it is the first summand. That is, suppose $c_{1} \mathcal T_{\mu_1} \mathcal T_{\nu_1}^* \neq 0$.

Now, 

\bea 
(\mathcal T_{\mu_1} \mathcal T_{\mu_1}^*)x(\mathcal T_{\nu_1} \mathcal T_{\nu_1}^*) &=& (\mathcal T_{\mu_1} \mathcal T_{\mu_1}^*) \left(  \fsum_{i=1}^n c_{i} \mathcal T_{\mu_i} \mathcal T_{\nu_i}^* +x'\right) (\mathcal T_{\nu_1} \mathcal T_{\nu_1}^*)\\
&=&  c_{1} \mathcal T_{\mu_1} \mathcal T_{\nu_1}^* +(\mathcal T_{\mu_1} \mathcal T_{\mu_1}^*) x' (\mathcal T_{\nu_1} \mathcal T_{\nu_1}^*).
\eea

Since $x \in J_S \subseteq \ker \psi^{\mathcal T}_t$, by applying $\ker \psi^{\mathcal T}_t$ to both sides, we have that

\[ 0 = c_i t_{\mu_1} t_{\nu_1}^* + \psi^{\mathcal T}_t((\mathcal T_{\mu_1} \mathcal T_{\mu_1}^*) x' (\mathcal T_{\nu_1} \mathcal T_{\nu_1}^*))\]

\noindent so solving for $ t_{\mu_1} t_{\nu_1}^*$, we have

\[  t_{\mu_1} t_{\nu_1}^*=\psi^{\mathcal T}_t( \frac{-1}{c_1} (\mathcal T_{\mu_1} \mathcal T_{\mu_1}^*) x' (\mathcal T_{\nu_1} \mathcal T_{\nu_1}^*)) \]

\noindent but $x' \in A_{S'}$, and $A_{S^m} A_{S'} \subseteq A_{S'}$, so the righthand side is an element of $\psi^{\mathcal T_t}(A_{S'})$. Now by part (3) of Lemma  \ref{Contains Bolt or Range Projection}, we have either $t_{\mu_1}=0$ or there is a finite exhaustive set $E \subset {\mu_1} \Lambda$ with $d(\alpha) \in S \setminus \{m\}$ for all $\alpha \in E$, and such that the bolt $B=\fprod_{\alpha\in E} (t_{\mu_1} t_{\mu_1}^* -t_\alpha t_\alpha^*)$ is equal to $0$.

In the former case, $\mathcal T_{\mu_1} \mathcal T_{\mu_1}^* \in I_S$, so taking $y=c_1 (\mathcal T_{\mu_1} \mathcal T_{\mu_1}^*)\mathcal T_{\mu_1}\mathcal T_{\nu_1}^*=c_1 \mathcal T_{\mu_1}\mathcal T_{\nu_1}^*$, we have $y \in I_S$, so $x+I_S= (x_m-y) +x'+I_S$ is a decomposition with one fewer terms in the sum $x_m-y$, a contradiction. In the latter case, $\fprod_{\alpha \in E}( \mathcal T_{\mu_1} \mathcal T_{\mu_1}^* - \mathcal T_{\alpha} \mathcal T_{\alpha}^*)$ is a bolt in $\ker \pi^{\mathcal T}_t$, so $B_1=\fprod_{\alpha \in E}( \mathcal T_{\mu_1} \mathcal T_{\mu_1}^* - \mathcal T_{\alpha} \mathcal T_{\alpha}^*) \in I_S$. Let $y= c_1 B_1 \mathcal T_{\mu_1}\mathcal T_{\nu_1}^*$, and then we may write $x+I_S=x_m-y+x'+I_S$ which will have zeroed out the $c_1 \mathcal T_{\mu_1} \mathcal T_{\nu_1}^*$ summand in the $x_m$ term (while possibly adding more terms to the $x'$ part, which is acceptable), again a contradiction.

In either case, we found a contradiction with $x_m$ being the representative with the fewest summands. Thus $x_m$ had no terms in its summation, and thus $x_m=0$. That is to say, $x=x_m+x'=x'$, where $x' \in A_{S'}$, and therefore $x \in A_{S'} \cap \ker^{\mathcal T}_t =J_{S'}$. By the inductive hypothesis, since $D'=\{d(\mu) : \mu \in S'\}= D(S) \setminus \{m\}$ has fewer terms than $D(S)$, we have that $J_{S'}=I_{S'}$ so $x+I_S \in I_{S'}+I_{S}=I_S$, so $J_S \subseteq I_S$, as desired.

Now in the case that $S^m$ is not finite, we let $F_n$ be an increasing sequence of finite subsets of $S^m$ with $S^m = \bigcup F_n$, in which case $S' \cup F_n$ is MCE closed and substitution-closed, so the previous case applies to $A_{S' \cup F_n}$. But $A_S= \overline {\bigcup_{n=1}^\infty A_{S' \cup F_n}}$, and so the general case follows from Lemma \ref{Generated by Bolts Preserved Under Limit}.
\end{proof}

We can now add a second dash of Lemma \ref{Generated by Bolts Preserved Under Limit} to get our desired result:

\begin{theorem} \label{Kernel Generated by Bolts 2}
Let $(G,P)$ be a WQLO group and let $\Lambda$ be a finitely-aligned $P$-graph. Let $t$ be a representation of $\Lambda$, and $\psi^{\mathcal T}_t: \mathcal B(\mathcal T) \ra \mathcal B(t)$ be the balanced covering. Then $\ker \psi^{\mathcal T}_t $ is generated (as an ideal) by the bolts and $\mathcal T_\mu \mathcal T_\mu^*$s it contains.

\end{theorem}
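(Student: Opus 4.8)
The plan is to realize the whole balanced algebra $\mathcal B(\mathcal T)$ as the closure of an increasing union of subalgebras of the form $A_S$ studied in Lemma \ref{Contains Bolt or Range Projection}, where each index set $S$ is MCE closed, substitution closed, and has finite degree set $D(S)$. Once this is arranged, Lemma \ref{Kernel Generated by Bolts 1} handles each piece and Lemma \ref{Generated by Bolts Preserved Under Limit} assembles the pieces into the global statement, so the only real work is choosing the right exhausting family.

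First I would single out a convenient supply of index sets: for a subset $D \subseteq P$, put $S_D = \{\mu \in \Lambda : d(\mu) \in D\}$. I claim that if $D$ is \emph{join-closed} (meaning whenever $p,q \in D$ have a common upper bound then $p \vee q \in D$), then $S_D$ is MCE closed and substitution closed. MCE closure is immediate from Lemma \ref{Equivalent Minimality}, since every $\lambda \in MCE(\mu,\nu)$ has $d(\lambda) = d(\mu) \vee d(\nu) \in D$; substitution closure holds because $d(\nu\alpha) = d(\nu)d(\alpha) = d(\mu)d(\alpha) = d(\mu\alpha)$, so membership of $\nu\alpha$ in $S_D$ depends only on its degree, which is unchanged. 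Then Lemma \ref{Contains Bolt or Range Projection}(1) shows $A_{S_D}$ is a closed $*$-subalgebra of $\mathcal B(\mathcal T)$, with $D(S_{D}) \subseteq D$.

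Second, I would produce an increasing sequence of \emph{finite} join-closed degree sets exhausting the degrees that actually occur. The crucial observation is that the join-closure of a finite set $\{p_1,\dots,p_k\}$ is finite: every existing iterated join has the form $\bigvee_{i\in I} p_i$ for some $I \subseteq \{1,\dots,k\}$, using that suprema are associative in the sense that $(\bigvee_{i\in I}p_i)\vee(\bigvee_{j\in J}p_j) = \bigvee_{i\in I\cup J}p_i$ whenever the left-hand side exists, and there are at most $2^k$ subsets $I$. Since $\Lambda$ is countable, $D(\Lambda) = \{d(\mu):\mu\in\Lambda\}$ is countable; enumerating it as $d_1,d_2,\dots$ and letting $D_n$ be the join-closure of $\{d_1,\dots,d_n\}$, each $D_n$ is finite and join-closed, $D_n \subseteq D_{n+1}$, and $\bigcup_n D_n \supseteq D(\Lambda)$.

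Finally, setting $A_n := A_{S_{D_n}}$, the sequence is increasing and $\overline{\bigcup_n A_n} = \mathcal B(\mathcal T)$, since any spanning element $\mathcal T_\mu \mathcal T_\nu^*$ with $d(\mu)=d(\nu)=d$ has $d\in D(\Lambda)$, hence $d \in D_n$ for some $n$, so $\mu,\nu\in S_{D_n}$ and $\mathcal T_\mu\mathcal T_\nu^* \in A_n$. Lemma \ref{Kernel Generated by Bolts 1} applies to each $A_n$ because $D(S_{D_n})$ is finite, giving that $\ker\psi^{\mathcal T}_t \cap A_n$ is generated by the bolts and range projections $\mathcal T_\mu \mathcal T_\mu^*$ it contains; Lemma \ref{Generated by Bolts Preserved Under Limit} then upgrades this to $\ker\psi^{\mathcal T}_t = \ker\psi^{\mathcal T}_t \cap \mathcal B(\mathcal T)$, as desired. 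The only genuinely nontrivial input is the finiteness of the join-closure and the resulting choice of exhausting degree sets; I would take care to phrase it using only \emph{existing} joins, since weak quasi-lattice order does not guarantee that arbitrary joins exist, and to justify the associativity of suprema in that partial setting. Everything else is bookkeeping on top of the two preceding lemmas.
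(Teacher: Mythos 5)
Your proposal is correct and follows essentially the same route as the paper: both exhaust $\mathcal B(\mathcal T)$ by an increasing sequence of subalgebras $A_S$ with $S$ MCE closed, substitution closed, and of finite degree set, then apply Lemma \ref{Kernel Generated by Bolts 1} to each piece and Lemma \ref{Generated by Bolts Preserved Under Limit} to pass to the limit. The only difference is in the choice of exhausting family — the paper closes a finite set of \emph{paths} $\{\lambda_1,\dots,\lambda_n\}$ under substitutions and MCEs, while you close a finite set of \emph{degrees} under existing joins and take all paths carrying those degrees — and both constructions rest on the same finiteness observation, that iterated (existing) joins of $k$ elements yield at most $2^k$ values.
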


\begin{proof}
As above, for any $S \subseteq \Lambda$, let $D(S)=\{d(\mu) : \mu \in \Lambda\}$. Note that taking a substitution preserves the degree of a path, and taking minimal common extensions of paths can take joins of degrees (that is, replace paths of length $p$ and $q$ with paths of length $p \vee q$). Therefore, if $F \subseteq \Lambda$ and $S$ is the set of all paths in $\Lambda$ obtained by taking substitutions and MCEs of $F$, then $D(S) \subseteq \{ p_1 \vee p_2 \vee... \vee p_n : p_i \in D(F)\}$. In particular, if $F$ is finite, then $|D(S)| \leq 2^{|F|}<\infty$, so any finite $F \subseteq \Lambda$ is contained in a set $S \subseteq \Lambda$ which is MCE closed and substitution closed and for which $D(S)$ is finite.

Since $\Lambda$ is countable, we can enumerate the elements as $\lambda_1, \lambda_2,...$. For each $n \in \N$, let $F_n=\{\lambda_1,..., \lambda_n\}$, and let $S_n$ denote the set of all paths in $\Lambda$ obtained by taking substitutions and MCEs of $F_n$.

Then $S_n$ is substitution closed and MCE closed and by the above argument, $D(S_n)<\infty$. Therefore, by Lemma \ref{Kernel Generated by Bolts 1}, $\ker \psi^{\mathcal T}_t \cap A_{S_n}$ is generated by the bolts and $\mathcal T_\mu \mathcal T_\mu^*$ it contains. 

But recall that $\mathcal B(\mathcal T) = \closedspan \{ \mathcal T_\mu \mathcal T_\nu^* : d(\mu)=d(\nu)\}$, and for any $\mu, \nu \in \Lambda$ with $d(\mu)=d(\nu)$, since we have an enumeration, there exists some $m, n \in \N$ with $\mu= \lambda_m, \nu=\lambda_n$, so $\mu, \nu \in F_{\max(m,n)} \subset S_{\max(m,n)}$, so $ \mathcal T_\mu \mathcal T_\nu^* \in A_{S_{\max(m,n)}}$. Thus $\mathcal B(\mathcal T) = \overline{ \bigcup_{n \in \N} A_{S_n}}$. 

Then by Lemma \ref{Generated by Bolts Preserved Under Limit}, $\ker \psi^{\mathcal T}_t \cap \mathcal B(\mathcal T) =\ker \psi^{\mathcal T}_t$ is generated by the bolts and $\mathcal T_\mu \mathcal T_\mu^*$ it contains. 
\end{proof}

The above result classifies ideals in the balanced algebra $\mathcal B(\mathcal T)$, and when combined with the Factors Through Theorem (Lemma \ref{Factors Through Theorem}) shows that two balanced algebras are isomorphic if and only if they have the same set of bolts and range projections which are set equal to 0. We make use of this fact in the following lemma:

\begin{lemma} \label{Balanced Tight is Minimal}
Let $(G,P)$ be a WQLO group, $\Lambda$ a $P$-graph, and let $s$ and $t$ be two representations of $(G,P)$. Then

\begin{enumerate}
\item If  $t$ is tight and $s$ is $\Lambda$-faithful, then $t \leq_{bal} s$. 
\item If $t$ and $s$ are both tight and $\Lambda$-faithful, then $t \cong_{bal} s$.
\end{enumerate}
\end{lemma}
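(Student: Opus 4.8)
The plan is to deduce both parts from Theorem \ref{Kernel Generated by Bolts 2} together with the Factors Through Theorem (Lemma \ref{Factors Through Theorem}). For part (1), producing a balanced covering $\psi^s_t : \mathcal B(s) \ra \mathcal B(t)$ with $s_\mu s_\nu^* \mapsto t_\mu t_\nu^*$ is, by Lemma \ref{Factors Through Theorem} applied to the surjections $\psi^{\mathcal T}_s$ and $\psi^{\mathcal T}_t$ out of $\mathcal B(\mathcal T)$, equivalent to verifying the kernel containment $\ker \psi^{\mathcal T}_s \subseteq \ker \psi^{\mathcal T}_t$. Indeed, treating these balanced coverings as additive group homomorphisms, part (2) of Lemma \ref{Factors Through Theorem} produces a bonding map, and part (3) applied to $+$, $\times$, and $^*$ shows it is a $*$-homomorphism; since $\psi^s_t \circ \psi^{\mathcal T}_s = \psi^{\mathcal T}_t$ it sends $s_\mu s_\nu^* = \psi^{\mathcal T}_s(\mathcal T_\mu \mathcal T_\nu^*)$ to $\psi^{\mathcal T}_t(\mathcal T_\mu \mathcal T_\nu^*) = t_\mu t_\nu^*$, which is exactly a balanced covering.

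So everything reduces to the containment $\ker \psi^{\mathcal T}_s \subseteq \ker \psi^{\mathcal T}_t$. This is where Theorem \ref{Kernel Generated by Bolts 2} does the heavy lifting: it tells us $\ker \psi^{\mathcal T}_s$ is generated, as an ideal, by the bolts and the range projections $\mathcal T_\mu \mathcal T_\mu^*$ that it contains. Since $\ker \psi^{\mathcal T}_t$ is an ideal, it suffices to show that each such generator already lies in $\ker \psi^{\mathcal T}_t$, and I would handle the two types of generator separately, as this is precisely where the two hypotheses enter.

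For a range projection: if $\mathcal T_\mu \mathcal T_\mu^* \in \ker \psi^{\mathcal T}_s$, then $s_\mu s_\mu^* = 0$, and since $\norm{s_\mu}^2 = \norm{s_\mu s_\mu^*}$ by the $C^*$-identity, this forces $s_\mu = 0$, contradicting the $\Lambda$-faithfulness of $s$. Hence $\ker \psi^{\mathcal T}_s$ contains no range projections at all, and only bolts remain to be considered. For a bolt $B = \fprod_{\alpha \in E}(\mathcal T_\mu \mathcal T_\mu^* - \mathcal T_\alpha \mathcal T_\alpha^*)$, with $\mu \in \Lambda$ and $E \subset \mu\Lambda$ finite and exhaustive, tightness of $t$ gives $\psi^{\mathcal T}_t(B) = \fprod_{\alpha \in E}(t_\mu t_\mu^* - t_\alpha t_\alpha^*) = 0$ directly from the definition of tightness; thus every bolt lies in $\ker \psi^{\mathcal T}_t$, whether or not it lies in $\ker \psi^{\mathcal T}_s$. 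Consequently all generators of $\ker \psi^{\mathcal T}_s$ lie in $\ker \psi^{\mathcal T}_t$, giving the desired containment and hence $t \leq_{bal} s$. Part (2) is then immediate: if $s$ and $t$ are both tight and $\Lambda$-faithful, applying part (1) in both directions yields $t \leq_{bal} s$ and $s \leq_{bal} t$, i.e. $t \cong_{bal} s$.

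Once Theorem \ref{Kernel Generated by Bolts 2} is in hand the argument is short, so I do not expect a genuine obstacle at this stage; the only points requiring care are bookkeeping the direction of the covering (which representation covers which, and correspondingly which kernel must sit inside which) and confirming that the map produced by Lemma \ref{Factors Through Theorem} is genuinely the balanced covering $\psi^s_t$ rather than merely some map agreeing with it on generators. The latter is settled by the fact that $\mathcal B(s)$ is the closed span of the elements $s_\mu s_\nu^*$, so agreement on these generators determines the map uniquely.
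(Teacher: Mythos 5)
Your proposal is correct and follows essentially the same route as the paper's proof: reduce to the kernel containment $\ker \psi^{\mathcal T}_s \subseteq \ker \psi^{\mathcal T}_t$ via the Factors Through Theorem, then use Theorem \ref{Kernel Generated by Bolts 2} together with the observations that $\Lambda$-faithfulness of $s$ excludes range projections from $\ker \psi^{\mathcal T}_s$ while tightness of $t$ puts every bolt in $\ker \psi^{\mathcal T}_t$. Your write-up merely spells out the details (the $C^*$-identity step and the verification that the bonding map is the balanced covering) that the paper leaves implicit.
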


\begin{proof}
For the first claim, let $\mathcal T$ denote the universal representation of $\Lambda$. Then, there are balanced coverings $\psi^{\mathcal T}_{s}: \mathcal B(\mathcal T) \ra \mathcal B(s)$ and $\psi^{\mathcal T}_{t}: \mathcal B(\mathcal T) \ra \mathcal B(t)$. We wish to use the Factors Through Theorem (Lemma \ref{Factors Through Theorem}) to conclude there is a balanced covering $\psi^s_t: \mathcal B(s) \ra \mathcal B(t)$ given by $s_\mu s_\nu^* \ra t_\mu t_\nu^*$, which we can do if and only if $\ker \psi^{\mathcal T}_{s} \subseteq \ker \psi^{\mathcal T}_{t}$. 

By Theorem \ref{Kernel Generated by Bolts 2}, $\ker \psi^{\mathcal T}_{s}$ and $ \ker \psi^{\mathcal T}_{t}$ are generated by the bolts and $\mathcal T_\mu \mathcal T_\mu^*$ they contain. Since $s$ is $\Lambda$-faithful, its kernel contains no $\mathcal T_\mu \mathcal T_\mu^*$ (and possibly some bolts), and since $t$ is tight, its kernel contains every bolt (and possibly some $\mathcal T_\mu \mathcal T_\mu^*$). Thus $\ker \psi^{\mathcal T}_{s} \subseteq \ker \psi^{\mathcal T}_{t}$, so $t \leq_{bal} s$ as desired.

For the second claim, by applying (1) twice, we have $t \leq_{bal} s$ and $s \leq_{bal} t$, so $t \cong_{bal} s$.
\end{proof}

\subsubsection{Normalizations of Coactions}

We will now touch briefly on the normalization of a coaction. For a more thorough introduction, the reader is directed to \cite[Appendix A.7]{echterhoff2006categorical}

\begin{remark}
Given a discrete coaction $(A, G, \delta)$, define $j_A: A \ra A \otimes C^*_r(G)$ by $j_A = (\id_A \otimes \pi^{U}_L) \circ \delta$, which is a $*$-homomorphism. 

Let $A^n=j_A(A) \cong A/\ker j_A$. Then by Lemma A.55 and Definition A.56 of \cite{echterhoff2006categorical} there is a coaction $(A^n, G,\delta^n)$ which is normal, and such that $j_A$ is $\delta$-$\delta^n$ equivariant, meaning that $\delta^n \circ j_A = (j_A \otimes \id_{G}) \circ \delta$. We call $(A^n, G, \delta^n)$ the \ul{normalization}\index{normalization!of a coaction} of $(A,G, \delta)$.

Note that \cite{echterhoff2006categorical} uses a definition of normality which is equivalent to ours, but not identical. Readers may wish to read \cite[Lemma 1.4]{quigg1996discrete} for a proof of the equivalence of the two definitions.

\end{remark}

The following lemma is analogous to Lemma \ref{Coactionization} but refers to a ``normalization'' process for representations instead of a ``coactionization'' process. Note that while coactionization made the representation larger with respect to $\leq_{rep}$, normalization will make the representation smaller.

\begin{lemma}\label{Normalization}
Let $(G,P)$ be a WQLO group, and $\Lambda$ a $P$-graph. Let $t$ be a representation of $\Lambda$ with a gauge coaction $\delta$. Then,

\begin{enumerate}
\item Let $j_A: C^*(t) \ra C^*(t)^n$ denote the quotient map taking the cosystem $(C^*(t), G, \delta)$ to its normalization $(C^*(t)^n, G, \delta^n)$, and for $\lambda \in \Lambda$ let $\tilde t_\lambda := j_A(t_\lambda)$. Then $\tilde t$ is a representation of $\Lambda$.
\item There is a canonical covering $t \mapsto \tilde t$.
\item The coaction $\delta^n$ on $C^*(\tilde t)$ is normal and is a gauge coaction.
\item $t$ is canonically isomorphic to $\tilde t$ if and only if $\delta$ is a normal coaction.
\item There is a balanced covering $\psi^t_{\tilde t}: \mathcal B(t) \ra \mathcal B(\tilde t)$, which is a balanced isomorphism.
\item $\tilde t$ is $\Lambda$-faithful (respectively, tight) if and only if $t$ is.
\end{enumerate}
\end{lemma}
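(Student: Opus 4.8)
The plan is to run the same machine as in Proposition \ref{Coactionization}, but with the normalization map $j_A = (\id_A \otimes \pi^U_L) \circ \delta$ playing the role that coactionization played there. The three structural facts I will lean on are: that $j_A$ is a $*$-homomorphism (so it carries representations to representations), that $A^n = j_A(C^*(t))$ with $\tilde t_\lambda = j_A(t_\lambda)$, and the equivariance identity $\delta^n \circ j_A = (j_A \otimes \id_G) \circ \delta$ from the cited normalization theory. Parts (1)--(3) then fall out quickly. For (1), $\tilde t$ satisfies (T1)--(T4) because these are polynomial $*$-relations and $j_A$ is a $*$-homomorphism; the (T4) sum stays finite by finite alignment. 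For (2), $j_A$ itself, corestricted to $C^*(\tilde t) = j_A(C^*(t)) = A^n$, is the required canonical covering $t_\lambda \mapsto \tilde t_\lambda$. For (3), $\delta^n$ is normal by the construction of the normalization, and applying equivariance to a generator gives $\delta^n(\tilde t_\lambda) = (j_A \otimes \id_G)(t_\lambda \otimes U_{d(\lambda)}) = \tilde t_\lambda \otimes U_{d(\lambda)}$, so $\delta^n$ is a gauge coaction (one could alternatively invoke Proposition \ref{Coactionization}(3)).

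The crux is part (5), and the key observation is that $j_A$ is \emph{isometric} on the balanced algebra. By (the proof of) Lemma \ref{Balanced Conditional Expectation}, $\mathcal B(t) = A_e$, and for $a \in A_e$ we have $\delta(a) = a \otimes U_e = a \otimes 1$; hence $j_A(a) = (\id_A \otimes \pi^U_L)(a \otimes 1) = a \otimes 1$ since $\pi^U_L(U_e) = L_e = 1$. As $a \mapsto a \otimes 1$ is isometric for the minimal tensor norm, $j_A \mid_{\mathcal B(t)}$ is injective. It is exactly the balanced covering $\psi^t_{\tilde t}$ (it sends $t_\mu t_\nu^* \mapsto \tilde t_\mu \tilde t_\nu^*$), and it is surjective onto $\mathcal B(\tilde t) = j_A(\mathcal B(t))$ by definition, so it is a balanced isomorphism. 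Intuitively, normalization alters only the off-diagonal fibers and fixes the unit fiber.

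With (5) in hand, (4) and (6) are short. For (4), our notion of normality (Definition \ref{Normal Coaction Definition}) is equivalent, via \cite[Lemma 1.4]{quigg1996discrete}, to injectivity of $j_A$; since $j_A$ is the canonical covering of (2) and is always surjective, it is an isomorphism precisely when $\delta$ is normal, and any canonical isomorphism must agree with $j_A$ on generators, giving the equivalence. For (6), note that each vertex projection $t_v$ and each bolt $\fprod_{\alpha \in E}(t_\mu t_\mu^* - t_\alpha t_\alpha^*)$ lies in $\mathcal B(t)$ and is carried by $\psi^t_{\tilde t}$ to the corresponding element of $\mathcal B(\tilde t)$; since $\psi^t_{\tilde t}$ is an isomorphism by (5), $t_v = 0 \iff \tilde t_v = 0$ and a bolt vanishes for $t$ iff it vanishes for $\tilde t$. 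Because $\Lambda$-faithfulness is governed by the vertex projections ($t_\lambda = 0 \iff t_{s(\lambda)} = 0$ by (T3)) and tightness by the bolts, both properties transfer.

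I expect the only genuine obstacle to be the verification in (5) that $j_A$ is isometric on $A_e$ --- specifically pinning down that $A^n$ sits inside $A \otimes C^*_r(G)$ so that the formula $j_A(a) = a \otimes 1$ is literally correct, and reconciling $\mathcal B(\tilde t)$ (defined as a closed span of the $\tilde t_\mu \tilde t_\nu^*$) with both the image $j_A(A_e)$ and the unit fiber $(A^n)_e$ of $\delta^n$. Everything else is bookkeeping once the equivariance identity and the two equivalent definitions of normality are granted.
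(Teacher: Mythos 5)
Your proposal is correct and follows essentially the same route as the paper's own proof: parts (1)--(3) via the $\delta$-$\delta^n$ equivariance identity, part (5) via the key observation that $j_A$ restricted to $\mathcal B(t)=A_e$ is the map $a \mapsto a\otimes 1$ and hence isometric by the cross-norm property, part (4) via \cite[Lemma 1.4]{quigg1996discrete}, and part (6) by noting that $\Lambda$-faithfulness and tightness are conditions living inside the balanced algebra. The one obstacle you flag at the end (that $j_A(a)=a\otimes 1$ holds literally on $A_e$) is resolved in the paper exactly as you anticipate, by computing $j_A(x)=(\id_A\otimes\pi^U_L)(x\otimes U_e)=x\otimes L_e=x\otimes 1$ and invoking $\norm{x\otimes 1}=\norm{x}$.
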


\begin{definition}
We call the representation $\tilde t$ the given in the previous lemma the \ul{normalization}\index{normalization!of a representation} of $t$.
\end{definition}

\begin{proof}[Proof of \ref{Normalization}]
(1) The fact that $\tilde t_\lambda$ is a representation is immediate since it is the image of a representation under a homomorphism.

(2) $j_A$ is a canonical covering.

(3) $\delta^n$ is normal by construction. To show it is a gauge coaction, recall from Definition A.56 and Lemma A.55 of \cite{echterhoff2006categorical} that the canonical covering $j_A$ is $\delta$-$\delta^n$ equivariant, meaning that $\delta^n \circ j_A = (j_A \otimes \id_{G}) \circ \delta$. Then, for any $\mu \in \Lambda$, 

\bea 
\delta^n( \tilde t_{\mu}) &=& \delta^n (j_A(t_\mu)) \\
&=& (j_A \otimes \id_{G}) \circ \delta(t_\mu) \\
&=& (j_A \otimes \id_{G}) (t_\mu \otimes U_{d(\mu)}) \\
&=& \tilde {t}_\mu \otimes U_{d(\mu)} \\
\eea
\noindent as desired.

(4) By \cite[Lemma 1.4]{quigg1996discrete}, a coaction is normal (in the sense of its conditional expectation $\Phi$ being faithful) if and only if $j_A$ is injective, which is equivalent to $j_A$ being a canonical isomorphism.

(5) The balanced covering $\psi^t_{\tilde t}$ exists because it is a restriction of the canonical covering $j_A$ from part (2). Since it is a balanced covering, it is surjective, so it suffices to show $\psi^t_{\tilde t}$ is injective. To this end, recall that $\psi^t_{\tilde t}= j_A \mid_{\mathcal B(t)}$ and that $j_A := (\id_A \otimes \pi^U_L) \circ \delta$. Now for $x \in \mathcal B(t)$ we have $\delta(x)= x \otimes U_e$, so $j_A(x)=( (\id_A \otimes \pi^U_L) \circ \delta) (x)= x \otimes L_e=x\otimes 1$. Then since $\norm{\cdot}$ is a $C^*$-cross norm, $\norm{j_A(x)}= \norm{x} \cdot \norm{1}= \norm{x}$, so $x=0$ if and only if $\psi^t_{\tilde t}(x)=j_A(x)=0$, as desired. 

(6) Having proven (5), this is immediate: since tightness is a relation among elements of the balanced algebra, $t$ is tight if and only if $\tilde t$ is tight. Similarly, $t$ is $\Lambda$-faithful if and only if every $t_\lambda \neq 0$ if and only if every $t_\lambda ^* t_\lambda \neq 0$, which is a relation among elements of the balanced algebra, so $t$ is $\Lambda$-faithful if and only if $\tilde t$ is.

\end{proof}

\begin{corollary} \label{Normal Coactionization}
Let $(G,P)$ be a WQLO group, and let $\Lambda$ be a finitely aligned $P$-graph. For any $\Lambda$-faithful, tight representation $t$ of $\Lambda$, the normalization of the coactionization of $t$ is a $\Lambda$-faithful, tight representation with a normal gauge coaction.
\end{corollary}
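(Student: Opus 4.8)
The plan is to chain together the two preceding structural results, Proposition \ref{Coactionization} and Lemma \ref{Normalization}, since ``coactionization'' and ``normalization'' have already been built precisely so that this corollary falls out by bookkeeping. There is essentially no hard analytic content left; the work has all been done in establishing those two results, and the only thing to verify is that the hypotheses line up when the two operations are composed.

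First I would form the coactionization $t'$ of $t$, where $t'_\lambda = t_\lambda \otimes U_{d(\lambda)}$. By Proposition \ref{Coactionization}(1) this is a representation, by Proposition \ref{Coactionization}(4) it carries a gauge coaction, and by Proposition \ref{Coactionization}(7) it is $\Lambda$-faithful and tight precisely because $t$ is. The one point that must be checked before proceeding is that Lemma \ref{Normalization} is applicable to $t'$, since that lemma takes as hypothesis a representation equipped with a gauge coaction; but this is exactly what Proposition \ref{Coactionization}(4) supplies.

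Next I would apply Lemma \ref{Normalization} to $t'$ to obtain its normalization $\widetilde{t'}$. By Lemma \ref{Normalization}(1) this is again a representation, by Lemma \ref{Normalization}(3) the induced coaction $\delta^n$ on $C^*(\widetilde{t'})$ is normal and is a gauge coaction, and by Lemma \ref{Normalization}(6) it remains $\Lambda$-faithful and tight because $t'$ is. Concatenating these facts yields that $\widetilde{t'}$, the normalization of the coactionization of $t$, is simultaneously $\Lambda$-faithful, tight, and equipped with a normal gauge coaction, which is the assertion of the corollary.

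The ``main obstacle'' here is really only a matter of confirming the interface between the two operations: that tightness and $\Lambda$-faithfulness are invariants tracked through \emph{both} passages (via Proposition \ref{Coactionization}(7) and Lemma \ref{Normalization}(6) respectively), and that normality is genuinely acquired at the normalization step rather than merely assumed. Since both of those invariance statements were proven above by reducing to norm-preserving identities in the balanced algebra, no further estimate is needed, and the proof is a two-line composition of the cited parts.
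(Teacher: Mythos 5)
Your proposal is correct and follows exactly the paper's own proof: form the coactionization $t'$, then its normalization $\widetilde{t'}$, citing Proposition \ref{Coactionization}(1),(4),(7) and Lemma \ref{Normalization}(1),(3),(6) to track the gauge coaction, normality, $\Lambda$-faithfulness, and tightness through both steps. Your write-up is in fact slightly more explicit than the paper's, which simply says ``by those two lemmas,'' but the content and route are identical.
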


\begin{proof}
Recall that $t'$ denotes the coactionization of $t$ as in Lemma \ref{Coactionization} and let $\widetilde{t'}$ denoting the normalization of $t'$ as in Lemma \ref{Normalization}. By those two lemmas, $\widetilde{t'}$ has a normal gauge coaction, and since $t$ is $\Lambda$-faithful and tight, so is $\widetilde{t'}$.
\end{proof}

\subsubsection{The Co-Universal Algebra}

The following result shows that a balanced covering extends to a covering of the entire algebras in the context of a normal coaction:

\begin{lemma} \label{Covering Extension}
Let $(G,P)$ be a WQLO group, let $\Lambda$ be a finitely-aligned $P$-graph, and let $s$ and $t$ be two representations of $\Lambda$. If $s \leq_{bal} t$, $s$ and $t$ have gauge coactions, and the gauge coaction on $s$ is normal, then $s \leq_{rep} t$.
\end{lemma}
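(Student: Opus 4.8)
The plan is to produce the desired canonical covering $\pi^t_s : C^*(t) \to C^*(s)$ by factoring through the Toeplitz algebra. Writing $\mathcal T$ for the Toeplitz representation (Definition \ref{Toeplitz Definition}), we have the canonical coverings $\pi^{\mathcal T}_t$ and $\pi^{\mathcal T}_s$, and by the Factors Through Theorem (Lemma \ref{Factors Through Theorem}) a $*$-homomorphism $\pi^t_s$ with $t_\lambda \mapsto s_\lambda$ exists precisely when $\ker \pi^{\mathcal T}_t \subseteq \ker \pi^{\mathcal T}_s$. So the entire argument reduces to establishing this kernel containment.

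First I would assemble the three conditional expectations $\Phi_{\mathcal T}, \Phi_t, \Phi_s$ onto the respective balanced algebras. The representation $\mathcal T$ has a gauge coaction: being universal it covers its own coactionization, while the coactionization covers it by Proposition \ref{Coactionization}(2), so $\mathcal T$ is canonically isomorphic to $\mathcal T'$ and hence carries a gauge coaction by Proposition \ref{Coactionization}(5); and $s,t$ have gauge coactions by hypothesis. Thus Lemma \ref{Balanced Conditional Expectation} supplies all three expectations. The structural facts I need are the two intertwining identities $\Phi_t \circ \pi^{\mathcal T}_t = \psi^{\mathcal T}_t \circ \Phi_{\mathcal T}$ and $\Phi_s \circ \pi^{\mathcal T}_s = \psi^{\mathcal T}_s \circ \Phi_{\mathcal T}$, together with the factorization $\psi^{\mathcal T}_s = \psi^t_s \circ \psi^{\mathcal T}_t$, where the hypothesis $s \leq_{bal} t$ supplies the balanced covering $\psi^t_s$. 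Each identity is verified on the spanning elements $\mathcal T_\mu \mathcal T_\nu^*$ (distinguishing the cases $d(\mu)=d(\nu)$ and $d(\mu)\neq d(\nu)$, where $\Phi$ annihilates the term) and then extended by continuity, since both sides are bounded linear maps.

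With these in hand, the kernel containment follows from a positivity-and-faithfulness argument. Fix $x \in \ker \pi^{\mathcal T}_t$; it suffices to show $\pi^{\mathcal T}_s(x)=0$, equivalently $\pi^{\mathcal T}_s(x^*x)=0$. Since $\pi^{\mathcal T}_s(x^*x) \geq 0$ and $\Phi_s$ is faithful — this is exactly where normality of the coaction on $s$ enters, via Definition \ref{Normal Coaction Definition} — it is enough to show $\Phi_s(\pi^{\mathcal T}_s(x^*x)) = 0$. I would then compute
\[ \Phi_s\bigl(\pi^{\mathcal T}_s(x^*x)\bigr) = \psi^{\mathcal T}_s\bigl(\Phi_{\mathcal T}(x^*x)\bigr) = \psi^t_s\bigl(\psi^{\mathcal T}_t(\Phi_{\mathcal T}(x^*x))\bigr) = \psi^t_s\bigl(\Phi_t(\pi^{\mathcal T}_t(x^*x))\bigr), \]
applying the $s$-intertwining, the factorization of balanced coverings, and the $t$-intertwining in turn. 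But $\pi^{\mathcal T}_t(x^*x) = \pi^{\mathcal T}_t(x)^*\pi^{\mathcal T}_t(x) = 0$ because $x \in \ker \pi^{\mathcal T}_t$, so the whole expression vanishes. Hence $\ker \pi^{\mathcal T}_t \subseteq \ker \pi^{\mathcal T}_s$, and the Factors Through Theorem yields $\pi^t_s$, giving $s \leq_{rep} t$.

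I expect the main obstacle to be bookkeeping rather than conceptual: carefully checking the intertwining identities on generators and confirming that $\mathcal T$ genuinely carries a gauge coaction so that $\Phi_{\mathcal T}$ is available. The one genuinely load-bearing hypothesis is the normality of the coaction on $s$, which is what upgrades the vanishing of $\Phi_s(\pi^{\mathcal T}_s(x^*x))$ to the vanishing of $\pi^{\mathcal T}_s(x^*x)$ itself; without faithfulness of $\Phi_s$ the argument collapses, and this is precisely why normality cannot be dropped from the statement.
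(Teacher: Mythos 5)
Your proof is correct and follows essentially the same route as the paper: factor through the Toeplitz algebra, reduce everything to the kernel containment $\ker \pi^{\mathcal T}_t \subseteq \ker \pi^{\mathcal T}_s$, and obtain that containment from positivity of $\pi^{\mathcal T}_s(x^*x)$ together with faithfulness of $\Phi_s$, which is exactly where normality enters. The only difference is organizational: the paper verifies the single commuting square $\psi^{t}_s \circ \Phi_{t} \circ \pi^{\mathcal T}_{t} = \Phi_s \circ \pi^{\mathcal T}_s$ directly on the spanning elements $\mathcal T_\mu \mathcal T_\nu^*$, which lets it avoid $\Phi_{\mathcal T}$ altogether and hence skip your (correct, but unnecessary) preliminary step of showing that $\mathcal T$ carries a gauge coaction.
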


\begin{proof}

Our argument will make use of several functions, organized according to the following diagram (which will commute):

\begin{center}
\begin{tikzcd}
C^*(\mathcal T) \arrow[r, "\pi^{\mathcal T}_{t}"] \arrow[d, "\pi^{\mathcal T}_{s}"] &
C^*(t) \arrow[r, "\Phi_t"] \arrow[dl, dashed, "\exists \pi^t_s"] &
 \mathcal B(t)\arrow[d, "\psi^t_s"]\\
C^*(s)  \arrow[rr, "\Phi_s"] &&
\mathcal B(s)
\end{tikzcd}
\end{center}

Here $\mathcal T$ denotes the universal representation of $\Lambda$, and $\pi^{\mathcal T}_t$ and $\pi^{\mathcal T}_s$ are the canonical coverings of $C^*(t)$ and $C^*(s)$ respectively. We wish to show the existence of the canonical covering $\pi^t_s$. The algebras $\mathcal B(t)$ and $\mathcal B(s)$ are the balanced subalgebras as in Lemma \ref{Balanced Subalgebra} and the maps $\Phi_{t}: C^*(t) \ra \mathcal B(t)$ and $\Phi_s: C^*(s) \ra \mathcal B(s)$ are the conditional expectations arising from Lemma \ref{Balanced Conditional Expectation}. From that lemma, these conditional expectations satisfy

\[ \Phi_{ t} ( t_\mu  t_\nu^*)= \begin{cases}  t_\mu  t_\nu^* & \text{ if } d(\mu)=d(\nu) \\
0 & \text{ otherwise} \end{cases} \text{ and similarly } \Phi_{s} (s_\mu s_\nu^*)= \begin{cases} s_\mu s_\nu^* & \text{ if } d(\mu)=d(\nu) \\
0 & \text{ otherwise} \end{cases}.\] 

Also note that by Definition \ref{Normal Coaction Definition}, $\Phi_s$ is faithful because the gauge coaction on $s$ is normal. Finally, $\psi^t_s$ is the balanced covering that exists because $t \geq_{bal} s$.

We will now verify that this diagram commutes. That is, we will show that 

\[ \psi^{t}_s \circ \Phi_{t}  \circ \pi^{\mathcal T}_{t} = \Phi_s \circ \pi^{\mathcal T}_s. \]

Since $C^*(\mathcal T)= \overline \spanset\{ \mathcal T_\mu \mathcal T_\nu : \mu, \nu \in \Lambda\}$ and all the maps are linear and continuous, it suffices to check that the two functions agree on each $\mathcal T_\mu \mathcal T_\nu^*$. To this end, we fix some $\mu, \nu \in \Lambda$, and we have

\bea
\psi^{t}_s \circ \Phi_{t}  \circ \pi^{\mathcal T}_{t}  (\mathcal T_\mu \mathcal T_\nu^*) &=&\psi^{ t}_s \circ \Phi_{ t} ( t_\mu  t_\nu^*) \\
&=&\psi^{ t}_s \left(  \begin{cases}  t_\mu  t_\nu^* & \text{ if } d(\mu)=d(\nu) \\
0 & \text{ otherwise} \end{cases} \right) \\
&=& \begin{cases} s_\mu s_\nu^* & \text{ if } d(\mu)=d(\nu) \\
0 & \text{ otherwise} \end{cases}\\
&=& \Phi_{s} (s_\mu s_\nu^*) \\
&=&  \Phi_s \circ \pi^{\mathcal T}_s( \mathcal T_\mu \mathcal T_\nu^*)
\eea

\noindent as desired.

We will now show that $\ker \pi^{\mathcal T}_t \subseteq \ker  \pi^{\mathcal T}_s$. To this end, suppose that $x \in \ker \pi^{\mathcal T}_t$. Then $x^*x \in \ker \pi^{\mathcal T}_t$, so

\bea
0 &=&\psi^{t}_s \circ \Phi_{t}  \circ \pi^{\mathcal T}_{t}(x^*x) \\
&=& \Phi_s \circ \pi^{\mathcal T}_s(x^*x) 
\eea

\noindent and since $\pi^{\mathcal T}_s(x^*x) \geq 0$ and $\Phi_s$ is faithful, then we must have that $ \pi^{\mathcal T}_s(x^*x) =0$, so $ \pi^{\mathcal T}_s(x) =0$, and thus $x \in \ker \pi^{\mathcal T}_s$ as desired.

Thus by Lemma \ref{Factors Through Theorem} (the Factors Through Theorem), there exists a map $\pi^t_s$ satisfying $\pi^{\mathcal T}_s= \pi_s^t \circ \pi^{\mathcal T}_t$, from which it is immediate that $\pi^t_s$ is a canonical covering. Thus $s \leq_{rep} t$.

\end{proof}

Now we may prove the main theorem of this section, which is a slight generalization of \cite[Theorem 5.3]{brownlowe2013co} from the context of quasi-lattice ordered groups to weakly quasi-lattice ordered groups. We have also rephrased the result:

\begin{theorem}
\label{Co-Universal Algebra}
Let $(G,P)$ be a WQLO group, and let $\Lambda$ be a finitely aligned $P$-graph. If $s$ is a $\Lambda$-faithful, tight representation with a normal gauge coaction, and $t$ is a $\Lambda$-faithful gauge-coacting representation, then $s \leq t$.

In particular, there is a unique representation $S$ of $\Lambda$ which is $\Lambda$-faithful, tight, and has a normal gauge coaction, and $S \leq t$ for any $\Lambda$-faithful gauge-coacting representation $t$. 

We write $C^*_{min}(\Lambda)$ for $C^*(S)$, and call $C^*_{min}(\Lambda)$ the \ulindex{co-universal} algebra of the graph.

\end{theorem}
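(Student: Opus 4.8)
The plan is to deduce the theorem almost entirely from the two key results just proved, Lemma \ref{Balanced Tight is Minimal} and Lemma \ref{Covering Extension}, reserving the ultrafilter representation and the normalization machinery for the existence half of the ``in particular'' statement. All of the analytic work has already been absorbed into those earlier lemmas, so the present argument is mostly a matter of assembling them in the correct order and with the correct directions.

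First I would establish the inequality $s \leq t$. Since $s$ is tight and $t$ is $\Lambda$-faithful, Lemma \ref{Balanced Tight is Minimal}(1) supplies a balanced covering, giving $s \leq_{bal} t$. Both $s$ and $t$ carry gauge coactions by hypothesis, and the gauge coaction on $s$ is normal; these are exactly the hypotheses of Lemma \ref{Covering Extension}, which promotes the balanced covering to a genuine canonical covering and yields $s \leq_{rep} t$, i.e. $s \leq t$. This is the entire content of the first assertion, provided one is careful to match directions: Lemma \ref{Balanced Tight is Minimal} is stated with the tight representation on the left of $\leq_{bal}$, and Lemma \ref{Covering Extension} requires normality of the \emph{smaller} coaction, here the one on $s$.

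For the ``in particular'' clause I would argue existence and uniqueness separately. For existence, the ultrafilter representation $f$ of Lemma \ref{Ultrafilter Representation} is $\Lambda$-faithful and tight but need not have a gauge coaction; applying Corollary \ref{Normal Coactionization} to $f$ produces a representation $S := \widetilde{f'}$, the normalization of the coactionization of $f$, which is simultaneously $\Lambda$-faithful, tight, and equipped with a normal gauge coaction. For uniqueness, suppose $S_1$ and $S_2$ are two such representations; applying the first assertion once with $(s,t)=(S_1,S_2)$ and once with $(s,t)=(S_2,S_1)$ gives $S_1 \leq S_2$ and $S_2 \leq S_1$, so by antisymmetry of $\leq_{rep}$ (Lemma \ref{Representation Order}(1)) they are canonically isomorphic. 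Finally, the claim that $S \leq t$ for every $\Lambda$-faithful gauge-coacting $t$ is simply the first assertion with $s = S$.

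The main obstacle lies not in this theorem but in verifying that its hypotheses are genuinely available. One must confirm that the coactionization--normalization process of Corollary \ref{Normal Coactionization} preserves both $\Lambda$-faithfulness and tightness, which is guaranteed by Lemma \ref{Coactionization}(7) together with Lemma \ref{Normalization}(6), and that it is normality of the coaction on $s$ rather than on $t$ that Lemma \ref{Covering Extension} demands. The subtle point worth double-checking is that $t$ is assumed only to have a gauge coaction, not a normal one; normality is imposed solely on the co-universal representation $S$, and it is precisely this asymmetry that forces $S$ to sit at the \emph{bottom} of the $\Lambda$-faithful gauge-coacting representations.
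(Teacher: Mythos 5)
Your proposal is correct and follows essentially the same route as the paper's own proof: Lemma \ref{Balanced Tight is Minimal} to get $s \leq_{bal} t$, Lemma \ref{Covering Extension} (with normality on the $s$ side) to promote this to $s \leq_{rep} t$, the ultrafilter representation plus Corollary \ref{Normal Coactionization} for existence of $S$, and two applications of the first assertion with antisymmetry for uniqueness. The direction-checks you flag (tight on the left of $\leq_{bal}$, normality required of the smaller representation) are exactly the points the paper's argument relies on.
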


\begin{proof}
Suppose that $s$ was a $\Lambda$-faithful, tight representation with a normal gauge coaction, and that $t$ was any $\Lambda$-faithful gauge-coacting representation. Since $s$ is tight and $t$ is $\Lambda$-faithful, by Lemma \ref{Balanced Tight is Minimal}, $s \leq_{bal} t$. Then since $s$ has a normal gauge coaction and $t$ has a gauge coaction, by Lemma \ref{Covering Extension}, $s \leq_{rep} t$.

To show the existence of such an $S$, recall that $f$ denotes the ultrafilter representation from Lemma \ref{Ultrafilter Representation}, which is $\Lambda$-faithful and tight by that lemma. By Corollary \ref{Normal Coactionization}, $S:= \widetilde{f'}$ is a $\Lambda$-faithful, tight representation with a normal gauge coaction.

If $t$ were another $\Lambda$-faithful, tight representation that had a normal gauge coaction, then by the first part of the claim $t \leq_{rep} S$ and $S \leq_{rep} t$, so $t \cong_{rep} S$. That is, $S$ is unique up to canonical isomorphism.

\end{proof}

\subsection{The Tight Algebra}

The following result is a relatively straightforward combination of the properties of $C^*_{min}(\Lambda)$ and the construction of a universal algebra (Lemma \ref{Universal Algebra}).
 
\begin{proposition} \label{Tight Representation Definition}
Let $(G,P)$ be a WQLO group, and $\Lambda$ a finitely-aligned $P$-graph.

\begin{enumerate}
\item There is a tight representation $T$ of $\Lambda$ which is universal for tight representations. We will denote the algebra generated by this representation as $C^*_{tight}(\Lambda)$\index{representation!of a $P$-graph!universal tight}.
\item There is a canonical covering $\pi_S^T: C^*_{tight}(\Lambda) \ra C^*_{min}(\Lambda)$.
\item $T$ is $\Lambda$-faithful. 
\item $T$ has a gauge coaction $\delta$.
\item The gauge coaction $\delta$ on $C^*_{tight}(\Lambda)$ is normal if and only if $\pi_S^T$ is an isomorphism, in which case $C^*_{tight}(\Lambda) \cong C^*_{min}(\Lambda)$.
\item If $\delta$ is normal, there is a gauge-invariant uniqueness theorem of this form: if $t$ is another $\Lambda$-faithful, gauge coacting, tight representation, then $C^*_{min}(\Lambda)$ is canonically isomorphic to $C^*(t)$. 
\end{enumerate}
\end{proposition}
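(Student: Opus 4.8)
The plan is to construct $C^*_{tight}(\Lambda)$ directly via Lemma \ref{Universal Algebra}, and then to obtain parts (2)--(6) by comparing $T$ against the co-universal representation $S$ of Theorem \ref{Co-Universal Algebra}, using the coactionization machinery of Proposition \ref{Coactionization} and the order structure of Lemma \ref{Representation Order}.

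For (1), I would feed Lemma \ref{Universal Algebra} the relator set consisting of (T1)--(T4) together with, for each $\mu \in \Lambda$ and each finite $E \subset \mu\Lambda$ that is exhaustive for $\mu\Lambda$, the tightness relator $\fprod_{\alpha \in E}(t_\mu t_\mu^* - t_\alpha t_\alpha^*)=0$. Finite alignment makes the (T4) sum finite and each tightness relator a finite product, so all relators are polynomial, and (T1),(T3) force the generators to be partial isometries; Lemma \ref{Universal Algebra} then yields a tight representation $T$ universal for tight representations, whose algebra I call $C^*_{tight}(\Lambda)$. Part (2) is then immediate: $S$ from Theorem \ref{Co-Universal Algebra} is tight, so the universal property of $T$ supplies the canonical covering $\pi_S^T : C^*_{tight}(\Lambda) \ra C^*_{min}(\Lambda)$. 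For (3), note that $\pi_S^T(T_\lambda)=S_\lambda \neq 0$ because $S$ is $\Lambda$-faithful, which forces each $T_\lambda \neq 0$.

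For (4), I would coactionize $T$: by Proposition \ref{Coactionization}(1) and (7) the assignment $T'_\lambda = T_\lambda \otimes U_{d(\lambda)}$ defines a representation which is again tight, so the universal property of $C^*_{tight}(\Lambda)$ produces a $*$-homomorphism into $C^*(T') \subseteq C^*_{tight}(\Lambda) \otimes C^*(G)$ sending $T_\lambda \mapsto T_\lambda \otimes U_{d(\lambda)}$. By Proposition \ref{Coactionization}(3) this map is automatically a gauge coaction $\delta$. For (5), if $\delta$ is normal then $T$ is a $\Lambda$-faithful (part 3), tight, normally gauge-coacting representation, so the uniqueness clause of Theorem \ref{Co-Universal Algebra} identifies $T$ with $S$ up to canonical isomorphism, i.e.\ $\pi_S^T$ is an isomorphism. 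Conversely, if $\pi_S^T$ is an isomorphism, then it is equivariant for the two gauge coactions (both send $T_\lambda$, respectively $S_\lambda$, to the corresponding $\otimes U_{d(\lambda)}$ form), hence it intertwines the conditional expectations $\Phi_T$ and $\Phi_S$; since $S$ has a normal coaction $\Phi_S$ is faithful, and transporting faithfulness across the isomorphism shows $\Phi_T$ is faithful, so $\delta$ is normal.

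Finally, for (6), with $\delta$ normal and $t$ a $\Lambda$-faithful, tight, gauge-coacting representation, the universal property of $T$ gives $t \leq_{rep} T$, and part (5) gives $T$ canonically isomorphic to $S$, so $t \leq_{rep} S$; meanwhile Theorem \ref{Co-Universal Algebra} gives $S \leq_{rep} t$. Antisymmetry of $\leq_{rep}$ (Lemma \ref{Representation Order}) then forces $S$ and $t$ to be canonically isomorphic, i.e.\ $C^*_{min}(\Lambda) \cong C^*(t)$ canonically. I expect the one genuinely delicate point to be the converse of (5): one must verify that the algebra isomorphism $\pi_S^T$ really is coaction-equivariant and therefore commutes with the conditional expectations, so that normality---a property of $\Phi$ rather than of the algebra alone---is actually transported from $S$ to $T$. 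Everything else is bookkeeping against results already in hand.
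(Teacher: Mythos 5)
Your proposal is correct and follows essentially the same route as the paper's own proof: part (1) via Lemma \ref{Universal Algebra} with the (T1)--(T4) and tightness relators, parts (2)--(3) by comparing against $S$ from Theorem \ref{Co-Universal Algebra}, part (4) by coactionizing $T$ and invoking its universality together with Proposition \ref{Coactionization}(3)/(7), part (5) via the uniqueness clause of Theorem \ref{Co-Universal Algebra} in one direction and transport of normality across the canonical isomorphism in the other, and part (6) by the sandwich $S \cong T \geq_{rep} t \geq_{rep} S$ with antisymmetry from Lemma \ref{Representation Order}. The only difference is that you spell out the equivariance and intertwining of conditional expectations in the converse of (5), which the paper asserts in a single sentence; that extra care is sound but not a different argument.
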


\begin{proof}
For (1), the relators (T1)-(T4) and the tightness condition are all polynomial relations in the generators, and $(T1)$ and $(T3)$ together imply that all the generators are partial isometries, so by Lemma \ref{Universal Algebra}, there is a universal $C^*$-algebra for such tight representations. Let us denote this representation by $T$ and the algebra it generates by $C^*_{tight}(\Lambda)$.

For (2), by Theorem \ref{Co-Universal Algebra}, the representation $S$ which generates $C^*_{min}(\Lambda)$ is a tight representation, so by the universality of $T$, there is a canonical covering $\pi^T_S: C^*_{tight}(\Lambda) =C^*(T) \ra C^*(S)=C^*_{min}(\Lambda)$. 

For (3), by Theorem \ref{Co-Universal Algebra}(1), $S$ is $\Lambda$-faithful, so each $S_\lambda$ is nonzero, and since $\pi^T_S(T_\lambda)=S_\lambda$, then each $T_\lambda$ is nonzero as well.

For (4), let $T'$ denote the coactionization of $T$ given by Proposition \ref{Coactionization}. By part (7) of that proposition, since $T$ is tight, then $T'$ is also tight, and therefore by the universality of $T$, there is a canonical covering $T_\lambda \mapsto T_\lambda'=T_\lambda \otimes U_{d(\lambda)}$, which is the desired gauge coaction. 

For (5), if $S \cong T$, then the gauge coaction on $T$ is normal since the gauge coaction on $S$ is normal. Conversely, if the gauge coaction on $T$ is normal, then by the uniqueness part of Theorem \ref{Co-Universal Algebra}, the canonical covering $\pi_S^T$ is an isomorphism.

For (6), suppose that $\delta$ is normal and $t$ is a $\Lambda$-faithful gauge coacting, tight representation. Then by the universality of $C^*_{tight}(\Lambda) = C^*(T)$, $T \geq_{rep} t$ (here we are using the partial order notation of Lemma \ref{Representation Order}). Similarly, by the co-universality of $C^*_{min}(\Lambda) =C^*(S)$, $t \geq_{rep} S$. Finally, by (5) since $\delta$ is normal, then $T \cong S$. Putting these together, we have that $S \cong T \geq_{rep} t \geq_{rep} S$, so $t \cong S$.

\end{proof}

This is a pleasant result if $\delta$ is normal, but there are examples arising from non-amenable groups where $\delta$ is not normal:

\begin{lemma} \label{GIUT Failure}
Let $(G,P)$ be a group with a total ordering (meaning $G= P \cup P\inv$ in addition to $P \cap P\inv = \{1\}$), and let $\Lambda=P$, thought of as a small category with one object and with degree functor $\id_P$. Then any tight representation $t$ of $\Lambda=P$ extends uniquely to a unitary representation $\bar t$ of $G$ by $\bar t_g = \begin{cases} t_g &\text{ if }g \in P \\ t_{g\inv}^* &\text{ if } g \in P \inv \end{cases}$. 

Therefore, $C^*_{tight}(\Lambda) \cong C^*(G)$, and in particular the gauge coaction on $C^*_{tight}(\Lambda)$ is normal if and only if $G$ is amenable. 
\end{lemma}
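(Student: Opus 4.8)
The plan is to prove the representation-level statement first, deduce the algebra isomorphism from matching universal properties, transport the gauge coaction, and only then settle the normality/amenability equivalence.

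First I would unpack what a representation of $\Lambda=P$ is: since $P$ is a one-object category, $t$ assigns to each $p\in P$ an operator $t_p$, where $t_1$ is a projection acting (via (T2)) as the unit of $C^*(t)$, the map $p\mapsto t_p$ is multiplicative, and $t_p^*t_p=t_1$ by (T3). The crucial observation is that because the order on $P$ is total, any two elements have a common extension, so every singleton $\{p\}\subseteq 1\cdot\Lambda$ is exhaustive for $\Lambda$; tightness applied with $\mu=1$ and $E=\{p\}$ then forces $t_1t_1^*-t_pt_p^*=0$, i.e. $t_pt_p^*=t_1$. Hence each $t_p$ is a unitary in the unital algebra $C^*(t)$. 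I would then define $\bar t_g=t_g$ for $g\in P$ and $\bar t_g=t_{g^{-1}}^*$ for $g\in P^{-1}$; this is unambiguous since $P\cap P^{-1}=\{1\}$ and the formulas agree there. That $\bar t$ is a unitary representation of $G$ reduces to a routine four-case check of $\bar t_g\bar t_h=\bar t_{gh}$ (according to whether $g$, $h$, and $gh$ lie in $P$ or $P^{-1}$), each case collapsing via (T2) and $t_p^*=t_p^{-1}$; this is exactly where $G=P\cup P^{-1}$ is used, since one needs $pq^{-1}$ to again land in $P\cup P^{-1}$. Uniqueness is automatic, as any unitary extension must send $p^{-1}\mapsto t_p^{-1}=t_p^*$.

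Next I would run the correspondence in reverse: restricting a unitary representation $u$ of $G$ to $P$ gives a representation of $\Lambda$ in which (T1)--(T4) and tightness all collapse to trivial statements about unitaries, using that $MCE(p,q)=\{\max(p,q)\}$ is a singleton. Since $u_{p^{-1}}=u_p^*$, the algebra generated by $\{u_p\}$ coincides with that generated by all $\{u_g\}$, so restriction is inverse to extension and $C^*(t)=C^*(\bar t)$. Matching universal properties then yields the isomorphism: the universal tight representation $T$ produces a unitary representation $\bar T$ of $G$, hence a surjection $C^*(G)\to C^*_{tight}(\Lambda)$; the universal unitary representation $U$ restricts to a tight representation, hence a surjection $C^*_{tight}(\Lambda)\to C^*(G)$; these agree with one another on generators and so are mutually inverse, giving $\Theta\colon C^*_{tight}(\Lambda)\to C^*(G)$ with $\Theta(T_p)=U_p$. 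Since $d=\id_P$, the gauge coaction satisfies $\delta(T_p)=T_p\otimes U_{d(p)}=T_p\otimes U_p$, so transporting along $\Theta$ turns $\delta$ into a homomorphism agreeing with the canonical coaction $\delta_G$ of Example \ref{Delta_G} on the generating set $\{U_p\}$; being homomorphisms, they coincide. Thus $\Theta$ intertwines the coactions and their conditional expectations, reducing the problem to showing $\delta_G$ is normal iff $G$ is amenable.

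The forward implication is immediate from Lemma \ref{Amenable Coactions are Normal}. The converse is the main obstacle. Here I would identify the grading's conditional expectation (Lemma \ref{Graded Conditional Expectation}) as $\Phi\colon C^*(G)\to C^*(G)_e=\C 1$ with $\Phi(U_g)=\delta_{g,e}1$, i.e. the canonical trace $\tau(U_g)=\delta_{g,e}$, and observe that it factors as $\tau=\tau_r\circ\pi^U_L$, where $\pi^U_L\colon C^*(G)\to C^*_r(G)$ is the regular representation and $\tau_r$ is the canonical trace on $C^*_r(G)$. The faithfulness of $\tau_r$ is standard: elements of $C^*_r(G)$ commute with the right regular representation, so $x\delta_e=0$ forces $x\delta_g=0$ for every $g$, hence $x=0$. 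Granting this, $\Phi$ is faithful iff $\pi^U_L$ is injective (if $\pi^U_L(a)=0$ then $\Phi(a^*a)=\tau_r(\pi^U_L(a)^*\pi^U_L(a))=0$, forcing $a=0$ when $\Phi$ is faithful, and conversely the composition of an injective map with a faithful one is faithful), which is precisely amenability by Definition \ref{Amenable Group Definition}. Since normality of $\delta_G$ is faithfulness of $\Phi$ (Definition \ref{Normal Coaction Definition}), this closes the equivalence. I expect everything except this last factorization argument to be bookkeeping; tying faithfulness of the $e$-component conditional expectation to injectivity of $\pi^U_L$ is the genuine content.
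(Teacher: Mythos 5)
Your proposal is correct, and for most of its length it runs parallel to the paper's proof: you use tightness plus totality of the order to force each $t_p$ to be unitary (the paper does exactly this, noting every singleton is exhaustive), you extend to $G$ by the same formula with the same case-by-case verification of multiplicativity, you get uniqueness the same way, and you handle the forward direction (amenable $\Rightarrow$ normal) via Lemma \ref{Amenable Coactions are Normal} just as the paper does. You are slightly more careful than the paper in one bookkeeping respect: you explicitly check that restricting a unitary representation of $G$ to $P$ yields a tight representation (using $MCE(p,q)=\{\max(p,q)\}$), which is what really justifies the identification $C^*_{tight}(\Lambda)\cong C^*(G)$ that the paper asserts more briskly. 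The genuine divergence is in the converse direction, normal $\Rightarrow$ amenable. The paper views $\{\mathbb{C}U_g\}_{g\in G}$ as a topological grading of $C^*(G)$ with faithful conditional expectation and invokes \cite[Proposition 3.7]{exel1996amenability} to conclude that the regular representation onto $C^*_r(G)$ is an isomorphism, hence $G$ is amenable in the sense of Definition \ref{Amenable Group Definition}. You instead argue by hand: the expectation of Lemma \ref{Graded Conditional Expectation} is the canonical trace $\tau(U_g)=\delta_{g,e}$, it factors as $\tau_r\circ\pi^U_L$ through the reduced trace, and $\tau_r$ is faithful by the standard commutation argument with the right regular representation, so faithfulness of $\Phi$ is equivalent to injectivity of $\pi^U_L$. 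Both are valid; the paper's route is shorter because it reuses Fell-bundle machinery already assembled earlier in the text, while yours is self-contained and, in effect, re-proves in this special case the equivalence (cited by the paper from \cite[Lemma 1.4]{quigg1996discrete}) between faithfulness of the grading expectation and injectivity of $j_A=(\id\otimes\pi^U_L)\circ\delta$. The trade-off is elementary transparency versus economy; neither argument has a gap.
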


\begin{proof}
Let $(G,P)$ be a group with a total ordering, and let $\Lambda=P$, thought of as a small category with the natural degree functor. Let $t$ be a tight representation of $P$. 

We will first show that $t_p$ is a unitary for each $p \in P$. For all $p,q \in P$, $\max(p,q) \geq p,q$, so $p$ and $q$ have a common extension. That is, every $p \in P$ is exhaustive for $P$, so by tightness, we have that $1=t_et_e^*= t_pt_p^*$ for all $p \in P$. Since also $t_pt_p^*=t_{s(p)}=t_e=1$ by the (T3) relator, this says that the $t_p$s are unitaries. 

Now, extend $t$ to $\bar t$ on all of $G=P \cup P\inv$ by $\bar t_g = \begin{cases} t_g &\text{ if }g \in P \\ t_{g\inv}^* &\text{ if } g \in P \inv \end{cases}$, or equivalently $\bar t_p=t_p$ and $\bar t_{p\inv}=t_p^*$ for $p \in P$. We wish to show that $\bar t$ is a representation of all of $G$. To this end, it suffices to check that it is a group homomorphism, and since it extends a representation of $P$, it suffices to check that multiplication is correct for one element in $P$ and one element in $P\inv$. 

To this end, suppose $p,q \in P$. We have several cases depending on which products are positive. If $1 \leq p\inv q$, then

\[ \bar t_{p\inv} \bar t_{q}= t_p^* t_q = t_p^* t_pt_{p \inv q} =t_{p\inv q} = \bar t_{p\inv q} \]

\noindent and if instead $1 \geq p\inv q$, then $1 \leq q\inv p$, so

\[ \bar t_{p\inv} \bar t_{q} =t_p^* t_q = (t_q t_{q\inv p})^* t_q= t_{q\inv p}^* t_q^*t_q=t_{q\inv p}^*=\bar t_{p\inv q}\]

\noindent so in either case $\bar t_{p\inv} \bar t_{q} =\bar t_{p\inv q}$. 

For products the other way, if $p q \inv \geq 1$, then

\[ \bar t_p \bar t_{q\inv}= t_p t_q^* = t_{p q \inv} t_qt_q^* = t_{pq \inv} = \bar t_{pq \inv}  \]

\noindent while if $pq\inv \leq 1$, then $q p\inv \geq 1$, so

\[ \bar t_p \bar t_{q\inv}= t_p t_q^* = t_p (t_{q p\inv} t_p)^* = t_p t_p^* t_{q p\inv}^* = t_{q p \inv}^*= \bar t_{pq\inv} \]

\noindent so in either case $\bar t_p \bar t_{q\inv}= \bar t_{pq \inv}$. Thus $\bar t$ is indeed a representation of $G$. 

To show that $\bar t$ is the unique extension of $t$ to a unitary representation of $G$, observe that if $\tilde t$ were another such representation, then for all $p \in P$, $1= \tilde t_p \tilde t_{p\inv}= t_p \tilde t_{p\inv}$, so since $t_p$ is a unitary we get that $t_p^* = \tilde t_{p\inv}$, so $\tilde t = \bar t$.

Thus the algebra generated by the universal tight representation $T$ is the algebra generated by the universal group representation. That is, $C^*_{tight}(\Lambda)= C^*(G)$, and the gauge coaction on $C^*_{tight}(\Lambda)$ is the standard coaction $\delta_G$ on $C^*(G)$ given in Example \ref{Delta_G}. 

Now for normality, by Lemma \ref{Amenable Coactions are Normal}, if $G$ is amenable, then all of its coactions are amenable, and in particular $\delta_G$ is amenable. Conversely, if the gauge coaction on $C^*_{tight}(\Lambda) \cong C^*(G)$ is normal, then considering $\{\C U_g\}_{g \in \C}$ as a topological grading over $G$ with conditional faithful expectation, by \cite[Proposition 3.7]{exel1996amenability}, $C^*(G)$ is naturally isomorphic to $C^*_r( \{\C U_g\}_{g \in \C})=C^*_r(G)$, so the full and reduced $C^*$-algebras are isomorphic and thus $G$ is amenable. 

\end{proof}

\begin{corollary}
If $(G,P)=(F_2,R)$ where $R$ a total ordering on $G$ such as the Magnus expansion given in Section 3.2 of \cite{clay2016ordered}, then $C^*_{tight}(\Lambda) \not \cong C^*_{min}(\Lambda)$. Since both $C^*_{tight}(\Lambda)$ and $C^*_{min}(\Lambda)$ are $\Lambda$-faithful, tight, gauge coacting representations, there is no gauge-invariant uniqueness theorem for this $R$-graph.

\end{corollary}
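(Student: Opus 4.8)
The plan is to read this corollary off from Lemma~\ref{GIUT Failure} and Proposition~\ref{Tight Representation Definition} by specializing to the non-amenable group $F_2$. The key observation is that $(F_2,R)$ is totally ordered, so setting $\Lambda=P=R$ (a one-object category with degree functor $\id_P$) places us exactly in the hypotheses of Lemma~\ref{GIUT Failure}. No new machinery is needed; the work is entirely in chaining together the two preceding results.

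First I would apply Lemma~\ref{GIUT Failure} to $(G,P)=(F_2,R)$ with $\Lambda=P$. This identifies $C^*_{tight}(\Lambda)\cong C^*(F_2)$ and shows that the gauge coaction $\delta$ on $C^*_{tight}(\Lambda)$ is normal if and only if $F_2$ is amenable. Since the free group on two generators is not amenable (as recorded in the remark following Definition~\ref{Amenable Group Definition}), $\delta$ is not normal. Next I would invoke Proposition~\ref{Tight Representation Definition}(5), which asserts that $\delta$ is normal if and only if the canonical covering $\pi_S^T\colon C^*_{tight}(\Lambda)\to C^*_{min}(\Lambda)$ is an isomorphism. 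As $\delta$ is not normal, $\pi_S^T$ is not an isomorphism; and because $\pi_S^T$ is the unique candidate for a canonical isomorphism (it is the map sending each generator $T_\lambda$ to $S_\lambda$), the representations $T$ and $S$ are not canonically isomorphic, i.e. $C^*_{tight}(\Lambda)\not\cong C^*_{min}(\Lambda)$.

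To conclude that the gauge-invariant uniqueness theorem genuinely fails for this $R$-graph, I would observe that both representations qualify as candidates: $T$ is $\Lambda$-faithful, tight, and gauge-coacting by parts (1), (3) and (4) of Proposition~\ref{Tight Representation Definition}, while $S$ is $\Lambda$-faithful, tight, and carries a (normal) gauge coaction by Theorem~\ref{Co-Universal Algebra}. Thus $\Lambda$ admits two $\Lambda$-faithful, tight, gauge-coacting representations that are \emph{not} canonically isomorphic, which is precisely the failure of uniqueness up to canonical isomorphism.

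The argument is short, and the only real subtlety is bookkeeping about which flavour of isomorphism is at stake: the route through Proposition~\ref{Tight Representation Definition}(5) directly yields the nonexistence of a canonical isomorphism, which is all that is needed to defeat the uniqueness statement. If one wishes to assert the stronger fact that $C^*_{tight}(\Lambda)$ and $C^*_{min}(\Lambda)$ are not even abstractly isomorphic, the natural path is to identify $C^*_{min}(\Lambda)$ with the reduced group algebra $C^*_r(F_2)$ (arising as the normalization of $\delta_G$) and then play the simplicity of $C^*_r(F_2)$ against the non-simplicity of $C^*(F_2)$; proving the identification $C^*_{min}(\Lambda)\cong C^*_r(F_2)$ would be the main extra piece of work, but it is not required for the corollary as stated.
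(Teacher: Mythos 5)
Your proposal is correct and follows essentially the same route as the paper: apply Lemma~\ref{GIUT Failure} to get $C^*_{tight}(\Lambda)\cong C^*(F_2)$ with a non-normal gauge coaction (since $F_2$ is not amenable), then use Proposition~\ref{Tight Representation Definition}(5) to conclude $\pi^T_S$ is not an isomorphism, while both $T$ and $S$ are $\Lambda$-faithful, tight, and gauge-coacting. Your closing remark correctly flags that the argument yields failure of \emph{canonical} isomorphism (which is all the corollary needs), and the suggested route to abstract non-isomorphism via $C^*_r(F_2)$ is a reasonable optional strengthening not pursued by the paper.
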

\begin{proof}
By the previous lemma, $C^*_{tight}(\Lambda) \cong C^*(G)$, and by Proposition \ref{Tight Representation Definition} $C^*_{tight}(\Lambda)$ is a tight, $\Lambda$-faithful representation of $\Lambda$ with a gauge coaction. Since $G=F_2$ is not amenable, the gauge coaction is not normal. But by part (5) of \ref{Tight Representation Definition}, since the gauge coaction is not normal, then $\pi^T_S$ is not an isomorphism, so $C^*_{min}(\Lambda)$ is another tight, $\Lambda$-faithful representation of $\Lambda$ with a gauge coaction, but which is not canonically isomorphic to $C^*_{tight}(\Lambda)$.

\end{proof}

The reader may recognize the ordered group $(F_2, R)$ from Example \ref{No Amenable Reduction} earlier in this manuscript, where we used it as an example of an ordering that had no reduction to an amenable group. This recognition may ignite a spark of hope: perhaps the existence of a reduction to an amenable group prevents the obstruction seen in the previous lemma and guarantees that $\delta$ is normal. We shall see in the next section that this is indeed the case.

\subsection{Reductions and Representations}

In this section, we develop the notion that if $(G,P)$ reduces to $(H,Q)$, then $P$-graphs are $Q$-graphs and they have the same representations.

\begin{remark}
In this section, we will be considering a category $\Lambda$ as a graph with respect to more than one semigroup (ex: as a $P$-graph and as a $Q$-graph). In cases where there may be confusion, we will write $C^*_{min}(\Lambda, P)$ and $C^*_{tight}(\Lambda, P)$ to indicate the semigroup $P$, and so on for other semigroups. 
\end{remark}

First we will show that if $\Lambda$ is a $P$-graph, and $(G,P)$ has reduction $(H,Q)$, then $\Lambda$ is also a $Q$-graph in a natural way.

\begin{lemma} \label{P Graph is Q Graph}
Let $(G,P)$ be an ordered group, and let $(H,Q)$ be a reduction of $(G,P)$ with degree functor $d^P_Q:P \ra Q$. Then a $P$-graph $\Lambda$ with degree functor $d_P^\Lambda$ is also a $Q$-graph with degree functor $d=d^P_Q \circ d_P^\Lambda$.
\end{lemma}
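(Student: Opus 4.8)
The goal is to check the two defining conditions for $(\Lambda, d)$ to be a $Q$-graph: that $d = d^P_Q \circ d_P^\Lambda$ is a functor into $Q$ (viewed as a one-object category), and that it satisfies the unique factorization property relative to $Q$. Since $\Lambda$ is already a $P$-graph it is a countable category, so no work is needed there. The plan is to obtain $Q$-factorizations by composing two layers of unique factorization: the factorization of elements of $P$ supplied by the reduction (captured by Proposition \ref{Reduction Criterion}(2)/(3)), followed by the unique factorization that $\Lambda$ already enjoys as a $P$-graph.

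The functor claim is immediate: $d_P^\Lambda : \Lambda \ra P$ is a functor by hypothesis, and $d^P_Q = \varphi \mid_P : P \ra Q$ is a monoid homomorphism, hence a functor between the corresponding one-object categories, so the composite $d$ is a functor. For the factorization property, fix $\lambda \in \Lambda$ and set $p = d_P^\Lambda(\lambda) \in P$, so that $d(\lambda) = \varphi(p)$. Suppose $q_1, q_2 \in Q$ with $q_1 q_2 = d(\lambda) = \varphi(p)$. Then $q_1 \leq \varphi(p)$, so by Proposition \ref{Reduction Criterion}(3) there exist unique $p_1, p_2 \in P$ with $p_1 p_2 = p$ and $\varphi(p_1) = q_1$ (and hence $\varphi(p_2) = q_2$). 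Now apply the unique factorization property of $\Lambda$ as a $P$-graph to the equation $p_1 p_2 = d_P^\Lambda(\lambda)$: there exist unique $\lambda_1, \lambda_2 \in \Lambda$ with $\lambda = \lambda_1 \lambda_2$, $d_P^\Lambda(\lambda_1) = p_1$, and $d_P^\Lambda(\lambda_2) = p_2$. Then $d(\lambda_1) = \varphi(p_1) = q_1$ and $d(\lambda_2) = \varphi(p_2) = q_2$, giving existence of the desired $Q$-factorization.

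The main (and only) delicate point is transporting uniqueness through both layers. Suppose $\lambda = \lambda_1' \lambda_2'$ is any factorization in $\Lambda$ with $d(\lambda_1') = q_1$. Writing $p_i' = d_P^\Lambda(\lambda_i')$, functoriality gives $p_1' p_2' = d_P^\Lambda(\lambda) = p$, with $\varphi(p_1') = d(\lambda_1') = q_1$. By the uniqueness clause of Proposition \ref{Reduction Criterion}(3) this forces $p_1' = p_1$ and $p_2' = p_2$; then $\lambda = \lambda_1' \lambda_2'$ is a $P$-graph factorization of $\lambda$ with $d_P^\Lambda(\lambda_1') = p_1$, so the unique factorization property of the $P$-graph $\Lambda$ forces $\lambda_1' = \lambda_1$ and $\lambda_2' = \lambda_2$. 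Thus the $Q$-factorization is unique, and $(\Lambda, d)$ is a $Q$-graph. I expect no analytic or combinatorial obstacle here; the argument is purely a matter of chaining the two uniqueness statements in the correct order.
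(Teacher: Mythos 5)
Your proof is correct and follows essentially the same route as the paper: both establish existence and uniqueness of the $Q$-factorization by first applying the reduction's unique factorization property (Proposition \ref{Reduction Criterion}(2)/(3)) to factor in $P$, and then applying the unique factorization property of $\Lambda$ as a $P$-graph. The only cosmetic difference is that you invoke Proposition \ref{Reduction Criterion}(3) by name where the paper speaks of "unique factorization of $d^P_Q$," which is the same fact.
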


\begin{proof}
Certainly, $d$ maps $\Lambda$ into $Q$, so, it suffices to check that $\Lambda$ has unique factorization with respect to $d$.

Let $\lambda \in \Lambda$, and $q \in Q$ satisfy $q \leq d(\lambda)$. We must show that there is a unique $\alpha, \beta \in \Lambda$ such that $\lambda =\alpha \beta$ and $d(\alpha)=q$. 

For existence, since $d_P^\Lambda(\lambda) \in P$ is a path of length $d^P_Q(d_P^\Lambda(\lambda))=d(\lambda)\geq q$, then by unique factorization of $d^P_Q$, there exist unique $p, p_1 \in P$ such that $pp_1 =d_P^\Lambda(\lambda)$ and $d^P_Q(p)=q$. Then $p \leq d_P^\Lambda(\lambda)$, so by unique factorization of $d_P^\Lambda$, there exist unique $\alpha, \beta \in \Lambda$ such that $\alpha \beta = p p_1$ and $d_P^\Lambda (\alpha)=p$. Then $d(\alpha)= d^P_Q(d_P^\Lambda(\alpha))= d^P_Q(p)=q$, so the desired $\alpha, \beta$ exist.

For uniqueness, if $\alpha', \beta' \in \Lambda$ were two other paths with $\lambda =\alpha' \beta'$ and $d(\alpha')=q$, then $d_P^\Lambda (\lambda)= d_P^\Lambda(\alpha') d_P^\Lambda(\beta')$ would be a factorization of $d_P^\Lambda(\lambda)$ where the first term has degree $d^P_Q(d_P^\Lambda(\alpha'))=d(\alpha')= q$, so by uniqueness of the factorization of $d^P_Q$, we have that $d_P^\Lambda(\alpha')=p$. Then by the uniqueness of the factorization $d_P^\Lambda$, we have $\alpha'=\alpha$, as desired.

\end{proof}

Next, we will show that whether $\Lambda$ is regarded as a $P$-graph or a $Q$-graph, it has the same representations.

\begin{lemma}
Let $(G,P)$ and $(H,Q)$ be two WQLO groups, and $\Lambda$ a small category with two functors $d^\Lambda_P: \Lambda \ra P$ and $d^\Lambda_Q: \Lambda \ra Q$ such that $\Lambda$ is a $P$-graph with respect to $d^\Lambda_P$ and a $Q$-graph with respect to $d^\Lambda_Q$. Then,

\begin{enumerate}
\item $\Lambda$ is finitely-aligned as a $P$-graph if and only if it is finitely-aligned as a $Q$-graph. 
\item The (T1)-(T4) relators are the same for $\Lambda$ independently of being a $P$-graph or a $Q$-graph. 
\item A representation is tight independently of $\Lambda$ being a $P$-graph or a $Q$-graph. 
\end{enumerate}
\end{lemma}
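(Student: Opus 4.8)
The plan is to observe that every notion appearing in the three claims — finite alignment, the relators (T1)--(T4), and tightness — is defined purely in terms of the \emph{category} structure of $\Lambda$, and makes no reference whatsoever to a choice of degree functor. The central object is the partial order $\leq$ on $\Lambda$, where $\alpha \leq \beta$ means $\beta \in \alpha\Lambda$; this is defined solely from composition of morphisms. From $\leq$ one builds the notions of common extension, minimal common extension, and hence $MCE(\alpha,\beta)$, all without invoking $d^\Lambda_P$ or $d^\Lambda_Q$. Likewise the sets $\alpha\Lambda$, the source and range maps $s,r$, and the vertex set $\Lambda^0$ (the identity morphisms of $\Lambda$) are categorical data. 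This is exactly the independence foreshadowed by the Remark following Lemma \ref{Equivalent Minimality}, and once it is made explicit each claim follows immediately.

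For (1), recall that $\Lambda$ is finitely aligned precisely when $MCE(\mu,\nu)$ is finite for all $\mu,\nu \in \Lambda$. Since $MCE(\mu,\nu)$ is computed from $\leq$ alone, it is literally the same set whether $\Lambda$ is viewed as a $P$-graph or a $Q$-graph, and so its finiteness is a grading-independent condition. For (2), I would run through the relators of Definition \ref{$P$-Graph Representation} one at a time: (T1) refers only to $\Lambda^0$; (T2) to composition together with $s$ and $r$; (T3) to $s$; and (T4) to $MCE(\mu,\nu)$. Each of these ingredients is categorical, so the equation asserted by each relator is identical in the two settings.

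For (3), tightness (Definition \ref{Representation Terminology}c) asks that $\fprod_{\alpha \in E}(t_\mu t_\mu^* - t_\alpha t_\alpha^*) = 0$ whenever $\mu \in \Lambda$ and $E \subset \mu\Lambda$ is finite and exhaustive for $\mu\Lambda$. The quantifier ranges over the categorical sets $\mu\Lambda$ and their finite subsets, exhaustiveness is phrased in terms of common extensions, and the displayed product involves only the range projections $t_\mu t_\mu^*$ and $t_\alpha t_\alpha^*$; none of this sees the grading. Hence the class of tight representations is the same for the two structures.

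The only point requiring a small argument — and the place where one must be careful not to smuggle in the degree — is the identification of the vertex set $\Lambda^0$. Here I would simply note that vertices are by definition the identity morphisms of the category $\Lambda$, a notion intrinsic to $\Lambda$; equivalently, since any functor preserves identities, $d^\Lambda_P(\lambda)=1$ iff $\lambda$ is a vertex iff $d^\Lambda_Q(\lambda)=1$, so the two characterizations of $\Lambda^0$ coincide. Beyond this there is no genuine obstacle: the substantive work was already done in establishing that $MCE$ is a purely order-theoretic invariant, and the present lemma is essentially a bookkeeping consequence of that fact.
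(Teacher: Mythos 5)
Your proposal is correct and follows exactly the paper's own argument: the paper's proof likewise observes that $MCE(\mu,\nu)$, the (T1)--(T4) relators, and the tightness condition depend only on the category structure of $\Lambda$ and not on the degree functor, which is precisely why the paper defined $MCE$ categorically (rather than via degrees) in the first place. Your additional check that $\Lambda^0$ is grading-independent (vertices being identity morphisms) is a fine point the paper leaves implicit, but it does not change the approach.
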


\begin{proof}
For (1), recall that $MCE(\mu, \nu)$ depends only on the category structure of $\Lambda$ by definition, so it is finite regardless of the degree functor given to $\Lambda$.

Similarly for (2) and (3), the T1-T4 and tightness relations as written only depend on the category structure of $\Lambda$. 

\end{proof}

\begin{corollary} \label{Tight Isomorphism}
Let $(G,P)$ and $(H,Q)$ be two WQLO groups, and $\Lambda$ a small category with two functors $d^\Lambda_P: \Lambda \ra P$ and $d^\Lambda_Q: \Lambda \ra Q$ such that $\Lambda$ is a finitely-aligned $P$-graph with respect to $d^\Lambda_P$ and a finitely-aligned $Q$-graph with respect to $d^\Lambda_Q$. Then $C^*_{tight}(\Lambda, P)$ is canonically isomorphic to $C^*_{tight}(\Lambda, Q)$.
\end{corollary}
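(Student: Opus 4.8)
The plan is to deduce this directly from the preceding lemma, whose content is precisely that the data defining $C^*_{tight}$ do not depend on the choice of grading. First I would fix notation: let $T^P$ denote the universal tight representation of $\Lambda$ viewed as a $P$-graph (so that $C^*(T^P)=C^*_{tight}(\Lambda,P)$), and let $T^Q$ denote the universal tight representation of $\Lambda$ viewed as a $Q$-graph. Both of these exist by Proposition \ref{Tight Representation Definition}(1), using part (1) of the preceding lemma to guarantee that $\Lambda$ is finitely aligned under either grading so that the hypotheses of that proposition are met.

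The crucial observation is parts (2) and (3) of the preceding lemma: a function $t:\Lambda \ra B$ satisfies the relators (T1)--(T4) and the tightness condition as a representation of the $P$-graph $\Lambda$ if and only if it satisfies them as a representation of the $Q$-graph $\Lambda$. Hence the class of tight representations of $\Lambda$ is literally the same collection of functions regardless of whether $\Lambda$ is regarded as a $P$-graph or a $Q$-graph. In particular, $T^P$ is itself a tight representation of the $Q$-graph $\Lambda$, and $T^Q$ is a tight representation of the $P$-graph $\Lambda$.

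Next I would invoke the universal property in each direction. Since $T^P$ is a tight representation of the $Q$-graph $\Lambda$, universality of $C^*_{tight}(\Lambda,Q)$ yields a canonical covering $\rho:C^*_{tight}(\Lambda,Q)\ra C^*_{tight}(\Lambda,P)$ with $T^Q_\lambda \mapsto T^P_\lambda$. Symmetrically, since $T^Q$ is a tight representation of the $P$-graph $\Lambda$, universality of $C^*_{tight}(\Lambda,P)$ gives a canonical covering $\sigma:C^*_{tight}(\Lambda,P)\ra C^*_{tight}(\Lambda,Q)$ with $T^P_\lambda \mapsto T^Q_\lambda$. The composites $\sigma\circ\rho$ and $\rho\circ\sigma$ fix every generator; as each algebra is generated by its $\{T_\lambda\}$, a $*$-endomorphism fixing this generating set is the identity, so these composites are the respective identity maps. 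Thus $\rho$ and $\sigma$ are mutually inverse $*$-isomorphisms intertwining $T^P_\lambda$ with $T^Q_\lambda$, which is exactly a canonical isomorphism in the sense of Definition \ref{Representation Terminology}.

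I do not expect a serious obstacle here, since all of the mathematical content has been front-loaded into the preceding lemma, whose point is that (T1)--(T4) and tightness are assertions purely about the category structure of $\Lambda$ and the combinatorial sets $MCE(\mu,\nu)$, neither of which references the degree functor. The only points requiring care are bookkeeping ones: confirming that the universal tight algebra genuinely exists for both gradings (which needs finite alignment on both sides, supplied by part (1)) and checking that the two covering maps are mutually inverse on generators, so that the resulting isomorphism is \emph{canonical}---sending each $\lambda$-generator to the corresponding $\lambda$-generator---rather than merely an abstract isomorphism of $C^*$-algebras.
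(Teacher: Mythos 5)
Your proposal is correct and follows essentially the same route as the paper: invoke the preceding lemma to see that the (T1)--(T4) and tightness relators coincide under either grading, then apply the two universal properties to obtain canonical coverings in both directions, which are mutually inverse since they fix the generators. The extra bookkeeping you spell out (existence via finite alignment, and checking the composites are the identity on generators) is exactly what the paper's terser two-sentence proof leaves implicit.
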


\begin{proof}
Let $a$ and $b$ be the universal tight representations of $\Lambda$ as a $P$-graph and $Q$-graph, respectively, so that $C^*_{tight}(\Lambda, P)= C^*(a)$ and $C^*_{tight}(\Lambda, Q)=C^*(b)$.

By the previous Lemma, both the $a_\lambda$s and the $b_\lambda$s satisfy the equivalent T1-T4 and tightness relators, so by their respective universal properties, there are canonical covers taking $a_\lambda \mapsto b_\lambda$ and $b_\lambda \mapsto a_\lambda$, which are inverses, and hence canonical isomorphisms.
\end{proof}

\begin{lemma} \label{Coactions Carry Over}
Let $\varphi: (G,P) \ra (H,Q)$ be a reduction of WQLO  groups. Let $\Lambda$ be a finitely-aligned $P$-graph, and $t$ a representation of $\Lambda$. If $t$ has a gauge coaction by $G$, then it has a gauge coaction by $H$ (when $\Lambda$ inherits the structure of a $Q$-graph as described in \ref{P Graph is Q Graph}). 

In particular, if $\delta$ is the gauge coaction by $G$, then $\epsilon:= (\id_{C^*(t)} \otimes \bar \varphi) \circ \delta$ is the gauge coaction by $H$, where $\bar \varphi:C^*(G) \ra C^*(H)$ is given by $U_g \mapsto V_{\varphi(g)}$ .
\end{lemma}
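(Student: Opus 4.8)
The plan is to verify directly that the proposed map $\epsilon = (\id_{C^*(t)} \otimes \bar\varphi) \circ \delta$ is a gauge coaction by $H$, using Proposition \ref{Coactionization}(3), which reduces the work substantially: by that result, to establish that $\epsilon$ is a gauge coaction I only need to check that $\epsilon$ is a $*$-homomorphism $C^*(t) \to C^*(t) \otimes C^*(H)$ satisfying $\epsilon(t_\lambda) = t_\lambda \otimes V_{d_Q(\lambda)}$, where $d_Q = \varphi \circ d_P^\Lambda$ is the $Q$-degree functor from Lemma \ref{P Graph is Q Graph}. I do not need to independently verify injectivity, nondegeneracy, or the coaction identity, since (3) grants all of these automatically once the generators map correctly.

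First I would construct $\bar\varphi: C^*(G) \to C^*(H)$. The operators $\{V_{\varphi(g)}\}_{g \in G}$ are unitaries in $C^*(H)$ satisfying $V_{\varphi(g)} V_{\varphi(h)} = V_{\varphi(g)\varphi(h)} = V_{\varphi(gh)}$ and $V_{\varphi(g)}^* = V_{\varphi(g)^{-1}} = V_{\varphi(g^{-1})}$, so they form a representation of $G$; by the universal property of $C^*(G)$ there is a $*$-homomorphism $\bar\varphi$ with $\bar\varphi(U_g) = V_{\varphi(g)}$. Then by Lemma \ref{Tensor of Homomorphisms} (applied with $\varphi = \id_{C^*(t)}$ and $\psi = \bar\varphi$) there is a $*$-homomorphism $\id_{C^*(t)} \otimes \bar\varphi : C^*(t) \otimes C^*(G) \to C^*(t) \otimes C^*(H)$. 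Composing with $\delta$, the map $\epsilon$ is a $*$-homomorphism into $C^*(t) \otimes C^*(H)$ as a composite of $*$-homomorphisms.

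Next I would compute $\epsilon$ on a generator $t_\lambda$. Since $\delta(t_\lambda) = t_\lambda \otimes U_{d_P^\Lambda(\lambda)}$, we have
\[
\epsilon(t_\lambda) = (\id_{C^*(t)} \otimes \bar\varphi)(t_\lambda \otimes U_{d_P^\Lambda(\lambda)}) = t_\lambda \otimes \bar\varphi(U_{d_P^\Lambda(\lambda)}) = t_\lambda \otimes V_{\varphi(d_P^\Lambda(\lambda))} = t_\lambda \otimes V_{d_Q(\lambda)},
\]
which is exactly the defining relation for a gauge coaction of $\Lambda$ viewed as a $Q$-graph with degree functor $d_Q = \varphi \circ d_P^\Lambda$. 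Applying Proposition \ref{Coactionization}(3) for the $Q$-graph structure then upgrades $\epsilon$ to a full gauge coaction.

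The only genuinely non-routine point — and the step I would flag as the main obstacle — is confirming that Proposition \ref{Coactionization}(3) applies in this setting, namely that $\Lambda$ really is a finitely-aligned $Q$-graph so that the proposition is available. This is supplied by Lemma \ref{P Graph is Q Graph} (which gives the $Q$-graph structure via $d_Q = d_Q^P \circ d_P^\Lambda$) together with the preceding lemma showing finite alignment, the (T1)--(T4) relators, and tightness all depend only on the underlying category and hence transfer from the $P$-graph to the $Q$-graph. Once that is in place, everything else is a formal verification; the subtlety is purely bookkeeping about which grading group the coaction lives over, and I would be careful to state explicitly that the $Q$-degree of $\lambda$ is $\varphi(d_P^\Lambda(\lambda))$ so that the indices on $V$ match.
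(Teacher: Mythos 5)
Your proof is correct and follows essentially the same route as the paper: construct $\bar\varphi$ from the universal property of $C^*(G)$, form $\id_{C^*(t)} \otimes \bar\varphi$ via Lemma \ref{Tensor of Homomorphisms}, compute $\epsilon(t_\lambda) = t_\lambda \otimes V_{\varphi(d_P^\Lambda(\lambda))}$, and invoke Proposition \ref{Coactionization}(3). Your explicit check that $\Lambda$ is a finitely-aligned $Q$-graph (so that Proposition \ref{Coactionization} applies) is a point the paper leaves implicit, and is a worthwhile addition.
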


\begin{proof}
First let us fix some notation. Let $\{U_g\}_{g \in G}$ and $\{V_h\}_{h \in H}$ denote the universal unitary representations of $G$ and $H$ respectively, so that $C^*(G)$ and $C^*(H)$ are the closed span of the $U_g$ and $V_h$, respectively. Let $\delta$ denote the gauge coaction on $C^*(t)$, $d^\Lambda_P$ the degree function on $\Lambda$ as a $P$-graph, and recall that the degree functor on $\Lambda$ as a $Q$-graph was $d^\Lambda_Q = \varphi \circ d^\Lambda_P$.

Then we have that $g \mapsto V_{\varphi(g)}$ is a unitary representation of $G$, so by the universal property of $C^*(G)$, there is a $*$-homomorphism $\bar \varphi:C^*(G) \ra C^*(H)$ given by $U_g \mapsto V_{\varphi(g)}$. We will prove that there is a gauge coaction of $H$ on $C^*(t)$ given by $\epsilon:= (\id_{C^*(t)} \otimes \bar \varphi) \circ \delta$. First note that by Lemma \ref{Tensor of Homomorphisms}, there is such a function $\epsilon$, it is a $*$-homomorphism, and it maps $C^*(t) \ra C^*(t) \otimes C^*(H)$. 

But now observe that 

\bea
\epsilon(t_\lambda) &=& (\id_A \otimes \bar \varphi) \circ \delta (t_\lambda) \\
 &=& (\id_A \otimes \bar \varphi) (t_\lambda \otimes U_{d_P^\Lambda(\lambda)}) \\
 &=& t_\lambda \otimes V_{\varphi(d_P^\Lambda(\lambda))} \\
 &=& t_\lambda \otimes V_{d^\Lambda_Q(\lambda)}
\eea

so by Proposition \ref{Coactionization}(3), $\epsilon$ is a gauge coaction by $H$, as desired. 
\end{proof}

\subsection{Gauge-Invariant Uniqueness for Ordered Groups which Reduce to Amenable Groups}

The following is the mathematical core of our gauge-invariant uniqueness theorem:

\begin{proposition} \label{Uniqueness Theorem}
Let $(G,P)$ be a WQLO group and suppose there is a reduction to an amenable group $\varphi:(G,P) \ra (H,Q)$. Then for any finitely-aligned $P$-graph $\Lambda$, the following are canonically isomorphic:

\begin{enumerate}
\item [a.] $C^*_{tight}(\Lambda, P)$.
\item [b.] $C^*_{tight}(\Lambda, Q)$.
\item [c.] $C^*_{min}(\Lambda, P)$.
\item [d.] $C^*_{min}(\Lambda, Q)$.
\item [e.] $C^*(t)$, where $t$ is any representation of $\Lambda$ as a $P$-graph which is tight, $\Lambda$-faithful, and which has a gauge coaction by $G$.
\end{enumerate}
\end{proposition}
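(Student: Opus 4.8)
The plan is to show that each of the five algebras is canonically isomorphic to a single algebra, namely $C^*_{min}(\Lambda, Q)$, the co-universal algebra for $\Lambda$ regarded as a $Q$-graph. The whole argument hinges on one observation: since $H$ is amenable, Lemma \ref{Amenable Coactions are Normal} forces every $H$-gauge coaction appearing below to be automatically normal. This is exactly the ingredient whose absence produced the pathology in Lemma \ref{GIUT Failure}, and transporting each representation from the $P$-grading to the (amenable) $Q$-grading is what lets us invoke it.

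First I would record the setup. By Lemma \ref{P Graph is Q Graph}, $\Lambda$ is a $Q$-graph with degree functor $d = \varphi \circ d^\Lambda_P$, and because finite alignment, the (T1)--(T4) relators, tightness, and $\Lambda$-faithfulness each depend only on the category structure of $\Lambda$, these notions agree whether $\Lambda$ is viewed as a $P$-graph or a $Q$-graph. In particular Corollary \ref{Tight Isomorphism} immediately supplies the canonical isomorphism (a) $\cong$ (b).

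Next I would collapse the $Q$-graph tower and then transport the two remaining algebras into it. For (b) $\cong$ (d): the universal tight $Q$-graph representation carries a gauge coaction by $H$ (Proposition \ref{Tight Representation Definition}(4)), which is normal by Lemma \ref{Amenable Coactions are Normal}, so Proposition \ref{Tight Representation Definition}(5) yields $C^*_{tight}(\Lambda, Q) \cong C^*_{min}(\Lambda, Q)$. For (c) $\cong$ (d): the representation $S$ generating $C^*_{min}(\Lambda, P)$ is $\Lambda$-faithful and tight with a normal $G$-gauge coaction, so by Lemma \ref{Coactions Carry Over} it acquires an $H$-gauge coaction, which is again normal since $H$ is amenable; thus $S$ is a $\Lambda$-faithful, tight $Q$-graph representation with a normal gauge coaction, and the uniqueness clause of Theorem \ref{Co-Universal Algebra} identifies $C^*(S)$ canonically with $C^*_{min}(\Lambda, Q)$. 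For (e) $\cong$ (d): given any tight, $\Lambda$-faithful $P$-graph representation $t$ with a $G$-gauge coaction, Lemma \ref{Coactions Carry Over} once more produces an $H$-gauge coaction which is automatically normal, so the same uniqueness clause gives $C^*(t) \cong C^*_{min}(\Lambda, Q)$.

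Finally I would note that every isomorphism produced—via Corollary \ref{Tight Isomorphism}, Proposition \ref{Tight Representation Definition}(5), and the uniqueness in Theorem \ref{Co-Universal Algebra}—sends generators to generators and is therefore canonical, and composing canonical isomorphisms is again canonical; this closes the loop and shows (a) through (e) are mutually canonically isomorphic. There is no real obstacle here beyond bookkeeping: the one genuinely delicate point is keeping straight which grading ($G$ versus $H$) a given coaction lives over at each step, and confirming that it is precisely \emph{normality}—not merely the existence of a gauge coaction—that is needed to apply Proposition \ref{Tight Representation Definition}(5) and the co-universality of Theorem \ref{Co-Universal Algebra}. The amenability of $H$ is doing all the heavy lifting by guaranteeing that normality for free.
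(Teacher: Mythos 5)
Your proof is correct, and for (a) $\cong$ (b) $\cong$ (d) it coincides with the paper's: Corollary \ref{Tight Isomorphism} gives (a) $\cong$ (b), and Proposition \ref{Tight Representation Definition} together with Lemma \ref{Amenable Coactions are Normal} gives (b) $\cong$ (d). Where you genuinely diverge is in (c) and (e). The paper never re-invokes the uniqueness clause of Theorem \ref{Co-Universal Algebra} in the $Q$-grading; instead it assembles canonical \emph{coverings} and closes a cycle: universality of $C^*_{tight}(\Lambda,P)$ gives $A \geq_{rep} C$, Lemma \ref{Coactions Carry Over} plus co-universality of $C^*_{min}(\Lambda,Q)$ gives $C \geq_{rep} D$, and since $D \cong B \cong A$, antisymmetry of $\geq_{rep}$ (Lemma \ref{Representation Order}) forces every covering in the cycle to be an isomorphism; part (e) is then the analogous sandwich $A \geq_{rep} t \geq_{rep} C \cong A$. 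You instead anchor everything at $C^*_{min}(\Lambda,Q)$: any tight, $\Lambda$-faithful representation with a $G$-gauge coaction becomes, via Lemma \ref{Coactions Carry Over} and Lemma \ref{Amenable Coactions are Normal}, a tight, $\Lambda$-faithful $Q$-graph representation with a \emph{normal} gauge coaction, so the uniqueness clause of Theorem \ref{Co-Universal Algebra}, applied over $(H,Q)$, identifies it canonically with $D$. This treats (c) and (e) by literally the same argument ((c) being the special case where $t$ is the representation generating $C^*_{min}(\Lambda,P)$), avoids constructing the covering $\pi^C_D$ and the order-theoretic bookkeeping, and makes visible that the co-universality of $C^*_{min}(\Lambda,P)$ is never needed, only its defining properties; what the paper's route buys in exchange is the explicit commuting square of canonical coverings among the four named algebras. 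One caveat you share with the paper: both arguments tacitly treat $(H,Q)$ as WQLO, which is needed even to form $C^*_{tight}(\Lambda,Q)$ and $C^*_{min}(\Lambda,Q)$; strictly, one should first replace $(H,Q)$ by the hereditary subgroup $(\varphi(G),\varphi(P))$, which is amenable and, by Lemma \ref{Surjective Reduction} and the fact that a surjective reduction of a WQLO group has WQLO image, is a legitimate target for the whole argument.
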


\begin{proof}
For the sake of abbreviation, let us denote the representations generating $C^*_{tight}(\Lambda, P)$, $C^*_{tight}(\Lambda, Q)$, $C^*_{min}(\Lambda, P),$ and $C^*_{min}(\Lambda, Q)$ as $A,$ $B,$ $C,$ and $D$, respectively. Our argument is organized by the following diagram:

\begin{center}
\begin{tikzcd}
C^*(A)= C^*_{tight}(\Lambda, P) \arrow[r,"\pi_B^A"] \arrow[d, "\pi_C^A"]
& C^*(B)= C^*_{tight}(\Lambda, Q) \arrow[d, "\pi_D^B"] \\
C^*(C)= C^*_{min}(\Lambda, P) \arrow[r, "\pi^C_D"]
& C^*(D)= C^*_{min}(\Lambda, Q)
\end{tikzcd}
\end{center}

All arrows here denote canonical coverings.

The map $\pi_B^A$ exists and is an isomorphism by Corollary \ref{Tight Isomorphism}.

The maps $\pi_C^A$ and $\pi_D^B$ are the canonical coverings of Proposition \ref{Tight Representation Definition}(2). Since $H$ is amenable, then by Lemma \ref{Amenable Coactions are Normal} the gauge coaction by $H$ on $C^*_{tight}(\Lambda, Q)$ is normal, and so $\pi_D^B$ is a canonical isomorphism by Proposition \ref{Tight Representation Definition} (5).

Combining these two, we have that $A \cong B \cong D$.

The map $\pi_D^C$ arises from the work established in the previous lemmas: by Lemma \ref{Coactions Carry Over} $C^*(C)$ carries a gauge coaction by $H$. Since also $C$ is $\Lambda$-faithful by \ref{Co-Universal Algebra}, then by the minimality of $C^*_{min}(\Lambda, Q)$, there is a canonical covering $\pi_D^C: C^*_{min}(\Lambda, P) \ra C^*_{min}(\Lambda, Q)$. 

Now, with the order notation of Lemma \ref{Representation Order}, we have that $A \geq_{rep} C \geq_{rep} D \cong B \cong A$, so $A \cong B \cong C \cong D$. That is, (a.)-(d.) are canonically isomorphic.

For (e.), we have that $A \geq_{rep} t \geq_{rep} C \cong A$ by the respective universality and co-universality of $C^*(A)$ and $C^*(C)$, so $A \cong t \cong C$.
\end{proof}

One nice outcome is that this result is independent of the amenable group $(H,Q)$ and the reduction $\varphi$, and simply requires that one exist. In particular, we have a gauge-invariant uniqueness theorem for WQLO groups which have a reduction to an amenable group:

\uniquenesstheorem

\begin{proof}
Let $T$ be the universal tight representation as in Proposition \ref{Tight Representation Definition} (1), so $C^*(T) \cong C^*_{tight}(\Lambda)$. By that theorem, $T$ is $\Lambda$-faithful, tight, and has a gauge coaction. It is also universal for tight representations.

By Proposition \ref{Uniqueness Theorem}, $C^*(T) \cong C^*_{min}(\Lambda)$, so $T$ is co-universal for representations which are $\Lambda$-faithful and have gauge coaction.

To see that $T$ is the unique representation (up to canonical isomorphism) which is $\Lambda$-faithful, tight, and has a gauge coaction, if there were such another representation $t$, it would be covered by $T$ (since $t$ is tight, and $T$ is universal for tight representations) and it would cover $T$ (since $t$ is $\Lambda$-faithful and has a gauge coaction and $T$ is co-universal for such representations). Thus $t\cong T$.

\end{proof}

\begin{remark}
In the case that $(G,P)$ reduces to an amenable group, we believe that the $C^*$-algebra generated by this unique $\Lambda$-faithful, tight, gauge coacting representation deserves the title of \emph{the} $C^*$-algebra of the graph. We will write $C^*(\Lambda)$ for this algebra.
\end{remark}

\subsection{Additional Results in the case of a Strong Reduction}

In this section we will consider what further results can be provided about a $P$-graph algebra under the assumption that $(G,P)$ strongly reduces to an amenable group. 

\begin{lemma} \label{Automatic Normality of Gauge Coactions}
Let $(G,P)$ be a WQLO group, and suppose $(G,P)$ has a strong reduction to an amenable group. Then every gauge coaction on a finitely-aligned $P$-graph is normal. 
\end{lemma}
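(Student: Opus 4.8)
The plan is to transport the gauge coaction to the amenable reduction, invoke automatic normality there, and pull the conclusion back using the fact that a \emph{strong} reduction does not alter the balanced subalgebra. Fix a strong reduction $\varphi:(G,P)\ra(H,Q)$ with $H$ amenable; by Lemma \ref{Strong Reduction}, $\varphi\mid_P$ is an order isomorphism, and in particular injective. Let $t$ be any representation of $\Lambda$ carrying a gauge coaction $\delta$ by $G$. By Lemma \ref{P Graph is Q Graph} (together with the lemma preceding Corollary \ref{Tight Isomorphism}), $\Lambda$ is also a finitely-aligned $Q$-graph under the degree functor $d^\Lambda_Q=\varphi\circ d^\Lambda_P$, and by Lemma \ref{Coactions Carry Over} the representation $t$ then carries a gauge coaction $\epsilon=(\id_{C^*(t)}\otimes\bar\varphi)\circ\delta$ by $H$.

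Next I would compare the two associated conditional expectations. By Lemma \ref{Balanced Conditional Expectation}, the coaction $\delta$ yields a conditional expectation $\Phi^P_t:C^*(t)\ra\mathcal B(t)$ sending $t_\mu t_\nu^*$ to itself when $d^\Lambda_P(\mu)=d^\Lambda_P(\nu)$ and to $0$ otherwise, while $\epsilon$ yields an expectation $\Phi^Q_t$ given by the analogous formula using $d^\Lambda_Q$. The crux is that, because $\varphi\mid_P$ is injective, we have $d^\Lambda_P(\mu)=d^\Lambda_P(\nu)$ if and only if $\varphi(d^\Lambda_P(\mu))=\varphi(d^\Lambda_P(\nu))$, i.e. if and only if $d^\Lambda_Q(\mu)=d^\Lambda_Q(\nu)$. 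Hence the balanced subalgebras computed for the two gradings coincide and $\Phi^P_t$ and $\Phi^Q_t$ agree on the spanning set $\{t_\mu t_\nu^*:\mu,\nu\in\Lambda\}$; by linearity and continuity this forces $\Phi^P_t=\Phi^Q_t$.

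To finish, since $H$ is amenable, Lemma \ref{Amenable Coactions are Normal} shows that $\epsilon$ is normal, which by Definition \ref{Normal Coaction Definition} means precisely that $\Phi^Q_t$ is faithful. As $\Phi^P_t=\Phi^Q_t$, the expectation $\Phi^P_t$ is faithful as well, so $\delta$ is normal. Since $t$ was an arbitrary gauge-coacting representation, every gauge coaction on a finitely-aligned $P$-graph is normal.

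The step I expect to be delicate is the identification $\Phi^P_t=\Phi^Q_t$, which rests entirely on the injectivity of $\varphi\mid_P$—exactly what separates a strong reduction from a general one. For a merely (non-strong) reduction the $Q$-balanced subalgebra can be strictly larger, since two paths of distinct $P$-degree may share a common $Q$-degree, and then the two conditional expectations need not coincide; so this clean transfer of normality genuinely requires the strong-reduction hypothesis, consistent with normality being asserted here only in that setting.
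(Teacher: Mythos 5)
Your proposal is correct and follows essentially the same route as the paper: carry the coaction over to $H$ via Lemma \ref{Coactions Carry Over}, observe that the two conditional expectations agree on the spanning set $\{t_\mu t_\nu^*\}$ precisely because $\varphi\mid_P$ is injective (the defining feature of a strong reduction), and then invoke Lemma \ref{Amenable Coactions are Normal} to transfer faithfulness from the $H$-expectation to the $G$-expectation. Your closing remark about where the argument breaks for non-strong reductions also matches the paper's own framing of why this hypothesis is the bottleneck.
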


\begin{proof}
Let $\varphi:(G,P) \ra (H,Q)$ be the strong reduction where $H$ is amenable. Fix some finitely-aligned $P$-graph $\Lambda$ and a representation $s$ of $\Lambda$ which has a gauge coaction by $G$.

Then by Lemma \ref{Coactions Carry Over}, $s$ has a gauge coaction by $H$. For sake of clarity, let $\Phi_s$ and $\Omega_s$ denote the conditional expectations on $C^*(s)$ as in Lemma \ref{Graded Conditional Expectation} which respectively arise from its $G$-coaction and $H$-coaction.

By Lemma \ref{Amenable Coactions are Normal}, since $H$ is amenable, then $\Omega_s$ is faithful. Our desired result is that $\Phi_s$ is faithful.

To this end, it suffices to show that $\Omega_s=\Phi_s$ as maps on $C^*(s)$, and since $C^*(s)= \closedspan\{s_\alpha s_\beta^* : \alpha, \beta \in \Lambda\}$ and both functions are continuous and linear, it suffices to check that $\Omega_s=\Phi_s$ on a single term $s_\alpha s_\beta^*$.

Recall that by Lemma \ref{Graded Conditional Expectation}, we have

\[
\Phi_s(s_\alpha s_\beta^*) = \begin{cases} s_\alpha s_\beta^* & \text{ if }d(\alpha)=d(\beta)  \\ 0 & \text{ if } d(\alpha) \neq d(\beta) \end{cases}
\]

\noindent and

\[
\Omega_s(s_\alpha s_\beta^*) = \begin{cases} s_\alpha s_\beta^* &\text{ if }\varphi(d(\alpha))=\varphi(d(\beta))  \\ 0 & \text{ if } \varphi(d(\alpha)) \neq \varphi(d(\beta)) \end{cases}.
\]

But $\varphi$ is a strong reduction, so for the positive elements $d(\alpha), d(\beta) \in P$, we have $d(\alpha)=d(\beta)$ if and only if $\varphi(d(\alpha))=\varphi(d(\beta))$.

Thus $\Omega_s=\Phi_s$ on each spanning element, so they are equal as functions, and thus $\Phi_s$ is faithful, so $s$ is normal.

\end{proof}


\begin{lemma} \label{Toeplitz Criteria}
Let $(G,P)$ be a WQLO group with a strong reduction onto an amenable group, let $\Lambda$ be a $P$-graph, and let $s$ be a representations of $(G,P)$. If $s$ is $\Lambda$-faithful, has a gauge coaction, and every proper bolt in $s$ is nonzero, then $s \cong \mathcal T$.

\end{lemma}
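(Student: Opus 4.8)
The plan is to show that the canonical covering $\pi^{\mathcal T}_s \colon C^*(\mathcal T) \to C^*(s)$, which exists by the universal property of $\mathcal T$ and witnesses $s \leq_{rep} \mathcal T$, is an isomorphism. By Lemma \ref{Representation Order} it then suffices to produce the reverse domination $\mathcal T \leq_{rep} s$, and I would obtain this by lifting a balanced isomorphism through Lemma \ref{Covering Extension}. So the first task is to assemble the hypotheses of that lemma for the pair $(\mathcal T, s)$: a normal gauge coaction on $\mathcal T$, a gauge coaction on $s$, and the balanced domination $\mathcal T \leq_{bal} s$.

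First I would check that $\mathcal T$ carries a normal gauge coaction. The coactionization $\mathcal T'$ of Proposition \ref{Coactionization}(1) is a representation, so the universal property of $\mathcal T$ furnishes a $*$-homomorphism $C^*(\mathcal T) \to C^*(\mathcal T') \subseteq C^*(\mathcal T) \otimes C^*(G)$ sending $\mathcal T_\lambda \mapsto \mathcal T_\lambda \otimes U_{d(\lambda)}$; by Proposition \ref{Coactionization}(3) this is a gauge coaction. Since $(G,P)$ strongly reduces to an amenable group, Lemma \ref{Automatic Normality of Gauge Coactions} makes this coaction normal. As $s$ has a gauge coaction by hypothesis, both coaction hypotheses of Lemma \ref{Covering Extension} are then in hand.

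The heart of the argument is that $\psi^{\mathcal T}_s \colon \mathcal B(\mathcal T) \to \mathcal B(s)$ is injective, which by Theorem \ref{Kernel Generated by Bolts 2} reduces to showing that $\ker \psi^{\mathcal T}_s$ contains no range projection $\mathcal T_\mu \mathcal T_\mu^*$ and no nonzero bolt. Since $s$ is $\Lambda$-faithful, $\psi^{\mathcal T}_s(\mathcal T_\mu \mathcal T_\mu^*) = s_\mu s_\mu^* \neq 0$, disposing of the range projections. Because $\psi^{\mathcal T}_s$ is a homomorphism with $\mathcal T_\beta \mathcal T_\gamma^* \mapsto s_\beta s_\gamma^*$, it sends a bolt $B = \prod_{\alpha \in E}(\mathcal T_\mu \mathcal T_\mu^* - \mathcal T_\alpha \mathcal T_\alpha^*)$ to the corresponding bolt $\prod_{\alpha \in E}(s_\mu s_\mu^* - s_\alpha s_\alpha^*)$ in $s$, so it suffices to show every such bolt in $s$ is nonzero (the case $\mu \in E$ giving the zero element, which lies in the zero ideal). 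Here I would use a conjugation identity valid in any representation $t$: writing each $\alpha \in E$ uniquely as $\alpha = \mu\alpha'$ and setting $E' = \{\alpha' : \alpha \in E\}$, the relations (T2)--(T4) together with the commutativity of the projections $t_\alpha t_\alpha^*$ (itself a consequence of (T4)) yield $\prod_{\alpha \in E}(t_\mu t_\mu^* - t_\alpha t_\alpha^*) = t_\mu\left[\prod_{\alpha' \in E'}(t_{s(\mu)} - t_{\alpha'}t_{\alpha'}^*)\right]t_\mu^*$, where the bracketed factor is a proper bolt: its base $s(\mu)$ is a vertex and, when $\mu \notin E$, the set $E' \subseteq s(\mu)\Lambda \setminus s(\mu)$ is finite and exhaustive for $s(\mu)\Lambda$. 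Applying this with $t = s$, the proper bolt $B' = \prod_{\alpha' \in E'}(s_{s(\mu)} - s_{\alpha'}s_{\alpha'}^*)$ is nonzero by hypothesis, and since $B' \leq s_{s(\mu)} = s_\mu^* s_\mu$ one checks $s_\mu B' s_\mu^* = 0 \iff B' = 0$; hence the bolt in $s$ is nonzero. Therefore $\ker \psi^{\mathcal T}_s = 0$, the map $\psi^{\mathcal T}_s$ is an isomorphism, and in particular $\mathcal T \leq_{bal} s$.

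With $\mathcal T \leq_{bal} s$ and the normal gauge coaction on $\mathcal T$ established, I would apply Lemma \ref{Covering Extension} with its ``$s$'' taken to be $\mathcal T$ and its ``$t$'' taken to be $s$ to upgrade the balanced domination to $\mathcal T \leq_{rep} s$. Combined with the automatic $s \leq_{rep} \mathcal T$, the antisymmetry of $\leq_{rep}$ from Lemma \ref{Representation Order} yields $s \cong \mathcal T$. The main obstacle I anticipate is the bolt-to-proper-bolt reduction in the third paragraph: it rests on a careful manipulation of the (T1)--(T4) relations to establish the conjugation identity $B = t_\mu \mathcal B' t_\mu^*$, the commutativity of the relevant range projections, the verification that $E'$ remains finite and exhaustive, and the equivalence $s_\mu B' s_\mu^* = 0 \iff B' = 0$, all of which must be checked rather than merely asserted.
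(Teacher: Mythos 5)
Your proof is correct and, at its core, runs on the same two pillars as the paper's: injectivity of the balanced covering $\psi^{\mathcal T}_s$ obtained from Theorem \ref{Kernel Generated by Bolts 2} together with the hypotheses on $s$, and normality of the gauge coaction on $\mathcal T$ obtained from Lemma \ref{Automatic Normality of Gauge Coactions}. The packaging differs slightly: the paper inlines the conditional-expectation diagram chase (for $x \in \ker\pi^{\mathcal T}_s$, one gets $\psi^{\mathcal T}_s(\Phi_{\mathcal T}(x^*x))=0$, then injectivity of $\psi^{\mathcal T}_s$ and faithfulness of $\Phi_{\mathcal T}$ force $x=0$), whereas you reuse Lemma \ref{Covering Extension} with the roles $(\mathcal T, s)$ to upgrade $\mathcal T \leq_{bal} s$ to $\mathcal T \leq_{rep} s$ and finish by antisymmetry; since the proof of Lemma \ref{Covering Extension} is exactly that diagram chase, these are the same argument. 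Where your write-up genuinely adds value is the bolt analysis: the paper's proof asserts that because the kernel contains no \emph{proper} bolts and no range projections it must vanish, but Theorem \ref{Kernel Generated by Bolts 2} speaks of \emph{all} bolts, and a bolt $\prod_{\alpha\in E}(\mathcal T_\mu\mathcal T_\mu^*-\mathcal T_\alpha\mathcal T_\alpha^*)$ with non-vertex base $\mu\notin E$ is neither proper (in the paper's sense) nor trivially zero. Your conjugation identity $\prod_{\alpha\in E}(t_\mu t_\mu^*-t_\alpha t_\alpha^*)=t_\mu\bigl[\prod_{\alpha'\in E'}(t_{s(\mu)}-t_{\alpha'}t_{\alpha'}^*)\bigr]t_\mu^*$, with the checks that $E'$ is finite and exhaustive for $s(\mu)\Lambda$ and that $s_\mu B' s_\mu^*=0$ iff $B'=0$, is precisely the reduction needed to close this gap, so your version is the more complete of the two.
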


\begin{proof}
Since $\mathcal T$ covers $s$, it suffices to show that $\ker \pi^{\mathcal T}_s=0$.

To this end, consider the diagram

\begin{center}
\begin{tikzcd}
C^*(\mathcal T) \arrow[r, "\Phi_{\mathcal T}"] \arrow[d, "\pi^{\mathcal T}_{s}"] &
 \mathcal B(\mathcal T)\arrow[d, "\psi^\mathcal T_s"]\\
C^*(s)  \arrow[r, "\Phi_s"] &
\mathcal B(s)
\end{tikzcd}
\end{center}

\noindent which commutes by a routine computation on the spanning elements $\mathcal T_\mu \mathcal T_\nu^*$.

Fix some $x \in \ker \pi^{\mathcal T}_s=0$. Then $\pi^{\mathcal T}_s (x^*x)=0$, so 

\[ 0=\Phi_s(\pi^{\mathcal T}_s (x^*x))= \psi^{\mathcal T}_s (\Phi_{\mathcal T}(x^*x)).\]

Now by Theorem \ref{Kernel Generated by Bolts 2}, since $\ker \psi^{\mathcal T}_s$ is generated by the bolts and $\mathcal T_\mu \mathcal T_\mu^*$s it contains, and by the hypotheses on $s$ this kernel contains no proper bolts or $\mathcal T_\mu \mathcal T_\mu^*$, then we have $\ker \psi^{\mathcal T}_s=0$, and thus $\Phi_{\mathcal T}(x^*x)=0$. 

By Lemma \ref{Automatic Normality of Gauge Coactions}, $\Phi_{\mathcal T}$ is faithful, so $x^*x=0$ and thus $x=0$. Thus $\ker \pi_s^{\mathcal T}= \{0\}$, as desired.
\end{proof}

\begin{corollary}
Let $\Lambda$ be a finitely-aligned $\N^2*\N$-graph. Then any gauge coaction by $G=\Z^2*\Z$ is automatically normal.
\end{corollary}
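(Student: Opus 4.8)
The plan is to recognize this corollary as an immediate specialization of Lemma \ref{Automatic Normality of Gauge Coactions}, which asserts that if $(G,P)$ is a WQLO group admitting a strong reduction to an amenable group, then every gauge coaction on a finitely-aligned $P$-graph is normal. Consequently the entire task reduces to verifying the two hypotheses of that lemma for the specific ordered group $(G,P) = (\Z^2 * \Z, \N^2 * \N)$, after which the conclusion is automatic for any finitely-aligned $\N^2 * \N$-graph $\Lambda$.

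First I would check that $(\Z^2 * \Z, \N^2 * \N)$ is WQLO. By the Example in Section~2, $(\Z^2, \N^2)$ is WQLO (indeed lattice-ordered, since $(p \vee q)_i = \max(p_i, q_i)$) and $(\Z, \N)$ is WQLO (being totally ordered). The earlier lemma stating that a free product of WQLO groups is again WQLO then gives that $(\Z^2 * \Z, \N^2 * \N) = (\Z^2, \N^2) * (\Z, \N)$ is WQLO.

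Second I would invoke Corollary \ref{N^2*N Reduces}, which states precisely that $(\Z^2 * \Z, \N^2 * \N)$ strongly reduces to an amenable group: concretely, Proposition \ref{Free to Wreath Reduction} exhibits the natural homomorphism onto $(\Z^2 \wr \Z, \N^2 \wr \N)$ as a strong reduction, and Lemma \ref{Wreath Product is Amenable} guarantees that $\Z^2 \wr \Z$ is amenable.

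With both hypotheses established, Lemma \ref{Automatic Normality of Gauge Coactions} applies verbatim, yielding that any gauge coaction by $G = \Z^2 * \Z$ on a finitely-aligned $\N^2 * \N$-graph is normal. I expect essentially no obstacle at this stage: all the substantive work — constructing the strong reduction to the wreath product, and showing that the existence of a strong reduction forces the two associated conditional expectations $\Phi_s$ and $\Omega_s$ to coincide (whence faithfulness of the amenable one transfers to the gauge coaction) — has already been carried out in the cited results. This corollary merely packages those facts for the distinguished example $(\N^2 * \N)$ that motivates the applications in the paper.
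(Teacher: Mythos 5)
Your proposal is correct and follows exactly the paper's own argument: cite the strong reduction of $(\Z^2 * \Z, \N^2 * \N)$ onto the amenable wreath product $(\Z^2 \wr \Z, \N^2 \wr \N)$ (Corollary \ref{N^2*N Reduces} via Proposition \ref{Free to Wreath Reduction} and Lemma \ref{Wreath Product is Amenable}), then apply Lemma \ref{Automatic Normality of Gauge Coactions}. Your additional verification that the free product is WQLO is a hypothesis the paper leaves implicit, but it is the same proof.
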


\begin{proof}
Recall that $(\Z^2 *\Z, \N^2*\N)$ strongly reduces to $(\Z^2 \wr \Z, \N^2 \wr\N)$, which is amenable. Then by Lemma \ref{Automatic Normality of Gauge Coactions}, the gauge coaction is normal.

\end{proof}

\begin{lemma} \label{Conditional Expectation is Equivalent to Coaction}
Let $(G,P)$ be a WQLO group and suppose that $(G,P)$ has a strong reduction to an amenable group. Let $\Lambda$ be a finitely-aligned $P$-graph, and let $t$ be a representation of $\Lambda$. Then the following are equivalent:

\begin{enumerate}

\item $t$ has a gauge coaction by $G$
\item There is a bounded linear map $\Phi_t: C^*(t) \ra \mathcal B(t)$ satisfying
\[ \Phi_t (t_\mu t_\nu^*)= \begin{cases} t_\mu t_\nu^* & \text{ if } d(\mu)=d(\nu) \\ 0 &\text{ if } d(\mu) \neq d(\nu) \end{cases} \]
\end{enumerate}

\end{lemma}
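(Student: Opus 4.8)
The plan is to prove the two implications separately, with the forward direction being essentially free and the reverse direction carrying the real content. For $(1) \Rightarrow (2)$, I would simply invoke Lemma \ref{Balanced Conditional Expectation}: if $t$ has a gauge coaction by $G$, then that lemma directly produces a conditional expectation $\Phi_t : C^*(t) \to \mathcal{B}(t)$ with exactly the stated formula on the spanning elements $t_\mu t_\nu^*$, and a conditional expectation is in particular a bounded linear map. So this direction requires only a citation and the observation that the hypothesis of a strong reduction is not even needed here.

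For $(2) \Rightarrow (1)$, the strategy is to use the strong reduction to pass to the amenable group $(H,Q)$ where a conditional expectation automatically comes from an honest coaction, and then transport the resulting coaction back. More precisely, let $\varphi : (G,P) \to (H,Q)$ be the strong reduction with $H$ amenable, and regard $\Lambda$ as a $Q$-graph via Lemma \ref{P Graph is Q Graph}. I would first argue that the given map $\Phi_t$ is in fact a conditional expectation onto $\mathcal{B}(t)$ whose ``$Q$-balanced'' behavior coincides with its ``$P$-balanced'' behavior: because $\varphi$ is a \emph{strong} reduction, $d(\mu) = d(\nu)$ in $P$ if and only if $\varphi(d(\mu)) = \varphi(d(\nu))$ in $Q$ (this is the same key equivalence used in Lemma \ref{Automatic Normality of Gauge Coactions}), so the $P$-balanced subalgebra and the $Q$-balanced subalgebra of $C^*(t)$ coincide, and $\Phi_t$ is simultaneously the $P$-grading and $Q$-grading expectation.

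The main work is then to show that the existence of such a $\Phi_t$ over the amenable group $H$ forces a genuine coaction. Here I would appeal to the Fell bundle / topological grading machinery: the graded components $\{C^*(t)_h\}_{h \in H}$ defined by $C^*(t)_h = \closedspan\{t_\mu t_\nu^* : \varphi(d(\mu))\varphi(d(\nu))^{-1} = h\}$, together with the expectation $\Phi_t$ onto the $h=e$ fiber, constitute a topological grading of $C^*(t)$ over $H$ in the sense of the definition preceding Remark \ref{Graded algebra is a Fell Bundle}. The conditions $C^*(t)_{h_1} C^*(t)_{h_2} \subseteq C^*(t)_{h_1 h_2}$ and $C^*(t)_h^* \subseteq C^*(t)_{h^{-1}}$ follow from the multiplication computation in Lemma \ref{Zigzag Resolution} together with the fact that $\varphi \circ d$ is a functor, and $\Phi_t$ supplies exactly the conditional expectation required for a \emph{topological} grading. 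By Lemma \ref{Coaction Grading Fell Equivalence} (using that $H$ is amenable), a topological grading by $H$ is equivalent to a coaction by $H$, yielding a gauge coaction by $H$ with $\epsilon(t_\lambda) = t_\lambda \otimes V_{d^\Lambda_Q(\lambda)}$.

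Finally I would convert the $H$-coaction back into a $G$-coaction, which is the step I expect to be the genuine obstacle, since $\varphi$ runs the wrong way for a naive pushforward. The resolution is to build the $G$-coaction directly rather than transport it: having the grading over $H$ is equivalent, via the strong-reduction degree correspondence, to a grading over $G$ with fibers $C^*(t)_g = \closedspan\{t_\mu t_\nu^* : d(\mu)d(\nu)^{-1} = g\}$ and the \emph{same} expectation $\Phi_t$ onto the $g=e$ fiber (again because $d(\mu)=d(\nu) \iff \varphi(d(\mu))=\varphi(d(\nu))$). One must check that the $G$-indexed pieces are well-defined closed subspaces and that $\Phi_t$ vanishes off the identity fiber; then by Proposition \ref{Coactionization}(3) it suffices to produce a $*$-homomorphism $\delta : C^*(t) \to C^*(t) \otimes C^*(G)$ with $\delta(t_\lambda) = t_\lambda \otimes U_{d(\lambda)}$, and that homomorphism is exactly what the grading-to-coaction construction delivers once one works over $G$ instead of $H$. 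The delicate point throughout is verifying that $\Phi_t$, a priori only bounded and linear with the prescribed values on simple tensors, genuinely respects the grading and is idempotent and contractive; I would handle this by noting that its values on the dense $*$-algebra $\spanset\{t_\mu t_\nu^*\}$ already determine a projection of norm one by the argument in \cite[Corollary 19.6]{exel2017partial} applied to the amenable grading, so boundedness plus the correct spanning-set values suffice to identify $\Phi_t$ with the grading expectation.
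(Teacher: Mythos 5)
Your $(1)\Rightarrow(2)$ direction matches the paper exactly (cite Lemma \ref{Balanced Conditional Expectation}; no strong reduction needed), and your opening moves for $(2)\Rightarrow(1)$ --- using the bounded map $\Phi_t$ to produce a topological grading with fibers $B_g = \closedspan\{t_\mu t_\nu^* : d(\mu)d(\nu)\inv = g\}$ of $C^*(t)$ --- are also the paper's, via \cite[Theorem 19.1 and Definition 19.2]{exel2017partial}. But your final step contains a genuine gap: you assert that a $*$-homomorphism $\delta: C^*(t) \ra C^*(t)\otimes C^*(G)$ with $\delta(t_\lambda) = t_\lambda \otimes U_{d(\lambda)}$ is ``exactly what the grading-to-coaction construction delivers once one works over $G$ instead of $H$.'' There is no such construction over $G$: the equivalence between topological gradings and coactions (Lemma \ref{Coaction Grading Fell Equivalence}) requires the grading group to be amenable, and $G$ is precisely the group not assumed amenable --- if it were, the whole lemma would follow immediately from that equivalence. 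For a general group, a topological grading of $C^*(t)$ only exhibits $C^*(t)$ as an intermediate quotient between the full and reduced cross-sectional algebras of the associated Fell bundle $\mathcal B$, i.e.\ there are surjections $C^*(\mathcal B) \ra C^*(t) \ra C^*_r(\mathcal B)$ (\cite[Theorem 19.5]{exel2017partial}). The coaction lives naturally on $C^*(\mathcal B)$ (by \cite[Proposition 3.3]{quigg1996discrete}), and it induces the desired gauge coaction on $C^*(t)$ only once one knows the surjection $C^*(\mathcal B) \ra C^*(t)$ is injective. Proving that injectivity is the entire content of the implication, and your proposal replaces it with the very assertion to be proved.

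The paper closes this gap as follows: embed the generators into $C^*(\mathcal B)$ to get a representation $s$ with $C^*(s) = C^*(\mathcal B)$ carrying a gauge coaction by $G$ (Quigg); by Lemma \ref{Automatic Normality of Gauge Coactions} --- this is where the strong reduction to an amenable group actually enters --- that coaction is normal, so its conditional expectation $\Phi_s$ is faithful; by \cite[Proposition 19.6]{exel2017partial} the regular representation $L: C^*(\mathcal B) \ra C^*_r(\mathcal B)$ has kernel $\{x : \Phi_s(x^*x)=0\}=\{0\}$; and since $L = \psi \circ \pi^s_t$ factors through $C^*(t)$, the covering $\pi^s_t$ is injective, so $t \cong s$ has a gauge coaction. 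Two further remarks on your sketch: the detour through an $H$-coaction is harmless but unusable, as you yourself note it cannot be pushed back along $\varphi$; and your claimed ``equivalence'' of the $H$-grading with the $G$-grading is itself unjustified, since a strong reduction is injective on $P$ but need not be injective on $PP\inv$, so distinct $G$-fibers may land in a single $H$-fiber --- only the identity fibers are forced to coincide, which is what Lemma \ref{Automatic Normality of Gauge Coactions} exploits, but it is not enough to identify the two gradings.
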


\begin{proof}
$(1 \Rightarrow 2)$ If $t$ has a gauge coaction, then such a map $\Phi_t$ exists by Lemma \ref{Balanced Conditional Expectation}.

$(2 \Rightarrow 1)$ If such a $\Phi_t$ exists, then by Theorem 19.1 and Definition 19.2 of \cite{exel2017partial}, writing $B_g= \closedspan\{t_\mu t_\nu^* : d(\mu) d(\nu)\inv = g\}$, we have that $\mathcal B=\{B_g\}_{g \in G}$ are linearly independent and form a topological grading of $C^*(t)$. 

In particular, $\mathcal B$ is a Fell bundle, and we can form the cross-sectional algebra $C^*(\mathcal B)$ as in Remark \ref{Fell Bundle Conditional Expectation}. Each graded component $B_g$ is naturally embedded in $C^*(\mathcal B)$, so for each $\lambda \in \Lambda$, let $s_\lambda$ denote the operator arising from embedding $t_\lambda \in B_{d(\lambda)}$ into $C^*(\mathcal B)$.

But one may immediately check the T1-T4 relators to check that $\{s_\lambda\}_{\lambda \in \Lambda}$ is a representation of $\Lambda$ in $C^*(\mathcal B)$. Note that since the $\{s_\lambda\}_{\lambda \in \Lambda}$ generate each fiber $B_g$, then they generate all of $C^*(\mathcal B)$, so $C^*(\mathcal B)=C^*(s)$.

Our goal is now to show that $s$ has a gauge coaction and that $s \cong t$. For the former, by \cite[Proposition 3.3]{quigg1996discrete}, there is a coaction $\delta$ on $C^*(\mathcal B)$ satisfying

\[ \delta(b)= b\otimes U_g \text{ for all } b \in B_g, g \in G.\]

In particular, since $s_\lambda \in B_{d(\lambda)}$, then $\delta(s_\lambda)=s_\lambda \otimes U_{d(\lambda)}$, so $\delta$ is a gauge coaction on $s$.

Now we must show that $s \cong t$. By \cite[Theorem 19.5]{exel2017partial}, since $C^*(t)$ is a topologically graded $C^*$-algebra, with grading $\mathcal B$ there is a commutative diagram of surjective $*$-homomorphisms

\begin{center}
\begin{tikzcd}
C^*(\mathcal B)=C^*(s) \arrow[rr, "L"]  \arrow[dr, "\pi^s_t"] &&C^*_{r}(\mathcal B) \\
& C^*(t) \arrow[ur, "\psi"]
\end{tikzcd}
\end{center}

\noindent where $L$ denotes the regular representation from  \cite[Definition 17.6]{exel2017partial} (in that definition, this map is called $\Lambda$, but we have changed its name to avoid confusion with the $P$-graph $\Lambda$).

Recall that $C^*(\mathcal B)$ is topologically graded with conditional expectation $\Phi_s$. Since $G$ strongly reduces to an amenable group, then by Lemma \ref{Automatic Normality of Gauge Coactions}, the gauge action $s$ is normal and thus the conditional expectation $\Phi_s$ is faithful. But by \cite[Proposition 19.6]{exel2017partial}, the kernel of the regular representation $L$ is given by

\[ \ker (L)= \{x \in C^*(\mathcal B) : \Phi_s(x^*x)=0\} \]

\noindent and since $\Phi_s$ is faithful, then $\ker L=\{0\}$. Since $L$ is injective and $L= \psi \circ \pi_{s}^t$, then $\pi_s^t$ is injective, so $s \cong t$, so $t$ has a gauge coaction as desired.

\end{proof}

The reader should note that if such a bounded linear map exists, then there is a gauge coaction, so that bounded linear map is the conditional expectation of Lemma \ref{Graded Conditional Expectation}.

The next result shows that if you represent a $P$-graph on a Hilbert space in such a way that the Hilbert space also has a ``$P$-grading'' which interacts with the grading on $\Lambda$ in a natural way, then this representation has a gauge coaction.

\begin{lemma} \label{Graded Hilbert Space}
Let $(G,P)$ be a WQLO group and suppose that $(G,P)$ has a strong reduction to an amenable group. Let $\Lambda$ be a finitely-aligned $P$ graph. Let $t$ be a representation of $\Lambda$ in $\mathcal B(\mathcal H)$ for some Hilbert space $\mathcal H$, and suppose that $\mathcal H= \bigoplus_{p \in P} \mathcal H_p$ such that $t_\mu \mathcal H_p \subseteq \mathcal H_{d(\mu)p}$ for all $\mu \in \Lambda$, $p \in P$. Then $t$ has a gauge coaction.
\end{lemma}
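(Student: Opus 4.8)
The plan is to reduce the statement to the criterion of Lemma \ref{Conditional Expectation is Equivalent to Coaction}: because $(G,P)$ strongly reduces to an amenable group, the representation $t$ has a gauge coaction as soon as we exhibit a bounded linear map $\Phi_t : C^*(t) \ra \mathcal B(t)$ satisfying $\Phi_t(t_\mu t_\nu^*) = t_\mu t_\nu^*$ when $d(\mu)=d(\nu)$ and $\Phi_t(t_\mu t_\nu^*) = 0$ otherwise. The decomposition $\mathcal H = \bigoplus_{p\in P} \mathcal H_p$ is exactly the tool needed to build this map as a ``block-diagonal compression'' against the grading.

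First I would let $Q_p \in \mathcal B(\mathcal H)$ denote the orthogonal projection onto $\mathcal H_p$ and define $E : \mathcal B(\mathcal H) \ra \mathcal B(\mathcal H)$ by $E(x)\xi = \sum_{p \in P} Q_p x Q_p \xi$. To see this is well-defined and contractive, note that for a fixed $\xi$ the vectors $Q_p x Q_p \xi$ lie in the mutually orthogonal subspaces $\mathcal H_p$, so $\sum_{p} \norm{Q_p x Q_p \xi}^2 \le \norm{x}^2 \sum_p \norm{Q_p\xi}^2 = \norm{x}^2\norm{\xi}^2$; hence the net of finite partial sums is Cauchy, the limit exists, and $E$ is a well-defined linear contraction on all of $\mathcal B(\mathcal H)$.

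The key computation is to track how $t_\mu t_\nu^*$ moves the grading. From the hypothesis $t_\mu \mathcal H_p \subseteq \mathcal H_{d(\mu)p}$ one gets $Q_r t_\mu Q_p = 0$ unless $r = d(\mu) p$; taking adjoints gives $Q_p t_\nu^* Q_s = 0$ unless $s = d(\nu)p$, so $t_\nu^*$ lowers the grading by $d(\nu)$. Inserting $\sum_p Q_p = 1$ between $t_\mu$ and $t_\nu^*$ then shows $Q_r t_\mu t_\nu^* Q_s = 0$ unless $r = d(\mu)d(\nu)\inv s$. Consequently, if $d(\mu)=d(\nu)$ the operator $t_\mu t_\nu^*$ is block diagonal and $E(t_\mu t_\nu^*) = t_\mu t_\nu^*$; and if $d(\mu)\neq d(\nu)$ then $d(\mu)d(\nu)\inv \neq 1$, so the equation $r = d(\mu)d(\nu)\inv s$ forces $r \neq s$, every diagonal block $Q_s t_\mu t_\nu^* Q_s$ vanishes, and $E(t_\mu t_\nu^*) = 0$.

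Finally I would restrict $E$ to $C^*(t)$. By Lemma \ref{Zigzag Resolution}, $C^*(t) = \closedspan\{t_\mu t_\nu^* : \mu,\nu \in \Lambda\}$, and the previous step shows $E$ carries each spanning element either to itself (when $d(\mu)=d(\nu)$, an element of $\mathcal B(t)$) or to $0$. Since $E$ is linear and continuous, $\Phi_t := E\mid_{C^*(t)}$ therefore maps $C^*(t)$ into $\mathcal B(t) = \closedspan\{t_\mu t_\nu^* : d(\mu)=d(\nu)\}$ and satisfies the required formula, so Lemma \ref{Conditional Expectation is Equivalent to Coaction} ($2 \Rightarrow 1$) yields the gauge coaction. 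The main obstacle is the bookkeeping in the grading computation, especially confirming that $t_\nu^*$ lowers the grading by $d(\nu)$ and that the only surviving blocks of $t_\mu t_\nu^*$ are diagonal exactly when $d(\mu)=d(\nu)$; but this is a short orthogonality argument rather than a genuine difficulty, and the boundedness of $E$ is likewise immediate from the orthogonality of the subspaces $\mathcal H_p$.
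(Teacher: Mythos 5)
Your proposal is correct and follows essentially the same route as the paper's proof: both reduce to Lemma \ref{Conditional Expectation is Equivalent to Coaction} and construct $\Phi_t$ as the block-diagonal compression $x \mapsto \sum_{p} Q_p x Q_p$, then verify the formula on the spanning elements $t_\mu t_\nu^*$. Your bookkeeping via vanishing off-diagonal blocks $Q_r t_\mu t_\nu^* Q_s$ and your Cauchy-sum justification of boundedness are just slightly different phrasings of the paper's inner-product computation and its appeal to WOT convergence of the pairwise orthogonal compressions.
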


\begin{proof}
By Lemma \ref{Conditional Expectation is Equivalent to Coaction}, it suffices to show that there is a bounded linear map $\Phi_t: C^*(t) \ra \mathcal B(t)$ satisfying

\[ \Phi_t (t_\mu t_\nu^*)= \begin{cases} t_\mu t_\nu^* & \text{ if } d(\mu)=d(\nu) \\ 0 &\text{ if } d(\mu) \neq d(\nu) \end{cases}. \]

For each $p \in P$, let $Q_p \in \mathcal B(\mathcal H)$ denote the projection onto $\mathcal H_p$. Then define 

\[ \Phi_t(x) = \fsum_{\substack{p \in P \\ WOT \text{ limit}} } Q_p x Q_p \]

\noindent which is bounded and converges in the weak operator topology because the $\{Q_p\}_{p \in P}$ are pairwise orthogonal projections. We then wish to show that 

\[ \Phi_t (t_\mu t_\nu^*)= \begin{cases} t_\mu t_\nu^* & \text{ if } d(\mu)=d(\nu) \\ 0 &\text{ if } d(\mu) \neq d(\nu) \end{cases} \]

\noindent for all $\mu, \nu \in \Lambda$. 

To this end, fix some $q \in P$, $h_q \in \mathcal H_q$, and $x \in \mathcal H$. Then we have that

\bea
\langle \Phi_t (t_\mu t_\nu^*) h_q , x \rangle &=& \fsum_{\substack{p \in P \\ WOT \text{ limit}} } \langle Q_p t_\mu t_\nu^* Q_p h_q, x \rangle \\
&=&\langle Q_{q} t_\mu t_\nu^* Q_{q} h_q, x \rangle \\
&=&\langle Q_{q}  t_\mu t_\nu^* h_q,  x \rangle 
\eea

\noindent since $Q_p h_q =0$ if $p \neq q$. Since our choice of $x \in \mathcal H$ was arbitrary, we may unbind the $x$ term, so we have shown that $\Phi_t (t_\mu t_\nu^*) h_q=Q_{q}  t_\mu t_\nu^* h_q$.

But note that $t_\mu t_\nu^* h_q \in \mathcal H_{d(\mu) d(\nu) \inv q}$ and $q=d(\mu) d(\nu) \inv q$ if and only if $d(\mu)=d(\nu)$, so 

\[ \Phi_t (t_\mu t_\nu^*) h_q= Q_{q}  t_\mu t_\nu^* h_q =\begin{cases}
 t_\mu t_\nu^* h_q &\text{ if } d(\mu)=d(\nu) \\ 
 0 &\text{ if } d(\mu) \neq d(\nu) 
\end{cases}. \]

Since this is true for each $q \in P$ and $h_q \in \mathcal H_q$, and the $\mathcal H_q$ span all of $\mathcal H$, we may unbind the $h_q$ to get that  

\[ \Phi_t (t_\mu t_\nu^*) =\begin{cases}
 t_\mu t_\nu^* &\text{ if } d(\mu)=d(\nu) \\ 
 0 &\text{ if } d(\mu) \neq d(\nu) 
\end{cases} \]

\noindent as desired. 

\end{proof}

\begin{corollary} \label{Left Regular is Toeplitz}
Let $(G,P)$ be a WQLO group and suppose that $(G,P)$ has a strong reduction to an amenable group. Let $\Lambda$ be a finitely-aligned $P$-graph. Then $L\cong \mathcal T$, where $L$ denotes the regular representation of $\Lambda$ and $\mathcal T$ denotes the Toeplitz representation of $\Lambda$.

\end{corollary}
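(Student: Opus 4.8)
The plan is to identify $L$ as a representation satisfying the hypotheses of Lemma \ref{Toeplitz Criteria}, which will immediately yield $L \cong \mathcal T$. That lemma demands three properties of a representation: that it be $\Lambda$-faithful, that it carry a gauge coaction, and that each of its proper bolts be nonzero. The first is already recorded in Remark \ref{Regular Representation Properties}, so the work lies in verifying the remaining two, both of which are within reach given the machinery assembled for strong reductions.

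For the gauge coaction, I would produce the required $P$-grading of the underlying Hilbert space and appeal to Lemma \ref{Graded Hilbert Space}. Concretely, on $\mathcal H = \ell^2(\Lambda)$ set $\mathcal H_p = \closedspan\{e_\alpha : d(\alpha) = p\}$ for each $p \in P$. Since the degree functor assigns to each path a single element of $P$ and $\{e_\alpha\}_{\alpha \in \Lambda}$ is an orthonormal basis, these subspaces are pairwise orthogonal and $\mathcal H = \bigoplus_{p \in P}\mathcal H_p$. The defining formula $L_\mu e_\alpha = e_{\mu\alpha}$ (when $s(\mu)=r(\alpha)$), together with $d(\mu\alpha)=d(\mu)d(\alpha)$, shows $L_\mu \mathcal H_p \subseteq \mathcal H_{d(\mu)p}$, so Lemma \ref{Graded Hilbert Space} applies — this is precisely where the strong-reduction hypothesis is consumed — and $L$ has a gauge coaction.

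For the proper bolts, I would reuse the computation in Remark \ref{Regular Representation Properties}: for any $\mu \in \Lambda$ and finite exhaustive $E \subset \mu\Lambda$ with $\mu \notin E$, one has $\prod_{\alpha \in E}(L_\mu L_\mu^* - L_\alpha L_\alpha^*)e_\mu = e_\mu \neq 0$, so such a bolt is nonzero; since a proper bolt has its base path outside $E$, every proper bolt in $L$ is nonzero. With all three hypotheses verified, Lemma \ref{Toeplitz Criteria} gives $L \cong \mathcal T$. I do not anticipate a genuine obstacle: each ingredient is essentially pre-packaged in an earlier result, and the only real content is recognizing that the degree-grading of $\ell^2(\Lambda)$ is exactly the kind of graded Hilbert space demanded by Lemma \ref{Graded Hilbert Space}. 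The subtlest point to state carefully is the bookkeeping that a proper bolt always has its base path outside $E$, so that the Remark's calculation applies verbatim.
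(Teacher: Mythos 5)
Your proposal is correct and follows the paper's own proof essentially verbatim: it invokes Lemma \ref{Toeplitz Criteria}, cites Remark \ref{Regular Representation Properties} for $\Lambda$-faithfulness and nonvanishing of proper bolts, and obtains the gauge coaction from Lemma \ref{Graded Hilbert Space} via the degree-grading $\ell^2(\Lambda) = \bigoplus_{p \in P} \ell^2(\Lambda^p)$. Your extra care in spelling out the bolt computation and the observation that a proper bolt's base path lies outside $E$ is a fine (if slightly more explicit) rendering of the same argument.
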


\begin{proof}
To show that $L$ is universal, by Lemma \ref{Toeplitz Criteria} it suffices to show that $L$ it is $\Lambda$-faithful, has a gauge coaction, and that every proper bolt is nonzero. By Remark \ref{Regular Representation Properties}, $L$ is $\Lambda$-faithful and its proper bolts are nonzero. 

To show that $L$ has a gauge coaction, we will appeal to Lemma \ref{Graded Hilbert Space}. Recall that $L$ is a representation in $\mathcal B(\ell^2(\Lambda))$ by 

\[ 
L_\mu e_\alpha= \begin{cases}
e_{\mu \alpha} & \text{ if } s(\mu)=r(\alpha) \\
0 & \text{ if } s(\mu) \neq r(\alpha)
\end{cases}. \]

Therefore, writing $\Lambda^p= \{\mu \in \Lambda: d(\mu)=p\}$, we can see that $\ell^2(\Lambda) = \bigoplus_{p \in P} \ell^2(\Lambda^p)$, and a simple calculation shows that $L_\mu \ell^2(\Lambda^p) \subseteq \ell^2(\Lambda^{d(\mu)p})$. Therefore, by Lemma \ref{Graded Hilbert Space}, $L$ has a gauge coaction.

Thus by by Lemma \ref{Toeplitz Criteria}, $L \cong \mathcal T$.

\end{proof}

The following result shows that if $\Lambda$ has no ``infinite paths'', then the ultrafilter representation is the universal tight representation.

\begin{lemma} \label{Ultrafilter is Universal}
Let $(G,P)$ be a WQLO group and suppose that $(G,P)$ has a strong reduction to an amenable group. Let $\Lambda$ be a finitely-aligned $P$-graph. Recall that $\Lambda$ has a partial order given by $\alpha \leq \beta$ if $\beta \in \alpha \Lambda$. Suppose that with respect to this partial order, every chain in $\Lambda$ has an upper bound in $\Lambda$. Then the ultrafilter representation $f$ of $\Lambda$ is the universal tight representation of $\Lambda$.

\end{lemma}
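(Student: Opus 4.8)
The plan is to reduce the statement to the single assertion that the ultrafilter representation $f$ has a gauge coaction by $G$. Once that is known the conclusion is immediate: by Lemma \ref{Ultrafilter Representation} the representation $f$ is already $\Lambda$-faithful and tight, so $f$ would be a $\Lambda$-faithful, tight, gauge-coacting representation, and since $(G,P)$ strongly reduces (hence reduces) to an amenable group, Proposition \ref{Uniqueness Theorem}(e) yields a canonical isomorphism $C^*(f) \cong C^*_{tight}(\Lambda,P)$. As this isomorphism sends each $f_\lambda$ to the corresponding universal tight generator, $f$ is canonically isomorphic to the universal tight representation $T$ of Proposition \ref{Tight Representation Definition}, which is exactly the claim.

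To produce the gauge coaction I intend to invoke Lemma \ref{Graded Hilbert Space}, which applies precisely because $(G,P)$ strongly reduces to an amenable group: it suffices to grade the underlying Hilbert space $\ell^2(\widehat\Lambda_\infty)$ by $P$ so that $f_\mu$ carries the $p$-graded piece into the $d(\mu)p$-graded piece. The natural grading assigns to each basis vector $e_U$ a degree reflecting the ``length'' of the ultrafilter $U$, so the heart of the argument is a structural classification of ultrafilters under the chain hypothesis.

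The key structural step, which I expect to be the main obstacle, is to show that when every chain in $\Lambda$ has an upper bound, every ultrafilter $U$ is principal, i.e. $U = U_\omega := \{\lambda : \lambda \leq \omega\}$ for a unique element $\omega$ maximal in $\Lambda$. First I would check the easy direction: for maximal $\omega$, the set $U_\omega$ is a filter, and any filter strictly containing it would, by F2, produce an extension of $\omega$, contradicting maximality, so $U_\omega$ is an ultrafilter. For the converse, given an ultrafilter $U$, I would use that $\Lambda$ (hence $U$) is countable to enumerate $U = \{\lambda_1, \lambda_2, \dots\}$ and build, by repeated application of F2, an ascending chain $\mu_1 \leq \mu_2 \leq \cdots$ in $U$ with $\lambda_k \leq \mu_k$ for all $k$; this chain is therefore cofinal in $U$. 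By hypothesis it has an upper bound $\omega \in \Lambda$, and cofinality forces $\omega$ to bound every element of $U$, so $U \subseteq U_\omega$. Since $U_\omega$ is a filter and $U$ is maximal, $U = U_\omega$; in particular $\omega = \max U \in U$, and maximality of $\omega$ in $\Lambda$ follows because $U_{\omega'} \supsetneq U_\omega$ for any $\omega' > \omega$ would again contradict maximality of $U$. The delicate point is exactly that the upper bound of the cofinal chain is captured inside $U$ by ultrafilter maximality, rather than escaping to a larger filter.

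With the classification in hand I would define $\rho \colon \widehat\Lambda_\infty \ra P$ by $\rho(U) = d(\max U)$, which is well-defined by the uniqueness above. Since $\mu \cdot U_\omega = U_{\mu\omega}$ whenever $r(\omega) = s(\mu)$ and $\mu\omega$ is again the maximum of an ultrafilter, one obtains $\rho(\mu \cdot U) = d(\mu\omega) = d(\mu)\,\rho(U)$. Setting $\mathcal H_p = \closedspan\{e_U : \rho(U) = p\}$ then gives an orthogonal decomposition $\ell^2(\widehat\Lambda_\infty) = \bigoplus_{p \in P} \mathcal H_p$ with $f_\mu \mathcal H_p \subseteq \mathcal H_{d(\mu)p}$, so Lemma \ref{Graded Hilbert Space} supplies the gauge coaction and the proof concludes as described in the first paragraph.
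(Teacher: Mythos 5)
Your proposal is correct and follows essentially the same route as the paper's own proof: reduce to producing a gauge coaction via the uniqueness theorem, classify all ultrafilters as principal filters $U_\omega$ of maximal paths using countability, F2, and the chain-upper-bound hypothesis, then grade $\ell^2(\widehat\Lambda_\infty)$ by the degree of the maximal element and apply Lemma \ref{Graded Hilbert Space}. The only cosmetic difference is that you spell out the verification $\mu \cdot U_\omega = U_{\mu\omega}$ that the paper dismisses as ``a simple calculation.''
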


\begin{proof}
By our gauge-invariant uniqueness theorem for $P$-graphs, it suffices to show that $f$ is $\Lambda$-faithful, tight, and has a gauge coaction. The former two properties are shown in Lemma \ref{Ultrafilter Representation}. It then suffices to use Lemma \ref{Graded Hilbert Space} to show that $f$ has a gauge coaction.

To this end, we wish to show that the ultrafilters in $f$ correspond to maximal paths in $\Lambda$. More formally, for each path $\alpha \in \Lambda$, let $U_\alpha = \{\mu \in \Lambda: \mu \leq \alpha\}$. It is straightforward to check that $U_\alpha$ is a filter, and that it is an ultrafilter if and only if $\alpha$ is maximal. We wish to show that all ultrafilters in $\Lambda$ are of the form $U_\alpha$.

Fix some ultrafilter $U$. If $U$ is finite, it must contain a maximal element $\alpha$, in which case $U=U_\alpha$. If instead $U$ is infinite, since $U \subseteq \Lambda$ is countable, we may enumerate $U= \{\alpha_n\}_{n \in \N}$. Inductively define a sequence $\{\beta_n\}_{n \in \N} \subseteq U$ by $\beta_1=\alpha_1$, and for $n>1$, let $\beta_n$ be a common upper bound of $\beta_{n-1}$ and $\alpha_n$ which is guaranteed to exist by the $F2$ property of filters. Note that $U= \bigcup_{n=1}^\infty U_{\beta_n}$. 

Then $\{\beta_n\}_{n \in \N}$ is a chain in $\Lambda$, so by our hypothesis, it has an upper bound $\beta \in \Lambda$. Then $U_\beta \supseteq U_{\beta_n}$ for all $n$, and since $U= \bigcup_{n=1}^\infty U_{\beta_n}$, then $U_\beta \supseteq U$, and since $U$ is an ultrafilter, $U=U_\beta$, as desired.

Therefore, we can partition the set of ultrafilters by the degree of their unique maximal element. That is, writing $\widehat \Lambda_\infty^p = \{U_\alpha \in \widehat \Lambda_\infty: d(\alpha)=p\}$, we have $\widehat \Lambda_\infty= \bigsqcup_{p \in P} \widehat \Lambda_\infty^p$, and therefore 

\[ \ell^2(\widehat \Lambda_\infty) = \bigoplus_{p \in P} \ell^2(\widehat \Lambda_\infty^p) .\]

Finally, a simple calculation shows that $f_\mu \ell^2(\widehat \Lambda_\infty^p) \subseteq \ell^2(\widehat \Lambda_\infty^{d(\mu) p})$, so by Lemma \ref{Graded Hilbert Space}, $f$ has a gauge coaction, as desired.

\end{proof}

\begin{example}
Let $G= BS(1,2) = \langle c, t | tc=c^2t \rangle$, and let $P=\{c,t\}^*$. Note that $BS(1,2)$ is amenable, and therefore has a strong reduction to an amenable group (itself). Define a $P$-graph $\Lambda$ as in this diagram:

\begin{center}
\begin{tikzcd}
v_1  \arrow[d, "t_1"] \arrow[rr, "c_1"] &&
v_2 \arrow[d, "t_2"]\\
v_3 \arrow[r, "c_2"]  &
v_4 \arrow[r, "c_3"] &
v_5\\
\end{tikzcd}
\end{center}

\noindent so $\Lambda =\{v_1,v_2,v_3,v_4,v_5, c_1,c_2,c_3, t_1,t_2, c_2t_1, c_3c_2, c_3c_2t_1=c_1t_2\}$, where the range and source maps are as indicated in the diagram, the operation is concatenation of paths, and the degrees are given by $d(v_i)=1$, $d(c_i)=c$, $d(t_i)=t$, and so on.

Since $\Lambda$ is finite, all chains have an upper bound, so by Lemma \ref{Ultrafilter is Universal}, the ultrafilter representation is the universal tight representation and the set of ultrafilters is in bijection with the maximal paths in $\Lambda$. Thus $\widehat \Lambda_\infty = \{v_1, c_1, t_1, c_2t_1, c_3c_2t_1=t_2c_1\}$, so $C^*(f)$ is represented on a 5-dimension vector space and $C^*(f) \subseteq M_5$, the space of 5-by-5 matrices. A direct calculation with respect to the basis $\{ e_{v_1}, e_{c_1}, e_{t_1}, e_{c_2t_1}, e_{c_3c_2t_1}=e_{t_2c_1}\}$ shows that

\bea
f_{v_1} &=& E_{11} \\
f_{c_1} &=& E_{21} \\
f_{t_1} &=& E_{31} \\
f_{c_2t_1} &=& E_{41} \\
f_{t_2c_1} &=& E_{51} \\
\eea

\noindent where $E_{ij}$ denotes the matrix consisting of all 0s except for a single 1 in the $i$th row and $j$th column (a matrix unit). Since these matrix units generate $M_5$, then $C^*(f) = C^*(\Lambda)=M_5$.

\end{example}

It is notable that the results in this section apply only to groups with strong reductions, instead of any reduction. The only ``bottleneck'' for this is Lemma \ref{Automatic Normality of Gauge Coactions}, which says that gauge coactions are normal when $(G,P)$ strongly reduces to an amenable group. We conjecture that a (non-strong) reduction to an amenable group would suffice:

\begin{conjecture}
Let $(G,P)$ be a WQLO group, let $\Lambda$ be a finitely aligned $P$-graph, and suppose $(G,P)$ reduces to an amenable group. Then every gauge coaction on a representation of $\Lambda$ is normal.

Therefore, the other results in this section only require the hypothesis that $(G,P)$ reduces to an amenable group.
\end{conjecture}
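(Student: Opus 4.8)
The plan is to imitate the proof of Lemma \ref{Automatic Normality of Gauge Coactions} as far as possible and to isolate precisely the one place where strongness was used, so that it may be replaced by a separate argument. Fix a reduction $\varphi\colon(G,P)\to(H,Q)$ with $H$ amenable, a finitely-aligned $P$-graph $\Lambda$, and a representation $s$ with a gauge coaction $\delta$ by $G$. By Lemma \ref{Surjective Reduction} we may factor $\varphi$ through $(H',Q')=(\varphi(G),\varphi(P))$, a hereditary subgroup of $(H,Q)$; since $H'$ is a subgroup of the amenable group $H$ it is amenable, so without loss of generality $\varphi$ is surjective, and we write $N=\ker\varphi\lhd G$, so that $G/N\cong H$ is amenable. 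By Lemma \ref{Coactions Carry Over}, $s$ also carries a gauge coaction by $H$, and exactly as in Lemma \ref{Automatic Normality of Gauge Coactions} this produces two conditional expectations on $C^*(s)$: the expectation $\Phi_s\colon C^*(s)\to\mathcal B(s)$ we wish to prove faithful, which kills $s_\mu s_\nu^*$ unless $d(\mu)=d(\nu)$, and $\Omega_s\colon C^*(s)\to\mathcal B_H(s)$, which kills $s_\mu s_\nu^*$ unless $\varphi(d(\mu))=\varphi(d(\nu))$, where $\mathcal B_H(s)=\closedspan\{s_\mu s_\nu^*:\varphi(d(\mu))=\varphi(d(\nu))\}$. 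By Lemma \ref{Amenable Coactions are Normal}, $\Omega_s$ is faithful because $H$ is amenable.

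Since $d(\mu)=d(\nu)$ implies $\varphi(d(\mu))=\varphi(d(\nu))$ we have $\mathcal B(s)\subseteq\mathcal B_H(s)$, and the failure of strongness is exactly that this inclusion may be proper. The structural observation driving the argument is that $\mathcal B_H(s)$ is itself graded by the subgroup $N$: for $n\in N$ set $(\mathcal B_H(s))_n=\closedspan\{s_\mu s_\nu^*:d(\mu)d(\nu)\inv=n\}$, which is legitimate because $\varphi(d(\mu)d(\nu)\inv)=e$ forces $d(\mu)d(\nu)\inv\in N$, and whose unit fibre is precisely $\mathcal B(s)$. This $N$-grading is just the restriction to $N$ of the $G$-grading of $C^*(s)$ coming from $\delta$, hence is topological and carries a conditional expectation $\Psi\colon\mathcal B_H(s)\to\mathcal B(s)$ (indeed $\Psi=\Phi_s|_{\mathcal B_H(s)}$). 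A check on the spanning elements $s_\mu s_\nu^*$ — splitting into the case $\varphi(d(\mu))\neq\varphi(d(\nu))$, where both sides vanish, and the case $\varphi(d(\mu))=\varphi(d(\nu))$, where the two expectations agree — shows $\Phi_s=\Psi\circ\Omega_s$. As $\Omega_s$ is already faithful, the entire conjecture reduces to proving that $\Psi$ is faithful, equivalently (by Remark \ref{Fell Bundle Conditional Expectation}) that the Fell bundle $\{(\mathcal B_H(s))_n\}_{n\in N}$ over $N$ is amenable.

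This last step is the crux, and it is where strongness cannot simply be discarded. In the strong case $\varphi|_P$ is injective, so $(\mathcal B_H(s))_n=0$ for every $n\neq e$ and the bundle is trivially amenable — this is the entire content of Lemma \ref{Automatic Normality of Gauge Coactions}. In general, however, $N=\ker\varphi$ is typically a large, non-amenable group (for $\varphi\colon F_2\to\Z$ it is free of infinite rank), so Lemma \ref{Fell bundle over amenable group} does not apply. The hope, and the main obstacle, is to show that the interval-bijection property of a reduction forces this particular bundle to be amenable nonetheless: the fibres $(\mathcal B_H(s))_n$ record only the ways in which two paths of equal $Q$-degree can differ in $P$-degree, and the bijectivity of $\varphi$ on each interval $[1,p]$ sharply constrains these differences. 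One would attempt either to construct a faithful $\Psi$ directly — for instance by realizing $s$ on a $P$-graded Hilbert space $\bigoplus_{p\in P}\mathcal H_p$ with $s_\mu\mathcal H_p\subseteq\mathcal H_{d(\mu)p}$ and invoking the genuinely faithful expectation $x\mapsto\sum_p Q_pxQ_p$ of Lemma \ref{Graded Hilbert Space} — or to establish an extension principle for Fell bundles reducing amenability over $G$ (via $G/N\cong H$ amenable) to amenability of the restriction over $N$. Both routes, though, collide with the same difficulty: distributing the $Q$-graded pieces of a faithful $H$-representation back across the fibres of $P\to Q$, which is exactly the collapsed-degree data that strongness renders trivial.

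Finally, the conjecture is immediate in one important situation: if the open Question of Section \ref{sectionSummary} has a positive answer, so that every ordered group reducing to an amenable group also \emph{strongly} reduces to one, then Lemma \ref{Automatic Normality of Gauge Coactions} applies verbatim and there is nothing left to prove. A complete resolution is therefore entangled with that question, and the genuinely new content resides only in those (possibly nonexistent) groups that reduce to an amenable group without strongly doing so; pinning down whether such groups exist, and if so whether the bundle $\{(\mathcal B_H(s))_n\}_{n\in N}$ they produce is forced to be amenable, is where I expect the real work to lie.
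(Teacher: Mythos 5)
You should be aware at the outset that the statement you were given is not a theorem of the paper but an explicitly open conjecture: the paper offers no proof, so there is nothing to compare your argument against, and a complete proof would be new mathematics. Judged on its own terms, your proposal is what it claims to be --- a correct \emph{reduction} of the conjecture, not a proof. The steps you do carry out are sound: using Lemma~\ref{Surjective Reduction} to replace $\varphi$ by its corestriction onto $(\varphi(G),\varphi(P))$, which is amenable as a subgroup of an amenable group, so that $N=\ker\varphi$ and $G/N$ is amenable; producing the two expectations $\Phi_s$ and $\Omega_s$ from the $G$- and $H$-coactions (Lemmas~\ref{Coactions Carry Over}, \ref{Graded Conditional Expectation}, \ref{Amenable Coactions are Normal}); noting that the fibres of the topological $G$-grading with $g\in N$ span $\mathcal B_H(s)=\closedspan\{s_\mu s_\nu^*:\varphi(d(\mu))=\varphi(d(\nu))\}$, giving a topological $N$-grading with unit fibre $\mathcal B(s)$ and expectation $\Psi=\Phi_s|_{\mathcal B_H(s)}$; verifying $\Phi_s=\Psi\circ\Omega_s$ on spanning elements; and concluding that faithfulness of $\Psi$ would imply faithfulness of $\Phi_s$ since $\Omega_s$ is faithful. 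Your observation that strongness makes every fibre with $n\neq e$ vanish, so that $\Psi$ is the identity, correctly exhibits Lemma~\ref{Automatic Normality of Gauge Coactions} as the degenerate case of this framework.

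The genuine gap is exactly where you locate it, and it is the entire content of the conjecture: nothing in the proposal establishes faithfulness of $\Psi$, i.e.\ injectivity of the regular representation of the Fell bundle $\{A_n\}_{n\in N}$, and since $N$ is typically non-amenable (the kernel of $F_2\ra\Z$ is free of infinite rank), Lemma~\ref{Fell bundle over amenable group} is unavailable. Two smaller corrections. First, your ``equivalently \ldots amenable'' overstates the relationship: by Remark~\ref{Fell Bundle Conditional Expectation}, faithfulness of $\Psi$ is equivalent to $\mathcal B_H(s)$ being isomorphic to the \emph{reduced} cross-sectional algebra of the bundle via the regular representation, which is implied by, but a priori weaker than, amenability of the bundle; aiming at the weaker statement may be strictly easier, so the distinction is worth preserving. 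Second, the Question in Section~\ref{sectionSummary} asks whether groups exist that reduce but do not strongly reduce to an amenable group; a \emph{positive} answer means such groups exist, which is the opposite of the hypothesis you invoke --- what settles the conjecture via Lemma~\ref{Automatic Normality of Gauge Coactions} is a \emph{negative} answer. With those caveats, your scaffolding is a reasonable way to organize an attack on the conjecture, but the conjecture is exactly as open after your argument as before it.
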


\subsection{Applications to Kirchberg Algebras in the UCT Class}

\begin{definition}
A $C^*$-algebra $A$ is called a \ulindex{Kirchberg algebra} if it is purely infinite, simple, nuclear, and separable.

A $C^*$-algebra is said to be in the \ulindex{UCT class} if it satisfies the hypotheses of the universal coefficient theorem. (For a more thorough treatment, the reader is directed to the introduction of \cite{barlak2020cartan}). 
\end{definition}

\begin{proposition} \label{Kirchberg UCT Algebras Result}
Let $A$ be a Kirchberg algebra in the UCT class. Then there is a finitely aligned $\N^2 * \N$-graph such that $A$ is stably isomorphic to $C^*(\Lambda)$.
\end{proposition}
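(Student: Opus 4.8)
The plan is to reduce the statement to realization results already in the literature and then identify the co-universal algebra appearing there with the algebra $C^*(\Lambda)$ furnished by our uniqueness theorem. The starting point is Corollary \ref{N^2*N Reduces}, which says that $(\Z^2*\Z, \N^2*\N)$ strongly reduces to the amenable group $(\Z^2 \wr \Z, \N^2 \wr \N)$. Consequently, Proposition \ref{Uniqueness Theorem} applies to \emph{every} finitely aligned $(\N^2*\N)$-graph $\Lambda$, so that $C^*_{tight}(\Lambda)$, $C^*_{min}(\Lambda)$, and the canonical algebra $C^*(\Lambda)$ are all canonically isomorphic. In particular, the co-universal algebra $C^*_{min}(\Lambda)$ of such a graph is exactly the algebra we have named $C^*(\Lambda)$; this is the single new structural fact that the reduction machinery contributes.

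Next I would invoke the realization results of Spielberg \cite{spielberg2007graph} and Brownlowe--Sims--Vittadello \cite{brownlowe2013co}. Spielberg's graph-based models exhibit, for every prescribed K-theory, a combinatorial object whose associated $C^*$-algebra is a Kirchberg algebra in the UCT class; reinterpreted as in \cite{brownlowe2013co}, these objects are finitely aligned $(\N^2*\N)$-graphs whose co-universal algebra is a Kirchberg algebra in the UCT class. Given the target algebra $A$, the Kirchberg--Phillips classification theorem tells us that its stable isomorphism class is determined by the pair $(K_0(A), K_1(A))$ of countable abelian groups, and every such pair is realized. Hence I would select a finitely aligned $(\N^2*\N)$-graph $\Lambda$ whose co-universal algebra has K-groups matching those of $A$, so that this co-universal algebra is stably isomorphic to $A$.

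The crucial bridge is to verify that the co-universal algebra of \cite{brownlowe2013co} coincides, up to canonical isomorphism, with our $C^*_{min}(\Lambda)$. This amounts to checking that the co-universal representation of \cite{brownlowe2013co} is $\Lambda$-faithful, tight, and carries a \emph{normal} gauge coaction, so that Theorem \ref{Co-Universal Algebra} pins it down as the representation $S$ generating $C^*_{min}(\Lambda)$. Here one uses that $(\Z^2*\Z, \N^2*\N)$ is quasi-lattice ordered, hence WQLO, so that both frameworks apply, and that the normality of the gauge coaction is automatic precisely because of the strong reduction to an amenable group. Combining this identification with the isomorphisms of the first paragraph then gives $A$ stably isomorphic to $C^*_{min}(\Lambda) \cong C^*(\Lambda)$, as desired.

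The hard part will be exactly this matching of definitions. The co-universal algebra of \cite{brownlowe2013co} was constructed under hypotheses that did not, by themselves, identify it with a universal tight algebra---since $\Z^2*\Z$ is not amenable (it contains a copy of $F_2$)---and the conventions there differ slightly from ours (quasi-lattice order versus WQLO, and their $MCE$ being our $MDCE$, reconciled by Lemma \ref{Equivalent Minimality}). One must therefore carefully translate their co-universal property into the hypotheses of Theorem \ref{Co-Universal Algebra}, and also confirm that Spielberg's models are finitely aligned in our sense, which should follow from the finiteness built into his construction. Once these compatibility checks are in place, the new ingredient $C^*_{tight}(\Lambda) \cong C^*_{min}(\Lambda)$ promotes the co-universal realization of \cite{brownlowe2013co} to a realization by the canonical algebra $C^*(\Lambda)$, completing the argument.
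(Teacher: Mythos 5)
Your proposal is correct and takes essentially the same route as the paper: the paper simply cites \cite[Corollary 6.3]{brownlowe2013co} to produce a finitely aligned $\N^2*\N$-graph $\Lambda$ with $A$ stably isomorphic to $C^*_{min}(\Lambda)$ (this corollary already packages the Spielberg/Kirchberg--Phillips realization content you unpack), and then applies Corollary \ref{N^2*N Reduces} together with Theorem \ref{uniquenesstheorem} to conclude $C^*_{min}(\Lambda)=C^*(\Lambda)$. The compatibility check you flag as the hard part is essentially built into the paper's framework, since Theorem \ref{Co-Universal Algebra} is a direct generalization of \cite[Theorem 5.3]{brownlowe2013co}, so the two co-universal algebras coincide by their shared co-universal property.
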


\begin{proof}
By \cite[Corollary 6.3]{brownlowe2013co}, there is a finitely aligned $\N^2 * \N$-graph $\Lambda$ such that $A$ is stably isomorphic to $C^*_{min}(\Lambda)$. But since $\N^2*\N$ strongly reduces to an amenable group by Corollary \ref{N^2*N Reduces}, then $C^*_{min}(\Lambda)=C^*(\Lambda)$ by the gauge-invariant uniqueness theorem for $P$-graphs (Theorem \ref{uniquenesstheorem}). Thus $A$ is stably isomorphic to $C^*(\Lambda)$.
\end{proof}

\section{Appendix 1: Tight Representations}

In this section, our goal is to justify our use of the term ``tight'' in the context of representations of $P$-graphs by showing that a representation is tight in the sense of Definition \ref{Representation Terminology}(c) if and only if it corresponds to a tight representation of a semilattice in the sense of Exel's definition in \cite{exel2009tight}.

In this section, we fix $(G,P)$ a WQLO group, $\Lambda$ a finitely-aligned $P$-graph, and $t: \Lambda \ra C^*(t)$ a representation. We will often write $q_\lambda =  t_\lambda t_\lambda^*$.

It can be easy to be confused about what symbols denote paths, sets of paths, and sets of sets of paths. To avoid confusion, we will try to stick to the convention that greek letters like $\alpha$ denote paths in $\Lambda$, lower case letters (and $\mu \Lambda$ where $\mu \in \Lambda$) denote subsets of $\Lambda$, and capital letters denote collections of subsets.

\subsection{Background}

Recall the following definition of the tightness from \ref{Representation Terminology}(c), slightly modified to keep with our notation for this section: 

\begin{definition}
A representation $t$ of a graph $\Lambda$ is \ulindex{tight} if for every $\mu \in \Lambda$ and finite $z \subseteq \mu \Lambda$ which is exhaustive for $\mu \Lambda$, we have that

 \[ \fprod_{\alpha \in z} (q_\mu -q_\alpha )=0 \] 
 
\noindent where $q_\lambda =  t_\lambda t_\lambda^*$.
 
\end{definition}

This condition was referred to a tightness before, but in this section will be referred to as \ul{H-tightness}\index{tight!H-tightness} to avoid confusion. 

The following terminology is from \cite[Section 2]{exel2009tight}:

\begin{definition}
In \cite{exel2009tight}'s context, a partially ordered set contains a smallest element, denoted $0$. A semilattice is a partially ordered set where for all $x,y \in X$, the set $\{z \in X: z\leq x,y\}$ has a maximum element, denoted $x \wedge y$. That is, $x \wedge y$ is the infimum of $x$ and $y$, and every two elements have an infimum (although it may be $0$).

Given, $x,y \in X$, we say $x$ and $y$ are disjoint and write $x \perp y$ if there is no nonzero $z \in X$ with $z \leq x,y$. Otherwise we say that $x$ and $y$ intersect, and write $x \Cap y$.

\end{definition}

\begin{definition}
Let $E$ be a semilattice. Given finite $X, Y \subseteq E$, let $E^{X,Y}$ denote the subset of $E$ given by

\[ E^{X,Y}= \{z \in E: z \leq x \ \forall x\in X, z \perp y \ \forall y \in Y\}.\]

Given any subset $F \subseteq E$, we shall say that a subset $Z \subseteq F$ is a \ul{cover}\index{cover!of a semilattice} for $F$ if, for every nonzero $x \in F$ there exists $z \in Z$ such $x$ and $z$ have a common extension.
\end{definition}

\begin{definition}
Let $\sigma:E \ra \mathcal B$ be a representation of a semilattice $E$ in a Boolean algebra $\mathcal B$. We shall say that $\sigma$ is \ulindex{tight} if for every finite subsets $X, Y \subseteq E$ and for every finite cover $Z$ for $E^{X,Y}$, one has that

\[ \bigvee_{z \in Z} \sigma(z) \geq \bigwedge_{x \in X} \sigma(x) \wedge \bigwedge_{y \in Y} \neg \sigma(y).\]

Since the reverse inequality is always true, tightness is equivalent to equality in this expression.
\end{definition}

To avoid confusion with H-tightness, we will refer to this notion of tightness as \ul{E-tightness}\index{tight!E-tightness}.

\subsection{Building our Semilattice}

We'll now define our semilattice $E$ and develop its basic properties.

\begin{remark}
Given $\Lambda$, let $E(\Lambda)= \{ \bigcup_{i=1}^n \alpha_i \Lambda : \alpha_i \in \Lambda\}$ denote the collection of finite unions of $\alpha \Lambda$s, ordered under containment. When $\Lambda$ is clear from context, we will write $E$ for $E(\Lambda)$. The empty set serves as the zero element for $E$, and two elements $a,b$ of $E$ have a least lower bound $a \cap b$, which is also in $E$ by finite alignment of $\Lambda$. Thus $E$ is a semilattice.

Recall that $\Lambda$ is given a partial ordering $\leq$ by $\alpha \leq \beta$ if and only if $\beta =\alpha \alpha_1$. Note that $\alpha \leq \beta$ if and only if $\beta \Lambda \subseteq \alpha \Lambda$.

For $a \in E$, we let $m(a)=\{\alpha \in a: \alpha \text{ minimal in } a\}$ denote the set of maximal elements of $a$ (with respect to the ordering on $\Lambda$). For any $a \in E$, we may write $a= \bigcup_{i=1}^n \alpha_i \Lambda$ for some $\alpha_i \in \Lambda$, so it follows that $m(a) \subseteq \{\alpha_i\}_{i=1}^n$, so $m(a)$ is finite. Note also that $m(a)$ is empty if and only if $a$ is empty. Finally, note that $a= \bigcup_{\alpha \in m(a)} \alpha \Lambda$. So in this way, $m$ provides a unique representation of each element of $E$, and provides a bijection between $E$ and the set of finite, pairwise incomparable subsets of $\Lambda$.
\end{remark}

If $t$ be a representation of $\Lambda$, let $q_\alpha= t_\alpha t_\alpha^*$ for all $\alpha \in \Lambda$, and let $\mathcal B_t \subseteq C^*(t)$ be the Boolean algebra generated by $1$ and $\{q_\alpha : \alpha \in \Lambda\}$, where $q_\alpha \wedge q_\beta =q_\alpha q_\beta$.

Finally, given such a representation $t$, define $\sigma_t: E \ra \mathcal B_t$ by 

\[ \sigma_t: a= \bigcup_{\alpha \in m(a)} \alpha \Lambda \mapsto \bigvee _{\alpha \in m(a)} t_\alpha t_\alpha^*=\bigvee _{\alpha \in m(a)} q_\alpha. \] When the representation $t$ is clear from context, we will write $\sigma$ and $\mathcal B$ for $\sigma_t$ and $\mathcal B_t$, respectively. 

\subsection{Technical Lemmas}

\begin{lemma} [Intersection Criterion]
If $a,b \in E$, then $a \Cap b$ if and only if there are $\alpha \in m(a), \beta \in m(b)$ such that $\alpha \Cap \beta$ (in the sense of having a common extension in $\Lambda$).

\end{lemma}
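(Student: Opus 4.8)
The plan is to reduce this semilattice-level statement to the set-theoretic fact that $a \Cap b$ holds precisely when $a$ and $b$ meet as subsets of $\Lambda$, and then to locate a witness inside a common generator. First I would unwind the definition of $\Cap$ on $E$: by Exel's definition $a \Cap b$ means there is a nonzero $z \in E$ with $z \leq a$ and $z \leq b$, and since the order on $E$ is containment, this says there is a nonempty $z \in E$ with $z \subseteq a$ and $z \subseteq b$. I would then note that the greatest lower bound of $a$ and $b$ in $E$ is $a \cap b$, which lies in $E$ by finite alignment, so the existence of any nonzero common lower bound is equivalent to $a \cap b \neq \emptyset$ as subsets of $\Lambda$. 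Thus the criterion to prove becomes: $a \cap b \neq \emptyset$ if and only if some $\alpha \in m(a)$ and some $\beta \in m(b)$ have a common extension in $\Lambda$.

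For the forward direction, assuming $a \Cap b$, I would pick any $\gamma \in a \cap b$. Since $a = \bigcup_{\alpha \in m(a)} \alpha\Lambda$, there is $\alpha \in m(a)$ with $\gamma \in \alpha\Lambda$, i.e.\ $\alpha \leq \gamma$; likewise there is $\beta \in m(b)$ with $\gamma \in \beta\Lambda$. Then $\gamma$ is a common extension of $\alpha$ and $\beta$, so $\alpha \Cap \beta$ in the sense of paths.

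For the converse, assuming $\alpha \in m(a)$ and $\beta \in m(b)$ have a common extension $\gamma$, I would exhibit the nonzero element $z = \gamma\Lambda \in E$. Using the equivalence $\alpha \leq \gamma \iff \gamma\Lambda \subseteq \alpha\Lambda$, from $\gamma \in \alpha\Lambda$ I get $\gamma\Lambda \subseteq \alpha\Lambda \subseteq a$, and similarly $\gamma\Lambda \subseteq \beta\Lambda \subseteq b$. Hence $z$ is a nonzero common lower bound of $a$ and $b$, giving $a \Cap b$.

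There is no serious obstacle here: the content lies entirely in correctly translating the two distinct uses of $\Cap$ (on the semilattice $E$ versus on paths of $\Lambda$) and in exploiting the canonical generator decomposition $a = \bigcup_{\alpha \in m(a)} \alpha\Lambda$ together with the dictionary $\alpha \leq \gamma \iff \gamma\Lambda \subseteq \alpha\Lambda$. The only point requiring a moment's care is confirming that a nonzero common lower bound in $E$ exists exactly when the set-theoretic intersection $a \cap b$ is nonempty, which is immediate once one observes that $a \cap b$ itself lies in $E$ by finite alignment.
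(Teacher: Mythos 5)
Your proof is correct and is essentially the paper's argument: the paper simply distributes intersection over the decompositions $a=\bigcup_{\alpha\in m(a)}\alpha\Lambda$, $b=\bigcup_{\beta\in m(b)}\beta\Lambda$ to get $a\cap b=\bigcup_{\alpha,\beta}\alpha\Lambda\cap\beta\Lambda$, which packages your two directions into one line. Your explicit justification that $a\Cap b$ in $E$ is equivalent to $a\cap b\neq\emptyset$ (via the meet lying in $E$ by finite alignment) is a point the paper leaves implicit, but it is the same approach.
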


\begin{proof}
Observe that $a \cap b= \bigcup_{\alpha \in m(a), \beta \in m(b)} \alpha \Lambda \cap \beta \Lambda$, so the lefthand side is nonempty if and only if one of the terms on the righthand side are nonempty. But $\alpha \Lambda \cap \beta \Lambda$ is nonempty if and only if $\alpha$ has a common extension with $\beta$.
\end{proof}

\begin{lemma} \label{Basic Cover}
Let $Z$ be a finite cover of $E^{X,Y}$. Then there is another finite cover $Z'$ of $E^{X,Y}$ such that every element of $Z'$ is of the form $\alpha \Lambda$, and $\bigvee_{z \in Z} z = \bigvee_{z' \in Z'} z'$.
\end{lemma}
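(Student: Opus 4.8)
<br>

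The plan is to prove Lemma \ref{Basic Cover} by showing that a finite cover $Z$ of $E^{X,Y}$ consisting of arbitrary elements of the semilattice $E$ can be refined, without changing the supremum $\bigvee_{z\in Z} z$, into a cover $Z'$ whose members are all of the simple form $\alpha\Lambda$. The key observation is that each element $z\in Z\subseteq E$ is by definition a finite union $z=\bigcup_{\alpha\in m(z)}\alpha\Lambda$, so $z$ decomposes naturally into the pieces $\alpha\Lambda$ for $\alpha\in m(z)$. The natural candidate is therefore
\[ Z'=\{\alpha\Lambda : \alpha\in m(z)\text{ for some }z\in Z\}, \]
which is finite since $Z$ is finite and each $m(z)$ is finite.

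First I would verify that $\bigvee_{z'\in Z'} z'=\bigvee_{z\in Z} z$. This is essentially formal: taking the union (join in $E$) over all $z'\in Z'$ reassembles exactly the pieces $\alpha\Lambda$ that make up each $z$, so $\bigcup_{z\in Z} z = \bigcup_{z\in Z}\bigcup_{\alpha\in m(z)}\alpha\Lambda = \bigcup_{z'\in Z'} z'$. Next I would check that $Z'$ is contained in $E^{X,Y}$: for $z\in Z\subseteq E^{X,Y}$ and $\alpha\in m(z)$, we have $\alpha\Lambda\subseteq z\leq x$ for every $x\in X$, so $\alpha\Lambda\leq x$; and since $\alpha\Lambda\leq z$ and $z\perp y$ for every $y\in Y$, any common lower bound of $\alpha\Lambda$ and $y$ would also be a common lower bound of $z$ and $y$, forcing $\alpha\Lambda\perp y$. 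Thus each $\alpha\Lambda\in E^{X,Y}$, so $Z'\subseteq E^{X,Y}$ as required of a cover.

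The remaining point is that $Z'$ is genuinely a cover of $E^{X,Y}$. Fix a nonzero $w\in E^{X,Y}$. Since $Z$ is a cover, there is some $z\in Z$ with $w\Cap z$, i.e. $w$ and $z$ have a common extension. By the Intersection Criterion (the preceding lemma), $w\Cap z$ means there exist $\gamma\in m(w)$ and $\alpha\in m(z)$ with $\gamma\Cap\alpha$ in $\Lambda$; in particular $w$ and $\alpha\Lambda$ intersect, and $\alpha\Lambda\in Z'$. Hence every nonzero $w\in E^{X,Y}$ has a common extension with some element of $Z'$, so $Z'$ covers $E^{X,Y}$. I expect no serious obstacle here; the main subtlety is simply being careful with the two distinct senses of ``common extension'' (in $\Lambda$ versus in the semilattice $E$) and ensuring the Intersection Criterion is applied to bridge them, which is exactly what makes the covering property transfer cleanly from $Z$ to the refined family $Z'$.
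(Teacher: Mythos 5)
Your proof is correct and takes essentially the same approach as the paper: decompose each $z\in Z$ into its constituent cylinders $\alpha\Lambda$, check that these lie in $E^{X,Y}$, and use the Intersection Criterion to transfer the covering property. The only (immaterial) difference is that the paper sets $Z'=\{\beta\Lambda : \beta\in m\bigl(\bigcup_{z\in Z}z\bigr)\}$, taking minimal elements of the total union to get an irredundant cover, whereas you take $\{\alpha\Lambda : \alpha\in m(z),\ z\in Z\}$, which may contain redundant cylinders but works just as well and even spares you the paper's extra step of passing from a minimal element of some $z$ down to a minimal element of the union.
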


\begin{proof}
Let $B= m( \bigcup_{z \in Z} z)$ and let $Z'= \{\beta \Lambda : \beta \in B\}$. Then by construction $B$ and hence $Z'$ are finite sets, and by our initial remark on $m$, we know that $\bigcup_{z \in Z} z=\bigvee_{z \in Z} z$ can be reconstituted from its minimal elements in the sense that $\bigvee_{z \in Z} z = \bigvee_{z' \in Z'} z'$. 

It then suffices to show that $Z'$ is a cover of $E^{X,Y}$. To first show that it is contained in $E^{X,Y}$, we know that if $\beta \Lambda \in Z'$, then $\beta \in B$, so $\beta$ must be minimal in some $z_0 \in Z$. Then for all $x \in X$ and $y \in Y$, $\beta \Lambda \leq z_0 \leq x$, and $\beta\Lambda \wedge y \leq z_0 \wedge y =0$, so $\beta \Lambda \leq x$ and $\beta \Lambda \perp y$ for all $x \in X, y \in Y$. Thus $Z' \subseteq E^{X,Y}$.

To show it covers, fix some nonzero $w \in E^{X,Y}$. Then since $Z$ covers $E^{X,Y}$, there is a $z \in Z$ such that $z \Cap w$. Then by the intersection criterion, there is a $\mu \in m(z), \nu \in m(w)$ such that $\mu \Cap \nu$. Since $\mu \in m(z)$, then $\mu \Lambda \leq \bigcup_{\beta \in B} \beta \Lambda$, so there is some $\beta \in B$ with $\beta \leq \mu$. Then $\beta \Cap \nu$, so $\beta \Lambda \Cap w$. Thus $Z'$ is a finite cover.

\end{proof}

\begin{lemma} \label{Y is a subset}
Let $X, Y \subseteq E$ be finite sets. Then there is a finite set $Y' \subseteq E$ such that $E^{X, Y}=E^{X,Y'}$ and $Y' \subseteq E^{X, \emptyset}$.
\end{lemma}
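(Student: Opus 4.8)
The plan is to obtain $Y'$ by ``cutting each element of $Y$ down below all of $X$.'' Assume first that $X \neq \emptyset$, and set $a_X := \bigwedge_{x \in X} x$; this finite meet exists in the semilattice $E$ because $E$ is closed under the intersection $a \cap b$ by finite alignment. I would then define
\[ Y' := \{ y \wedge a_X : y \in Y \}, \]
which is a finite subset of $E$. Since $y \wedge a_X \leq a_X \leq x$ for every $x \in X$, each element of $Y'$ lies below all of $X$, so immediately $Y' \subseteq E^{X, \emptyset}$, giving one of the two required conclusions.

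The heart of the argument is that, once the constraint $z \leq x$ for all $x \in X$ is already imposed, disjointness from $y$ and disjointness from $y \wedge a_X$ are the same condition. Recall that in $E$ we have $z \perp y$ if and only if $z \wedge y = 0$ (the meet being the largest common lower bound). For any $z \in E$ satisfying $z \leq x$ for all $x \in X$ — equivalently $z \leq a_X$, so that $z \wedge a_X = z$ — commutativity and associativity of $\wedge$ give
\[ z \wedge (y \wedge a_X) = (z \wedge a_X) \wedge y = z \wedge y, \]
whence $z \perp y \iff z \perp (y \wedge a_X)$. Every element of $E^{X,Y}$ and of $E^{X,Y'}$ already satisfies $z \leq x$ for all $x \in X$, so applying this equivalence to each $y \in Y$ shows that the defining disjointness conditions for $E^{X,Y}$ and $E^{X,Y'}$ agree; hence $E^{X,Y} = E^{X,Y'}$.

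The only genuinely delicate point, and the step I would single out, is the degenerate case $X = \emptyset$: there is no guarantee that $E$ has a top element $a_X$ (the candidate $\Lambda = \bigcup_{v \in \Lambda^0} v\Lambda$ need not be a \emph{finite} union when $\Lambda^0$ is infinite), so the construction above does not apply verbatim. This case, however, is handled trivially: when $X = \emptyset$ the condition $z \leq x$ for all $x \in X$ is vacuous, so $E^{X,\emptyset} = E$, which already contains the finite set $Y$. Thus one may simply take $Y' = Y$, and both requirements $E^{X,Y} = E^{X,Y'}$ and $Y' \subseteq E^{X,\emptyset}$ hold automatically. Combining the two cases completes the proof.
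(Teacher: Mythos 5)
Your proof is correct and takes essentially the same route as the paper: both replace each $y \in Y$ by its meet with $\bigwedge_{x \in X} x$ (the paper first reduces to a singleton $X = \{x\}$ by citing Exel's remark, which amounts to the same thing) and then verify $E^{X,Y} = E^{X,Y'}$ by commutativity and associativity of $\wedge$ together with $z \wedge x = z$ for $z$ below $X$. Your explicit handling of the degenerate case $X = \emptyset$, which the paper's proof passes over silently, is a small but genuine improvement in completeness.
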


\begin{proof}
By the remark in \cite{exel2009tight} following Definition 2.5, we may assume without loss of generality, that $X=\{x\}$, a singleton. Let $Y'= \{ y \wedge x :x \in X\}$ which is finite. Then by construction, $y'=y \wedge x \leq x$ for all $y' \in Y'$, so $Y' \subseteq E^{X, \emptyset}$. It then suffices to show that $E^{X, Y}=E^{X,Y'}$.

If $w \in E^{X, Y}$, then $w \leq x$ and for all $y\in Y$, $w \perp y$, so $w \wedge y=0$, and thus $w \wedge y'= w \wedge y \wedge x =0$  for all $y' \in Y'$. Thus $w \in E^{X,Y'}$.

Conversely, if $w \in E^{X, Y'}$, then $w \leq x$ and for all $y' \in Y$, $w \perp y'$, so $w \wedge y \wedge x=0$. But $w \leq x$, so $w \wedge y \wedge x= (w\wedge x) \wedge y= w \wedge y=0$, so $w \in E^{X,Y}$.

Thus $E^{X, Y}=E^{X,Y'}$ as desired.
\end{proof}

\begin{lemma} \label{Join Product Equivalence}
Let $\mu \in \Lambda$ and $B \subseteq \mu \Lambda$ be finite. Then $q_\mu = \bigvee_{\beta \in B} q_\beta$ if and only if $0 = \fprod_{\beta \in B} (q_\mu -q_\beta)$.
\end{lemma}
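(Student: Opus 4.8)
The plan is to reduce everything to a statement about \emph{commuting} projections and then invoke the Gelfand transform. The first step is to observe that since $B \subseteq \mu\Lambda$, every $\beta \in B$ factors as $\beta = \mu\gamma$ for some $\gamma \in \Lambda$, so by (T2) and (T3) one computes $q_\mu q_\beta = t_\mu t_\mu^* t_\mu t_\gamma t_\gamma^* t_\mu^* = t_\mu t_{s(\mu)} t_\gamma t_\gamma^* t_\mu^* = t_\beta t_\beta^* = q_\beta$; that is, each $q_\beta$ is a subprojection of $q_\mu$, and in particular $q_\mu - q_\beta$ is again a projection. The second step is to note that the family $\{q_\beta : \beta \in B\}$ is commuting: by (T4) the product $q_\beta q_{\beta'} = t_\beta t_\beta^* t_{\beta'} t_{\beta'}^* = \sum_{\lambda \in MCE(\beta,\beta')} q_\lambda$ is a finite sum of projections (finite by finite alignment), hence self-adjoint, hence equal to its own adjoint $q_{\beta'} q_\beta$.

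Once commutativity is established, I would pass to the commutative $C^*$-subalgebra $A_0$ of $C^*(t)$ generated by $q_\mu$ and $\{q_\beta\}_{\beta \in B}$. By Gelfand--Naimark $A_0 \cong C(X)$ for a compact Hausdorff space $X$, and each projection corresponds to the characteristic function of a clopen subset: write $M$ for the set corresponding to $q_\mu$ and $B_\beta \subseteq M$ for the set corresponding to $q_\beta$ (the containment reflecting $q_\beta \leq q_\mu$). Under this identification $q_\mu - q_\beta$ corresponds to $M \setminus B_\beta$, the product $\prod_{\beta \in B}(q_\mu - q_\beta)$ to $\bigcap_{\beta}(M \setminus B_\beta) = M \setminus \bigcup_\beta B_\beta$, and the join $\bigvee_{\beta} q_\beta$ to $\bigcup_\beta B_\beta$. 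De Morgan's law then yields the single identity $\prod_{\beta \in B}(q_\mu - q_\beta) = q_\mu - \bigvee_{\beta \in B} q_\beta$, valid in $A_0$ and hence in $C^*(t)$.

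With this identity in hand the lemma is immediate: $\prod_{\beta}(q_\mu - q_\beta) = 0$ holds if and only if $q_\mu - \bigvee_\beta q_\beta = 0$, i.e. if and only if $q_\mu = \bigvee_\beta q_\beta$. I do not expect a serious obstacle; the only point requiring genuine care is the commutativity of the $q_\beta$, which guarantees that the join is computed inside a commutative algebra and coincides with the relative-complement expression above. This is exactly where finite alignment and (T4) enter, and it is worth confirming explicitly that $\bigvee$ here denotes the supremum in the Boolean algebra $\mathcal B_t$, since the whole argument rests on that join agreeing with $q_\mu - \prod_\beta(q_\mu - q_\beta)$.
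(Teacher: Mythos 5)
Your proof is correct, and at bottom it rests on the same identity as the paper's: $\prod_{\beta \in B}(q_\mu - q_\beta) = q_\mu - \bigvee_{\beta \in B} q_\beta$, i.e.\ De Morgan's law. The paper's proof is a four-line chain of equivalences carried out axiomatically inside the Boolean algebra $\mathcal B_t$ (defined just before the lemma as the Boolean algebra generated by $1$ and the $q_\alpha$, with $q_\alpha \wedge q_\beta = q_\alpha q_\beta$): subtract, apply De Morgan's laws, and use that the meet is the operator product. What you do differently is supply the justification that this Boolean algebra makes sense and that those manipulations are valid: you check from (T2)/(T3) that each $q_\beta$ is a subprojection of $q_\mu$, from (T4) plus finite alignment that the $q_\beta$ commute, and then use the Gelfand transform to reduce the projection identity to set-theoretic De Morgan on clopen sets. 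So your route is more self-contained --- it proves exactly the facts that the paper's appeal to $\mathcal B_t$ presupposes --- while the paper's is shorter because it works abstractly in $\mathcal B_t$ and leaves those verifications implicit. Your closing concern is also settled affirmatively: $\bigvee$ in the statement is the join in $\mathcal B_t$, which for commuting projections is $p \vee q = p + q - pq$, the characteristic function of the union under your Gelfand identification, so it does agree with $q_\mu - \prod_{\beta \in B}(q_\mu - q_\beta)$.
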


\begin{proof}
The result follows from a chain of equivalences:

\bea
q_\mu &=& \bigvee_{\beta \in B} q_\beta  \\
0 &=& q_\mu- \bigvee_{\beta \in B} q_\beta \text{ by subtraction} \\
0 &=& \bigwedge_{\beta \in B} (q_\mu- q_\beta) \text{ by De Morgan's laws} \\
0 &=& \fprod_{\beta \in B} (q_\mu- q_\beta) \text{ by definition of } \bigwedge.
\eea
\end{proof}

\subsection{The Proof of Equivalence}

Here we prove the following proposition. The main interest is the equivalence of the first and last criteria, and the middle statements are there to ease the proof.

\begin{proposition}
If $t$ is a representation of $\Lambda$, and $\sigma=\sigma_t$ is the associated representation of the semilattice $E=E(\Lambda)$, then the following are equivalent:

\begin{enumerate}
\item $\sigma$ is a tight representation in the $E$-sense.
\item $\sigma$ is a tight representation in the $E$-sense when $Y= \emptyset$.
\item $\sigma$ is a tight representation in the $E$-sense when $Y=\emptyset$ and $X=\{\mu \Lambda\}$.
\item $t$ is a tight representation in the $H$-sense.
\end{enumerate}

\end{proposition}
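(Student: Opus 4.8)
The plan is to establish the equivalences by proving the cycle $(1) \Rightarrow (2) \Rightarrow (3) \Rightarrow (4) \Rightarrow (1)$, exploiting that each successive statement is a specialization of tightness to increasingly restricted choices of $X$ and $Y$. The implications $(1) \Rightarrow (2) \Rightarrow (3)$ are immediate, since each merely restricts the cover-condition to special cases ($Y = \emptyset$, and then additionally $X$ a singleton $\{\mu\Lambda\}$). The substance of the argument lies in $(3) \Rightarrow (4)$ and $(4) \Rightarrow (1)$, where I must translate between the $E$-tightness language of covers in the semilattice and the $H$-tightness language of bolts $\fprod_{\alpha \in E}(q_\mu - q_\alpha) = 0$.

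For $(3) \Rightarrow (4)$, I would take $\mu \in \Lambda$ and a finite $z \subseteq \mu\Lambda$ exhaustive for $\mu\Lambda$, and show the corresponding bolt vanishes. The key translation is that ``$z$ is exhaustive for $\mu\Lambda$'' is precisely the statement that $\{\alpha\Lambda : \alpha \in z\}$ is a cover (in the semilattice sense) for $E^{\{\mu\Lambda\}, \emptyset} = \{w \in E : w \leq \mu\Lambda\}$: by the Intersection Criterion, every nonzero $w \leq \mu\Lambda$ intersects some $\alpha\Lambda$ exactly when some minimal path of $w$ has a common extension with some $\alpha \in z$, which is the exhaustiveness condition. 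Applying $E$-tightness in case (3) gives $\bigvee_{\alpha \in z} \sigma(\alpha\Lambda) \geq \sigma(\mu\Lambda)$, i.e. $\bigvee_{\alpha \in z} q_\alpha \geq q_\mu$; but each $q_\alpha \leq q_\mu$ since $\alpha \in \mu\Lambda$, so in fact $q_\mu = \bigvee_{\alpha \in z} q_\alpha$. By Lemma \ref{Join Product Equivalence} this is equivalent to $\fprod_{\alpha \in z}(q_\mu - q_\alpha) = 0$, which is exactly $H$-tightness.

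For $(4) \Rightarrow (1)$, I would start from arbitrary finite $X, Y \subseteq E$ and a finite cover $Z$ of $E^{X,Y}$, and reduce to the special situation handled by $H$-tightness. First apply Lemma \ref{Basic Cover} to replace $Z$ by a cover $Z'$ all of whose elements have the form $\alpha\Lambda$, without changing the relevant join. Then apply Lemma \ref{Y is a subset} to replace $Y$ by $Y' \subseteq E^{X,\emptyset}$, and use the standard reduction (the remark following \cite[Definition 2.5]{exel2009tight}) to assume $X = \{x\}$ is a singleton, writing $x = \bigcup_{\mu \in m(x)} \mu\Lambda$. The goal inequality $\bigvee_{z \in Z}\sigma(z) \geq \bigwedge\sigma(x) \wedge \bigwedge \neg\sigma(y)$ must then be verified; I expect to argue fiber-by-fiber over the minimal paths $\mu \in m(x)$, showing that restricted to the range projection $q_\mu$, the cover condition combined with $H$-tightness (via Lemma \ref{Join Product Equivalence}) forces the required domination.

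The main obstacle will be the $(4) \Rightarrow (1)$ direction, specifically handling a nonempty $Y$ and a non-singleton $m(x)$ simultaneously. The difficulty is bookkeeping: after the reductions, I must show that covering $E^{X,Y}$ controls the Boolean-algebra expression involving the negations $\neg\sigma(y)$, and the cover $Z$ need not restrict cleanly to each fiber $q_\mu$. The careful point is that an element $\alpha\Lambda$ of the cover may fail to lie below a given $\mu\Lambda$, so I will need to intersect the cover with each $\mu\Lambda$ and verify that these intersections still witness exhaustiveness of the appropriate sets within $\mu\Lambda \setminus \bigcup_{y} y$; translating the disjointness conditions $z \perp y$ into the combinatorics of $MCE$ and common extensions is where the real work concentrates.
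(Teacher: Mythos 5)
Your implications $(1)\Rightarrow(2)\Rightarrow(3)$ and your $(3)\Rightarrow(4)$ are correct and coincide with the paper's argument: the translation between ``$z$ exhaustive for $\mu\Lambda$'' and ``$\{\alpha\Lambda : \alpha \in z\}$ is a cover of $E^{\{\mu\Lambda\},\emptyset}$'' via the Intersection Criterion, followed by Lemma \ref{Join Product Equivalence}, is exactly how the paper closes that direction. The genuine gap is $(4)\Rightarrow(1)$: you do not give this argument. You give a plan, correctly identify the two obstructions (nonempty $Y$, and $X$ generated by several minimal paths), and then defer them as ``bookkeeping'' and ``where the real work concentrates.'' Those obstructions are not bookkeeping; they are the entire mathematical content of the proposition, and your fiber-by-fiber plan as stated runs into a concrete problem: after intersecting the cover $Z$ with a fiber $\mu\Lambda$ for $\mu \in m(x)$, the resulting family of paths only meets those elements of $\mu\Lambda$ that are disjoint from every $y \in Y$, so it is \emph{not} exhaustive for $\mu\Lambda$, and H-tightness (which requires exhaustiveness for all of $\mu\Lambda$) cannot be applied to it.

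The paper resolves this by never attempting the direct jump from (4) to (1). It proves $(4)\Rightarrow(3)$ and then performs the two upgrades entirely inside the semilattice, using the already-established case (3) rather than H-tightness. For $(3)\Rightarrow(2)$ the key device is the MCE double cover: writing $x = \bigcup_{\alpha \in A}\alpha\Lambda$ and $Z = \{\beta\Lambda : \beta \in B\}$, one sets $Z_{\alpha,\beta} = \{\gamma\Lambda : \gamma \in MCE(\alpha,\beta)\}$ and checks that for fixed $\beta$ the union $\bigcup_{\alpha \in A} Z_{\alpha,\beta}$ covers $E^{\{\beta\Lambda\},\emptyset}$ while for fixed $\alpha$ the union $\bigcup_{\beta \in B} Z_{\alpha,\beta}$ covers $E^{\{\alpha\Lambda\},\emptyset}$; applying (3) to both families and exchanging the joins identifies $\sigma(x)$ with $\bigvee_{z \in Z}\sigma(z)$. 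For $(2)\Rightarrow(1)$ the device is to adjoin $Y$ to the cover: after Lemma \ref{Y is a subset} one checks that $Z' = Z \cup Y$ is a finite cover of $E^{X,\emptyset}$, applies (2) to $Z'$, and then strips off the $Y$-contribution by meeting with $\bigwedge_{y \in Y}\neg\sigma(y)$, using that $z \perp y$ forces $\sigma(z)\wedge\neg\sigma(y) = \sigma(z)$. Your sketch gestures at both ideas in embryo (intersecting the cover with fibers; tracking exhaustiveness relative to $Y$), but until you carry out arguments at this level of detail --- either by merging both tricks into one direct $(4)\Rightarrow(1)$ proof or by factoring through (3) and (2) as the paper does --- the equivalence is not established.
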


\begin{proof}
Certainly $(1) \Rightarrow (2) \Rightarrow (3)$. We will prove four non-obvious directions: $(3) \Rightarrow  (4), (4) \Rightarrow  (3), (3) \Rightarrow  (2)$ and $(2) \Rightarrow  (1)$.

$(3) \Rightarrow  (4)$ Fix some $\mu \in \Lambda$. We wish to show that if $B \subseteq \mu \Lambda$ is exhaustive, then $Z_B= \{\beta \Lambda: \beta \in B\}$ is a finite cover of $E^{\{\mu \Lambda\}, \emptyset}$. In particular, given such an exhaustive $B$, if $w \in E^{\{\mu \Lambda\}, \emptyset}$ is nonzero, then taking $\alpha \in m(w)$, $\alpha \in w \leq \mu \Lambda$, so $\alpha \in \mu \Lambda$, so since $B$ is exhaustive there is a $\beta \in B$ such that $\alpha \Cap \beta$, and by the intersection criterion, $\beta \Lambda \Cap w$. Thus $Z_B$ covers.

Then by (3), we have that $\bigvee_{z \in Z_B} \sigma(z)= \sigma(\mu \Lambda)$ and substituting $\sigma(z)=\sigma(\beta \Lambda)=q_\beta$ and $\sigma(\mu \Lambda)=q_\mu$, we have

\[ \bigvee_{\beta \in B} q_\beta = q_\mu\]

\noindent which is equivalent to H-tightness by Lemma \ref{Join Product Equivalence}.

Given such an exhaustive $B$, if $w \in E^{\{\mu \Lambda\}, \emptyset}$ is nonzero, then taking $\alpha \in m(w)$, $\alpha \in w \leq \mu \Lambda$, so $\alpha \in \mu \Lambda$, so since $B$ is exhaustive there is a $\beta \in B$ such that $\alpha \Cap \beta$, and by the intersection criterion, $\beta \Lambda \Cap w$. Thus $Z_B$ covers.

$(4) \Rightarrow (3)$ Fix some $\mu \in \Lambda$. We wish to show that if $Z \subseteq E^{\{\mu \Lambda\}, \emptyset}$ is a finite cover, then $B_Z= \bigcup_{z \in Z} m(z)$ is a finite exhaustive subset of $\mu \Lambda$. In particular, given $\alpha \in \mu \Lambda$, we have that $\alpha \Lambda \in E^{\{\mu \Lambda\}, \emptyset}$, so there is $z \in Z$ such that $z \Cap \alpha \Lambda$. Then by the intersection property, there is a $\beta \in m(z)$ such that $\beta \Cap \alpha$. Thus $B_Z$ is exhaustive for $\mu \Lambda$, as desired.

Then, by (4), and Lemma \ref{Join Product Equivalence}, we have that

\[  q_\mu=\bigvee_{\beta \in B_Z} q_\beta.\]

Substituting $\sigma(\mu \Lambda)= q_\mu$ and $\sigma(\beta \Lambda)= q_\beta$, then regrouping the terms on the right, we have that

\bea
\sigma(\mu \Lambda)&=&\bigvee_{\beta \in B_Z} \sigma(\beta \Lambda)\\
&=& \bigvee_{z \in Z} \bigvee_{\beta\in m(z)} \sigma(\beta \Lambda) \\
&=& \bigvee_{z \in Z}  \sigma(\bigvee_{\beta\in m(z)}\beta \Lambda) \\
&=& \bigvee_{z \in Z}  \sigma(z) \\
\eea

\noindent as desired.

$(3 \Rightarrow 2)$ Let $X \subseteq E$ and let $Z$ be a finite cover of $E^{X,\emptyset}$. By the remark in \cite{exel2009tight} following Definition 2.5, we may assume without loss of generality that $X$ is a singleton, since otherwise we may replace $X$ with $\{x_{min}\}$ where $x_{min}=\bigwedge_{x \in X} x$.

Let $Z$ be a finite cover of $E^{X, \emptyset}$. By Lemma \ref{Basic Cover}, we may assume without loss of generality that $Z = \{\beta_1 \Lambda , ...,\beta_n \Lambda\}$ for some $\beta_1,...,\beta_n \in  \Lambda$. Let $B=\{\beta_1,...\beta_n\}$ and $A=m(x)$.

Now, for each $\alpha \in A$ and $\beta \in B$, let $Z_{\alpha, \beta} = \{\gamma \Lambda: \gamma \in MCE(\alpha, \beta)\}$. Note that $Z_{\alpha, \beta}$ is finite and $\gamma \Lambda \in Z_{\alpha, \beta}$ is in $\alpha \Lambda \cap \beta \Lambda$.

We wish to show that for fixed $\beta \in B$, $\bigcup_{\alpha \in A} Z_{\alpha, \beta}$ is a cover for $E^{\{\beta \Lambda\}, \emptyset}$ and for fixed $\alpha \in A$, that $\bigcup_{\beta \in B} Z_{\alpha, \beta}$ is a cover for $E^{\{\alpha \Lambda\}, \emptyset}$. Each is a finite union of finite sets, hence finite, and is contained in the appropriate space. It then suffices to show that they are covering.

For the former set, fix some nonzero $w \in E^{\{\beta \Lambda\}, \emptyset}$. Since $w$ is nonzero, there is some $\mu \in m(w)$. Then $\mu \Lambda \leq w \leq \beta \Lambda \leq x$, so $\mu \Lambda \wedge x = \mu \Lambda <\infty$, so $\mu \Lambda \Cap x$, and thus there is $\alpha \in m(x)$ such that $\mu \Cap \alpha$. That is, $\alpha$ and $\mu$ have a common extension $\nu$. Since $\beta$ is a prefix of $\mu$, then $\nu$ is a common extension of $\alpha$ and $\beta$ as well. Thus some prefix of $\nu$ is a $\gamma \in MCE(\alpha, \beta)$, so $\nu$ is a common extension of both $\gamma$ and $\mu$. That is, $\gamma \Lambda \Cap w$, and $\gamma \Lambda \in Z_{\alpha, \beta}$. Thus $\bigcup_{\alpha \in A} Z_{\alpha, \beta}$ is a cover for $E^{\{\beta \Lambda\}, \emptyset}$, as desired.

For the latter set, fix some nonzero $w \in E^{\{\alpha \Lambda\}, \emptyset}$. Then we know that $w \leq \alpha \Lambda \leq x$, so $w \in E^{X, \emptyset}$, and because $Z$ is a cover, there is some $\beta \Lambda \in Z$ with $w \Cap \beta \Lambda$. Then by the intersection criterion, there is a $\mu \in m(w)$ such that $\mu \Cap \beta$. That is, $\mu$ and $\beta$ have a common extension $\nu$. However, $\alpha \leq \mu$ as $w \in E^{\{\alpha \Lambda\}, \emptyset}$, so $\nu$ is a common extension of $\beta$ and $\alpha$, and thus has a prefix which is some $\gamma \in MCE(\beta, \alpha)$. Thus $\nu$ is a common extension of $\mu$ and $\gamma$, so $w \Cap \gamma \Lambda$, and $\gamma \Lambda \in Z_{\alpha, \beta}$. Thus $\bigcup_{\beta \in B} Z_{\alpha, \beta}$ is a cover for $E^{\{\alpha \Lambda\}, \emptyset}$, as desired.

Now, using (3), the fact that these two sets cover gives us that

\bea
\sigma(\beta \Lambda)&=& \bigvee_{\gamma \in \bigcup_{\alpha \in A} Z_{\alpha, \beta}} \sigma(\gamma \Lambda ) \\
&=& \bigvee_{\alpha \in A} \bigvee_{\gamma \in MCE(\alpha, \beta) }\sigma(\gamma \Lambda) 
\eea
and 
\bea
\sigma(\alpha \Lambda)&=& \bigvee_{\gamma \in \bigcup_{\beta \in B} Z_{\alpha, \beta}} \sigma(\gamma \Lambda ) \\
&=& \bigvee_{\beta \in B} \bigvee_{\gamma \in MCE(\alpha, \beta)}  \sigma(\gamma \Lambda ) \\
\eea

Finally, using the fact that $x= \bigcup_{\alpha \in A} \alpha \Lambda$, we have that

\bea
\sigma(x) &=& \bigvee_{\alpha \in A} \sigma(\alpha \Lambda) \\
&=& \bigvee_{\alpha \in A} \left( \bigvee_{\beta \in B} \bigvee_{\gamma \in MCE(\alpha, \beta)}  \sigma(\gamma \Lambda )  \right)\\
&=&  \bigvee_{\beta \in B} \left( \bigvee_{\alpha \in A}  \bigvee_{\gamma \in MCE(\alpha, \beta)}  \sigma(\gamma \Lambda ) \right) \\
&=&  \bigvee_{\beta \in B} \sigma(\beta \Lambda) \\
&=&  \bigvee_{z \in Z} \sigma(z)
\eea

which is the E-tightness condition for $E^{X, \emptyset}$.

$(2 \Rightarrow 1)$ Let $X, Y \subseteq E$ be finite, and let $Z$ be a finite cover of $E^{X,Y}$. By Lemma \ref{Y is a subset}, we may assume that $Y$ is a subset of $E^{X, \emptyset}$. Let $Z'= Z \cup Y$.  We claim that $Z'$ is a finite cover of $E^{X, \emptyset}$. In particular, given $w \in E^{X,\emptyset}$, if $w \perp y$ for all $y\in Y$, then $w \in E^{X,Y}$, so $w \Cap z$ for some $z \in Z \subseteq Z'$. If instead $w \not \perp y$ for some $y$, then $w \Cap y$ for some $y \in Y \subseteq Z'$.

Then by (2), we know that

\[
\bigvee_{z' \in Z'} \sigma(z') = \bigwedge_{x \in X} \sigma(x) 
\]

and meeting with $\bigwedge_{y \in Y} \neg \sigma(y)$, we have that

\[
\left( \bigvee_{z' \in Z'} \sigma(z')\right) \wedge \bigwedge_{y \in Y} \neg \sigma(y)= \bigwedge_{x \in X} \sigma(x) \wedge \bigwedge_{y \in Y} \neg \sigma(y)
\]

Simplifying the lefthand side, we have that

\bea
\left( \bigvee_{z' \in Z'} \sigma(z')\right) \wedge \bigwedge_{y \in Y} \neg \sigma(y) &=&\left( \bigvee_{z \in Z}\sigma(z) \vee \bigvee_{y' \in Y} \sigma(y') \right) \wedge \bigwedge_{y \in Y} \neg \sigma(y)\\
&=&\left( \bigvee_{z \in Z}\sigma(z) \wedge \bigwedge_{y \in Y} \neg \sigma(y) \right) \vee\left( \bigvee_{y' \in Y} \sigma(y') \wedge \bigwedge_{y \in Y} \neg \sigma(y) \right)\\
&=&\left( \bigvee_{z \in Z}\sigma(z) \wedge \bigwedge_{y \in Y} \neg \sigma(y) \right)\\
&=&\bigvee_{z \in Z} \left(\sigma(z) \wedge \bigwedge_{y \in Y} \neg \sigma(y) \right)\\
\eea

Now, for a fixed $z \in Z, y \in Y$, and for any $\alpha \in m(z)$, $\beta \in m(y)$, we have that $\alpha \perp \beta$, so $q_\alpha q_\beta=0$. Then $\sigma(\alpha \Lambda) \wedge \neg \sigma(\beta \Lambda) = q_\alpha (1-q_\beta)=q_\alpha$, so $\sigma(z) \vee \neg \sigma(y)=\sigma(z)$. Thus

\[
\bigvee_{z \in Z} \left(\sigma(z) \wedge \bigwedge_{y \in Y} \neg \sigma(y) \right)= \bigvee_{z \in Z} \sigma(z)
\]

\noindent so by transitivity, 

\[ 
\bigvee_{z \in Z} \sigma(z) = \bigwedge_{x \in X} \sigma(x) \wedge \bigwedge_{y \in Y} \neg \sigma(y)
\]

\noindent as desired. This completes the proof that (2) implies (1).

\end{proof}

\bibliography{References}{}

\printindex

\end{document}